\documentclass[oneside,english,11pt]{amsart}
\usepackage[backend=biber,style=numeric,citestyle=numeric-comp,sorting=none]{biblatex}

\usepackage[english]{babel}
\usepackage[T1]{fontenc}
\usepackage{lmodern}
\usepackage{microtype}
\usepackage{xurl}
\usepackage{xcolor}
\usepackage{booktabs}
\usepackage{array}
\usepackage{longtable}
\usepackage[section]{placeins}
\usepackage{csquotes}
\usepackage{preamble}
\usepackage[parfill]{parskip}
\usepackage{adjustbox}

\SetSymbolFont{stmry}{bold}{U}{stmry}{m}{n}
\newtheorem{theorem}[thm]{Theorem}
\newtheorem{lemma}[thm]{Lemma}
\hypersetup{hidelinks}
\AtEveryBibitem{%
  \clearfield{abstract}\clearfield{keywords}\clearfield{issn}%
  \iffieldundef{doi}{}{\clearfield{url}}%
  \iffieldundef{journaltitle}{}{\clearfield{eprint}}%
  \iffieldundef{eprint}{}{\clearfield{url}}%
}
\renewbibmacro{in:}{\ifentrytype{article}{}{\printtext{\bibstring{in}\intitlepunct}}}
\DeclareFieldFormat[article,incollection,inproceedings,misc]{title}{#1}
\date{}

\providecommand{\Z}{\mathbb Z}

\DeclareMathOperator{\lcm}{lcm}

\title[New Directions in Irrationality]{\LARGE New Directions in Irrationality}
\author[Elyasheev Leibtag]{Elyasheev Leibtag\textsuperscript{1,2,*}}
\author[Hila Barkan]{Hila Barkan\textsuperscript{1,*}}
\author[Rotem Kalisch]{Rotem Kalisch\textsuperscript{1}}
\author[Shachar Weinbaum]{Shachar Weinbaum\textsuperscript{1}}
\author[Michael Shalyt]{Michael Shalyt\textsuperscript{1}}
\author[Ido Kaminer]{Ido Kaminer\textsuperscript{1}}
\thanks{\textsuperscript{*} These authors contributed equally to this work.}
\subjclass[2020]{11J82, 11J72, 11M06, 33C20}
\keywords{Diophantine approximation, irrationality measures, conservative matrix fields,
hypergeometric functions, zeta values, Catalan's constant}

\begin{document}
\raggedbottom
\begin{abstract}
Since Ap\'ery’s proof of the irrationality of \(\zeta(3)\), decades of work have explored integrals, hypergeometric identities, and recurrences with exceptional arithmetic properties, to discover the strongest quantitative bounds on irrationality measures for zeta values and related constants. 
Yet, most findings have relied on specially tailored families of such constructions, making their systematic extension difficult.
Here we show how these constructions can be connected and generalized into a common multidimensional structure based on conservative matrix fields (CMFs), enabling a systematic discovery of new irrationality proofs and quantitative bounds for irrationality exponents.
CMFs are finite-dimensional cocycles of compatible rational matrix shifts, and are shown to capture all the classical constructions developed since Apéry as one-dimensional trajectories. The initial point encodes the mathematical constants; varying the trajectory within a suitable convergence region produces different approximation schemes for the same constants.
We use this structure to improve the irrationality measures of famous constants, yielding for example
\[
    \mu(\zeta(2))<4.62675,
    \qquad
    \mu(\zeta(3))<5.4197,
\]
and proving that \(1,\zeta(3),\zeta(4)\) are linearly independent over \(\mathbb Q\), yielding
\[
    \mu\bigl(A\zeta(3)+B\zeta(4)\bigr)<66.3.
\]
for every $(A,B)\in\mathbb Q^2\setminus\{(0,0)\}$.
The same approach demonstrated over other CMFs provides irrationality proving trajectories for infinitely more mathematical constants.
We further construct higher-rank CMFs with explicit initial states for \(1,\zeta(2),\zeta(3),\ldots,\zeta(d+1)\), and identify the arithmetic conditions needed to obtain further, higher-order independence results.
\end{abstract}

\maketitle


\clearpage
\section{Introduction}
\label{sec:introduction}

When Ap\'ery announced his proof of the irrationality of $\zeta(3)$ in 1978~\cite{apery1979irrationalite,van1979proof}, he was reportedly asked where his formulas came from, and replied that they "grow in my garden". 
Over the following decades, Beukers, Zagier, Hata, Rhin and Viola, Fischler, Brown, Zudilin, and many others developed a range of constructions in an effort to find further irrationality proofs in Ap\'ery's garden \cite{beukers1979note,hata1995beukers,rhin1996permutation,hata2000zeta3,rhinViola2001,fischler2004bourbaki,zudilin2002arithmetic,zagier2006integral,zudilin2014zeta2}.

These efforts yielded substantial quantitative advances: Rhin--Viola obtained the bound $\mu(\zeta(3))<5.513891$~\cite{rhinViola2001}, and Zudilin obtained $\mu(\zeta(2))<5.095412$~\cite{zudilin2014zeta2}. 
Related constructions gave increasingly strong approximations to Catalan's constant~\cite{nesterenko2016catalan,viola2022catalan}, $\zeta(5)$~\cite{brownZudilin2022zeta5}, and combinations of zeta values~\cite{dauguetZudilin2014}. 
Ball and Rivoal proved that infinitely many odd zeta values are irrational~\cite{ball2001irrationalite}, and Zudilin proved that at least one of $\zeta(5),\zeta(7),\zeta(9),\zeta(11)$ is irrational~\cite{zudilin2002arithmetic}.
Despite these advances, no subsequent Ap\'ery-type, ``classical'', construction achieved a comparable irrationality breakthrough.

More recent approaches abandoned the classical constructions, and instead expanded the search for irrationality using methods that do not arise classically from Ap\'ery-like sequences. 
Zudilin's determinantal method replaces the requirement that individual linear forms tend to zero by estimates for structured determinants~\cite{zudilin2017determinantal}. 
Brown developed a many-parameter criterion for Mellin integrals using generalized Vandermonde determinants and transfinite diameter~\cite{brown2026mellin}. 
Fauzan's subsequent preprint uses normalized Hankel determinants to prove the irrationality of $\zeta(5)$~\cite{fauzan2026}.
OpenAI's proof of the irrationality of Catalan's constant also used a related approach~\cite{openai2026catalan}.
In another direction, Calegari, Dimitrov, and Tang applied arithmetic holonomy bounds to Zagier's earlier constructions, proving the linear independence of $1$, $\zeta(2)$, and $L(2,\chi_{-3})$ \cite{zagier2009apery,calegari2024linear,calegariDimitrovTang2025Holonomy}.

These breakthroughs raise a fundamental question: had classical Ap\'ery-type constructions reached their limits, or could they still yield new irrationality theorems, as hoped since Ap\'ery's 1978 proof?

This paper shows Apery's garden still grows irrationality proofs.
We connect and generalize all the classical constructions within a common multidimensional framework of conservative matrix fields (CMFs), and use this structure to prove new irrationality and linear-independence theorems. 
Developed over several years by generalizing low-rank families of rational approximations \cite{elimelech2024algorithm,weinbaum2025conservative,raz2025euler}, the framework now embeds all the classical constructions described above, each appearing as a distinct CMF trajectory. This embedding recovers the irrationality measures and provides a systematic way for their proofs and generalization.

\subsection{Main Results}
For an irrational real number $\alpha$, let $\mu(\alpha)$ denote its
irrationality measure; see Definition~\ref{def:irrationality-measure}.

Our first result is a proof of the irrationality of every nonzero rational combination of $\zeta(3)$ and $\zeta(4)$.
\begin{mthm}
\label{thm:intro-zeta34}
For every $(A,B)\in\mathbb Q^2\setminus\{(0,0)\}$,
\[
    A\zeta(3)+B\zeta(4)\notin\mathbb Q,
\]
and
\[
    \mu\bigl(A\zeta(3)+B\zeta(4)\bigr)<66.3.
\]
\end{mthm}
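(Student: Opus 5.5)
The plan is to prove the statement through a Nesterenko-type criterion for simultaneous linear forms, not through a single Apéry-type linear form. We want to show that $1,\zeta(3),\zeta(4)$ are linearly independent over $\mathbb Q$ with a quantitative measure. Since $\zeta(4)=\pi^4/90$ is transcendental, the only issue is the mixed combination, but we do not treat the cases separately. For $B=0$ the bound is weaker than the known $\mu(\zeta(3))<5.4197$. For $A=0$ it is trivial. The general case follows from one uniform construction.

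\textbf{Construction of the linear forms.} In the rank-three CMF whose initial state encodes $(1,\zeta(3),\zeta(4))$, we choose a trajectory, a direction vector in the convergence region. Along it the matrix product produces, after clearing denominators by $d_n^{k}$ (with $d_n=\lcm(1,\dots,n)$) and removing a common factorial-type factor $\Phi_n$ obtained by Chudnovsky--Rukhadze prime analysis, three linearly independent integer linear forms
\[
L_{n,i}=a_{n,i}+b_{n,i}\zeta(3)+c_{n,i}\zeta(4),\qquad i=1,2,3 .
\]
Concretely, we take consecutive trajectory steps, or the three columns of the accumulated matrix. The asymptotics come from the characteristic roots of the limiting recurrence along the trajectory, via Poincaré--Perron. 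Two of the forms decay like $e^{-\rho n}$, and the coefficients grow like $e^{\tau n}$, with $\rho,\tau$ given by logs of roots of an explicit polynomial together with the arithmetic contributions $k\cdot n$ and $-\log\Phi_n$.

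\textbf{Reduction to one form.} For fixed $(A,B)$ and $x=A\zeta(3)+B\zeta(4)$, we take integer combinations of the three forms. The determinant of the coefficient matrix is nonzero, which the CMF flatness (the cocycle determinant identity) makes explicit as a product of rational factors. Because of this, for each $n$ some form among the decaying ones, combined with $(A,B)$, yields an approximation $q$, $p$ to $x$ in which $|q x-p|$ is small but nonzero. The standard lemma converting ``nonzero forms with decay rate $\rho$ and size $\tau$'' into $\mu\le 1+\tau/(\rho')$ then gives irrationality whenever the effective decay beats growth. Here $\rho'$ is the decay left after eliminating one constant by a second form, which is effectively a $2\times2$ determinant argument. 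Numerically evaluating the optimized trajectory is expected to give $66.3$.

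\textbf{Main obstacle.} The hard step is the arithmetic. We must show that along the chosen trajectory the denominators are genuinely $d_n^{4}$ (or smaller) times a large cancelling $\Phi_n$, so that the net exponent $\rho-\tau$ becomes positive. Non-vanishing of the determinant also has to be proved, not just observed numerically. For the arithmetic, I would first compute the $p$-adic valuations of the Pochhammer ratios appearing in the CMF entries via floor-function sums, optimizing the trajectory parameters jointly with the measure bound. For the determinant, I would try the closed-form determinant from flatness before anything else.
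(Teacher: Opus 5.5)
\textbf{The criterion you aim for fails for this construction.} Your proposal is a type~I (Nesterenko-style) argument. Made explicit, your elimination step takes two forms with integer coefficients of size $e^{\tau n}$. It produces $q=b_2c_1-b_1c_2$ with $|q|\le e^{2\tau n+o(n)}$ and $|qx-p|=|uL_1+vL_2|\le e^{(\tau-\rho)n+o(n)}$. So $\delta=(\rho-\tau)/(2\tau)$, and you need $\rho>\tau$.

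For the trajectory that actually proves the theorem, this fails badly. The type~I forms given by the recessive dual state $\mathcal G_n=\zeta(4)\mathbf a_n+\zeta(3)\mathbf b_n+\mathbf d_n$ decay at rate about $26$, while their integer coefficients grow at rate about $87$. Nesterenko's criterion therefore gives only $\dim_{\mathbb Q}\langle1,\zeta(3),\zeta(4)\rangle\ge1.3$ (Remark~\ref{rem:z34-type-one}). Your proposal gives no reason to expect another direction with $\rho>\tau$. The sentence that the optimized trajectory ``is expected to give $66.3$'' is not an estimate: $66.3$ is $g/(g-h)$ for a different pair of rates.

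\textbf{The missing idea is to use simultaneous (type~II) approximations with a common denominator and to do their arithmetic directly.} Through the dual pairing, the forward walk yields $q_n,p_{3,n},p_{4,n}$ such that, for all $(A,B)$ at once, $|xq_n-Ap_{3,n}-Bp_{4,n}|/|q_n|\le e^{-(g-o(1))n}$. Here $g=\log|\lambda_1/\lambda_3|>112.8075$ is the full spectral gap; in your notation $g=\rho+\tau$, not $\rho-\tau$.

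Your $2\times2$ elimination is in fact pointing at these numbers. By Lemma~\ref{prop:z34-dual}, up to the scalar $\Lambda_n$ and the weights $\tilde u_n$, $\mathbf q_n,\mathbf p_{3,n},\mathbf p_{4,n}$ are combinations of exactly the minors your elimination forms. The gain comes from not bounding these minors through squares of type~I coefficients. The paper instead:
\begin{itemize}
\item writes them as explicit hypergeometric sums with harmonic numbers;
\item normalizes by $6d_{19n}^4$, for a total rate $<141.4516$;
\item extracts a common divisor of rate $\ge30.3467$ by counting primes in both the forward and the dual-minor presentations and keeping the better count for each prime. Neither presentation alone reaches the needed $28.6441$.
\end{itemize}
This gives $h<111.105<g$, hence $\delta>0.015$ and $\mu\le g/(g-h)<66.26$, with nonvanishing from Lemma~\ref{lem:nonvanishing}.

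Finally, the case $A=0$ of the measure bound is not trivial: transcendence of $\zeta(4)$ gives irrationality but no explicit exponent. The uniform argument covers that case as well.
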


As the second result, we improve the upper bounds for the irrationality measures of $\zeta(2)$ and $\zeta(3)$.
\begin{mthm}
\label{thm:intro-zeta23}
\[
    \mu(\zeta(2))<4.62675,
    \qquad
    \mu(\zeta(3))<5.4197.
\]
\end{mthm}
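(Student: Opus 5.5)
The plan is to derive both bounds from the standard quantitative irrationality criterion. Let $\alpha$ be a real number. Suppose there are integer sequences $q_n,p_n$ and constants $Q,E>1$ such that
\[
q_n=e^{Qn+o(n)},\qquad |q_n\alpha-p_n|=e^{-En+o(n)}.
\]
Then $\mu(\alpha)\le 1+Q/E$. If only $q_n=e^{Qn+o(n)}$ is known, together with the approximation error tending to zero geometrically with nonzero forms for all large $n$, the bound still holds. For each of $\zeta(2)$ and $\zeta(3)$ we will produce such sequences from a one-dimensional trajectory in the corresponding CMF.

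The trajectory will be chosen inside the convergence region of the CMF whose initial point encodes $\zeta(2)$ (respectively $\zeta(3)$). Along the trajectory, the products of the shift matrices give two solutions $a_n,b_n$ of a linear recurrence with polynomial coefficients, and $b_n\zeta-a_n\to0$. The classical Apéry and Rhin--Viola sequences appear as particular directions in this picture. We will vary the direction vector $(r_1,\dots,r_k)$ over integers, which is the analogue of the Rhin--Viola permutation-group parameters but over a larger family, and optimize.

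For a fixed direction the argument has four steps.
\begin{enumerate}
\item Compute the characteristic roots of the limiting recurrence. By Poincaré--Perron, $\log|b_n|/n\to\log|\lambda_1|$ and $\log|b_n\zeta-a_n|/n\to\log|\lambda_2|$. The second limit requires showing that the linear form is genuinely the subdominant solution and is nonzero; we would get this from an integral or hypergeometric-sum representation along the trajectory, which also gives positivity.
\item Find a common denominator. Show that $D_n\,a_n,\ D_n\,b_n\in\mathbb Z$ with $D_n=d_{cn}^{s}/\Phi_n$. Here $d_m=\lcm(1,\dots,m)$, and $\Phi_n$ is a Chudnovsky--Rukhadze--Hata type product over primes $p$ for which certain floor-function expressions in $\{n/p\}$ are positive. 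The prime number theorem then gives $\log D_n/n\to sc-\int\varphi(x)\,d\psi(x)$.
\item Set $q_n=D_nb_n$, so that $Q=\log|\lambda_1|+\log D$ and $E=-\log|\lambda_2|-\log D$.
\item Evaluate these numerically to certified precision for the optimal direction, confirming $1+Q/E<4.62675$ for $\zeta(2)$ and $<5.4197$ for $\zeta(3)$. The stated constants would be slightly rounded-up values of the computed optimum.
\end{enumerate}

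The main obstacle is step 2, the arithmetic. For a general direction, the entries of the matrix products are rational functions with potentially large denominators. We have to prove a sharp $p$-adic valuation bound, uniformly in $n$, exhibiting the $\Phi_n$ savings. Asymptotic analysis is routine by comparison. I would first express the trajectory's $b_n$ and $a_n$ as hypergeometric sums (or Beukers-type integrals) with parameters linear in $n$. I would then apply the Rhin--Viola and Zudilin technique: bound the $p$-adic valuation of each summand via floor-function differences, and show the partial-fraction coefficients lie in $d_n^{-j}\mathbb Z$ for the appropriate power $j$. A secondary, more computational difficulty is certifying that the chosen direction lies in the convergence region and that the root separation $|\lambda_1|>|\lambda_2|$ holds.
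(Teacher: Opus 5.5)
Your four steps follow the same scheme as the paper. However, the statement is two specific constants, and your outline leaves open exactly the parts that make these numbers true; the arithmetic step as you describe it would not reach them.

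\textbf{The construction is never fixed.} The paper uses the following trajectories.
\begin{itemize}
\item For $\zeta(2)$: the ${}_4F_3(1)$ trajectory from Zudilin's initial point $(0,1,1,1;1,1,1)$ in the new direction $(-11,10,9,11;2,1,3)$. The limit is identified through a dual pairing with a $G^{4,1}_{4,4}(-1)$ walk, whose recessive initial state is $(\zeta(2)-\zeta(3)-1,\zeta(2)-1,-1)$.
\item For $\zeta(3)$: the case $B=0$ of the ${}_5F_4(1)$ trajectory $(-14,-15,13,14,12;1,2,3,4)$, dual to $G^{5,2}_{5,5}(1)$. This family is obtained by adjoining an equal upper and lower parameter to the $G^{4,2}_{4,4}(1)$ family of Rhin--Viola and Giraudin.
\end{itemize}
In that classical family the best known direction has $\delta\approx0.222$, i.e.\ $\mu\approx5.5$. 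So ``optimize over the CMF encoding $\zeta(3)$'' does not by itself give $5.4197$.

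The constants are also not rounded optima. They are the certified output of one fixed direction and one fixed divisor, with slack in the denominator rate $h$ of order $10^{-5}$ for $\zeta(2)$ and $10^{-4}$ for $\zeta(3)$. (Minor point: in both trajectories the two subdominant characteristic roots are complex conjugates of equal modulus. Poincar\'e--Perron therefore gives only a $\limsup$ for the linear form, not a limit, though that is all you need.)

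\textbf{The genuine gap is Step 2.} Bounding the $p$-adic valuation of each summand by floor-function differences and collecting a Chudnovsky--Rukhadze--Hata product is exactly the termwise ``carry'' count, and here it is not enough.
\begin{itemize}
\item For $\zeta(2)$, the paper first uses conservation to replace the walk by a four-block lattice path, ending in an order-$11n$ finite difference. This yields a binomial closed form. After normalizing by $C_n=\prod_{u=9,10,11}\binom{un}{8n}$ and $d_{12n}d_{13n}$, the termwise divisor has density $\gamma>12.128427$.
\item That alone gives $h<38.168733+25-12.128427\approx51.04$. This yields a bound of only about $5.40$ for $\mu(\zeta(2))$, worse even than Zudilin's.
\item The missing $233/120$ comes from primes in six shells, such as $(13n/2,8n)$ and $(13n/3,9n/2)$. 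For these primes the normalized sum reduces modulo $p$ to $\sum_{x\in\mathbb F_p}P(x)$ with $\deg P<p-1$, so it vanishes even though individual terms are not divisible by $p$.
\item For $\zeta(3)$ the situation is similar. The divisor $\mathcal D'_n$, of density $>30.458547$, combines three ingredients: the carry count for the partial-fraction presentation normalized by $(15n)!^2/(10n)!^3$ and $d_{19n}^3$; fourteen first-order shells of the same kind; and two second-order windows in $\{n/p\}$ handled by a trace identity.
\end{itemize}
What your proposal lacks is this cancellation \emph{between} summands, together with a presentation of the approximants, supplied by the CMF structure, in which it can be detected. Without it, Step 4 cannot certify either constant.
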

These improve the recent bounds 
$\mu(\zeta(2))<5.0193784, \mu(\zeta(3))<5.5138800$ due to Giraudin \cite{giraudin2026zeta2,giraudin2026zeta3}, which in turn improve the previous bounds by Zudilin \cite{zudilin2014zeta2} and Rhin--Viola \cite{rhinViola2001}, respectively.

To show the prospects of the CMF framework beyond the main results on zeta values, we establish the irrationality of an infinite family of constants defined by the harmonic-weighted Gregory series, 
connecting Gregory's classical arctangent series with Euler sums~\cite{euler1776meditationes,flajolet1998euler}. 
We construct explicit rational approximations proving irrationality of values in this series, for every integer $M\ge7165$, improving by nearly eleven orders of magnitude the threshold obtained from a polylogarithmic criteria~\cite{kawashima2026linear}, and by more than 700 orders of magnitude the threshold obtained using general $G$-function methods~\cite{fischler2018rational}.

While this manuscript was in preparation, Lai \cite{lai2026zeta34} proved the linear independence of $1,\zeta(3),\zeta(4)$ (Theorem~\ref{thm:intro-zeta34}) by following Fauzan's usage of the determinant method, combined with classical simultaneous zeta values approximations.
Lai acknowledged that his proof was generated by an LLM.
We obtained this result using CMFs and without using an LLM. 
Our CMF approach is fully classical, in the sense that it provides an Ap\'ery-type sequence for every constant and every combination. 
Such sequences provide explicit Diophantine approximations and irrationality-measure bounds.
We formally verify these irrationality measure bounds in Lean \cite{demoura2021lean4}.

\subsection{Paper Structure}

Section~\ref{sec:preliminaries} introduces the formalism of irrationality measures and Diophantine approximations. 
Section~\ref{sec:cmfs} presents the concept of CMF and its relations to D-finite functions.
Section~\ref{sec:recovery} shows how all the classical results in the field can be unified by being embedded as trajectories in CMFs.
Section~\ref{sec:methodology} showcases the methodology of using CMFs to discover Ap\'ery-type sequences that can be used for irrationality proofs.
Section~\ref{sec:proof_strategy} proves the main results -- irrationality and linear independence.
Section~\ref{sec:other_implications} extends the use of CMFs to additional domains, proving the irrationality of the harmonic-weighted Gregory series.
Section~\ref{sec:outlook} discusses promising next steps in light of the broader context and current status of the field.

The appendix contains extra results and proof details. The code, exact certificates and Lean sources are available at \coderepo{}; see \hyperref[sec:code]{Code and data availability}.

\section{Preliminaries}
\label{sec:preliminaries}

We use the CMF conventions of \cite{weinbaum2025conservative}; see also
\cite[Appendices~E and~G]{raz2025euler}.  All matrix evaluations and
lattice paths below are assumed admissible, so that every matrix
encountered is defined and invertible.

\subsection{Diophantine Approximations}
\label{subsec:diophantine-approximation}

Let
\[
    r_n=\frac{p_n}{q_n}\longrightarrow\alpha,
    \qquad
    p_n\in\mathbb Z,\quad q_n\in\mathbb N,
\]
with $r_n\ne\alpha$, and $i\neq j \implies r_i\neq r_j$.  The fractions need not be reduced.  We write
\[
    \widetilde q_n=\frac{q_n}{\gcd(p_n,q_n)}
\]
for the reduced denominator.

\begin{defn}[Approximation exponent]
\label{def:delta}
The \emph{approximation exponent} of the sequence $r_n\to\alpha$ as above is
\[
    \delta_{(r_n)}
    =
    \liminf_{n\to\infty}
    \left(
        -1-\frac{\log|\alpha-r_n|}{\log\widetilde q_n}
    \right).
\]
\end{defn}

A positive approximation exponent implies that $\alpha$ is irrational.

\begin{defn}[Irrationality measure]
\label{def:irrationality-measure}
For irrational $\alpha$, its \emph{irrationality measure}
$\mu(\alpha)$ is the infimum of the real numbers $\mu$ such that, for
every $\varepsilon>0$,
\[
    \left|\alpha-\frac pq\right|>q^{-\mu-\varepsilon}
\]
for all $p\in\mathbb Z$ and all sufficiently large $q\in\mathbb N$.
\end{defn}

\begin{remark}[Approximation exponents and irrationality measures]
\label{rem:mu-from-delta}
All approximation sequences considered below have exponential reduced
denominator and error rates,
\[
    \widetilde q_n=\exp(\sigma n+o(n)),
    \qquad
    |\alpha-r_n|=\exp(-g n+o(n)),
    \qquad \sigma,g>0.
\]
Hence
\[
    \delta_{(r_n)}=\frac{g}{\sigma}-1.
\]
If $\delta_{(r_n)}>0$, then the standard irrationality-measure argument
gives
\[
    \mu(\alpha)\le 1+\frac{1}{\delta_{(r_n)}};
\]
see \cite[\S2, p.~196]{van1979proof}.  In particular, any proved lower
bound $\delta_{(r_n)}\ge d>0$ gives
\[
    \mu(\alpha)\le 1+\frac1d.
\]
\end{remark}

\subsection{Linear Recurrences, Cocycles, and Dual Limits}
\label{subsec:linear-cocycles}

The classical constructions considered in this paper are usually
written as scalar linear recurrences.  A recurrence
\[
    a_r(n)u_{n+r}+\cdots+a_0(n)u_n=0,
    \qquad a_j(n)\in\mathbb Q[n],
\]
produces a \emph{P-recursive}, or \emph{holonomic}, sequence
\cite{stanley1980differentiably,lipshitz1989d,
zeilberger1990holonomic}.  A convergent ratio of two rational solutions
of the same recurrence is commonly called an \emph{Ap\'ery limit}; see
\cite[\S\S2.2--2.3]{weinbaum2025conservative}.

We use the equivalent first-order matrix language, since it extends
directly to CMFs.  A matrix
\[
    T(n)\in\operatorname{GL}_r(\mathbb Q(n))
\]
defines the linear difference system
\[
    S_{n+1}=S_nT(n).
\]
Its transition matrix, which we also call its forward walk, is
\[
    \Phi_T(N)=T(0)\cdots T(N-1),
    \qquad \Phi_T(0)=I_r,
\]
so that $S_n=S_0\Phi_T(n)$.  In companion coordinates this is precisely
the scalar recurrence above.  Conversely, every rational first-order
system is, after a rational change of state coordinates, equivalent to
a companion system by the cyclic-vector theorem; this equivalence will be
made precise below \ref{def: companion form}.

The \emph{dual} or \emph{contragredient} walk is
\[
    \Phi_T^\vee(N)=\Phi_T(N)^{-\mathsf T}.
\]
For either the forward or dual walk $P_N$, fixed rational rows $u,w$
and rational columns $c,d$ give rational quantities
\[
    r_N=\frac{uP_Nc}{wP_Nd}
\]
whenever the denominator is nonzero.

For a nonzero vector or matrix $V$, we write $[V]$ for its projective
class.  Thus $[V_N]\to[V]$ means that there are nonzero scalars $a_N$
such that
\[
    a_NV_N\longrightarrow V\ne0.
\]
In particular, if $[P_N]\to[P_\infty]$ and $wP_\infty d\ne0$, then
\[
    \frac{uP_Nc}{wP_Nd}
    \longrightarrow
    \frac{uP_\infty c}{wP_\infty d}.
\]

The asymptotic theory of linear recurrences is classical, going back to
Poincar\'e--Perron theory and Birkhoff--Trjitzinsky
\cite{birkhoff1930formal,birkhoff1933analytic}; see also
\cite{wimp1985resurrecting}.  We need only the case in which one
solution is exponentially smaller than all independent solutions.

\begin{defn}[Weak Poincar\'e--Perron type]
\label{def:weak-pp}
The difference system generated by $T(n)$ is of \emph{weak Poincar\'e--Perron type} if there are a nonzero row $y$, a complementary subspace
$H\subset\mathbb R^{1\times r}$,
\[
    \mathbb R^{1\times r}=\mathbb R\cdot y\oplus H,
\]
and constants $C,g>0$ such that
\[
    \frac{\|y\Phi_T(n)\|}{\|y\|}
    \le
    Ce^{-gn}\frac{\|s\Phi_T(n)\|}{\|s\|}
    \qquad
    (s\in H\setminus\{0\},\ n\ge0).
\]
The solution $Y_n=y\Phi_T(n)$ is called \emph{recessive} (minimal).
\end{defn}

\begin{remark}
\label{rem:separated-spectrum}
If $T(n)$ tends to an invertible matrix whose eigenvalues have pairwise
distinct moduli, then its cocycle is of weak Poincar\'e--Perron type;
see \cite[Lemma~4.3]{weinbaum2025conservative}.  In the rational case,
when the limit exists,
\[
    T(n)=T_\infty+\frac{C_1}{n}+O(n^{-2}).
\]
\end{remark}

The dual walk recovers the initial values of the recessive solution.

\begin{thm}[Dual convergence]
\label{thm:dual-convergence}
Assume that $T(n)$ is invertible for all $n$, and is of weak Poincar\'e--Perron type with recessive solution  $Y_n=y\Phi_T(n)$, complement $H$, and constants $C,g>0$.
Then, for any column $\mathbf c\in\mathbb C^r$ with a nonzero component along the dominant solution of the dual system, the forward iteration of the dual system isolates the initial state $\mathbf y$ up to scaling. That is, there exists a sequence $a_n\in\mathbb C\setminus\{0\}$ such that
\[
    \lim_{n\to\infty}
    a_n\Phi_T^\vee(n)\mathbf c
    =
    \mathbf y.
\]
We refer to such columns $\mathbf c$ as \emph{non-exceptional} vectors.
\end{thm}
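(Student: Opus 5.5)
The plan is to work with transposes and decompose the dual iterate along $\mathbb R\cdot y\oplus H$ rather than study $\Phi_T(n)^{-\mathsf T}$ directly. Transposing, $\bigl(\Phi_T^\vee(n)\mathbf c\bigr)^{\mathsf T}=\mathbf c^{\mathsf T}\Phi_T(n)^{-1}=:w_n$, a row vector, and the claim becomes that $[w_n]\to[y]$. Since $\mathbb R^{1\times r}=\mathbb R\cdot y\oplus H$ (complexified if $\mathbf c$ is complex), I will write uniquely
\[
    w_n=\beta_n y+h_n,\qquad \beta_n\in\mathbb C,\ h_n\in H .
\]
Multiplying on the right by $\Phi_T(n)$, which is invertible because every $T(k)$ is, gives the identity
\[
    \mathbf c^{\mathsf T}=\beta_n\,y\Phi_T(n)+h_n\Phi_T(n).
\]
This identity drives the argument: the fixed vector $\mathbf c^{\mathsf T}$ is split into a recessive piece and a dominant piece. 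Because the recessive piece is exponentially small, its coefficient $\beta_n$ must be large, and this is the mechanism by which the dual walk magnifies $y$.

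Next I will make precise that $\mathbf c$ is non-exceptional. I take this to mean that $\mathbf c^{\mathsf T}$ keeps a component, bounded away from zero, off the dominant subspace $H\Phi_T(n)$. Concretely, there is $\kappa>0$ with $|\beta_n|\,\|y\Phi_T(n)\|\ge\kappa$ for all large $n$. Equivalently, the pairing of $\mathbf c$ with the dominant dual solution does not degenerate. In particular $\beta_n\neq0$ for large $n$, so I can set $a_n=1/\beta_n$ and get
\[
    a_n\Phi_T^\vee(n)\mathbf c=\Bigl(y+\tfrac{h_n}{\beta_n}\Bigr)^{\mathsf T}.
\]
It then remains to prove $\|h_n\|/|\beta_n|\to0$.

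For that I apply Definition~\ref{def:weak-pp} with $s=h_n$ (trivial if $h_n=0$):
\[
    \|h_n\|\le Ce^{-gn}\,\|y\|\,\frac{\|h_n\Phi_T(n)\|}{\|y\Phi_T(n)\|}.
\]
The displayed identity gives $\|h_n\Phi_T(n)\|\le\|\mathbf c\|+|\beta_n|\,\|y\Phi_T(n)\|$. Dividing by $|\beta_n|$,
\[
    \frac{\|h_n\|}{|\beta_n|}\le C\|y\|e^{-gn}\Bigl(\frac{\|\mathbf c\|}{|\beta_n|\,\|y\Phi_T(n)\|}+1\Bigr)\le C\|y\|\Bigl(\frac{\|\mathbf c\|}{\kappa}+1\Bigr)e^{-gn}\longrightarrow0.
\]
This proves the theorem, and it also shows the projective convergence is exponentially fast with rate $g$, which will matter later for approximation exponents.

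The main obstacle is the nondegeneracy step, that is, justifying $|\beta_n|\,\|y\Phi_T(n)\|\ge\kappa$ from the phrase ``nonzero component along the dominant solution of the dual system.'' I will make this the defining property of non-exceptional columns. I will then check that it holds for generic $\mathbf c$: the exceptional set is where $\mathbf c^{\mathsf T}$ lies asymptotically in $H\Phi_T(n)$, a limiting hyperplane condition. Under the separated-spectrum hypothesis of Remark~\ref{rem:separated-spectrum}, this hyperplane is the limit of the subspaces $H\Phi_T(n)$ and is annihilated by the dominant dual solution. Everything else in the argument is the linear algebra above.
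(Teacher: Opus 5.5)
Your proposal is correct and follows essentially the same argument as the paper. Both use the decomposition $\mathbf c^{\mathsf T}\Phi_T(n)^{-1}=\beta_n y+h_n$, the identity $\mathbf c^{\mathsf T}=\beta_nY_n+h_n\Phi_T(n)$, the triangle-inequality bound on $\|h_n\Phi_T(n)\|$, and the weak Poincar\'e--Perron inequality with $s=h_n$. The only difference is how non-exceptionality is made precise. The paper introduces the transported functional $\ell_n=\Phi_T(n)^{-1}\ell$ (with $y\ell=1$ and $H\ell=0$), so that $\beta_n=\mathbf c^{\mathsf T}\ell_n$, and defines non-exceptional by $\liminf_n|\mathbf c^{\mathsf T}\ell_n|/(\|\mathbf c\|\,\|\ell_n\|)>0$. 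It then derives your assumed bound $|\beta_n|\,\|Y_n\|\ge\kappa$ from $\|Y_n\|\,\|\ell_n\|\ge Y_n\ell_n=1$, so the paper's condition implies yours and your hypothesis is, if anything, slightly weaker.
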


\begin{proof}
Set $\mathbf y=Y_0^{\mathsf T}=y^{\mathsf T}$, and $H_n=H\Phi_T(n)$. 

Let $\ell$ be the unique column vector satisfying
\[
y\, \ell=1, \qquad  h\,\ell =0\qquad \forall h\in H
\]
So defining $\ell_n=\Phi_T^{-1} (n) \ell$
\[
Y_n \, \ell_n=1 \qquad  h\,\ell_n =0\qquad \forall h\in H_n
\]
By hypothesis $\mathbf c$ is non-exceptional, i.e.\ its component along $\ell_n$ does not degenerate:
\[
    \kappa:=\liminf_{n\to\infty} \frac{|\mathbf c^{\mathsf T}\ell_n|}{\|\mathbf c\|\,\|\ell_n\|}>0 .
\]
Write $\mathbf c^{\mathsf T}\Phi_T^{-1}(n)=\alpha_n y+h_n$ with $h_n\in H$,
that is,
\[
    \mathbf c^{\mathsf T}=\alpha_nY_n+h_n\Phi_T(n).
\]
Multiplying by $\ell_n$ gives $\alpha_n=\mathbf c^{\mathsf T}\ell_n$, and
since $\|Y_n\|\,\|\ell_n\|\ge Y_n\ell_n=1$,
\[
    |\alpha_n|\,\|Y_n\|\ge\frac{|\mathbf c^{\mathsf T}\ell_n|}{\|\ell_n\|}
    \ge\frac{\kappa}{2}\|\mathbf c\|
\]
for all large $n$; in particular $\alpha_n\neq0$.
The weak Poincar\'e--Perron inequality with $s=h_n$, together with
$\|h_n\Phi_T(n)\|\le\|\mathbf c\|+|\alpha_n|\,\|Y_n\|$, gives
\[
    \frac{\|h_n\|}{|\alpha_n|}
    \le C\|y\|e^{-gn}\Bigl(1+\frac{\|\mathbf c\|}{|\alpha_n|\,\|Y_n\|}\Bigr)
    \le C\|y\|\Bigl(1+\frac{2}{\kappa}\Bigr)e^{-gn}.
\]
Hence, with $a_n=\alpha_n^{-1}$,
\[
    a_n\Phi_T^\vee(n)\mathbf c-\mathbf y
    =\bigl(h_n/\alpha_n\bigr)^{\mathsf T}\longrightarrow0 .
\]
\end{proof}

\begin{cor}[Dual ratios]
\label{cor:dual-rank-one}
Under the hypotheses of Theorem~\ref{thm:dual-convergence}, if $c_n$
is nonexceptional and $w\mathbf y\ne0$, then
\[
    \frac{u\Phi_T^\vee(n)c_n}
         {w\Phi_T^\vee(n)c_n}
    =
    \frac{u\mathbf y}{w\mathbf y}
    +O(e^{-gN+o(n)}).
\]
If in addition $[\ell_n]\to[\ell]$, then
\[
    [\Phi_T^\vee(n)]\longrightarrow[\mathbf y\ell].
\]
\end{cor}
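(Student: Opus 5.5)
The plan is to reuse the decomposition from the proof of Theorem~\ref{thm:dual-convergence}, now with a possibly varying column $c_n$. Write
\[
    c_n^{\mathsf T}=\alpha_nY_n+h_n\Phi_T(n),
    \qquad
    \alpha_n=c_n^{\mathsf T}\ell_n,\quad h_n\in H .
\]
Transposing gives
\[
    \Phi_T^\vee(n)c_n=\alpha_n\mathbf y+h_n^{\mathsf T}.
\]
The non-exceptional hypothesis, read uniformly along the sequence, is
\[
    \liminf_n |c_n^{\mathsf T}\ell_n|/(\|c_n\|\,\|\ell_n\|)>0 .
\]
Under it, the estimate from that proof carries over verbatim with $\mathbf c$ replaced by $c_n$: the factor $\|c_n\|$ cancels in the ratio. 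We therefore obtain
\[
    \|h_n\|/|\alpha_n|\le C'e^{-gn}
    \qquad\text{with } C'=C\|y\|(1+2/\kappa).
\]

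For the first claim, divide numerator and denominator by $\alpha_n$:
\[
    \frac{u\Phi_T^\vee(n)c_n}{w\Phi_T^\vee(n)c_n}
    =\frac{u\mathbf y+u\,\epsilon_n}{w\mathbf y+w\,\epsilon_n},
    \qquad \epsilon_n=h_n^{\mathsf T}/\alpha_n .
\]
Since $\|\epsilon_n\|=O(e^{-gn})$ and $w\mathbf y\ne0$, the denominator is bounded away from zero for large $n$. A first-order expansion of $x\mapsto (a+x)/(b+x')$ then gives the error $O(e^{-gn})$, which is in particular $O(e^{-gn+o(n)})$. (I read the ``$N$'' in the statement as $n$.) This step is routine.

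For the second claim, I would apply the first part to all columns at once. The matrix $\Phi_T^\vee(n)$ acting on a basis $e_1,\dots,e_r$ gives
\[
    \Phi_T^\vee(n)e_j=(e_j^{\mathsf T}\ell_n)\mathbf y+h_{n,j}^{\mathsf T}.
\]
This decomposition is exact for every $e_j$, with $h_{n,j}=e_j^{\mathsf T}\Phi_T(n)^{-1}$ projected onto $H$. Hence
\[
    \Phi_T^\vee(n)=\mathbf y\,\ell_n^{\mathsf T}+E_n,
\]
where the rows of $E_n^{\mathsf T}$ lie in $H$. Normalizing by $a_n=1/\|\ell_n\|$ (or by the scalar realizing $[\ell_n]\to[\ell]$), the main term tends to $\mathbf y\,\ell^{\mathsf T}$ up to scale; here $\mathbf y\ell$ is to be read as the outer product $\mathbf y\ell^{\mathsf T}$.

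It remains to show that $\|E_n\|/\|\ell_n\|\to0$, which is the main obstacle. The bound from the first part controls $\|h_n\|$ relative to $|\alpha_n|$ only for non-exceptional columns, and individual basis vectors $e_j$ may be exceptional. I would handle this by choosing a basis of columns all close to the direction of $\ell$, say $\ell+\eta e_j$ for small $\eta$ together with $\ell$ itself. Because $[\ell_n]\to[\ell]$, each of these is uniformly non-exceptional, so each column image equals its $\alpha$-coefficient times $(\mathbf y+O(e^{-gn}))$.

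Inverting this fixed basis change expresses $E_n$ as a fixed linear combination of terms of size $O(e^{-gn})\max_j|\alpha_{n,j}|$. Moreover $\max_j|\alpha_{n,j}|\asymp\|\ell_n\|$, again using $[\ell_n]\to[\ell]$. Therefore
\[
    \frac{\Phi_T^\vee(n)}{\|\ell_n\|}\longrightarrow\mathbf y\,\ell^{\mathsf T}/\|\ell\|,
\]
which is the projective convergence claimed.
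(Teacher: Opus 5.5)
Your proposal is correct. The paper gives no separate proof of this corollary; it is meant to be read off from the proof of Theorem~\ref{thm:dual-convergence}. Your first part is exactly that argument, and so are your readings: the uniform condition $\liminf_n|c_n^{\mathsf T}\ell_n|/(\|c_n\|\,\|\ell_n\|)>0$ for a varying column, $N$ as $n$, and $\mathbf y\ell$ as the outer product $\mathbf y\ell^{\mathsf T}$ with $\ell$ the projective limit of $\ell_n$.

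For the second claim, your decomposition $\Phi_T^\vee(n)=\mathbf y\,\ell_n^{\mathsf T}+E_n$ is the right one. Your auxiliary basis $\ell+\eta e_j$ does work, but the ``main obstacle'' it is designed to overcome is not real. In the proof of the theorem, stop before dividing by $|\alpha_n|$. The weak Poincar\'e--Perron inequality then gives, for \emph{every} column $c$,
\[
\|h_n\|\le C\|y\|e^{-gn}\Bigl(|\alpha_n|+\frac{\|c\|}{\|Y_n\|}\Bigr)\le 2C\|y\|\,\|c\|\,\|\ell_n\|\,e^{-gn},
\]
using $|\alpha_n|=|c^{\mathsf T}\ell_n|\le\|c\|\,\|\ell_n\|$ and $\|Y_n\|\,\|\ell_n\|\ge Y_n\ell_n=1$.

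Non-exceptionality is needed only to compare $\|h_n\|$ with $|\alpha_n|$, that is, to make a single column's image dominated by its $\mathbf y$-component. Measured against $\|\ell_n\|=\|\mathbf y\ell_n^{\mathsf T}\|/\|\mathbf y\|$, no such condition is needed. Taking $c=e_j$ therefore gives $\|E_n\|=O(e^{-gn}\|\ell_n\|)$ unconditionally. So $\Phi_T^\vee(n)$ is always rank one up to relative error $O(e^{-gn})$, and the hypothesis $[\ell_n]\to[\ell]$ enters only to make the direction of the main term $\mathbf y\ell_n^{\mathsf T}$ converge.

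This direct bound also unifies the two claims. Applied to $c_n$, it reads $\Phi_T^\vee(n)c_n=(c_n^{\mathsf T}\ell_n)\mathbf y+O(e^{-gn}\|c_n\|\,\|\ell_n\|)$, and non-exceptionality is precisely the statement that the first term dominates. Your basis trick gains nothing over this, though it is a legitimate way to reuse the first part as a black box.
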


\paragraph{Recessive initial values.}
\label{sec:initial-value-limit-principle}
For a companion recurrence, this is the classical backward-recurrence
principle for minimal solutions underlying Miller's algorithm; see
\cite{gautschi1967computational,zahar1977mathematical}.
If
\[
    Y_n=(y_n,\ldots,y_{n+r-1})
\]
is the recessive solution, the dual walk therefore recovers the
projective initial vector
\[
    [y_0:\cdots:y_{r-1}].
\]
This is the mechanism used below to recover constants from the initial
state of a CMF.

\begin{lemma}[Nonvanishing]
\label{lem:nonvanishing}
Let $A(n)\in \operatorname{GL}_r(\mathbb Q (n))$ and $v_n=\Phi_A(n)\,e_0$. Suppose that the CMF Ratios $x\,v_n$, for all rows
$x$, satisfy a common scalar recurrence of order $s$ whose trailing
coefficient does not vanish at any integer $n\ge0$. Let
$\rho_1,\ldots,\rho_s$ be rows such that the $s\times s$ matrix
\[
  \begin{pmatrix}\rho_1\\\vdots\\\rho_s\end{pmatrix}
  \bigl(v_0\ v_1\ \cdots\ v_{s-1}\bigr)
\]
is invertible. Then for every nonzero $x\in\operatorname{span}(\rho_1,\ldots,\rho_s)$
the numbers $L_n=x\,v_n$ do not vanish at $s$ consecutive indices.
\end{lemma}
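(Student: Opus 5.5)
The argument is a standard Casoratian (Wronskian-type) one. Write the common scalar recurrence as
\[
  a_s(n)u_{n+s}+\cdots+a_0(n)u_n=0 ,
\]
where $a_0(n)\neq0$ for every integer $n\ge0$. For each row $x$ the sequence $L_n=x\,v_n$ is a solution of this recurrence. Fix a nonzero $x=\sum_i\lambda_i\rho_i$ and suppose, for contradiction, that
\[
  L_m=L_{m+1}=\cdots=L_{m+s-1}=0
\]
for some $m\ge0$. The plan is to propagate these $s$ consecutive zeros backward to the initial window $n=0,\dots,s-1$, and then contradict the invertibility of the matrix in the statement.

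\emph{Backward propagation.} I will show by downward induction on $m$ that if $L$ vanishes on $m,\dots,m+s-1$ with $m\ge1$, then it also vanishes at $m-1$. Evaluate the recurrence at $n=m-1$:
\[
  a_0(m-1)L_{m-1}=-\sum_{j=1}^{s}a_j(m-1)L_{m-1+j}=0 .
\]
Since $m-1\ge0$ is an integer, the hypothesis gives $a_0(m-1)\neq0$, so $L_{m-1}=0$. One possible issue is that some $a_j(m-1)$ with $j\ge1$ could have a pole. But the $a_j$ are polynomials (after clearing denominators in the recurrence), so they are finite at every integer, and this step is legitimate. Iterating, $L$ vanishes on $m-1,\dots,m+s-2$, and after finitely many steps $L_0=\cdots=L_{s-1}=0$.

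\emph{Contradiction with invertibility.} The vanishing of the initial window says
\[
  (\lambda_1,\dots,\lambda_s)
  \begin{pmatrix}\rho_1\\\vdots\\\rho_s\end{pmatrix}
  \bigl(v_0\ \cdots\ v_{s-1}\bigr)=x\,(v_0\ \cdots\ v_{s-1})=0 .
\]
Invertibility of the $s\times s$ matrix then forces $\lambda=0$, so $x=0$. This contradicts $x\neq0$.

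\emph{Where care is needed.} The only delicate point is the bookkeeping. Because the representation of $x$ as $\sum_i\lambda_i\rho_i$ might not be unique, I work with the coefficient vector $\lambda$ directly. Note that invertibility of the matrix forces the $\rho_i$ to be linearly independent, so $\lambda$ is in fact unique. I also need that the recurrence holds for all $n\ge0$ and not merely eventually; this is implicit in the statement that the $x\,v_n$ \emph{satisfy} the recurrence. The paper's remark that all matrix evaluations are admissible guarantees that $v_n$ is defined for every $n\ge0$.
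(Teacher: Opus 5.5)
Your proposal is correct and is the paper's argument: you run the recurrence backwards, using that the trailing coefficient never vanishes, to push $s$ consecutive zeros down to $L_0=\cdots=L_{s-1}=0$. This contradicts invertibility of $\bigl(\rho_1;\ldots;\rho_s\bigr)\bigl(v_0\ \cdots\ v_{s-1}\bigr)$, because $x\neq0$ forces a nonzero coefficient vector. One small caution: your parenthetical ``after clearing denominators'' is unnecessary and slightly risky, since multiplying through by a common denominator can introduce new integer zeros of the trailing coefficient; instead, take the recurrence with polynomial coefficients $a_j(n)\in\mathbb Q[n]$ as in Section~\ref{subsec:linear-cocycles}, so the hypothesis applies to it directly.
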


\begin{proof}
Write $x=\sum_ic_i\rho_i$ with $c\ne0$. Then
$(L_0,\ldots,L_{s-1})=c\,[\rho_1;\ldots;\rho_s]\,[v_0,\ldots,v_{s-1}]\ne0$.
If $L_n=\cdots=L_{n+s-1}=0$ for some $n$, running the recurrence backwards,
which is possible since its trailing coefficient never vanishes, would
give $L_0=\cdots=L_{s-1}=0$, a contradiction.
\end{proof}

\subsection{Coboundary Equivalence}
\label{subsec:coboundary}

The natural equivalence between rational difference systems is a
rational change of state coordinates, also called a rational gauge
transformation.

\begin{defn}[Coboundary equivalence]
\label{def:coboundary}
The cocycles generated by $A(n)$ and $B(n)$ are
\emph{coboundary equivalent} if some
$U(n)\in\operatorname{GL}_r(\mathbb Q(n))$ satisfies
\[
    U(n)A(n)=B(n)U(n+1).
\]
\end{defn}

The relation telescopes to
\[
    \Phi_B(n)U(n)=U(0)\Phi_A(n),
\]
and, for the dual walks,
\[
    \Phi_B^\vee(n)U(n)^{-\mathsf T}
    =
    U(0)^{-\mathsf T}\Phi_A^\vee(n).
\]
\begin{defn}
    \label{def: companion form}
    The companion form of a matrix $M(n)$ is a coboundary-equivalent row companion $C(n)$.
\end{defn}

Since a rational matrix and its inverse have at most polynomial growth
away from their poles, coboundary equivalence preserves exponential
separation of solutions.

A scalar recurrence with coefficients in $\mathbb Q(n)$ is equivalent
to its companion system.  Conversely, the cyclic-vector theorem puts a
rational first-order system into companion form up to rational
coboundary equivalence; see
\cite[Theorem~B.2]{hendriksSinger1999}. Thus the scalar recurrence,
first-order difference-system, and cocycle languages used in the
literature describe the same one-dimensional objects up to this natural
change of coordinates.

\subsubsection{Dual Pairings}
\label{subsec:dual-pairing-prelim}

The dual pairing relates one system to the dual of another, possibly of a different rank and up to a scalar normalization.

\begin{defn}[Dual pairing]\label{def:dual-pairing}
Let $A(n)\in\operatorname{GL}_r(\mathbb Q(n))$ and $B(n)\in\operatorname{GL}_{r'}(\mathbb Q(n))$. A \emph{dual pairing} of $A$ and $B$ is a matrix $K(n)\in\mathbb Q(n)^{r\times r'}$ together with a scalar $c(n)\in\mathbb Q(n)^\times$ such that
\[
  A(n)\,K(n+1)=c(n)\,K(n)\,B(n)^{-\mathsf T}.
\]
\end{defn}
We call $K(n)$ the \emph{intertwining matrix} and $c(n)$ the
\emph{normalization factor}. The relation telescopes to
\begin{equation*}\label{eq:dual-pairing-walks}
\Phi_A(n)\,K(n)=\kappa_n\,K(0)\,\Phi_B^\vee(n),\qquad
  \kappa_n=\prod_{i<n}c(i).
\end{equation*}

\begin{lemma}[Minors of dual solutions]\label{lem:minors}
Assume $A(n)$ and $B(n)$ admit a dual pairing with $e=K(n)\,h_n$ as above. Let $r'=3$, let $Y_0,Z_0$ be row vectors and
\[
Y_n=Y_0\,\Phi_B(n),\qquad Z_n=Z_0\,\Phi_B(n)
\]
be the corresponding solutions.
If there exists a scalar $\lambda$ such that
\[
\rho\,K(0)=\lambda\,(Y_0\times Z_0)\]
then
\[
  \rho\,\Phi_A(n)e=\frac{\lambda\,\kappa_n}{\det\Phi_B(n)}\,
  \det\begin{pmatrix}Y_n\\Z_n\\h_n^{\mathsf T}\end{pmatrix}.
\]
\end{lemma}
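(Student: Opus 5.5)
Set $e = K(n)\,h_n$, a fixed column independent of $n$; this is what ``$e=K(n)h_n$ as above'' means. The plan is to push $\rho$ through the telescoped dual pairing and then read the resulting scalar as a cross-product pairing, which for $r'=3$ is a determinant.

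\textbf{Step 1: transfer to the dual walk.} Substitute $e=K(n)h_n$ and use the telescoped relation $\Phi_A(n)K(n)=\kappa_n K(0)\Phi_B^\vee(n)$:
\[
\rho\,\Phi_A(n)e=\rho\,\Phi_A(n)K(n)h_n=\kappa_n\,\rho K(0)\,\Phi_B^\vee(n)h_n=\lambda\kappa_n\,(Y_0\times Z_0)\,\Phi_B(n)^{-\mathsf T}h_n .
\]

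\textbf{Step 2: transport the cross product.} For $M\in\operatorname{GL}_3$ and row vectors $a,b$, the cofactor identity gives
\[
(aM)\times(bM)=\det(M)\,(a\times b)\,M^{-\mathsf T}.
\]
This follows from $\operatorname{adj}(M)=\det(M)M^{-1}$ together with $(Ma)\times(Mb)=\operatorname{cof}(M)(a\times b)$, transposed into row conventions. Apply it with $M=\Phi_B(n)$, $a=Y_0$, $b=Z_0$:
\[
(Y_0\times Z_0)\,\Phi_B(n)^{-\mathsf T}=\frac{Y_n\times Z_n}{\det\Phi_B(n)} .
\]
Pinning the transpose convention correctly here is the only real obstacle. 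To check it, pair both sides with $aM$: the left side gives $\det(M)\,(a\times b)\cdot a=0$, as the right side must. Comparing with a diagonal $M$ then fixes the constant as $\det M$.

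\textbf{Step 3: rewrite as a determinant.} Use the triple product $(u\times v)\cdot w=\det\begin{pmatrix}u\\v\\w\end{pmatrix}$ with $w=h_n^{\mathsf T}$. This gives
\[
\rho\,\Phi_A(n)e=\frac{\lambda\kappa_n}{\det\Phi_B(n)}\det\begin{pmatrix}Y_n\\Z_n\\h_n^{\mathsf T}\end{pmatrix},
\]
which is the claimed formula. Invertibility of every $B(i)$ is guaranteed by the standing admissibility assumption, so $\det\Phi_B(n)\neq0$.
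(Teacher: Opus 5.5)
Your proposal is correct and follows the paper's own argument: telescope the dual pairing, apply the cofactor identity $(Y_0\times Z_0)\,W^{-\mathsf T}=(Y_0W\times Z_0W)/\det W$ with $W=\Phi_B(n)$, and read the pairing with $h_n$ as a $3\times3$ determinant. One small slip: in your sanity check of Step 2 you have the sides swapped, since it is the right-hand side that reduces to $\det(M)\,(a\times b)\cdot a=0$. This does not matter, because your cofactor derivation is what actually proves the identity.
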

\begin{proof}
Observe that for every invertible matrix $W$,
\[
(Y_0\times Z_0)\,W^{-\mathsf T}=(Y_0W\times Z_0W)/\det W\]
The proof follows by the telescoping relation and taking $W=\Phi_B(n)$, $w=h_n$.
\end{proof}

\section{Conservative Matrix Fields}
\label{sec:cmfs}
A conservative matrix field (CMF) is the multidimensional version of the preceding difference systems. Equivalently, it is a rational discrete flat connection on a parameter lattice. 
Early instances of such structures used for diophantine approximations appeared in Gosper's pioneering work~\cite{Gosper1990}.
We use the conservative matrix field terminology first introduced in \cite{
elimelech2024algorithm}.
\begin{defn}[Conservative matrix field (CMF)]
\label{def:CMF}
A rank-$r$, $d$-dimensional \emph{conservative matrix field} over
$\mathbb Q$ is a family
\[
    \mathcal M_v(x)\in\operatorname{GL}_r(\mathbb Q(x)),
    \qquad v\in\mathbb Z^d,
\]
satisfying
\[
    \mathcal M_{v+w}(x)
    =
    \mathcal M_v(x)\mathcal M_w(x+v).
\]
\end{defn}

Equivalently, writing $M_i(x)=\mathcal M_{e_i}(x)$, the generators
satisfy the flatness relations
\[
    M_i(x)M_j(x+e_i)
    =
    M_j(x)M_i(x+e_j).
\]
Thus parallel transport between two lattice points is independent of
the chosen admissible path; see
\cite[Definition~10 and Proposition~2.3]{weinbaum2025conservative}.

\subsection{The Trajectories of a CMF}

\begin{defn}[Trajectory]
\label{def:trajectory}
For $s\in\mathbb Q^d$ and
$t\in\mathbb Z^d\setminus\{0\}$, the \emph{trajectory} through $s$
in direction $t$ is the restriction of the CMF to the rank-one lattice
\[
    s+\mathbb Zt.
\]
It is the one-dimensional rational difference system generated by
\[
    T_{s,t}(n)=\mathcal M_v(s+nt),
\]
and its transition matrix is
\[
    \Phi_{s,t}(N)=\mathcal M_{Nt}(s).
\]
\end{defn}

Thus the linear recurrences and Ap\'ery limits above are the
one-dimensional objects obtained by restricting a CMF to a direction,
possibly after passing to companion coordinates.

A \emph{compatible state} is a row-valued function $\mathcal S$
satisfying
\[
    \mathcal S(x+v)=\mathcal S(x)\mathcal M_v(x).
\]
Its restriction
\[
    Y_n=\mathcal S(x_0+nv)
\]
is a solution of the trajectory system.

In the D-finite CMFs used in this paper, the entries of
$\mathcal S(x)$ are values of the underlying function and its
derivatives.  Thus the state at the initial lattice point contains the
special values available to the trajectory, while rational CMF ratios of
that state give the constants and rational combinations considered
below.

\begin{prop}[Initial-value limit]
\label{prop:cmf-initial-limit}
Suppose that the restriction of a compatible state $\mathcal S$ to the
trajectory through $s$ in direction $t$ is recessive and satisfies
the hypotheses of Theorem~\ref{thm:dual-convergence}.  Then every
nonexceptional terminal column $c_N$ satisfies
\[
    [\Phi_{s,t}^\vee(N)c_N]
    \longrightarrow
    [\mathcal S(s)^{\mathsf T}].
\]
Consequently, for rational rows $u,w$ with
$w\mathcal S(s)^{\mathsf T}\ne0$,
\[
    \frac{u\Phi_{s,t}^\vee(N)c_N}
         {w\Phi_{s,t}^\vee(N)c_N}
    \longrightarrow
    \frac{u\mathcal S(s)^{\mathsf T}}
         {w\mathcal S(s)^{\mathsf T}}.
\]
\end{prop}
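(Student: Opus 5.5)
The plan is to reduce the statement to Theorem~\ref{thm:dual-convergence} and Corollary~\ref{cor:dual-rank-one}, applied to the one-dimensional system $T(n)=T_{s,t}(n)$ of Definition~\ref{def:trajectory}. First identify the walk. By the cocycle identity of Definition~\ref{def:CMF}, the transition matrix of the trajectory telescopes:
\[
    \Phi_{s,t}(N)=T_{s,t}(0)\cdots T_{s,t}(N-1)=\mathcal M_t(s)\mathcal M_t(s+t)\cdots\mathcal M_t(s+(N-1)t)=\mathcal M_{Nt}(s).
\]
The admissibility assumption guarantees that every factor is defined and invertible, so the dual walk $\Phi_{s,t}^\vee(N)=\mathcal M_{Nt}(s)^{-\mathsf T}$ is well defined.

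Next, identify the recessive solution with the state. Compatibility gives $\mathcal S(s+nt)=\mathcal S(s)\mathcal M_{nt}(s)=\mathcal S(s)\Phi_{s,t}(n)$. The restricted state $Y_n=\mathcal S(s+nt)$ is therefore exactly the solution $y\Phi_T(n)$ with $y=\mathcal S(s)$. The hypothesis says this solution is recessive in the sense of Definition~\ref{def:weak-pp}, with some complement $H$ and constants $C,g>0$.

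Theorem~\ref{thm:dual-convergence} then applies with $\mathbf y=\mathcal S(s)^{\mathsf T}$. For each non-exceptional column $\mathbf c$ it yields scalars $a_N\neq0$ with $a_N\Phi^\vee_{s,t}(N)\mathbf c\to\mathcal S(s)^{\mathsf T}$. This is precisely the projective convergence $[\Phi_{s,t}^\vee(N)c_N]\to[\mathcal S(s)^{\mathsf T}]$ in the sense of Section~\ref{subsec:linear-cocycles}, since the limit $\mathcal S(s)^{\mathsf T}$ is nonzero.

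The ratio statement is then immediate. Multiply numerator and denominator by $a_N$ and pass to the limit; the denominator limit $w\mathcal S(s)^{\mathsf T}$ is nonzero by assumption, so the denominators are nonzero for large $N$. Corollary~\ref{cor:dual-rank-one} in fact gives the exponential rate $O(e^{-gN+o(N)})$.

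The only delicate step is allowing the terminal column to vary with $N$ ($c_N$ rather than a fixed $\mathbf c$). I would check that the proof of Theorem~\ref{thm:dual-convergence} goes through verbatim with $\mathbf c$ replaced by $c_N$. Non-exceptionality should be read as
\[
    \liminf_N\frac{|c_N^{\mathsf T}\ell_N|}{\|c_N\|\,\|\ell_N\|}>0 .
\]
The decomposition $c_N^{\mathsf T}=\alpha_NY_N+h_N\Phi(N)$ and the bound
\[
    \frac{\|h_N\|}{|\alpha_N|}\le C\|y\|\Bigl(1+\frac{2}{\kappa}\Bigr)e^{-gN}
\]
use only the quantities at index $N$, so nothing changes. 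That the limit is independent of the normalization of $c_N$ is clear projectively.
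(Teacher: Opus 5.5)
Your proposal is correct and follows the paper's proof, which consists of the single line ``apply Theorem~\ref{thm:dual-convergence} to $Y_n=\mathcal S(s+nt)$.'' Your telescoping $\Phi_{s,t}(N)=\mathcal M_{Nt}(s)$, the identification $y=\mathcal S(s)$, and your check that the estimates still hold when the terminal column $c_N$ depends on $N$ (they use only index-$N$ quantities and are scale-invariant in $c_N$) are exactly the details the paper leaves implicit.
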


\begin{proof}
Apply Theorem~\ref{thm:dual-convergence} to
$Y_n=\mathcal S(s+nt)$.
\end{proof}

In particular, fixing the CMF, the initial point, and the CMF ratio fixes
the target for every direction for which the same compatible state is
recessive.  Varying the direction changes the approximation scheme,
not the target.

\subsection{Coboundaries and Directions}
\label{subsec:cmf-parameters}

Coboundary equivalence extends naturally to CMFs.

\begin{defn}[Coboundary equivalence of CMFs]
\label{def:cmf-coboundary}
Two CMFs $\mathcal M^1,\mathcal M^2$ are \emph{coboundary equivalent}
if some $U(x)\in\operatorname{GL}_r(\mathbb Q(x))$ satisfies
\[
    U(x)\mathcal M_v^1(x)
    =
    \mathcal M_v^2(x)U(x+v)
    \qquad(v\in\mathbb Z^d).
\]
\end{defn}

Integral translations of the parameter lattice give the basic example.

\begin{prop}[Integral shifts and parallel trajectories]
\label{prop:parallel-trajectories}
For $k\in\mathbb Z^d$, the shifted field
\[
    \mathcal M_v^{[k]}(x)=\mathcal M_v(x+k)
\]
is coboundary equivalent to $\mathcal M$, with coboundary
$U_k(x)=\mathcal M_k(x)$.  Consequently, admissibly connected parallel
trajectories are coboundary equivalent.
\end{prop}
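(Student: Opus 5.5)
The plan is to verify the defining coboundary identity of Definition~\ref{def:cmf-coboundary} directly from the CMF cocycle relation of Definition~\ref{def:CMF}, taking $\mathcal M^1=\mathcal M$, $\mathcal M^2=\mathcal M^{[k]}$ and $U(x)=U_k(x)=\mathcal M_k(x)$. First, $\mathcal M^{[k]}$ is itself a CMF, since $\mathcal M^{[k]}_{v+w}(x)=\mathcal M_{v+w}(x+k)=\mathcal M_v(x+k)\mathcal M_w(x+k+v)=\mathcal M^{[k]}_v(x)\mathcal M^{[k]}_w(x+v)$. Also $U_k(x)\in\operatorname{GL}_r(\mathbb Q(x))$ because every $\mathcal M_v$ is, by definition.

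The key computation applies the cocycle identity twice to the single matrix $\mathcal M_{k+v}(x)$, splitting the step $k+v$ in the two possible orders:
\[
\mathcal M_k(x)\,\mathcal M_v(x+k)=\mathcal M_{k+v}(x)=\mathcal M_{v+k}(x)=\mathcal M_v(x)\,\mathcal M_k(x+v).
\]
Rewritten in the notation of the proposition, this reads $U_k(x)\mathcal M^{[k]}_v(x)=\mathcal M_v(x)U_k(x+v)$. That is exactly the coboundary relation, with the roles arranged so that $U_k$ intertwines $\mathcal M^{[k]}$ and $\mathcal M$; Definition~\ref{def:cmf-coboundary} is symmetric after replacing $U$ by $U^{-1}$, so this gives the claimed equivalence.

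For the consequence about trajectories, let $s$ and $s'=s+k$ with $k\in\mathbb Z^d$ be two initial points and $t$ a common direction. Restricting the identity above to $x=s+nt$ and $v=t$ gives $\mathcal M_k(s+nt)\,T_{s',t}(n)=T_{s,t}(n)\,\mathcal M_k(s+(n+1)t)$. So $U(n)=\mathcal M_k(s+nt)$ is a one-dimensional coboundary in the sense of Definition~\ref{def:coboundary}, after inverting if needed to match its orientation.

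The only genuine obstacle is admissibility, i.e.\ making sure every matrix involved is defined and invertible at the lattice points used. The cocycle identity holds as an identity in $\mathbb Q(x)$, so the algebra is unconditional. However, $U(n)$ must lie in $\operatorname{GL}_r(\mathbb Q(n))$ as a function of $n$, and for the telescoped relation $\Phi_B(n)U(n)=U(0)\Phi_A(n)$ we need the specialisations along the trajectory to avoid poles and zeros of the determinant. I would handle this with the standing admissibility assumption stated at the start of Section~\ref{sec:preliminaries}. The phrase ``admissibly connected'' should then be read as meaning there is an admissible lattice path from $s$ to $s'$ along which $\mathcal M_k(s+nt)$ is defined and invertible for all $n\ge0$. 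Path independence of parallel transport is what makes $\mathcal M_k$ well defined regardless of the chosen path.
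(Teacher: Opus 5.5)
Your proof is correct and is the paper's own argument: the paper's entire proof is the double application of the cocycle relation, $\mathcal M_k(x)\mathcal M_v(x+k)=\mathcal M_{k+v}(x)=\mathcal M_v(x)\mathcal M_k(x+v)$, which is exactly your key display. Your additions correctly spell out what the paper leaves implicit: that $\mathcal M^{[k]}$ is itself a CMF, the admissibility caveat, and the ``consequently'' clause via the restriction $x=s+nt$, $v=t$, which gives $U(n)=\mathcal M_k(s+nt)$. One small point: with $\mathcal M^1=\mathcal M^{[k]}$ and $\mathcal M^2=\mathcal M$, the identity matches Definition~\ref{def:cmf-coboundary} directly, so no inversion of $U_k$ is needed, and your initial assignment of $\mathcal M^1,\mathcal M^2$ was simply the reverse orientation.
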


\begin{proof}
The cocycle relation gives
\[
    \mathcal M_k(x)\mathcal M_v(x+k)
    =
    \mathcal M_{k+v}(x)
    =
    \mathcal M_v(x)\mathcal M_k(x+v).
\]
\end{proof}

For a compatible state,
\[
    \mathcal S(x_0+k)^{\mathsf T}
    =
    \mathcal M_k(x_0)^{\mathsf T}
    \mathcal S(x_0)^{\mathsf T}.
\]
Thus the state and its rational CMF ratios are transported by the same
coboundary.  In particular, changing the initial point by an integral
lattice translation does not produce an unrelated approximation
system; it gives the corresponding parallel system in different state
coordinates.

Positive multiples of a direction contain no new geometric data.

\begin{cor}[Reduction to directions]
\label{cor:angular-search}
For $m\in\mathbb N_{>0}$,
\[
    \Phi_{x_0,mv}(N)=\Phi_{x_0,v}(mN).
\]
Hence, for any fixed rational CMF ratio,
\[
    r_N(mv)=r_{mN}(v).
\]
With the liminf convention of Definition~\ref{def:delta},
\[
    \delta_{r_N(mv)}
    \ge
    \delta_{r_N(v)}.
\]
Equality holds whenever the logarithmic ratio defining
$\delta_{r_N(v)}$ has an ordinary limit.
\end{cor}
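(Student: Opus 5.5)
The plan has three steps: prove the walk identity from the cocycle relation, transfer it to the ratios, and then compare approximation exponents.

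\emph{Step 1: the walk identity.} By Definition~\ref{def:trajectory},
\[
    \Phi_{x_0,mv}(N)=\mathcal M_{N(mv)}(x_0).
\]
Since $N(mv)=(mN)v$ as lattice vectors, the right-hand side is literally $\mathcal M_{(mN)v}(x_0)=\Phi_{x_0,v}(mN)$. To see that this agrees with the product of the trajectory step matrices, I will use the cocycle relation of Definition~\ref{def:CMF}. Induction on $N$ gives
\[
    \mathcal M_{mv}(x_0+jmv)=\prod_{i=0}^{m-1}\mathcal M_v(x_0+(jm+i)v).
\]
Concatenating these blocks for $j=0,\dots,N-1$ reproduces $T_{x_0,v}(0)\cdots T_{x_0,v}(mN-1)$. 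Admissibility, assumed throughout, ensures every factor is defined and invertible. Taking inverse transposes gives the same identity for the dual walks.

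\emph{Step 2: the ratios.} A fixed CMF ratio is
\[
    r_N=\frac{u P_N c}{w P_N d},
\]
where $P_N$ is the forward or dual walk. Substituting Step~1 yields $r_N(mv)=r_{mN}(v)$ directly. The same substitution works if the terminal column depends on $N$, provided it is taken as the same function of the endpoint lattice point; I would state this convention explicitly.

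\emph{Step 3: the exponents.} Write
\[
    f(n)=-1-\frac{\log|\alpha-r_n(v)|}{\log\widetilde q_n(v)}.
\]
Then the exponent for direction $mv$ is $\liminf_N f(mN)$. This is the liminf of $f$ along the subsequence $mN$, so it is at least $\liminf_n f(n)=\delta_{r_N(v)}$, since a liminf over a subsequence dominates the full liminf. If $f(n)$ has an actual limit, every subsequence shares it, which gives equality.

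The one subtle point is the hypothesis in Definition~\ref{def:delta} that the $r_n$ are pairwise distinct and never equal to $\alpha$. These properties pass to the subsequence $r_{mN}(v)$, so the exponent for direction $mv$ is well defined. Everything else is bookkeeping; the real content is the cocycle concatenation in Step~1, and I do not expect a genuine obstacle.
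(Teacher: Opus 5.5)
Your proposal is correct and takes essentially the same approach as the paper, whose proof simply notes that both trajectory products equal $\mathcal M_{Nmv}(x_0)$ and that the remaining claims follow by passing to the subsequence $mN$. Your explicit block concatenation (which is really an induction on $m$, not on $N$) and your remark that the distinctness and reduced-denominator data pass unchanged to the subsequence are harmless elaborations of that same argument.
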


\begin{proof}
Both trajectory products equal $\mathcal M_{Nmv}(x_0)$, and the
remaining assertions follow by passing to the subsequence $mN$.
\end{proof}

Thus, after these natural equivalences are taken into account, the
essential geometric parameter of a CMF trajectory is its direction.

\subsection{D-finite Functions for Generating CMFs}
\label{subsec:dfinite-cmf-construction}
\label{subsec:holonomic-cmf}

We describe how a CMF is associated with a $D$-finite function; we refer the reader to \cite{weinbaum2025conservative} for further details.

Let $K\coloneqq\mathbb Q(x_1,\ldots,x_d)$ and let $\mathcal V$
be a finite-dimensional $K$-space of functions of $x=(x_1,\ldots,x_d)$.
All identities are understood where the functions and rational
coefficients are defined. Suppose the coordinate shifts
$(S_if)(x)=f(x+e_i)$ preserve $\mathcal V$ and act bijectively.
They commute and are semilinear over $K$:
\[
 S_i(af)=a(x+e_i)S_if\qquad(a\in K).
\]
Choose a basis $\mathcal F=(F_1,\ldots,F_r)$ of $\mathcal V$.
Extending the shift of the basis $K$-linearly defines an invertible
map whose matrix $M_i(x)=\mathcal M_{e_i}(x)$ has columns given by
\[
 S_iF_b=\sum_{c=1}^r F_c(M_i(x))_{cb},\qquad
 \mathcal F(x+e_i)=\mathcal F(x)M_i(x).
\]
Commutativity of the shifts and uniqueness of basis expansion give
\[
 M_i(x)M_j(x+e_i)=M_j(x)M_i(x+e_j),
\]
the conserving property of Definition~\ref{def:CMF}.
\paragraph{State vectors and parameter shifts.}

For a family $F(x;z)$, the coordinates $x$ are lattice parameters
and $z=(z_1,\ldots,z_m)$ are auxiliary variables.  Write
$\partial_j=\partial/\partial z_j$ and $\theta_j=z_j\partial_j$.
D-finiteness in $z$ means that all partial derivatives of $F$ span
a finite-dimensional space over $K(z)$.
For one auxiliary variable, a differential equation of order $r$ can
be written, away from its leading-coefficient zeros, as
\[
 \theta^rF=\sum_{b=0}^{r-1}a_b(x,z)\theta^bF,
 \qquad a_b\in K(z),\qquad \theta=z\frac{d}{dz}.
\]
Repeated differentiation reduces every higher derivative to
$F,\theta F,\ldots,\theta^{r-1}F$.  At a rational point $z_0$ where
these functions and the reduction coefficients are regular, the space
\[
 \mathcal V=\operatorname{Span}_K
 \{(\theta^bF)(x;z_0):b\ge0\}
\]
is therefore finite-dimensional, of dimension at most $r$.
If parameter shifts preserve this space and act bijectively, a basis
of $\mathcal V$ gives a CMF by the preceding construction.
D-finiteness in the auxiliary variables alone does not assert this
additional shift property.

When $F,\theta F,\ldots,\theta^{r-1}F$ evaluated at $z_0$
form a basis, the corresponding row state vector is
\[
 \mathcal S_F(x)=\bigl(F,\theta F,\ldots,\theta^{r-1}F\bigr)(x;z_0).
\]
Along an admissible trajectory $x_n=x_0+nv$, it gives the compatible state
\[
 \mathcal S_n=\bigl(F,\theta F,\ldots,\theta^{r-1}F\bigr)(x_0+nv;z_0),
 \qquad
 \mathcal S_{n+1}=\mathcal S_n\mathcal M_v(x_n).
\]
Thus $\mathcal S_n=\mathcal S_0\Phi_{x_0,v}(n)$, as in
Subsection~\ref{sec:cmfs}.  For several auxiliary variables, use
a basis $P_1F,\ldots,P_rF$ in the same way, where the $P_b$ are
fixed derivative monomials and the same choice is valid at the
parameter values involved.
We refer to CMFs constructed from these families as
\emph{special-function CMFs}.

When constructing CMFs for the special case of hypergeometric families, shift invariance is supplied by
\emph{contiguity relations}
\[
 F(x+e_i;z)=L_i(x)F(x;z),
\]
where $L_i(x)$ is a rational differential operator.
Applying the basis operators $P_b$ and reducing the resulting
derivatives in the same basis gives the columns of the shift matrix.
If the shifts commute and are invertible, these matrices satisfy
the conserving relation above.  Specialization at a rational $z_0$
requires the basis, reductions, and shift inverses to remain valid.
This is the finite-space construction of
\cite{weinbaum2025conservative}; a rational change of basis gives
coboundary-equivalent CMFs as in Definition~\ref{def:cmf-coboundary}.

\paragraph{Hypergeometric families.}
For hypergeometric functions, the two ingredients above are supplied
by differential equations and contiguity relations.  For the general
family $F(\mathbf a,\mathbf b;z)={}_pF_q(\mathbf a;\mathbf b;z)$,
with $\theta=z\,d/dz$, the differential equation is
\cite[Section~16.8]{NIST:DLMF}
\[
 \left[\theta\prod_{j=1}^q(\theta+b_j-1)
       -z\prod_{i=1}^p(\theta+a_i)\right]F=0.
\]
For $p\le q+1$, the series defines an analytic function near the
origin; more generally the identities can be read formally.
The equation reduces higher derivatives to a finite list.  The elementary
parameter shifts satisfy
\[
 F(\mathbf a+e_i,\mathbf b;z)
   =\frac{\theta+a_i}{a_i}F(\mathbf a,\mathbf b;z),\qquad
 F(\mathbf a,\mathbf b-e_j;z)
   =\frac{\theta+b_j-1}{b_j-1}F(\mathbf a,\mathbf b;z).
\]
Here $e_i$ and $e_j$ select the corresponding parameter within its
block, and the identities are used where the denominators and the
chosen functions are defined.  Taking the upper parameters and the
negatives of the lower parameters as lattice coordinates makes these
positive coordinate shifts.  Applying the derivative operators to
these identities, then reducing with the differential equation,
expresses the shifted state vector in the original basis.
Where these maps are invertible, the reverse shifts are obtained by
matrix inversion.  Rational specialization of $z$ then gives a CMF.
Thus the differential equation provides finite derivative closure,
while the contiguity relations provide the parameter shifts; both
are needed for the construction.  The same principle applies to
multivariate hypergeometric systems, using partial derivatives.

\paragraph{GKZ hypergeometric functions.}
Let $x=(x_1,\ldots,x_d)$ be parameters and
$z=(z_1,\ldots,z_m)$ auxiliary variables.  A family $F(x;z)$ is
$A$-hypergeometric if, for an integer matrix
$A=(a_1,\ldots,a_m)\in\mathbb Z^{d\times m}$ of rank $d$, it
satisfies the Gel'fand--Kapranov--Zelevinsky (GKZ) system
\[
 \sum_{j=1}^m a_{ij}z_j\partial_jF=x_iF
 \quad(1\le i\le d),\qquad
 \partial^kF=\partial^\ell F\quad(Ak=A\ell),
\]
where $k,\ell\in\mathbb Z_{\ge0}^m$ and
$\partial^k=\prod_j\partial_j^{k_j}$
\cite{gelfandKapranovZelevinsky1990,saitoSturmfelsTakayama2000}.
These equations make the space spanned by $F$ and its partial
derivatives over $\mathbb Q(x,z)$ finite-dimensional.
The corresponding contiguity relation, for a compatibly normalized
family, is
\[
 F(x-a_j;z)=\partial_jF(x;z),\qquad 1\le j\le m.
\]
Indeed, differentiation sends solutions at $x$ to solutions at
$x-a_j$; the normalization identifies the chosen solutions under
this map \cite[Section~2.5]{reicheltSchulzeSevenheckWalther2021}.
Choose fixed derivative monomials $P_1,\ldots,P_r$ giving a basis
\[
 \mathcal F(x;z)=(P_1F,\ldots,P_rF)(x;z).
\]
Applying $P_b$ to the contiguity relation and reducing
$P_b\partial_jF$ in this basis gives a rational matrix $C_j(x;z)$
such that
\[
 \mathcal F(x-a_j;z)=\mathcal F(x;z)C_j(x;z).
\]
Where these maps are invertible, matrix inversion gives the opposite
shifts. Thus the derivative family is shift invariant on
$x_0+A\mathbb Z^m$. In a basis of this parameter lattice, its
coordinate shifts have matrices $M_i=\mathcal M_{e_i}$ as above.
Specializing at an admissible rational $z=z_0$ gives a CMF, with
compatible state vector obtained by expressing
$\mathcal F(x;z_0)$ in these lattice coordinates.

\paragraph{Computing the state vector and shift matrices.}
The main computational task is to find a derivative basis and a
complete set of reduction rules. The initial differential equations
need not suffice: cancellations between their leading terms can
produce additional relations.  One must account for these relations
to prove that the proposed derivatives both span and are independent,
and to express each shifted derivative uniquely in that basis.
General methods use noncommutative Gr\"obner bases
\cite{saitoSturmfelsTakayama2000} or Macaulay-matrix linear algebra
\cite{OharaTakayama2015,Chestnov2022Macaulay}.

This construction makes systematic the passage from special-function
identities to CMFs used in earlier explicit constructions
\cite{David2023,elimelech2024algorithm,raz2025euler}.
For comparison, creative telescoping provides a complementary way to obtain differential
equations and recurrences from sums and integrals, accounting for their
boundary terms \cite{zeilberger1990holonomic,zeilberger1991method}.
For example, the Beukers and Zudilin $\zeta(2)$ entries in
Table~\ref{tab:all-reconstructions} illustrate the connection with
integral and hypergeometric representations.
Once the state vector and shift matrices are known, restriction to $x_0+\mathbb Zv$ gives the trajectory system of Definition~\ref{def:trajectory}.

\section{Embedding Known Results as CMF Trajectories}
\label{sec:recovery}

Some Diophantine approximations can be realized as trajectories in CMFs.
A preliminary version of this phenomenon was shown in \cite{raz2025euler}, where several classical rational approximations to $\pi$ were identified as different trajectories in a single low-dimensional CMF.
Here we show how all earlier constructions designed to produce strong Diophantine approximations can be embedded as CMF trajectories.

By an \emph{embedding} we mean that the approximation scheme appearing in the literature is reproduced by a CMF trajectory, possibly after a finite shift
of the index, a common scalar normalization, or a specified coboundary/gauge transformation.
These embeddings reveal common CMFs behind all constructions, even though they are presented
rather differently in the literature, 

\subsection{Example Embeddings: Beukers' and Zudilin's \texorpdfstring{$\zeta(2)$}{ζ(2)} Constructions}
\label{subsec:zeta2-recovery}

What became the most prevalent classical approach for creation of Ap\'ery-type sequences is using tailored integral constructions that directly generate the sequences.
CMFs provide a framework to organize such constructions.

As an example of the simplest CMF embedding, we analyze Beukers' construction of $\zeta(2)$ \cite{beukers1979note}. After the usual integrations by parts, its linear form is given, up to sign, by
\[
    J_n=
    \int_0^1\!\int_0^1
    \frac{x^n(1-x)^n y^n(1-y)^n}
         {(1-xy)^{n+1}}\,dx\,dy = A\zeta(2)+B.
\]
This integral construction is equivalent to the following matrix in CMF notations:
\begin{equation}
 T_{\mathrm B}(n)=
 \begin{pmatrix}
 -3 & \dfrac{(n+1)(5n+8)}{(n+2)^2}\\[5pt]
 5 & -\dfrac{(n+1)(8n+13)}{(n+2)^2}
 \end{pmatrix}.
 \label{eq:beukers-integral-trajectory}
\end{equation}
In particular, $J_{n+1}=-3J_n+5K_n$; eliminating $K_n$ gives
\[
 (n+1)^2J_{n+1}+(11n^2+11n+3)J_n-n^2J_{n-1}=0
 \qquad(n\ge1),
\]
which is Ap\'ery's recurrence after the sign change $J_n\mapsto(-1)^nJ_n$~\cite{jainTzermias2005}.

The multidimensional nature of the CMF becomes visible when the exponents in the integral are allowed to vary independently. A five-parameter family was studied by Rhin and Viola~\cite[Eq.~(1.4)]{rhin1996permutation}:
\begin{equation}
 \mathcal I(x)=\int_0^1\!\int_0^1
 \frac{t^{b-1}(1-t)^{d-b-1}
       u^{c-1}(1-u)^{e-c-1}}
      {(1-tu)^a}\,dt\,du,
 \qquad x=(a,b,c;d,e).
 \label{eq:five-parameter-beukers-integral}
\end{equation}
Their parameters $(h,i,j,k,l)$ correspond to
\[
 x=(i+j-l+1,h+1,k+1;h+i+2,k+j+2).
\]
Next, define the coordinate vectors of the lattice $e_a,e_b,e_c,e_d,e_e$, 
set
\[
 \Delta=d+e-a-b-c-1,
 \qquad P=(d-a)(e-a),
\]
and define
\[
 R_t(x)=
 \begin{pmatrix}
 t\Delta-P+bc & (a-1)\Delta\\
 P & (t-a+1)\Delta
 \end{pmatrix}.
\]
The following five rational matrices generate the CMF of this integral:
\begin{equation}
 \mathcal M_{e_a}(x)=\frac{1}{a\Delta}
 \begin{pmatrix}
 a\Delta-P+bc & a\Delta\\
 P & 0
 \end{pmatrix},
 \label{eq:integral-cmf-a-shift}
\end{equation}
\begin{equation}
 \begin{aligned}
 \mathcal M_{e_b}(x)&=\frac{R_b(x)}{(d-b-1)\Delta},
 &\mathcal M_{e_c}(x)&=\frac{R_c(x)}{(e-c-1)\Delta},\\[3pt]
 \mathcal M_{-e_d}(x)&=\frac{R_{d-1}(x)}{(d-b-1)\Delta},
 &\mathcal M_{-e_e}(x)&=\frac{R_{e-1}(x)}{(e-c-1)\Delta}.
 \end{aligned}
 \label{eq:integral-cmf-other-shifts}
\end{equation}
They satisfy
\[
 \mathcal M_v(x)\mathcal M_w(x+v)
 =\mathcal M_w(x)\mathcal M_v(x+w),
\]
for the five elementary shifts displayed above. Opposite shifts are obtained
by inversion, and arbitrary lattice shifts by composition. 

The conversion from integrals to CMFs is particularly transparent once the underlying hypergeometric functions are recognized.
Then Beukers' construction can be embedded in the CMF as
\begin{equation}
 \boxed{
 \mathcal M={}_3F_2(1),\qquad
 s_{\mathrm B}=(1,1,1;2,2),\qquad
 t_{\mathrm B}=(1,1,1;2,2).
 }
\end{equation}
$x_{0,\mathrm B}$ is the initial point and $v_{\mathrm B}$ is the trajectory.
Hata's construction~\cite[Eq.~(1.2)]{hata1995beukers} uses
\begin{equation}
 \boxed{
 \mathcal M={}_3F_2(1),\qquad
 s_{\mathrm H}=(1,1,1;2,2),\qquad
 s_{\mathrm H}=(12,15,14;30,28).
 }
 \label{eq:hata-cmf-data}
\end{equation}
Beukers and Hata thus use the same CMF and the same initial point:
their constructions differ in the direction of the trajectory.

Table~\ref{tab:all-reconstructions} extends this embedding into a CMF dictionary of classical approximation constructions, with Table ~\ref{tab:appendix-reconstructions} in the appendix attempting to provide a full embedding of the literature.

The same approach embeds more advanced results in the field: A larger CMF contains Zudilin's construction for $\zeta(2)$ \cite{zudilin2014zeta2} and the recent constructions by Giraudin  \cite{giraudin2026zeta2} (see the CMF attributes in Table ~\ref{tab:all-reconstructions}). 
This realization will be used below: Zudilin's recurrence and Giraudin's recurrence are each one direction in a larger parameter lattice, and the new $\zeta(2)$ construction of Theorem~\ref{thm:intro-zeta23} is obtained by changing this direction while keeping the same CMF and initial point (see Figure~\ref{fig:zeta2-delta-sphere}).

\clearpage
\subsection{Embedding Classical Constructions in a CMF}

\label{subsec:all-classical-recovery}
\begingroup
\fontsize{8.5}{10}\selectfont
\setlength{\tabcolsep}{2.2pt}
\renewcommand{\arraystretch}{1.18}
\begin{longtable}{@{}l
>{\raggedright\arraybackslash}p{2.75cm}
c r l
>{\raggedright\arraybackslash}p{2.3cm}
>{\raggedright\arraybackslash}p{3.23cm}@{}}
\caption{CMF embeddings of classical Ap\'ery-type Diophantine approximations: capturing each one by a CMF, initial point, and trajectory. 
The computed approximation exponents are provided in each case, always matching the value reported in the literature. 
Subscripts $\mathrm F,\mathrm B$ denote forward and dual walks.
The full table is provided in Table ~\ref{tab:appendix-reconstructions}.
See Appendices~\ref{sec:appendix-history} and~\ref{app:special-cmfs}
for sources and CMF definitions.}\label{tab:all-reconstructions}\\
\toprule
Constant & Reference & Year & $\delta$ & CMF & Initial point & Trajectory \\
\midrule
$\zeta(2)$ & Ap\'ery \cite{apery1979irrationalite} & 1979 & $0.094$
& ${}_3F_2(1)$ & $(-2,-2,3;1,1)$ & $(-1,-1,1;0,0)_{\mathrm F}$ \\
 & Beukers \cite{beukers1979note} & 1979 & $0.094$
& ${}_3F_2(1)$ & $(3,3,3;6,6)$ & $(1,1,1;2,2)_{\mathrm D}$ \\
 & Hata \cite{hata1995beukers} & 1995 & $0.217$
& ${}_3F_2(1)$ & $(3,3,3;6,6)$ & $(12,15,14;30,28)_{\mathrm D}$ \\
 & Rhin--Viola \cite{rhin1996permutation} & 1996 & $0.225$
& ${}_3F_2(1)$ & $(3,3,3;6,6)$ & $(12,13,14;24,28)_{\mathrm D}$ \\
 & Zudilin \cite{zudilin2002thirdOrder} (simultaneous) & 2002 & $-0.043$
& ${}_4F_3(1)$ & $(3,3,3,3;6,6,6)$ & $(1,1,1,1;2,2,2)_{\mathrm F}$ \\
 & Zudilin \cite{zudilin2014zeta2} & 2014 & $0.244$
& ${}_4F_3(1)$ & $(1,1,1,0;1,1,1)$ & $(8,7,6,-6;1,2,3)_{\mathrm F}$ \\
 & Dauguet--Zudilin \cite{dauguetZudilin2014} (simultaneous) & 2014 & $0.062$
& ${}_4F_3(1)$ & $(1,1,1,0;1,1,1)$ & $(10,9,8,-5;1,2,3)_{\mathrm F}$ \\
 & Niedbala Giraudin \cite{giraudin2026zeta2} & 2026 & $0.252$
& ${}_4F_3(1)$ & $(1,1,1,0;1,1,1)$ & $(169,146,123,-123;\allowbreak 23,46,69)_{\mathrm F}$ \\
\addlinespace
$\zeta(3)$ & Ap\'ery \cite{apery1979irrationalite} & 1979 & $0.082$
& ${}_4F_3(1)$ & $(-2,-2,3,3;\allowbreak 1,1,1)$ & $(-1,-1,1,1;0,0,0)_{\mathrm F}$ \\
 & Beukers \cite{beukers1979note} & 1979 & $0.081$
& ${}_4F_3(1)$ & $(-2,-2,3,3;\allowbreak 1,1,1)$ & $(-1,-1,1,1;0,0,0)_{\mathrm F}$ \\
 & Hata \cite{hata2000zeta3} & 2000 & $0.157$
& $G^{4,2}_{4,4}(1)$ & $(1,1,2,2;\allowbreak 1,1,1,1)$ & $(0,1,15,16;\allowbreak 8,7,8,9)_{\mathrm F}$ \\
 & Rhin--Viola \cite{rhinViola2001} & 2001 & $0.221$
& $G^{4,2}_{4,4}(1)$ & $(1,1,2,2;\allowbreak 1,1,1,1)$ & $(0,7,31,32;\allowbreak 18,17,16,19)_{\mathrm F}$ \\
 & Zudilin \cite{zudilin2002thirdOrder} (simultaneous) & 2002 & $-0.042$
& ${}_4F_3(1)$ & $(3,3,3,3;6,6,6)$ & $(1,1,1,1;2,2,2)_{\mathrm F}$ \\
 & Dauguet--Zudilin \cite{dauguetZudilin2014} (simultaneous) & 2014 & $0.038$
& ${}_4F_3(1)$ & $(1,1,1,0;1,1,1)$ & $(10,9,8,-5;1,2,3)_{\mathrm F}$ \\
 & Niedbala Giraudin \cite{giraudin2026zeta3} & 2026 & $0.222$
& $G^{4,2}_{4,4}(1)$ & $(1,1,2,2;\allowbreak 1,1,1,1)$ & $(0,363979,1612007,\allowbreak 1664004;\allowbreak 935996,883999,\allowbreak 832002,987993)_{\mathrm F}$ \\
\addlinespace
$\zeta(5)$ & Zudilin \cite{zudilin2002thirdOrder} & 2002 & $-0.221$
& ${}_8F_7(1)$ & $(8,3,3,3,3,3,3,3;\allowbreak 6,6,6,6,6,6,6)$ & $(3,1,1,1,1,1,1,1;\allowbreak 2,2,2,2,2,2,2)_{\mathrm F}$ \\
 & Brown--Zudilin \cite{brownZudilin2022zeta5} & 2022 & $-0.134$
& ${}_8F_7(1)$ & $(5,2,2,2,2,2,2,2;\allowbreak 4,4,4,4,4,4,4)$ & $(41,17,16,15,\allowbreak 14,13,12,11;\allowbreak 24,25,26,27,28,29,30)_{\mathrm D}$ \\
\addlinespace
$\gamma$ & Rivoal \cite{rivoal2009gamma} (third order) & 2009 & $-0.940$
& $G^{3, 2}_{2,3}(1)$ & $(0, 0; 0, 0, 0)$ & $(0, 0; 1, 1, 1)_{\mathrm D}$ \\
 & Rivoal \cite{rivoal2009gamma} (simultaneous) & 2009 & $-0.928$
& $G^{4, 3}_{3, 4}(1)$ & $(0, 0, 0; 0, 0, 0, 0)$ & $(0, 0, 0; 1, 1, 1, 1)_{\mathrm D}$ \\
\addlinespace
Catalan's $G$ & Zudilin \cite{zudilin2003catalan} (original) & 2003 & $-0.476$
& ${}_3F_2(1)$ & $(2,2,\tfrac32;\tfrac72,\tfrac72)$ & $(1,1,1;2,2)_{\mathrm F}$ \\
 & Zudilin \cite{zudilin2002catalanRemarks} (modified) & 2002 & $-0.476$
& ${}_3F_2(1)$ & $(1,1,\tfrac12;\tfrac52,\tfrac52)$ & $(1,1,1;2,2)_{\mathrm F}$ \\
 & Nesterenko \cite{nesterenko2016catalan} & 2016 & $-0.476$
& ${}_3F_2(1)$ & $(1,1,\tfrac32;\tfrac52,\tfrac52)$ & $(1,1,1;2,2)_{\mathrm F}$ \\
\bottomrule

\end{longtable}
\endgroup

The trajectory column gives the direction in our coordinates; a subscript
$\mathrm F$ or $\mathrm B$ indicates whether the corresponding approximants
are read from the forward or dual walk.  In several cases the same
recurrence admits both descriptions, in related CMFs.  The displayed value
of $\delta$ is numerically evaluated from the CMF approximants.  It is not
exactly the bound stated in the cited papers; some of the papers
prove a slightly weaker explicit estimate and do not record the apparent
numerical stronger evidence. 

One entry deserves separate mention.  Brown and Zudilin construct cellular
integrals on $\mathcal M_{0,8}$ which give simultaneous rational
approximations to $\zeta(3)$ and $\zeta(5)$
\cite{brownZudilin2022zeta5}.  They also give a dual description in terms of
a very-well-poised ${}_8F_7(1)$ family.  In our coordinates its basic initial
point is
\[
    s_{\mathrm{BZ}}
    =(5,2,2,2,2,2,2,2;4,4,4,4,4,4,4),
\]
and the parameter choice corresponding to their strongest approximation in
this family gives the direction
\[
    t_{\mathrm{BZ}}
    =(41,17,16,15,14,13,12,11;
      24,25,26,27,28,29,30).
\]

This example is useful because the associated state contains several zeta
values at once.  Brown describes the problem of finding useful linear forms
in these large parameter families as ``significantly harder than finding a
needle in a haystack'' \cite{brown2014irrationality}.  In the CMF
description, the dual walk gives a direct way to isolate the desired initial
value: an inverse-transpose column recovers the initial state, and a rational
CMF ratio gives an explicit sequence
\[
    r_N\in\mathbb Q,
    \qquad
    r_N\longrightarrow\zeta(5).
\]
Thus the same hypergeometric data can be read as a one-dimensional rational
approximation sequence without reproducing the special cancellations of the
forward presentation.

Once a construction is placed in CMF coordinates, the initial point
identifies the relevant constants while the trajectory determines the
approximation scheme.  This separation is what we use next.

\subsection{Connecting Different Constants Through CMF Representations}
\label{sec:initial-point}

Once embedding distinct classical Diophantine approximations for a range of constants in the same CMF (Table ~\ref{tab:appendix-reconstructions} Appendix), we identify classical constructions that lie in the same CMF, differing only in their initial point.
For example, Beukers' construction for $\zeta(2)$ and Nesterenko's construction for Catalan's constant follow
\[
    v=(1,1,1;2,2)
\]
in the ${}_3F_2(1)$ CMF, with respective initial points
\[
    s_{\mathrm B}=(3,3,3;6,6),
    \qquad
    s_G=(3,3,\tfrac72;\tfrac{13}2,\tfrac{13}2).
\]

\begin{figure}[t]
\centering
\includegraphics[width=0.9\linewidth]{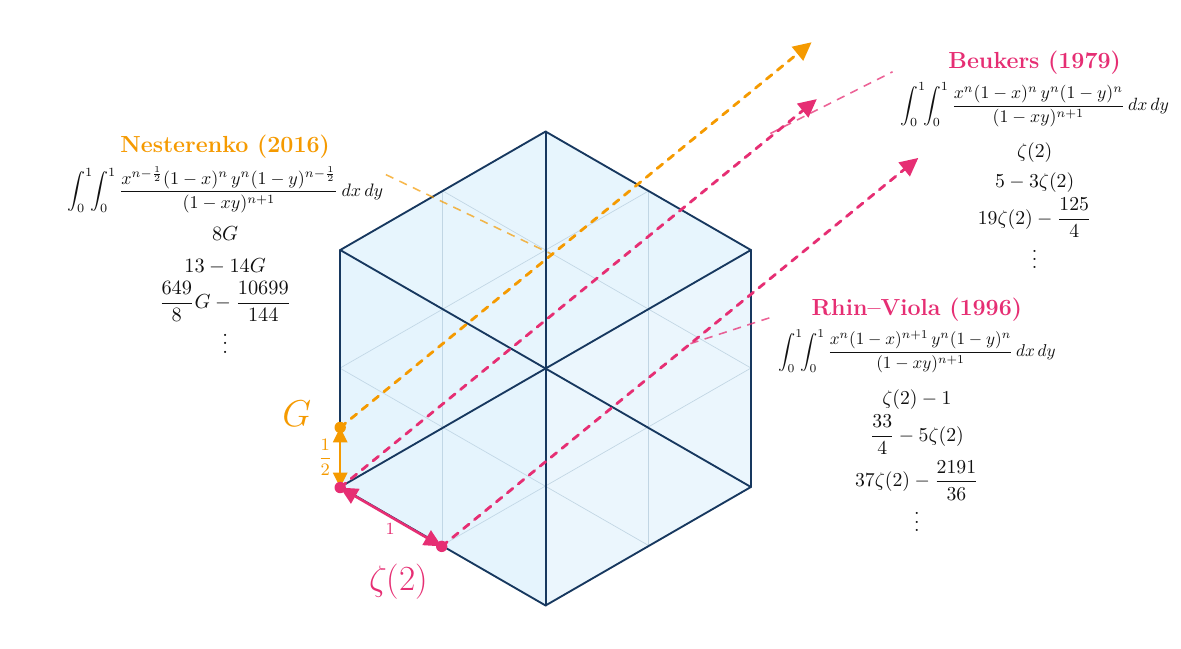}
\caption{Integral representations associated with the Beukers,
Nesterenko, and Rhin--Viola constructions.  The Beukers and Nesterenko
constructions give parallel trajectories in the ${}_3F_2(1)$ CMF with
direction $(1,1,1;2,2)$ and different initial points.}
\label{fig:z2-catalan-cube}
\end{figure}

The half-shifted Ap\'ery limits of Cohen and Zudilin
\cite{cohenZudilin2025variations} give a useful instance of the same
observation.  If a recurrence comes from the line $s+nt$, then replacing
$n$ by $n+\tfrac12$ simply replaces
\[
    s\longmapsto s+\tfrac12t,
\]
leaving the CMF and the direction unchanged.  For their tiny, small, and
big Ap\'ery recurrences the shifted data are
\[
\begin{array}{c|c|c|c}
 &\mathrm{CMF}&x_0+\tfrac12v&v\\
\hline
\mathrm{tiny}
&{}_2F_1(-1)
&(\frac12,\frac12;1)
&(1,1;2)
\\[2pt]
\mathrm{small}
&{}_3F_2(1)
&(\frac32,\frac32,\frac32;3,3)
&(1,1,1;2,2)
\\[2pt]
\mathrm{big}
&G^{4,2}_{4,4}(1)
&(\frac12,\frac12;\frac12,\frac12\mid0,0,0,0)
&(-1,-1;1,1\mid0,0,0,0).
\end{array}
\]

By Theorem~\ref{thm:dual-convergence}, the shifted limits are CMF ratios of the
corresponding initial states.  Write these states as
\[
    (U_0,U_1),\qquad
    (V_0,V_1),\qquad
    (W_0,W_1,W_2),
\]
respectively.  The relevant CMF ratios are
\[
    \frac{4U_0}{U_0+4U_1},\qquad
    \frac{16(9V_0+5V_1)}{2V_0+V_1},\qquad
    -768\,\frac{W_1+2W_2}{W_0}.
\]
Cohen and Zudilin identify these quantities with
\[
    \left(\frac{\Gamma(1/4)}{\Gamma(3/4)}\right)^2,\qquad
    \left(\frac{\Gamma(1/4)}{\Gamma(3/4)}\right)^4,\qquad
    -3\frac{\eta_+}{\omega_+},
\]
using different arguments in each case, such as continued-fraction identities
and Ap\'ery acceleration in the smaller examples, and a Beukers-type
integral, creative telescoping, and initial-period evaluations in the big
case.  The CMF description separates this case-by-case evaluation from the
identification of the shifted limit: the shift moves the initial point, and
the dual dynamics determines which state has to be evaluated.

\section{New Directions in Irrationality}
\label{sec:methodology}

The dictionary in Table~\ref{tab:all-reconstructions} and Table~\ref{tab:appendix-reconstructions} embeds classical Ap\'ery-type Diophantine approximations inside parameterized CMF families. 

This section explains how to use CMFs to discover better approximation schemes that can be used for irrationality proofs.
Given a target constant, we identify D-finite functions that evaluates it, and construct a CMF using its contiguous relations.
We then optimize the irrationality measures for that constant by exploring the space of trajectories in its lattice.

\begin{table}[htbp]
\centering
\small
\begin{adjustbox}{max width=\textwidth,center}
\thinmuskip=1mu
\medmuskip=1mu
\begin{tabular}{@{}lcclll@{}}
\toprule
Constant & Record $\delta$ & Discovered $\delta$ & CMF & Initial point & Trajectory \\
\midrule
$\zeta(3)$ & $0.222$ & $0.226$ & $G^{5,2}_{5,5}(1)$ & $(1,1,0,0,2;0,0,0,0,0)$ & $(11,12,-16,-17,10;0,1,2,-1,-2)_{\mathrm D}$ \\
$\zeta(2)$ & $0.252$ & $0.278$ & ${}_4F_3(1)$ & $(1,1,1,0;1,1,1)$ & $(10,9,11,-11;2,1,3)_{\mathrm F}$ \\
$G$ & $-0.373$ & $-0.275$ & ${}_4F_3(1)$ & $\left(4,4,-\frac{1}{2},0;\frac{7}{2},\frac{7}{2},4\right)$ & $(11,9,-2,-11;-1,0,2)_{\mathrm D}$ \\
$A\zeta(3)+B\zeta(4)$ & --- & $0.016$ & $G^{5,2}_{5,5}(1)$ & $(1,1,0,0,2;0,0,0,0,0)$ & $(11,12,-16,-17,10;0,1,2,-1,-2)_{\mathrm D}$ \\
$\zeta(2)+\zeta(3)$ & $-0.101$ & $-0.000329$ & ${}_4F_3(1)$ & $(7,8,6,9;2,3,16)$ & $(11,13,15,16;2,4,30)_{\mathrm F}$ \\
$L(2,\chi_{-3})$ & $-0.475$ & $-0.060$ & ${}_4F_3(\frac{1}{4})$ & $\left(\frac{1}{2},\frac{1}{2},\frac{1}{2},\frac{1}{2};\frac{3}{2},\frac{3}{2},\frac{3}{2}\right)$ & $(5,7,6,8;17,18,8)_{\mathrm D}$ \\
    $L(2,\chi_{-3})-\frac{2}{3}\zeta(2)$ & --- & $0.019$ & $G^{4,2}_{4,4}(1)$ & $\left(-1,0,2,\frac{8}{3};0,0,0,\frac{2}{3}\right)$ & $(  -10,-9,14,6;0,1,2,-2)_{\mathrm D}$ \\
    $L(2,\chi_{-3})+\frac{2}{3}\zeta(2)$ & --- & $0.017$ & $G^{4,2}_{4,4}(1)$ & $\left(-1,0,2,\frac{7}{3};0,0,0,\frac{1}{3}\right)$ & $(  -10,-9,14,6;0,1,2,-2)_{\mathrm D}$ \\
\bottomrule
\end{tabular}%
\end{adjustbox}
\caption{New CMF constructions with computed finite-depth exponents.
Subscripts F and D denote forward and dual walks.
$G$ is Catalan's constant and $\chi_{-3}$ is the nontrivial character modulo 3.
$\delta$ values are evaluated at $N=8000$}
\label{tab:new-results}
\end{table}

\subsection{A Systematic Search for Formulas in Ap\'ery's Garden}
\label{subsec:new-cmf-selection}
For any D-finite CMF, the initial special-function state provides a way to identify target constants: Proposition~\ref{prop:cmf-initial-limit} identifies the limits of ratios in a dual walk under the hypotheses of Theorem~\ref{thm:dual-convergence}. Rational numerator and denominator rows select ratios of linear combinations of the components of that state. Thus the choice of initial point and rows is part of the construction.

Once these choices are fixed, we vary the lattice direction within the domain where the walk is defined and the convergence hypotheses hold. Changing the trajectory produces different approximation sequences for the same target. 
Changing the initial point or the rows requires identifying the target
again. 
If one starts with a known forward approximation scheme, a dual special-function realization, connected through a rational coboundary or a pairing that preserves the approximation ratios, makes this initial-state method available.

For example, the forward ${}_4F_3(1)$ realization of Ap\'ery's $\zeta(3)$ approximants in Table~\ref{tab:all-reconstructions} has a dual presentation with data
\begin{equation}
\begin{aligned}
 \mathcal G&=G^{4,2}_{4,4}(1),\\
 s_0&=(-2,-2,3,3;0,0,0,0),\\
 d_0&=(-1,-1,1,1;0,0,0,0)_{\mathrm B}.
\end{aligned}
\label{eq:z3-seed-dual-setup}
\end{equation}

The forward realization explains the hypergeometric form of the classical denominators:
\begin{equation}
 q_m=\sum_{k=0}^{m}\binom{m}{k}^{\!2}\binom{m+k}{k}^{\!2}
 ={}_4F_3\!\left(
 \begin{matrix}-m,-m,m+1,m+1\\1,1,1\end{matrix};1\right).
 \label{eq:apery-hypergeometric-denominators}
\end{equation}
See \cite[Section~1.1, equation~(19)]{zudilin2011arithmeticHypergeometric}
for the binomial formula; the terminating series gives the second equality. Its parameter shifts are those of the forward trajectory. We retain the dual initial point and the numerator and denominator rows, and vary directions satisfying the same convergence hypotheses to seek better approximations.

\subsection{Improving Irrationality Measure by Changing CMF Trajectory}
\label{subsec:changing-traj}

The complementary operation is to keep the CMF and initial point fixed and change the direction. The recovery table shows that the same CMF may contain many approximation schemes for one constant, with substantially different
quality.  Numerically, nearby directions often have comparable finite-depth exponents, so the recovered constructions appear inside regions of related approximations rather than as isolated recurrences.

The $\zeta(2)$ case makes this particularly visible.
Figure~\ref{fig:zeta2-delta-sphere} shows $\delta_N$ over slices of the
${}_3F_2(1)$ and ${}_4F_3(1)$ direction spaces.  The recovered Beukers,
Hata, Rhin--Viola, and Zudilin constructions lie in regions of nearby
directions with comparable approximation quality; white marks the threshold
$\delta_N=0$.

\begin{figure}[htbp]
    \centering
    \setlength{\tabcolsep}{2pt}%
    \makebox[\textwidth][c]{%
        \resizebox{1.15\textwidth}{!}{%
            \begin{tabular}{@{}cc@{}}
                \raisebox{0\height}{%
                    \includegraphics[height=0.57\textwidth]
                    {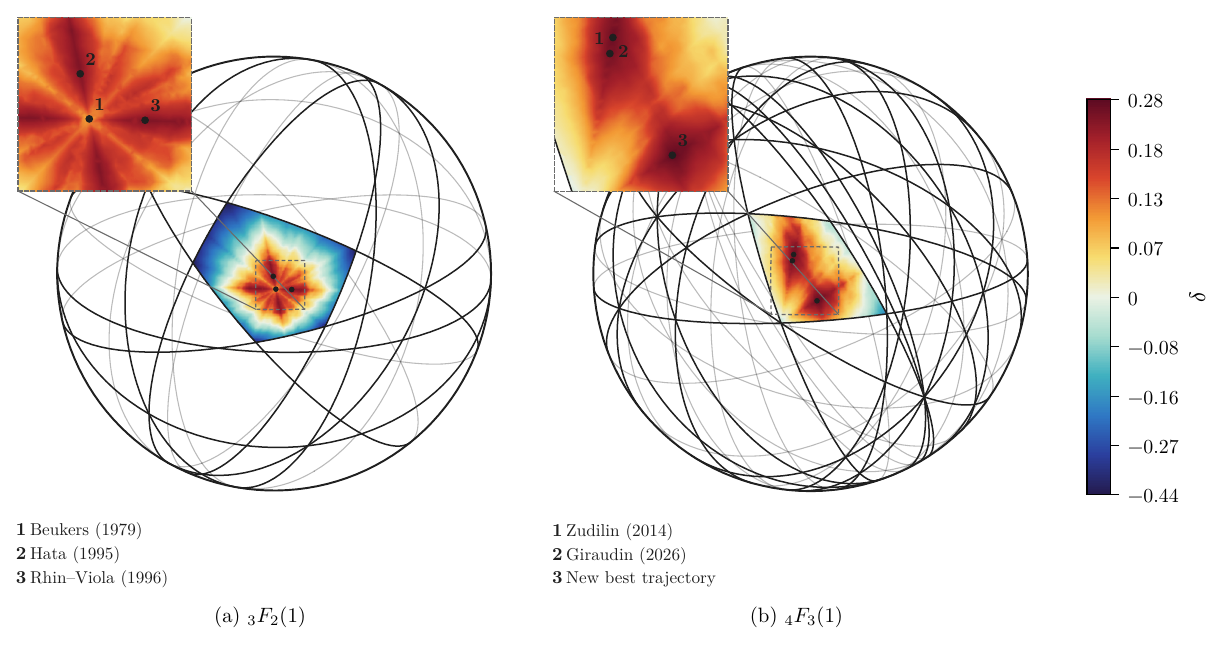}
                }
            \end{tabular}%
        }%
    }
    \caption{$\delta$ (numerical) for $\zeta(2)$ on slices of the ${}_3F_2(1)$~(a) and
    ${}_4F_3(1)$~(b) CMFs.}
    \label{fig:zeta2-delta-sphere}
\end{figure}

For $\zeta(2)$ this leads directly to the main improvement.  We keep
Zudilin's initial point
\[
    s=(0,1,1,1;1,1,1)
\]
and replace his direction
\[
    t_{\mathrm Z}=(-6 , 8,7,6;1,2,3)
\]
by
\[
    v_{\mathrm{new}}=(-11, 10,9,11;2,1,3).
\]
The resulting trajectory is analyzed in Theorem~\ref{thm:z23-measures}, where we
prove
\[
    \delta>0.2757,
    \qquad
    \mu(\zeta(2))<4.62675.
\]
Thus, in CMF coordinates, the new record construction is obtained from
Zudilin's by changing only the direction.

The same experiment can be carried out starting from other recovered constructions.
Table~\ref{tab:new-results} records several cases in which the CMF and initial point are fixed and another direction gives a stronger approximation scheme. 

The computations use exact rational matrix products implemented in \texttt{ramanujantools} \cite{ramanujantools}, with the trajectory data recorded in the accompanying \texttt{ramanujandata} repository.

Specific systematic algorithms for exploring CMF direction spaces will be discussed and reported separately.

\subsection{Lifting D-finite CMFs to a Higher Order}

A second way to extend a construction is to embed its special-function family in one with more parameters. A parameter-cancellation identity recovers the original family on a constrained set of parameter values; releasing the constraint opens directions unavailable in the original
parameter space. The identity establishes the embedding of the functions. The associated CMF and its admissible trajectories must still be constructed and checked.

For the Meijer--$G$ realization above, adjoining an equal upper and lower parameter gives
\[
 G^{5,2}_{5,5}\!\left(
 \begin{matrix}a_1,a_2,a_3,a_4,c\\b_1,b_2,b_3,b_4,c\end{matrix};z\right)
 =G^{4,2}_{4,4}\!\left(
 \begin{matrix}a_1,a_2,a_3,a_4\\b_1,b_2,b_3,b_4\end{matrix};z\right).
\]
The added upper parameter belongs to the denominator group of the Mellin--Barnes integrand, so its gamma factor cancels the added lower one. This embeds the Ap\'ery parameter family; allowing the two new parameters to vary independently opens further directions.

An enlarged family can be used in two ways. One can retain an embedded initial state and seek new trajectories for an existing target, or choose a different initial point whose state contains additional constants. In the latter case, suitable rational rows can give
simultaneous approximations with a common denominator. Neither more parameters nor a larger state alone guarantees improved approximation exponents; the benefit is a larger collection of candidates to examine.

\begin{figure}[htbp]
\label{fig:zeta3-delta-sphere}
    \centering
    \setlength{\tabcolsep}{2pt}%
    \makebox[\textwidth][c]{%
        \resizebox{1.15\textwidth}{!}{%
            \begin{tabular}{@{}cc@{}}
                \raisebox{-0.5\height}{%
                    \includegraphics[height=0.57\textwidth]
                    {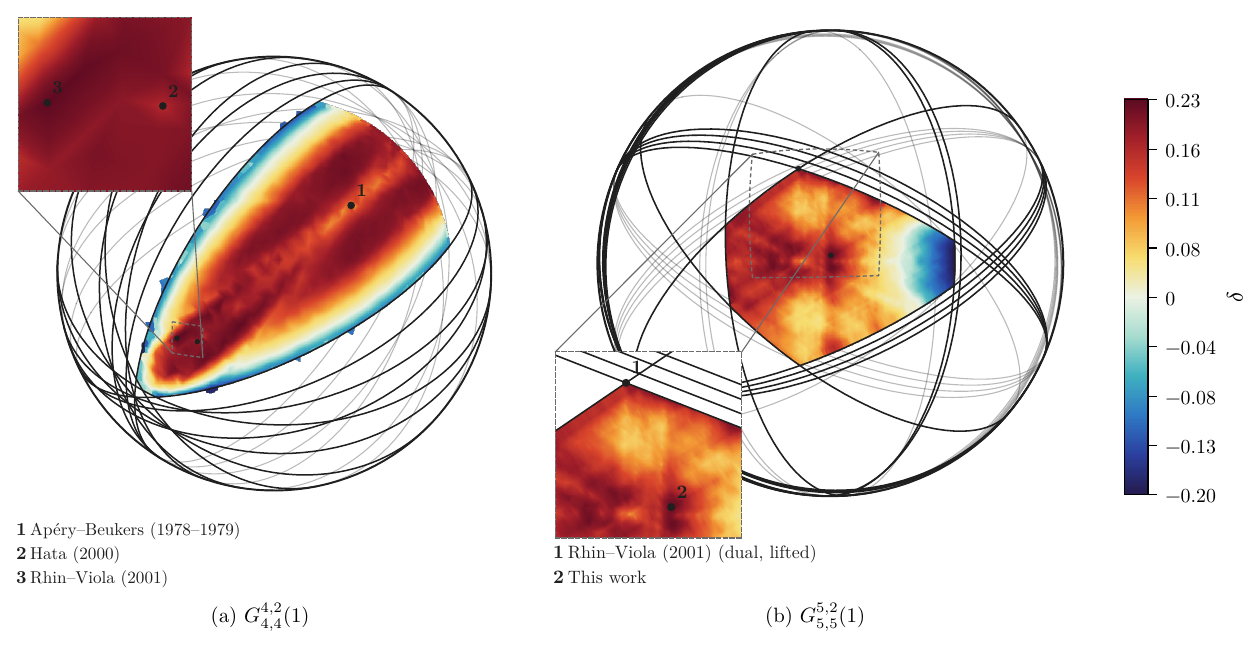}
                }
            \end{tabular}%
        }%
    }
    \caption{$\delta$ (numerical) for $\zeta(3)$ on slices of
    the $G^{4,2}_{4,4}(1)$ (a) and $G^{5,2}_{5,5}(1)$ (b) CMFs.}
\end{figure}

\section{Proving the Irrationality Exponent Using CMFs}
\label{sec:proof_strategy}

In this section we prove the main results of the paper. We first state the theorems using the CMF framework, then explain how to ''read'' these theorems and prove them using the CMF.
\begin{theorem}[CMF Theorem 1]
    \label{thm:cmf1}
    
    The CMF data
    \begin{equation*}
 \boxed{
 \mathcal M={}_5F_4(1),\qquad
 s_{\mathrm Z_{3,4}}=(0,0,1,1,1; 1,1,1,1),\qquad
 t_{\mathrm Z_{3,4}}=(-14,-15,13,14,12;1,2,3,4).
 }
\end{equation*}
    proves that for any rational numbers $A,B$, not both zero, the constant $A\zeta(3)+B\zeta(4)$ has $\delta>0.015$, and is therefore irrational. Moreover, for $B=0$ we approximate the irrational number $\zeta(3)$ and can improve the exponent to $\delta>0.226$.
\end{theorem}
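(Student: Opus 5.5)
Along the trajectory $s_{\mathrm Z_{3,4}}+n\,t_{\mathrm Z_{3,4}}$ of the ${}_5F_4(1)$ CMF, the plan is to produce explicit linear forms
\[
    L_n = a_n + b_n\zeta(3) + c_n\zeta(4),
\]
with $a_n,b_n,c_n\in\mathbb Q$, and to check that these forms satisfy the hypotheses of a Nesterenko/Hermite-type linear independence criterion. The ingredients are explicit arithmetic control of denominators, exact asymptotics of the linear forms and of their coefficients, and a nonvanishing statement. Throughout, $d_n=\operatorname{lcm}(1,\dots,n)$.

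\textbf{Step 1 (the forms).} Identify the state $\mathcal S(s)$ at the initial point $(0,0,1,1,1;1,1,1,1)$. A very-well-poised type ${}_5F_4(1)$ with these parameters has derivative states whose entries lie in $\mathbb Q+\mathbb Q\zeta(3)+\mathbb Q\zeta(4)$; this should follow from the Mellin--Barnes / partial-fraction expansion of the summand. Transporting the state along the trajectory by the compatibility relation $\mathcal S(s+nt)=\mathcal S(s)\mathcal M_{nt}(s)$ then writes each entry of $\mathcal S(s+nt)$ as such a linear form. Equivalently, each entry can be written as an explicit hypergeometric sum $\sum_k R_n(k)$ with $R_n$ rational of controlled degree.

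\textbf{Step 2 (arithmetic).} Decompose $R_n(k)$ into partial fractions in $k$. Standard Zudilin/Rhin--Viola bookkeeping then gives $d_{m_1 n}^{4}\,c_n\in\mathbb Z$, $d_{m_1n}^4 b_n\in\mathbb Z$ and $d_{m_1n}^4 a_n\in\mathbb Z$, with $m_1$ read off from the largest coordinate spreads of $t$ (here $15+4$ type quantities). Next, sharpen this with the $p$-adic valuation of the gamma/Pochhammer ratios, using a Chudnovsky--Rukhadze--Hata factor $\Phi_n=\prod_p p^{\phi(n/p)}$. The result is a common denominator $D_n=\exp(\sigma n+o(n))$ with $\sigma=4m_1-\int\phi\,d\psi$, computed by an explicit finite integral.

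\textbf{Step 3 (asymptotics).} The trajectory matrices $T(n)=\mathcal M_t(s+nt)$ converge (after scaling) to a limit matrix with distinct-modulus eigenvalues, which can be checked from the characteristic polynomial of $T_\infty$. By Remark~\ref{rem:separated-spectrum} the system is then of weak Poincar\'e--Perron type, and Corollary~\ref{cor:dual-rank-one} together with Proposition~\ref{prop:cmf-initial-limit} give:
\begin{itemize}
    \item $|L_n|=e^{-gn+o(n)}$ for the recessive combination, with $g=-\log|\lambda_{\min}|$;
    \item coefficient growth $\max(|a_n|,|b_n|,|c_n|)=e^{\tau n+o(n)}$ from the dominant eigenvalue.
\end{itemize}
Alternatively, a saddle-point analysis of the Barnes integral yields the same exponents.

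\textbf{Step 4 (criterion).} For the case $B\neq0$, a single sequence of forms cannot suffice, since a generic combination $A\zeta(3)+B\zeta(4)$ is approximated only by the two-dimensional lattice of forms. I would therefore use Lemma~\ref{lem:nonvanishing}: pick two independent numerator rows $\rho_1,\rho_2$ on the dual walk so that the $2\times2$ determinant built from the coefficients $(b_n,c_n)$ of consecutive $n$ is nonzero. This nonvanishing is checked via the trailing coefficient of the scalar recurrence (Lemma~\ref{lem:minors} expresses these determinants as minors). Then, for a fixed $(A,B)$, eliminate to obtain forms $q_n(A\zeta(3)+B\zeta(4))-p_n$ with $q_n,p_n\in D_n^{-1}\mathbb Z$ that are not identically zero infinitely often. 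This gives
\[
    \delta \ge \frac{g-\sigma}{\tau+\sigma} \quad\text{or}\quad \delta=\frac{g-\tau'}{\sigma+\tau}
\]
according to the criterion used. The numerical values should yield $\delta>0.015$. For $B=0$ the single ratio $r_N$ of Proposition~\ref{prop:cmf-initial-limit} isolating $\zeta(3)$ gives directly $\delta=g/\sigma'-1>0.226$ via Remark~\ref{rem:mu-from-delta}, where the better denominator $\sigma'$ comes from extra cancellation: the $\zeta(4)$ part is killed, so the $d^4$ factor improves to $d^3$ times $\Phi_n$.

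\textbf{Main obstacle.} I expect the hardest step to be the arithmetic Step 2, namely proving rigorously the sharp denominator, including the $\Phi_n$ saving, for a direction with such mixed-sign coordinates. The saving must be strong enough that $g>\sigma$ by the narrow margin $0.015$. I would first try to reduce the claim to integrality of a finite list of binomial-type quotients, and then certify the valuation function $\phi$ on $[0,1)$ piecewise, backing the computed exponents by exact rational certificates.
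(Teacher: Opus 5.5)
Your plan puts the constants in the wrong object and then uses the wrong kind of approximation.

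\textbf{Where the constants live.} At $s=(0,0,1,1,1;1,1,1,1)$ the ${}_5F_4(1)$ series terminates, so its state is $\mathcal F_0=(1,0,0,0)$. Along $s+nt$ every forward state is a terminating, hence rational, sum; in fact $12F_n$ is the denominator $q_n$ itself, which is a growing solution. So Step~1 is false, and Proposition~\ref{prop:cmf-initial-limit} cannot be applied to the ${}_5F_4$ state to produce $\zeta(3)$ and $\zeta(4)$.

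The sums $\sum_kR_n(k)$ you have in mind live on the dual $G^{5,2}_{5,5}(1)$ trajectory $s_1=(0,0,1,1,2;0,0,0,0,0)$, $t_1=(-16,-17,11,12,10;-2,-1,0,1,2)$. This is tied to the forward walk by the dual pairing $M(n)K(n+1)=c(n)K(n)Q^\vee(n)$, and its state is $\mathcal G_0=(1-2\zeta(3)+3\zeta(4),1-2\zeta(3),1)$. That $\mathcal G_n$ is the recessive solution needs an explicit decay bound; the paper uses the Euler-integral estimate $0<G_n(1)\le\beta_n=O(\eta^n)$. The spectrum only tells you that some recessive solution exists. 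Moreover, Remark~\ref{rem:separated-spectrum} does not apply as stated, because the two dominant roots of $Q_\infty$ are complex conjugate.

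\textbf{Type I versus type II.} The type~I forms $\mathcal G_n=\zeta(4)\mathbf a_n+\zeta(3)\mathbf b_n+\mathbf d_n$ decay at rate about $26$, while their integer coefficients grow at about $87$. A Nesterenko-type criterion then gives only $\dim_{\mathbb Q}\langle1,\zeta(3),\zeta(4)\rangle\ge1.3$ (Remark~\ref{rem:z34-type-one}); your $(g-\sigma)/(\tau+\sigma)$, evaluated on type~I data, is exactly this ratio.

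The proof instead uses type~II approximations. By dual convergence, the forward-walk rows $\rho_q=(12,0,0,0)$, $\rho_{p,3}=(0,0,-6,3)$, $\rho_{p,4}=(0,0,0,-2)$ give one \emph{common} denominator $q_n$ with $p_{s,n}/q_n\to\zeta(s)$. Since $(\alpha\rho_q-A\rho_{p,3}-B\rho_{p,4})K(0)\mathcal G_0^{\mathsf T}=0$ for every $(A,B)$, the dominant term cancels. Hence $(Ap_{3,n}+Bp_{4,n})/q_n$ approximates $\alpha=A\zeta(3)+B\zeta(4)$ at the uniform rate $g=\log|\lambda_1/\lambda_3|>112.8075$, and $\delta=g/h-1$.

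Your elimination between two type~I forms does produce such a common denominator: a $2\times2$ minor of the coefficients, compare Lemma~\ref{lem:minors}. But Step~2 only controls the denominators of $a_n,b_n,c_n$, and with those bounds the minors give a negative exponent. The proof needs the identity $\mathbf q_n=\Lambda_n\det(\mathbf b_n;\mathbf a_n;\tilde u_n)$ and its analogues, with the exponentially small factorial ratio $\Lambda_n$ of Lemma~\ref{prop:z34-dual}.

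\textbf{The arithmetic.} Even with the right approximants, one presentation with one Chudnovsky--Rukhadze--Hata factor does not reach the threshold. With $D_n=6d_{19n}^4$ (your $d_{19n}^4$ is the right order), the integers $D_n\mathbf q_n$ grow at rate $<141.4516$, so a common divisor of rate $>28.6441$ is needed. Counting valuations in the forward presentation alone ($\mathbf q_n=\sum_kB_{n,k}$, and $\mathbf p_{s,n}$ with harmonic sums) gives $<26.29$. The dual partial-fraction/minor presentation, which is what your $\sum_kR_n(k)$ route amounts to, gives only $<18.25$.

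The missing idea is to count each prime in \emph{both} presentations of the same integers and keep the larger valuation. This gives $\ge30.3467$, hence $h<111.105$ and $\delta>0.015$ (Lemma~\ref{prop:z34-divisor}).

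Likewise, $\delta>0.226$ for $B=0$ does not follow just from replacing $d^4$ by $d^3$. It needs the sharper normalization $C'_n=(15n)!^2/(10n)!^3$ and a divisor $\mathcal D'_n$ of rate $>30.4585$. That divisor combines the carry count with cancellations over $\mathbb F_p$ on fourteen shells and two second-order windows (Appendix~\ref{app:zeta3-measure}), and again the margin is small.
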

\begin{theorem}[CMF Theorem 2]
    \label{thm:cmf2}
    
    The CMF data
    \begin{equation*}
 \boxed{
 \mathcal M={}_4F_3(1),\qquad
 s_{\mathrm Z_{2}}=(0,1,1,1; 1,1,1),\qquad
 t_{\mathrm Z_{2}}=(-11,10,9,11;2,1,3).
 }
\end{equation*}
    proves that constant $\zeta(2)$ has $\delta>0.2757$.
\end{theorem}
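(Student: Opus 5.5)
We follow the classical Apéry--Zudilin route, expressing the trajectory data as an explicit linear form. Along the trajectory $s_{\mathrm Z_2}+n t_{\mathrm Z_2}$ in the ${}_4F_3(1)$ CMF, the compatible state $\mathcal S(s+nt)$ consists of (derivatives of) a very-well-poised-type hypergeometric sum. For the direction $(-11,10,9,11;2,1,3)$ this is, up to a rational gauge $U(n)$ as in Definition~\ref{def:coboundary}, a Zudilin-type rational-function sum
\[
  I_n=\sum_{k\ge1}\frac{d}{dk}R_n(k),\qquad
  R_n(k)=\frac{\prod(k\pm\text{linear in }n)\ \text{(Pochhammer blocks)}}{\prod (k+j)^{\dots}},
\]
with parameters read off from $t_{\mathrm Z_2}$ in the same way Zudilin's direction $(-6,8,7,6;1,2,3)$ gives his 2014 sum. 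First, by partial fractions in $k$ and the symmetry of the well-poised structure, I would show $I_n=u_n\zeta(2)-p_n$ with $u_n\in\mathbb Z$ and $D_{n}^{2}\,\Phi_n^{-1}p_n\in\mathbb Z$. Here $D_n=\operatorname{lcm}(1,\dots,c\,n)$ for the appropriate $c$ (here $c=11$ or so), and $\Phi_n$ is a product of primes $p$ weighted by $\lfloor\cdot\rfloor$-combinations of $n/p$. Second, I would check via Proposition~\ref{prop:cmf-initial-limit} and Corollary~\ref{cor:dual-rank-one} that $p_n/u_n\to\zeta(2)$: the trajectory's limit matrix has separated spectrum (Remark~\ref{rem:separated-spectrum}), so weak Poincar\'e--Perron holds. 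I would also invoke Lemma~\ref{lem:nonvanishing} to ensure $I_n\neq0$ along a subsequence of bounded gaps.

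Next come the asymptotics. I would compute $g=-\lim\frac1n\log|I_n|$ and $\lim\frac1n\log|u_n|$ as logarithms of the moduli of the characteristic roots of $T_\infty$; equivalently, I would use a saddle-point analysis of the Barnes integral for $I_n$. The characteristic polynomial of the order-2 or order-3 recurrence is obtained from the companion form of the trajectory matrix. The reduced denominator is bounded by
\[
  \log\widetilde q_n\le \log|u_n|+2\log D_n-\log\Phi_n,
\]
where $2\log D_n\sim 2c\,n$ by the prime number theorem. The quantity $\log\Phi_n\sim n\bigl(\int_0^1\varphi(x)\,d\psi(x)\bigr)$ is computed from the $p$-adic valuations of the rational coefficients, with $\varphi$ the periodic floor-function combination.

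Finally, Remark~\ref{rem:mu-from-delta} gives
\[
  \delta=\frac{g}{\sigma}-1,\qquad \sigma=\lim\tfrac1n\log|u_n|+2c-C_\Phi,
\]
and a certified numerical evaluation (interval arithmetic, checked in Lean) should show $\delta>0.2757$, giving $\mu(\zeta(2))<1+1/0.2757<4.62675$.

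The main obstacle is the arithmetic step: proving the denominator bound including the $\Phi_n$ gain. This requires finding the correct integrality structure (which $D_{\cdot n}$ multiplies which partial-fraction coefficient) for a non-symmetric direction. It also requires deriving a valid periodic function $\varphi$ with a proof that $\operatorname{ord}_p$ of every coefficient is at least $\varphi(n/p)$ for large $p$. My first attempt would be to mimic Zudilin's 2014 lemma: split $R_n$ into products of ratios of Pochhammer symbols, each of which is a $p$-adically controlled "integer-valued polynomial". I would then apply the Rhin--Viola-style permutation group (here the hypergeometric transformations of the ${}_4F_3$ family permuting the parameters) to take the minimum over equivalent representations of $\varphi$.
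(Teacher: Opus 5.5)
There is a genuine gap, and it sits exactly where the statement has its content: the denominator bound. You defer it as ``the main obstacle'' and propose only termwise Legendre/floor-function counts, maximized over hypergeometric symmetries. For this trajectory, termwise counting on the natural closed form is not enough.

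In the paper, the closed form is obtained by conservation. The walk is replaced by the lattice path of blocks $(0,1,1,1;0,0,0)$ ($3n$ steps), $(0,0,0,0;2,1,3)$ ($n$), $(0,7,6,8;0,0,0)$ ($n$) and $(-1,0,0,0;0,0,0)$ ($11n$). This path collapses to explicit binomial sums normalized by $C_nd_{12n}d_{13n}$, where $C_n=\prod_{u=9,10,11}\binom{un}{8n}$. Then:
\begin{itemize}
\item $C_nq_n$ grows at rate $<38.168733$;
\item $d_{12n}d_{13n}$ adds $25$;
\item the carry divisor $\prod p^{\phi(n/p)}$, with $\phi(x)=\min_{0\le y<1}\rho(x,y)$, removes only $\gamma\approx12.128$.
\end{itemize}
These counts alone give $h\approx51.04$ and $\delta\approx62.6366/51.04-1\approx0.227$. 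That is well short of $0.2757$; it only gives $\mu(\zeta(2))$ around $5.4$.

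The missing $\tfrac{233}{120}$ comes from primes in six shells $(\tfrac{13n}{2},8n),(\tfrac{13n}{3},\tfrac{9n}{2}),\dots,(\tfrac{13n}{12},\tfrac{9n}{8})$. For these primes the normalized sum reduces modulo $p$ to $\sum_{x\in\mathbb F_p}P(x)$ with $\deg P<p-1$, and therefore vanishes. This is cancellation between terms, invisible to termwise valuations. Your plan contains no mechanism that produces it. ``Mimic Zudilin's lemma and take the best presentation'' is not shown to recover these primes, so as written the plan does not reach the claimed exponent.

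The analytic side also needs correcting.
\begin{itemize}
\item The forward ${}_4F_3$ terminates (its first parameter is $-11n$), so the forward compatible state is rational, with $\mathcal F_0=(1,0,0)$. It is not itself a linear form in $\zeta(2)$. The constant enters only through the dual pairing with the $G^{4,1}_{4,4}(-1)$ trajectory, whose state $\mathcal G_0=(\zeta(2)-\zeta(3)-1,\zeta(2)-1,-1)$ is what the dual walk recovers.
\item Remark~\ref{rem:separated-spectrum} does not apply. After normalization the roots are $\lambda_{1,2}\approx-1.287\pm2.207i$, which have equal modulus, and $\lambda_3\approx-1.6\cdot10^{-27}$. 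What you actually need is that $\lambda_3$ is simple and strictly smallest, and, crucially, that $\mathcal G_n$ is the corresponding recessive solution.
\item The paper proves that identification by turning the Mellin--Barnes integral into a triple integral. This gives $(n!)^{13}|(\theta^kG_n)(-1)|=O(n^k\eta^n)$ with $\eta=11^{11}/4^{24}<1$, while $|\lambda_{1,2}|>1$. Without it, neither Proposition~\ref{prop:cmf-initial-limit} nor reading $g$ off the characteristic roots is justified.
\item Your formula $\delta=g/\sigma-1$ mixes conventions. If $g=-\lim\frac1n\log|I_n|$, then $|\zeta(2)-p_n/u_n|$ decays at rate $g+\lim\frac1n\log|u_n|$. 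It is that rate which enters Remark~\ref{rem:mu-from-delta}; here it is $\log|\lambda_1/\lambda_3|>62.636575$.
\end{itemize}
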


The CMF data $(\mathcal M,s_0,t_0)$ provides a trajectory matrix, see \ref{def:trajectory}.
\[
M(n)=\mathcal M_{t_0}(s_0+nt_0)
\]
whose walk produces the approximations. To show that they give a positive approximation exponent, we use the CMF structure to prove three things: (i) the limit is the intended constant, (ii) the error decays at an explicit exponential rate, (iii) the numerators and denominators share a common factor large enough to beat the growth of the denominators.

We show the CMF structure lends itself to proving the claims above in the following way.

\subsection{The CMF proof scheme}
\label{cmf proof stratergy}
We identify $M(n)$, through a dual pairing, see Definition~\ref{def:dual-pairing}, with a trajectory of a
special-function CMF $\mathcal Q=\mathcal Q(G)$ defined by a $D$-finite
function $G$:
\[
  \boxed{\;(\mathcal M,\ s_0,\ t_0)_F\;}
  \quad\xleftrightarrow[\ \text{dual pairing}\ ]{\ K(n),\ c(n)\ }\quad
  \boxed{\;(\mathcal Q,\ s_1,\ t_1)_D\;}
\]
We then use this identification to prove these claims as follows.

\paragraph{Identifying the constant.}
\label{par: constant}
We show the function state 
\[\mathcal G_n=(G_n,\theta G_n,\dots,\theta^{r'-1}G_n)\]
along the trajectory $s_1+nt_1$ is the recessive solution to the system. 
Hence, by Theorem~\ref{thm:dual-convergence} the dual walk then converges projectively to the initial state $\mathcal G_0$. The dual pairing then relates these constants to the forward walk.

\paragraph{The rate of convergence.}
The exponential growth rates are simply given by the eigenvalues of $Q(\infty)$, the matrix at infinity. Since the CMF limits are ratios of solutions, we can normalize the matrix $Q(n)$ to solve the characteristic polynomial at infinity.

\paragraph{Controlling the arithmetic growth.}\label{par: arithmetic control}
To bound the growth of the reduced denominators, we write the approximants as ratios of integers after an explicit normalization, and divide out a large common factor. For this, we aim to present the solutions in closed form, so that the primes dividing each term can be counted. The main tool is Legendre's formula
\[
v_p(m!)=\sum_{i\ge1}\lfloor m/p^i\rfloor.
\]
For primes $p^2>m$, only the first term survives, and the $p$-adic valuation of a binomial coefficient $\binom ab$ is given by the floor function 
\[
\lfloor a/p\rfloor-\lfloor b/p\rfloor-\lfloor(a-b)/p\rfloor.
\] 
Binomial coefficients of this kind appear in these hypergeometric constructions. 
A sum is divisible by $p^e$ if every term is, so minimizing over the terms gives a step function $\gamma(n/p)$ that bounds the valuation of the sum, then $\mathcal D_n=\prod_pp^{\,\gamma(n/p)}$ is a common divisor, and by the prime number theorem $\frac1n\log\mathcal D_n\to\int_0^\infty\gamma(x)\,x^{-2}\,dx$.
The closed forms are found from the structure of the construction and proved by the recurrence argument: the companion form of $M(n)$ gives the recurrence for the solution, and the closed forms satisfy it and agree at the initial values.

A single presentation usually misses divisibility that comes from cancellation between terms. One therefore expresses the same approximants in alternative forms, counts primes in each, and keeps for each prime the best count. 
The summands of the alternative presentations are
not necessarily integers, but they still bound the valuations from below.
The CMF gives three ways to present the same terms:
\begin{enumerate}
\item \emph{Parameter symmetries.} The classical permutation-group method
\cite{Hata90,rhin1996permutation,rhinViola2001,zudilin2014zeta2} uses the symmetries of the hypergeometric or integral representation, which rewrite the same numbers with different terms.
\item \emph{The dual presentation.} Beyond these symmetries, the dual pairing expresses the approximants as minors of dual solutions.
\item \emph{Walk decompositions.} A trajectory matrix lives in a CMF, so by conservation its walk can be replaced by the product along any lattice path with the same endpoints. Decomposing the path into blocks exposes the factors added by each block, and the decomposition can be adapted to the initial condition or to individual primes. This use of conservation goes back to \cite{David2023CMF}.
\end{enumerate}

\paragraph{Irrationality exponent.}
\label{par: delta}
From the three steps above we get rational numbers $ p_n,q_n\in \mathbb Q$, and real numbers $0<h,g$ satisfying
\[
  \Bigl|\alpha-\frac{p_n}{q_n}\Bigr|\le e^{-(g-o(1))n},
  \qquad
|\widetilde{q}_n|\le e^{(h+o(1))n}
\]
where $\widetilde{q}_n$ represents the reduced denominators.
Moreover, $p_n,q_n$ are both solutions to a holonomic recurrence given by the companion form of $M(n)$, see \ref{def: companion form}. The error term $L_n=\alpha q_n-p_n$ satisfies the same recurrence, and we use Lemma~\ref{lem:nonvanishing} to show that the approximations are nontrivial. 

By the definition in Section~\ref{subsec:diophantine-approximation}, this gives the approximation exponent:
\[
  \delta=\frac gh-1,
\]
so $\alpha$ is irrational when $g>h$, and $\mu(\alpha)\le\frac{g}{g-h}$. 
\begin{remark}[Finding the dual]\label{rem:finding-dual}
The dual CMF comes from the adjoint of the differential equation defining $\mathcal M$. For the hypergeometric CMFs here, its solutions are Meijer $G$-functions with related parameters.
\end{remark}
\subsection{Linear Independence of
\texorpdfstring{$1,\zeta(3),\zeta(4)$}{1, ζ(3), ζ(4)}}
\label{subsec:z34}
We use the CMF proof method to prove the following theorem.

\begin{theorem}\label{thm:z34}
The numbers $1$, $\zeta(3)$ and $\zeta(4)$ are linearly independent over $\mathbb Q$.
\end{theorem}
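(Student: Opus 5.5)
We follow the CMF proof scheme of Subsection~\ref{cmf proof stratergy} applied to the data of Theorem~\ref{thm:cmf1}: the ${}_5F_4(1)$ CMF with initial point $s_{\mathrm Z_{3,4}}=(0,0,1,1,1;1,1,1,1)$ and direction $t_{\mathrm Z_{3,4}}=(-14,-15,13,14,12;1,2,3,4)$. Along this trajectory the terms are very-well-poised-type terminating/convergent sums of the shape
\[
 \sum_k R_n(k),\qquad R_n(k)=\frac{\prod(k-a_i n)_{\cdot}}{\prod (k+b_j n+1)_{\cdot}},
\]
whose partial fraction expansion in $k$ produces linear forms $L_n=a_n\zeta(4)+b_n\zeta(3)+c_n$ with rational coefficients. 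We take two independent rational rows $u_1,u_2$ selected as in Proposition~\ref{prop:cmf-initial-limit} so that the dual walk yields two sequences of simultaneous forms, equivalently, for each fixed $(A,B)\neq(0,0)$, a single form $q_n(A\zeta(3)+B\zeta(4))-p_n$ obtained by eliminating the other constant. Concretely: suppose $A\zeta(3)+B\zeta(4)=P/Q$ is rational. Then the two families of forms $L^{(i)}_n=q_n^{(i)}\zeta(3)+r^{(i)}_n\zeta(4)-p^{(i)}_n$ can be combined, via the relation $\zeta(4)$ or $\zeta(3)$ expressed through the other, into forms $\ell_n=\alpha_n\,\xi-\beta_n$ with $\xi$ the remaining constant, and positivity of $\delta$ gives a contradiction.

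The steps, in order. (i) Identify the constant: establish the dual pairing $K(n),c(n)$ between $M(n)$ and the Meijer-$G$ CMF (Remark~\ref{rem:finding-dual}), show the $G$-state along the dual trajectory is recessive (Definition~\ref{def:weak-pp}) by computing the eigenvalues of the normalized matrix at infinity and checking they have distinct moduli (Remark~\ref{rem:separated-spectrum}), then apply Theorem~\ref{thm:dual-convergence} and evaluate the initial state $\mathcal G_0$ at the $G^{5,2}_{5,5}(1)$ point as a $\mathbb Q$-combination of $1,\zeta(3),\zeta(4)$ (a residue computation of the Mellin--Barnes integral with triple/quadruple poles). (ii) Rates: the error $L_n$ decays like $e^{-gn}$ with $g$ given by $\log$ of the ratio of the two smallest eigenvalue moduli, while coefficients grow like $e^{\tau n}$ with $\tau$ the largest. (iii) Arithmetic: write $a_n,b_n,c_n$ in closed form as finite sums of products of binomial coefficients times harmonic-type sums; show $d_{cn}^4$ (with $d_m=\lcm(1,\dots,m)$, $c=$ max relevant multiple of $n$, here about $30$) clears denominators, and then extract the common factor $\mathcal D_n=\prod_p p^{\gamma(n/p)}$ by minimizing Legendre valuations over the terms and over alternative presentations (parameter symmetries, dual minors via Lemma~\ref{lem:minors}, walk decompositions), so that $\frac1n\log\widetilde q_n\le 4c-\int_0^\infty\gamma(x)x^{-2}dx=:h$. (iv) Nonvanishing: apply Lemma~\ref{lem:nonvanishing} to the common recurrence (companion form of $M(n)$) to ensure, for each $(A,B)$, that the combined forms $\ell_n$ are nonzero for infinitely many $n$ (among any $s$ consecutive indices), which is needed since two-dimensional elimination could otherwise produce identically vanishing combinations. (v) Conclude $g>h$, giving $\delta>0.015$ uniformly in $(A,B)$, hence no nontrivial relation $A\zeta(3)+B\zeta(4)+C=0$, which is linear independence of $1,\zeta(3),\zeta(4)$.

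The main obstacle is step (iii) together with the simultaneity in (iv): a margin of only $\delta\approx0.015$ means the arithmetic saving $\int\gamma(x)x^{-2}dx$ must be computed essentially sharply, requiring the best valuation over several presentations prime by prime, and must hold for a common denominator serving both $\zeta(3)$ and $\zeta(4)$ coefficients. I would first compute $\gamma$ from the direct hypergeometric presentation, then add the symmetry-group images and dual-minor presentations until the numerically observed $\delta$ is reached, certifying the resulting integral and the eigenvalue estimates with exact interval arithmetic (as in the Lean verification).
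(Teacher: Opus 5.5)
Your outline follows the paper's scheme: the same ${}_5F_4(1)$ data, the dual pairing with $G^{5,2}_{5,5}(1)$, dual convergence, a spectral error rate, prime counting over the forward and dual-minor presentations keeping the best count per prime, and Lemma~\ref{lem:nonvanishing}. However, three steps fail as written.

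\emph{First, the contradiction mechanism.} What works is your initial description. One denominator row $\rho_q$ and two numerator rows give type II approximations $p_{3,n}/q_n\to\zeta(3)$ and $p_{4,n}/q_n\to\zeta(4)$ with the \emph{same} $q_n$. (The forward ${}_5F_4(1)$ sum terminates, so it yields the rational $q_n=12F_n$, not linear forms.) Then $(Ap_{3,n}+Bp_{4,n})/q_n$ approximates $\alpha=A\zeta(3)+B\zeta(4)$ directly. If $\alpha=P/Q$, the integers $Q\,D_n(\alpha\mathbf q_n-A\mathbf p_{3,n}-B\mathbf p_{4,n})/\mathcal D_n$ tend to $0$ and are nonzero infinitely often, which is a contradiction; nothing is eliminated. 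Your ``concrete'' version instead starts from type I forms $q^{(i)}_n\zeta(3)+r^{(i)}_n\zeta(4)-p^{(i)}_n$ with unrelated coefficients and uses the relation to eliminate one zeta value. A positive exponent for the resulting forms $\alpha_n\xi-\beta_n$ only re-proves that the remaining zeta value $\xi$ is irrational, which contradicts nothing. Eliminating $\xi$ as well would need a $2\times2$ determinant bound, i.e.\ forms that decay faster than their coefficients grow. The type I forms available here (the recessive dual solution) decay at rate about $26$ against coefficient growth about $87$; see Remark~\ref{rem:z34-type-one}.

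\emph{Second, identifying the limit.} The eigenvalues of $Q_\infty$ do not have distinct moduli: $\lambda_{1,2}\approx(2.803\pm6.487i)\cdot10^{25}$ are complex conjugates. So Remark~\ref{rem:separated-spectrum} does not apply as stated; there is only a gap between $|\lambda_3|$ and $|\lambda_1|=|\lambda_2|$. More importantly, no spectral computation shows that the specific state $\mathcal G_n$ of Meijer $G$-values \emph{is} the recessive solution. The non-dominant solutions form a single line, and if $\mathcal G_n$ had a dominant component, the dual walk would converge to a different vector and the limits would not be $\zeta(3),\zeta(4)$. The paper closes this with an analytic estimate (Lemma~\ref{lem:z34-recessive}). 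Mellin--Barnes and beta integrals turn $G_n(1)$ into a positive four-fold Beukers-type integral bounded by a factorial ratio $\beta_n=O(\eta^n)$ with $\eta<1$, and $|\theta^kG_n(1)|=O(n^k\eta^n)$ for $k\le2$, far below $|\lambda_{1,2}|^n$. Your plan needs such a bound.

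\emph{Third, the height budget.} Your $h=4c-\int_0^\infty\gamma(x)x^{-2}\,dx$ omits the archimedean growth of $\mathbf q_n$ (rate below $65.4516$). Moreover, $c\approx30$ would make the normalization cost $120$. With $g>112.8075$ you would then need a common divisor of rate above $72.6$, while the best per-prime count reaches only $30.3467$. The paper observes that the harmonic indices lie in $[3n,19n]$, so $6d_{19n}^4$ suffices (Lemma~\ref{prop:z34-int}). The required divisor rate is then $28.6441$, leaving a margin of only $1.70$.
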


We follow the scheme of Section~\ref{cmf proof stratergy}, with the dual
pairing

{\setlength{\fboxsep}{2pt}%
\[
  \boxed{\bigl({}_5F_4(1),\lbl{s_0}\pv{0,0,1,1,1}{1,1,1,1},
         \lbl{t_0}\pv{-14,-15,13,14,12}{1,2,3,4}\bigr)}
  \xleftrightarrow{K(n),c(n)}
  \boxed{\bigl(G^{5,2}_{5,5}(1),\lbl{s_1}\pv{0,0,1,1,2}{0,0,0,0,0},
         \lbl{t_1}\pv{-16,-17,11,12,10}{-2,-1,0,1,2}\bigr)}
\]}
Along the two trajectories we have the functions, their states, and their
initial values:
\[
\begin{aligned}
  F_n&={}_5F_4(s_0+n\,t_0;\,1),
  &\qquad
  G_n&=G^{5,2}_{5,5}(1;\,s_1+n\,t_1),\\[2pt]
  \mathcal F_n&=\bigl(F_n,\ \theta F_n,\ \theta^2F_n,\ \theta^3F_n\bigr),
  &\qquad
  \mathcal G_n&=\bigl(G_n,\ \theta G_n,\ \theta^2G_n\bigr),\\[2pt]
  \mathcal F_0&=\bigl(1,\ 0,\ 0,\ 0\bigr),
  &\qquad
  \mathcal G_0&=\bigl(1-2\zeta(3)+3\zeta(4),\ \ 1-2\zeta(3),\ \ 1\bigr).
\end{aligned}
\]
The trajectory matrix of the dual is block upper triangular,
\[
  \mathcal Q_{t_1}(s_1+n\,t_1)=\begin{pmatrix}Q(n)&*\\0&-1\end{pmatrix},
  \qquad
  \mathcal G_{n+1}=\mathcal G_n\,Q(n).
\]
\begin{remark}[Linear forms from the dual solution]\label{rem:z34-type-one}
The recessive dual solution already gives linear forms: 
\[
  \mathcal G_n=\zeta(4)\mathbf a_n+\zeta(3)\mathbf b_n+\mathbf d_n ,
\]
and after clearing denominators, each coordinate is an integer linear form in $1,\zeta(3),\zeta(4)$ tending to $0$.
This form alone does not prove linear independence: it decays at rate about $26$ while its coefficients grow at rate about $87$, so Nesterenko's criterion \cite{Nesterenko85} gives only
\[
\dim_{\mathbb Q}\langle1,\zeta(3),\zeta(4)\rangle\ge1.3\]
implying the irrationality of one of $\zeta(3),\zeta(4)$.
\end{remark}

\begin{lemma}\label{prop:z34-connection}
The trajectory matrix $M(n)$ and the block $Q(n)$ are related by a dual
pairing,
\[
  M(n)\,K(n+1)=c(n)\,K(n)\,Q^\vee(n),
\]
and $e_0=K(n)\,h_n$ for
\[
h_n=\bigl(1,\,10n+1,\,(10n+1)^2\bigr)^{\mathsf T}/\prod_{j=8}^{12}(jn+1).\]
\end{lemma}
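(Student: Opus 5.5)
The lemma asserts an explicit rational-function identity, so the plan is to construct $K(n)$ and $c(n)$ explicitly and then verify the identity by computation. The construction follows the dual-pairing picture of the CMF proof scheme and Remark~\ref{rem:finding-dual}. The forward state $\mathcal F_n$ consists of $F_n={}_5F_4(s_0+nt_0;1)$ and its $\theta$-derivatives at $z=1$. The dual state $\mathcal G_n$ consists of $G_n=G^{5,2}_{5,5}(1;s_1+nt_1)$ and its $\theta$-derivatives, and these solve the adjoint equation with related parameters. The natural bilinear pairing between solutions of an operator and its adjoint is the bilinear concomitant (Lagrange/Wronskian-type form). Here it should be a $4\times 3$ rational matrix in the parameters, obtained from the coefficients of the ${}_5F_4$ operator at $z=1$ in the reduced three-dimensional block $Q$.

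Concretely, I would proceed in the following order.

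\textbf{Step 1: trajectory matrices.} I would first write $M(n)=\mathcal M_{t_0}(s_0+nt_0)$ and $Q(n)$, the upper-left $3\times3$ block of $\mathcal Q_{t_1}(s_1+nt_1)$. Both are obtained by composing elementary contiguity matrices along any lattice path realizing the displacement, which the conservation property permits.

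\textbf{Step 2: computing $K$ and $c$.} Instead of deriving $K$ from the concomitant by hand, I would treat the relation $M(n)K(n+1)=c(n)K(n)Q(n)^{-\mathsf T}$ as a linear functional equation. I would make an ansatz for $K(n)$ with polynomial entries over a common denominator, and take $c(n)$ to be the ratio of determinant-type factors suggested by $\det M$ and $\det Q$. The resulting linear system is then solved, in the spirit of rational solutions of first-order difference systems (Abramov/Barkatou).

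\textbf{Step 3: verification.} Once candidate $K$ and $c$ are found, the identity $M(n)K(n+1)-c(n)K(n)Q^\vee(n)=0$ is a finite check in $\mathbb Q(n)$. It is done by clearing denominators and checking polynomial coefficients, which can be certified exactly (for instance in Lean).

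\textbf{Step 4: the vector $h_n$.} Finally I would check $e_0=K(n)h_n$ for the stated $h_n$. The components $1,10n+1,(10n+1)^2$ reflect that $K(n)$ acts, in the $\theta$-basis, as evaluation against powers of the last lower parameter; note $10n+1$ is the fifth coordinate of $s_0+nt_0$ shifted appropriately. The normalizing product $\prod_{j=8}^{12}(jn+1)$ should cancel the denominators of $K(n)$ coming from the upper parameters $13n+1$, $14n+1$, $12n+1$, etc. This is again a direct identity in $\mathbb Q(n)^4$.

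The main obstacle is Step 2: locating $K$ without simply guessing. I would first derive $K$ at the level of the full CMFs. This means pairing the ${}_5F_4$ basis with the $G^{5,2}_{5,5}$ basis via the concomitant, then checking that the concomitant form is compatible with each elementary shift. Once the relation holds for the generating shifts, it telescopes along the trajectory by conservation. Restricting to the trajectory and projecting away the trivial $-1$ block of $\mathcal Q$ then gives $K(n)$ and $c(n)$ as products of the elementary normalization factors. If that derivation gets stuck, I would fall back on the ansatz-and-solve approach, bounding degrees from the pole structure of $M$ and $Q$.
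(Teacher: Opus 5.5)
Your plan matches the paper's proof: the lemma is an explicit identity in $\mathbb Q(n)$, and the paper likewise exhibits $K(n),c(n)$ and checks $M(n)K(n+1)=c(n)K(n)Q^\vee(n)$ and $e_0=K(n)h_n$ by exact computation in the supplementary material, recording only $K(0)$ (for which indeed $K(0)h_0=e_0$ with $h_0=(1,1,1)^{\mathsf T}$). Your concomitant/ansatz derivation is therefore only a way of finding $K$. The parameter heuristics in your Step~4 are off: the fifth coordinate of $s_0+nt_0$ is $12n+1$, and $13n+1$, $14n+1$ do not occur in $\prod_{j=8}^{12}(jn+1)$. Nothing in the argument depends on them.
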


\begin{proof}
Exact verification is in the supplementary material. We record the value at the starting point,
$K(0)=\left(\begin{smallmatrix}0&0&1\\0&0&0\\-1&2&-1\\-2&2&0\end{smallmatrix}\right)$,
which is used below.
\end{proof}

\paragraph{Identifying the limit.}
\label{subsubsec:z34-constants}

Define the initial conditions
\[
  \rho_q=(12,0,0,0),\qquad \rho_{p,3}=(0,0,-6,3),\qquad
  \rho_{p,4}=(0,0,0,-2),
\]
\begin{lemma}\label{lem:z34-limits}
Let
\[
  q_n=\rho_q\,\Phi_M(n)\,e_0,\qquad
    p_{s,n}=\rho_{p,s}\,\Phi_M(n)\,e_0\qquad(s=3,4).
\]
Then the ratios converge:
\[
  \lim_{n\to\infty}\frac{p_{s,n}}{q_n}=\zeta(s)\qquad(s=3,4).
\]
\end{lemma}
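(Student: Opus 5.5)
The plan is to transfer everything to the dual side through the dual pairing of Lemma~\ref{prop:z34-connection}, and then apply the initial-value principle (Proposition~\ref{prop:cmf-initial-limit}, via Theorem~\ref{thm:dual-convergence}) to the recessive state $\mathcal G_n$.

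\textbf{Reduction to the dual walk.} Telescoping the pairing gives $\Phi_M(n)K(n)=\kappa_n K(0)\Phi_Q^\vee(n)$. Using $e_0=K(n)h_n$,
\[
  \rho\,\Phi_M(n)e_0=\kappa_n\,\rho K(0)\,\Phi_Q^\vee(n)h_n .
\]
So each of $q_n,p_{3,n},p_{4,n}$ equals the common scalar $\kappa_n$ times a dual-walk pairing of the row $\rho K(0)$ with the terminal column $h_n$. With the recorded value of $K(0)$ we get
\[
  \rho_qK(0)=(0,0,12),\qquad
  \rho_{p,3}K(0)=(6-6,\,-12+6,\,6)=(0,-6,6),\qquad
  \rho_{p,4}K(0)=(4,-4,0).
\]

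\textbf{Applying dual convergence.} Next, check that $\mathcal G_n$ is the recessive solution of $\mathcal G_{n+1}=\mathcal G_nQ(n)$ and that $h_n$ is non-exceptional. Then Proposition~\ref{prop:cmf-initial-limit} gives $[\Phi_Q^\vee(n)h_n]\to[\mathcal G_0^{\mathsf T}]$. Both ratios can be taken with the common column $h_n$, so the scalars $\kappa_n$ and $a_n$ cancel:
\[
  \frac{p_{3,n}}{q_n}\to\frac{-6(1-2\zeta(3))+6}{12}=\zeta(3),\qquad
  \frac{p_{4,n}}{q_n}\to\frac{4\bigl(1-2\zeta(3)+3\zeta(4)\bigr)-4(1-2\zeta(3))}{12}=\zeta(4).
\]
The denominator row pairs with $\mathcal G_0$ to give $12\ne0$, as Corollary~\ref{cor:dual-rank-one} requires.

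\textbf{Main obstacle.} The hard part is verifying the hypotheses of Theorem~\ref{thm:dual-convergence}.

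\emph{Weak Poincar\'e--Perron.} First normalize $Q(n)$ by its polynomial growth factor so that it has a limit $Q_\infty$. Then show that $Q_\infty$ has three eigenvalues of distinct moduli, so that Remark~\ref{rem:separated-spectrum} applies.

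\emph{Recessiveness of $\mathcal G_n$.} Show that $\mathcal G_n$ decays at the smallest characteristic rate, roughly $e^{-26n}$ relative to the dominant solution, as in Remark~\ref{rem:z34-type-one}. I would do this with an asymptotic (saddle-point) analysis of the Mellin--Barnes integral for $G^{5,2}_{5,5}(1;s_1+nt_1)$. The alternative is to show that $\mathcal G_n\to0$ faster than any solution with a nonzero dominant component, which also identifies it as the minimal solution.

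\emph{Non-exceptionality of $h_n$.} The column $h_n$ varies with $n$. Projectively $[h_n]\to[(0,0,1)^{\mathsf T}]$, so it suffices to check that $(0,0,1)^{\mathsf T}$ has a nonzero component along the limiting direction $\ell$ of $\ell_n$. I would establish this by computing the eigenvectors of $Q_\infty^{-\mathsf T}$, with the error from the $O(1/n)$ perturbation handled as in Corollary~\ref{cor:dual-rank-one}.

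Finally, $q_n\ne0$ for large $n$ follows because $q_n/\kappa_n$ is asymptotic to $12a_n^{-1}\ne0$.
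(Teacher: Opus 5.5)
Your reduction through the telescoped pairing and the pairings $\rho K(0)\mathcal G_0^{\mathsf T}=12,\ 12\zeta(3),\ 12\zeta(4)$ are exactly the paper's proof. For recessiveness the paper uses your second option, not saddle points: in Lemma~\ref{lem:z34-recessive} a positive Euler-type integral is bounded by a beta product $O(\eta^n)$ with $\eta<1$. Two of your hypothesis checks, however, fail as written.

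\emph{Distinct moduli.} $Q_\infty$ does not have three eigenvalues of distinct modulus. Its dominant pair is complex conjugate, $\lambda_{1,2}\approx(2.803\pm6.487i)\cdot10^{25}$, so $|\lambda_1|=|\lambda_2|$ and Remark~\ref{rem:separated-spectrum} cannot be invoked. What you need, and what holds, is only that the recessive root is isolated: $|\lambda_3|<|\lambda_1|=|\lambda_2|$. The splitting into a slow line and a fast plane survives the $O(1/n)$ perturbation by standard Perron/dichotomy arguments. Equivalently, $\lambda_3^{-1}$ is the simple, strictly dominant root of the inverse system, and that is all the cone argument of Appendix~\ref{app:hh-rate} uses. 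This gives Definition~\ref{def:weak-pp} for every $g<\log|\lambda_1/\lambda_3|$. It is also the property that lets exponential decay of $\mathcal G_n$ pin it to the slow line.

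\emph{Non-exceptionality of $h_n$.} The limit $Q_\infty$ exists only after the diagonal gauge $T(n)=\operatorname{diag}(1,n+1,(n+1)^2)$ of Appendix~\ref{app:zeta34-proof}, so the terminal column must be moved into that gauge as well. With $\tilde Q(n)=T(n)Q(n)T(n+1)^{-1}$ and $T(0)=I$,
\[
  \Phi_Q^\vee(n)\,h_n=\Phi_{\tilde Q}^\vee(n)\,T(n)^{-1}h_n,
  \qquad
  [T(n)^{-1}h_n]\longrightarrow[(1,10,100)^{\mathsf T}],
\]
while $\tilde\ell_n=\Phi_{\tilde Q}(n)^{-1}\ell$ tends projectively to the vector $v$ with $Q_\infty v=\lambda_3 v$. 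The condition that decides the limit is therefore $(1,10,100)\,v\neq0$. Pairing $(0,0,1)^{\mathsf T}$ with $v$ tests the wrong quantity.

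Working in the original coordinates does not rescue your test either. There $\ell_n=T(n)^{-1}\tilde\ell_n$, so unless $v$ is proportional to $(0,0,1)^{\mathsf T}$, the projective limit of $\ell_n$ is orthogonal to $(0,0,1)^{\mathsf T}$ and your check returns zero. In fact $h_n^{\mathsf T}\ell_n$ is smaller than $\|h_n\|\,\|\ell_n\|$ only by a polynomial factor, which the exponential gap in the proof of Theorem~\ref{thm:dual-convergence} absorbs.

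With these two corrections your plan goes through. What remains is the finite check $(1,10,100)\,v\neq0$ on the exact $Q_\infty$.
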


\begin{proof}
The state $\mathcal G_n$ is the recessive solution of $Q(n)$ (Appendix~\ref{app:zeta34-proof}).  The Dual Convergence Theorem~\ref{thm:dual-convergence} applies, therefore
\[
  \lim_{n\to\infty}\frac{p_{s,n}}{q_n}
  =\frac{\rho_{p,s}\,K(0)\,\mathcal G_0^{\mathsf T}}{\rho_q\,K(0)\,\mathcal G_0^{\mathsf T}}
  =\frac{12\,\zeta(s)}{12}=\zeta(s).
\]
\end{proof}

\paragraph{The rate of convergence.}
\label{subsubsec:z34-error}

\begin{lemma}\label{lem:z34-selected-error}
For $(A,B)\in\mathbb Q^2\setminus\{(0,0)\}$,
\[
  \Bigl|A\zeta(3)+B\zeta(4)-\frac{Ap_{3,n}+Bp_{4,n}}{q_n}\Bigr|
  \le e^{-(g-o(1))n},\qquad g>112.8075 .
\]
\end{lemma}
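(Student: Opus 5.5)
The plan is to reduce the error to the recessive dual solution through Lemma~\ref{lem:minors}, and then read off its exponential rate from the matrix at infinity of the dual trajectory.

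\emph{Step 1: reduce to minors.} Set $\rho=A\rho_{p,3}+B\rho_{p,4}$ and $\lambda_{A,B}=A\zeta(3)+B\zeta(4)$. The error row is $\rho_{\mathrm{err}}=\rho-\lambda_{A,B}\rho_q$, so the error equals $(\rho_{\mathrm{err}}\Phi_M(n)e_0)/q_n$. By the proof of Lemma~\ref{lem:z34-limits}, $\rho_{\mathrm{err}}K(0)\mathcal G_0^{\mathsf T}=0$. Hence $\rho_{\mathrm{err}}K(0)$ lies in the plane orthogonal to $\mathcal G_0^{\mathsf T}$. Any such row is a multiple of $Y_0\times Z_0$, where $Y_0=\mathcal G_0$ and $Z_0$ is a second row chosen with $Y_0\times Z_0$ proportional to $\rho_{\mathrm{err}}K(0)$. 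Lemma~\ref{lem:minors}, combined with $e_0=K(n)h_n$ from Lemma~\ref{prop:z34-connection}, then gives
\[
\rho_{\mathrm{err}}\Phi_M(n)e_0=\frac{\lambda\kappa_n}{\det\Phi_Q(n)}\det\begin{pmatrix}\mathcal G_n\\ Z_n\\ h_n^{\mathsf T}\end{pmatrix}.
\]
The same lemma, applied to $\rho_q$ with some pair of rows $Y'_0,Z'_0$ (both dominant or generic), writes $q_n$ in the same minor form.

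\emph{Step 2: asymptotics at infinity.} I will normalize $Q(n)$ by the scalar growth of $c(n)$, so that it has a limit $Q_\infty$. I will then compute the three roots of the characteristic polynomial of $Q_\infty$, together with the growth rates of $\kappa_n$, $\det\Phi_Q(n)$ and the polynomial vector $h_n$. Since the roots have distinct moduli, Remark~\ref{rem:separated-spectrum} gives weak Poincar\'e--Perron type. Thus $\mathcal G_n$ decays with the smallest exponent $\log|\lambda_1|$, while a generic $Z_n$ grows with $\log|\lambda_3|$.

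\emph{Step 3: compare exponents.} The minor in the error is therefore of size at most $|\lambda_1\lambda_3|^{n+o(n)}$, up to the $h_n$ polynomial factor. The minor for $q_n$ has size $|\lambda_2\lambda_3|^{n+o(n)}$, and I will show this is exact rather than just an upper bound. The prefactors $\kappa_n/\det\Phi_Q(n)$ cancel in the ratio, so the error is $|\lambda_1/\lambda_2|^{n+o(n)}$ and $g=\log|\lambda_2/\lambda_1|$. The numeric claim $g>112.8075$ is then a certified evaluation of the roots of an explicit cubic, which I will do by interval arithmetic on the exact characteristic polynomial.

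\emph{Main obstacle: lower bound for $q_n$.} Exact growth of $q_n$ requires its minor to have a nonvanishing leading component. This is the non-exceptionality condition of Theorem~\ref{thm:dual-convergence}: $\rho_qK(0)$ must not lie in the recessive or intermediate subspace. I will check it using the eigenvectors of $Q_\infty$ together with the explicit $K(0)$, possibly via the Birkhoff asymptotic expansion. As a fallback, I will instead show $q_n$ grows at the dominant rate directly from its closed-form positive hypergeometric sum. The uniformity in $(A,B)$ is harmless, since the constant $\lambda$ only depends linearly on $(A,B)$ and does not affect the exponent.
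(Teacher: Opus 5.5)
Your route is essentially the paper's. Because $\rho_{\mathrm{err}}K(0)\mathcal G_0^{\mathsf T}=0$, the leading dual mode cancels in the error but not in $q_n$. Then $g$ is the log-ratio between the separated recessive root of $Q_\infty$ and the next modulus, certified by interval arithmetic. Writing the error as a minor containing $\mathcal G_n$ is the same computation as the paper's dual-walk cancellation, since $(Y_0\times Z_0)\Phi_Q(n)^{-\mathsf T}=(Y_n\times Z_n)/\det\Phi_Q(n)$.

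\textbf{A false premise in Step~2.} The roots of $Q_\infty$ do not have distinct moduli. Lemma~\ref{lem:z34-recessive} gives a complex-conjugate dominant pair $\approx(2.803\pm6.487i)\cdot10^{25}$ and one separated recessive root $\approx7.203\cdot10^{-24}$. So Remark~\ref{rem:separated-spectrum} cannot be invoked, and there is no intermediate subspace; in your labeling $|\lambda_2|=|\lambda_3|$. What you need, and what the paper uses, is only the dichotomy between the recessive line and the two-dimensional dominant subspace. A block Perron--Levinson argument supplies this for a simple root of strictly smallest modulus.

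Your estimates survive the correction. On $\Lambda^2$ the roots are $|\lambda_{\mathrm{dom}}|^2$, which is real, simple and dominant, together with the conjugate pair $\lambda_{\mathrm{rec}}\lambda_{\mathrm{dom}},\lambda_{\mathrm{rec}}\bar\lambda_{\mathrm{dom}}$. Hence:
\begin{itemize}
\item the error minor is $O(|\lambda_{\mathrm{rec}}\lambda_{\mathrm{dom}}|^{n+o(n)})$;
\item the $q_n$ minor has exact size $|\lambda_{\mathrm{dom}}|^{2n+o(n)}$ when nondegenerate;
\item your $g=\log|\lambda_2/\lambda_1|$ (recessive root indexed $1$) equals the paper's $g=\log|\lambda_1/\lambda_3|\approx112.8075$ (recessive root indexed $3$).
\end{itemize}

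\textbf{Your main obstacle is misplaced.} The row condition is $\rho_qK(0)\mathcal G_0^{\mathsf T}\neq0$, equivalently $\mathcal G_0\notin\operatorname{span}(Y_0',Z_0')$. This is already known: the value is $12$, from Lemma~\ref{lem:z34-limits}.

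Non-exceptionality in Theorem~\ref{thm:dual-convergence} concerns the \emph{column}, here the moving vector $h_n$. In the paper's gauge $T(n)=\operatorname{diag}(1,n+1,(n+1)^2)$, the vector $T(n)^{-1}h_n$ tends projectively to $(1,10,100)^{\mathsf T}$. You must check that this vector is not orthogonal to the right eigenvector of $Q_\infty$ for the recessive root. This mirrors the cyclic-vector check in Appendix~\ref{app:hh-rate}, and the paper does not spell it out.

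Your fallback through the positive sum $\mathbf q_n=\sum_kB_{n,k}$ settles it cleanly. Compare its Stirling rate with the rate of $\kappa_n|\lambda_{\mathrm{rec}}|^{-n}$.

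\textbf{A minor point.} Scaling $Q(n)$ by $c(n)$ does not give a finite limit. You need the diagonal gauge $T(n)$, which changes only polynomial factors.
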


\begin{proof}
See Appendix \ref{app:zeta34-proof}
\end{proof}
\paragraph{Controlling the arithmetic growth.}
\label{par: gcd z34}
The symbolic verifications and exact computations supporting this arithmetic argument are provided in the supplementary material.

Finding an integer form for the denominators is straightforward.
\[
q_n=12F_n = 12\sum_{k=0}^{14n}t_{n,k},\qquad  
t_{n,k}=\frac{(-14n)_k(-15n)_k(1+12n)_k(1+13n)_k(1+14n)_k}{k!\,(1+n)_k(1+2n)_k(1+3n)_k(1+4n)_k}.
\]
\begin{lemma}[Denominator sum presentation]\label{prop:z34-q-iv}
Let $C_n$ be the expression
\[
C_n=\binom{12n}{10n}\binom{13n}{10n}
       \binom{14n}{10n}\binom{15n}{n}
\]
Then
\[
 \mathbf q_n=\frac{1}{12} C_nq_n
 =\sum_{k=0}^{14n}B_{n,k}, \qquad B_{n,k}=t_{n,k}C_{n}\in\mathbb Z_{>0}
\]
\end{lemma}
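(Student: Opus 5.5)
The identity $\mathbf q_n=\frac1{12}C_nq_n=\sum_k B_{n,k}$ is immediate from $q_n=12F_n=12\sum_k t_{n,k}$, once we note that the ${}_5F_4$ series terminates at $k=14n$ because of the factor $(-14n)_k$. The real content is that every $B_{n,k}=t_{n,k}C_n$ is a positive integer, and the plan is to rewrite $t_{n,k}$ entirely in factorials.

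\textbf{Sign.} Use $(-m)_k=(-1)^k m!/(m-k)!$ for $k\le m$. The two negative upper parameters contribute $(-1)^{2k}=1$, and every other Pochhammer symbol is positive. Hence $t_{n,k}>0$ for $0\le k\le 14n$.

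\textbf{Factorial form.} Write $(1+jn)_k=(jn+k)!/(jn)!$. This gives
\[
t_{n,k}=\frac{(14n)!\,(15n)!}{(14n-k)!\,(15n-k)!\,k!}\cdot
\frac{(12n+k)!\,(13n+k)!\,(14n+k)!}{(12n)!\,(13n)!\,(14n)!}\cdot
\frac{n!\,(2n)!\,(3n)!\,(4n)!}{(n+k)!\,(2n+k)!\,(3n+k)!\,(4n+k)!}.
\]
Expanding $C_n$ gives
\[
C_n=\frac{(12n)!\,(13n)!\,(14n)!\,(15n)!}{(10n)!^3\,(2n)!\,(3n)!\,(4n)!\,n!\,(14n)!}.
\]
The factors $(12n)!\,(13n)!\,(14n)!\,n!\,(2n)!\,(3n)!\,(4n)!$ cancel, leaving
\[
B_{n,k}=\frac{(15n)!^2\,(12n+k)!\,(13n+k)!\,(14n+k)!}{(10n)!^3\,k!\,(14n-k)!\,(15n-k)!\,(n+k)!\,(2n+k)!\,(3n+k)!\,(4n+k)!}.
\]
I would double-check this cancellation symbolically before relying on it.

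\textbf{Integrality.} The goal is to group $B_{n,k}$ into a product of binomial and multinomial coefficients, or, failing that, to prove the Landau-type floor inequality
\[
2\lfloor 15x\rfloor+\lfloor 12x+y\rfloor+\lfloor 13x+y\rfloor+\lfloor 14x+y\rfloor
\ \ge\ 3\lfloor 10x\rfloor+\lfloor y\rfloor+\lfloor 14x-y\rfloor+\lfloor 15x-y\rfloor+\sum_{j=1}^{4}\lfloor jx+y\rfloor
\]
for all real $x\ge0$ and $0\le y\le 14x$. Here $x=n/p^i$ and $y=k/p^i$, and the inequality would be applied to each term of Legendre's formula.

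For the grouping, I would first try pairing
\[
\frac{(12n+k)!}{(10n)!\,(2n+k)!}=\binom{12n+k}{10n},\quad
\frac{(13n+k)!}{(10n)!\,(3n+k)!}=\binom{13n+k}{10n},\quad
\frac{(14n+k)!}{(10n)!\,(4n+k)!}=\binom{14n+k}{10n},
\]
which leaves
\[
\frac{(15n)!^2}{k!\,(14n-k)!\,(15n-k)!\,(n+k)!}.
\]
This remainder equals
\[
\binom{15n}{k}\cdot\frac{(15n)!}{(14n-k)!\,(n+k)!}=\binom{15n}{k}\binom{15n}{n+k},
\]
since $(14n-k)+(n+k)=15n$. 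So $B_{n,k}$ is a product of five binomial coefficients. That settles integrality and positivity directly, with the floor inequality kept only as a fallback.

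\textbf{Main obstacle.} The main risk is bookkeeping: the reading of the parameters of $s_0+nt_0$, and whether the normalization $C_n$ in the statement exactly matches the factorial cancellation. I would verify the resulting closed form numerically for small $n$ against $F_n$ computed from the CMF walk.
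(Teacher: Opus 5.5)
Your proposal is correct and follows the same route as the paper: rewrite $t_{n,k}$ in factorials and check that $C_n$ clears the denominators. The paper only asserts this in one line, whereas you make it explicit. Your factorization
\[
B_{n,k}=\binom{15n}{k}\binom{15n}{n+k}\binom{12n+k}{10n}\binom{13n+k}{10n}\binom{14n+k}{10n}
\]
checks out for $0\le k\le 14n$ (for example, $B_{1,1}/B_{1,0}=t_{1,1}=4777.5$). It settles integrality and positivity at once, so the floor-inequality fallback is not needed.
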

\begin{proof}
    The binomial factor $C_n$ clears the denominators of the summands $t_{n,k}$.
\end{proof}
For $s=3,4$ let
\begin{equation*}
  \mathbf p_{s,n}=\sum_{k=-n}^{15n}
  \Bigl(B_{n,k}H^{(s)}_{4n+k}-\frac1{s-1}\,B'_{n,k}H^{(s-1)}_{4n+k}\Bigr),
\end{equation*}
where $B'_{n,k}$ is the derivative of $B_{n,k}$ in $k$
(Appendix~\ref{app:zeta34-proof}).

\begin{lemma}[Numerator sum presentation]\label{prop:z34-p-iv}
For $s=3,4$ and all $n\ge0$, $\mathbf p_{s,n}=\frac{C_n}{12}\,p_{s,n}$.
\end{lemma}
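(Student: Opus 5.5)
We prove $\mathbf p_{s,n}=\frac{C_n}{12}p_{s,n}$ with the recurrence argument of Section~\ref{cmf proof stratergy}. Both sides are rational sequences in $n$, so it suffices to show that they satisfy the same holonomic recurrence and agree at enough initial indices. The right-hand side is $\frac{C_n}{12}\rho_{p,s}\Phi_M(n)e_0$. Every row $x$ gives a sequence $x\,\Phi_M(n)e_0$ that satisfies the scalar recurrence read off from the companion form of $M(n)$ (Definition~\ref{def: companion form}); this is the same recurrence of order $s'$ used in Lemma~\ref{lem:nonvanishing}. Multiplying by the hypergeometric-in-$n$ factor $C_n/12$ conjugates this recurrence into another one, whose coefficients are obtained from the ratios $C_{n+j}/C_n$. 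These ratios are rational functions of $n$ and are products of Pochhammer quotients. Call the resulting operator $\mathcal L$. We already know $\mathcal L\mathbf q_n=0$, since $\mathbf q_n=\frac{C_n}{12}q_n$ by Lemma~\ref{prop:z34-q-iv}.

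The core step is to show that $\mathcal L\mathbf p_{s,n}=0$. We would use creative telescoping in the Zeilberger/Ap\'ery style, as in the classical $\zeta(3)$ proofs. We regard $B_{n,k}$ as a hypergeometric term in $(n,k)$, extended to the range $-n\le k\le 15n$ through its rational-function continuation in $k$. Its derivative $B'_{n,k}$ and the harmonic factors $H^{(s)}_{4n+k}$ come from expanding $R(t)=\sum$-type rational functions, so $\mathbf p_{s,n}$ should be the sum of residues at the poles of order $s$ of a rational function $R_n(t)$. Concretely, $B_{n,k}H^{(s)}-\frac{1}{s-1}B'H^{(s-1)}$ is the residue-type coefficient of $B_{n,t}\cdot(\text{polygamma})$, arranged so that $\mathbf p_{s,n}$ is the $\zeta(s)$-free part of $\sum_{t}R_n(t)$.

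Next we would compute a certificate $W_n(t)$, a rational multiple of $R_n(t)$, such that $\mathcal L_nR_n(t)=W_n(t+1)-W_n(t)$. The same certificate that yields $\mathcal L\mathbf q_n=0$ should work here, because the operator acts in $n$ only. Applying the certificate termwise to the Laurent expansions at each pole gives $\mathcal L\mathbf p_{s,n}=$ boundary terms at $k=-n$ and $k=15n+1$. The boundary terms vanish because of the zeros of $B_{n,k}$ and its derivative outside the support. This step is the main obstacle. We have to find and verify $W$ exactly, which we would do symbolically in the supplementary code. We also have to handle the double zeros and poles carefully, so that the derivative term $B'$ does not leave stray boundary contributions at the endpoints $k=-n,\dots,-1$ and $14n<k\le 15n$, where $B_{n,k}$ vanishes but $B'_{n,k}$ may not.

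Finally we match initial values. We compute $\mathbf p_{s,n}$ and $\frac{C_n}{12}\rho_{p,s}\Phi_M(n)e_0$ exactly for $n=0,\dots,s'-1$; at $n=0$ we use $K(0)$ and $\mathcal F_0$. We would also check that the leading coefficient of $\mathcal L$ has no nonnegative integer zeros beyond this range. If it does, we extend the range of checked indices past the last such zero. Uniqueness of solutions to the recurrence with these initial data then gives the identity for all $n\ge0$.
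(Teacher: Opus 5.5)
This is essentially the paper's own proof. The paper likewise shows that both sides satisfy the recurrence coming from the companion form of $M(n)$ (tacitly conjugated by $C_n/12$, which you make explicit) and agree at $n=0,1,2$, and it leaves the fact that the explicit sum $\mathbf p_{s,n}$ satisfies that recurrence to an exact certificate in the supplementary notebook, which is exactly the step you single out.

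When you write that step out, do not justify reusing the certificate on the grounds that $\mathcal L$ acts only in $n$: the harmonic index $4n+k$ also moves with $n$. The clean bookkeeping is $\zeta(s)\mathbf q_n-\mathbf p_{s,n}=c_s(-1)^nC'_n\sum_{t\ge 2n+1}F_n^{(s-2)}(t)$, with $c_3=-\tfrac12$, $c_4=\tfrac16$, and $C'_n$ and the double-pole rational function $F_n(t)$ as in Appendix~\ref{app:zeta3-measure}. Then a single $t$-telescoping certificate for $F_n(t)$ serves both $s=3$ and $s=4$. The only boundary contribution sits at the moving lower limit of the $t$-sum, where the triple zeros of $F_n$ on $[2n+1,10n]$ make it harmless, not at $k=-n$ or $k=15n+1$.
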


\begin{proof}
This is by the recurrence argument. Both sides satisfy the recurrence given by the companion form of $M(n)$, and they agree at $n=0,1,2$, the exact certificate is in the supplied notebook.
\end{proof}

\begin{lemma}[Integer normalization]\label{prop:z34-int} With $D_n=6d_{19n}^4$,
\[
D_n(\mathbf q_n,\mathbf p_{3,n},\mathbf p_{4,n})\in\mathbb Z^3.
\]
\end{lemma}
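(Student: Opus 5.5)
The plan is to bound the denominators term by term in the sum presentations of Lemmas~\ref{prop:z34-q-iv} and~\ref{prop:z34-p-iv}. For $\mathbf q_n$ nothing is needed: Lemma~\ref{prop:z34-q-iv} gives $\mathbf q_n\in\mathbb Z$, so $D_n\mathbf q_n\in\mathbb Z$. For $\mathbf p_{s,n}$ we look separately at the two kinds of summand, $B_{n,k}H^{(s)}_{4n+k}$ and $\frac1{s-1}B'_{n,k}H^{(s-1)}_{4n+k}$. We show that $d_{19n}^{s}$ clears the first, and that $(s-1)\,d_{19n}^{s}$ clears the second. Since $s\le4$ and $(s-1)\in\{2,3\}$ divides $6$, the claim follows.

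\emph{Harmonic factors.} For $-n\le k\le 15n$ the index $m=4n+k$ lies in $[3n,19n]$. Hence $H^{(j)}_m=\sum_{i\le m}i^{-j}$ has denominator dividing $d_m^{\,j}\mid d_{19n}^{\,j}$. Together with $B_{n,k}\in\mathbb Z$ from Lemma~\ref{prop:z34-q-iv}, this gives $d_{19n}^sB_{n,k}H^{(s)}_{4n+k}\in\mathbb Z$ for all $k$ in the range where $B_{n,k}\neq0$. The boundary indices $-n\le k<0$ and $14n<k\le15n$ are exactly where the extended $B_{n,k}$ vanishes. There the first summand is $0$, and only the derivative term survives.

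\emph{The derivative term (main step).} Write $B_{n,t}=C_n\,R_n(t)$, where $R_n(t)$ is the rational function of $t$ obtained by rewriting the Pochhammer quotient $t_{n,t}$ as a ratio of products of linear factors $(t+j)$. The shifts $j$ range over an interval of length $\le 19n$ because of the normalization $4n+k$. Then
\[
B'_{n,k}=B_{n,k}\sum_j\frac{\pm1}{k+j}
\]
at points where $B_{n,k}\ne0$. Each $k+j$ is a nonzero integer of absolute value $\le19n$, so $d_{19n}B'_{n,k}\in\mathbb Z$. At the points where $B_{n,t}$ has a simple zero (the extended ranges $k<0$ and $k>14n$), $B'_{n,k}$ equals $C_n$ times the product of the remaining factors with the vanishing factor deleted. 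I expect this to be the main obstacle: one must check that $C_n$ still clears that product up to a factor $d_{19n}$. I would first try to match the deleted factor against the binomials in $C_n$ by a direct computation, and otherwise argue $p$-adically via Legendre's formula for $p\le19n$. Granting this, $d_{19n}^{s-1}$ from $H^{(s-1)}$ times $d_{19n}$ from $B'$ gives $d_{19n}^{s}\,B'_{n,k}H^{(s-1)}_{4n+k}\in\mathbb Z$.

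\emph{Assembly.} Multiplying by the factor $6$ absorbs $\frac1{s-1}$, and $d_{19n}^s\mid d_{19n}^4$ for $s=3,4$. Summing over $k$ then gives $D_n\mathbf p_{s,n}\in\mathbb Z$. The exact ranges of the linear factors of $R_n(t)$ would be confirmed against the supplementary symbolic certificate.
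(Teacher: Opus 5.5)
Your bookkeeping is the same as in the paper's proof. $\mathbf q_n\in\mathbb Z$. The term $B_{n,k}H^{(s)}_{4n+k}$ is cleared by $d_{19n}^s$ because the harmonic indices lie in $[3n,19n]$. The derivative term needs one extra factor of $d$ on top of $d_{19n}^{s-1}$, and the $6$ absorbs $\frac1{s-1}$. The paper supplies the extra factor through $d_{16n}B'_{n,k}\in\mathbb Z$. The genuine gap is your justification of $d_{19n}B'_{n,k}\in\mathbb Z$. It fails already in the interior range $0\le k\le 14n$, not only at the simple zeros you flagged as the main obstacle.

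First, $t\mapsto t_{n,t}$ is a Gamma quotient, not a rational function. After reflection it equals $\sin^2(\pi t)/\pi^2$ times a rational function with double poles. Its logarithmic derivative at an integer $0\le k\le14n$ is
\[
\frac{B'_{n,k}}{B_{n,k}}=H_{12n+k}+H_{13n+k}+H_{14n+k}+H_{14n-k}+H_{15n-k}-H_k-H_{n+k}-H_{2n+k}-H_{3n+k}-H_{4n+k}.
\]
This contains $1/m$ for $m$ up to $28n$, so your claim that every $|k+j|\le19n$ is false.

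For example, take $k=14n$. Every prime $p\in(19n,26n]$ appears with coefficient $3$, so even after multiplying by $d_{19n}$ the logarithmic derivative keeps $p$ in its denominator. Integrality of $d_{19n}B'_{n,14n}$ survives only because $v_p(B_{n,14n})=3$. This comes from the factor $(26n)!(27n)!(28n)!$ in $t_{n,14n}$.

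So the missing idea is $p$-adic compensation between $B_{n,k}$ and its logarithmic derivative. For $p>15n$ the compensation is exact. There $v_p(C_n)=0$, and both $v_p(t_{n,k})$ and the coefficient of $1/p$ above equal
\[
\#\{m\in\{12n+k,13n+k,14n+k\}:m\ge p\}-\#\{m\in\{k,n+k,\dots,4n+k\}:m\ge p\}\ge0 ,
\]
hence $v_p(B'_{n,k})\ge0$. Small primes having a power in $(19n,28n]$ are not covered by this count.

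For those primes, the standard Ball--Rivoal/Zudilin lemma on partial-fraction coefficients handles everything. Apply it to $F_n(t)$ of Appendix~\ref{app:zeta3-measure} via $C'_nb_{2n+k}=-(-1)^nB'_{n,k}$. This gives $d_{16n}C'_nb_j\in\mathbb Z$, which is the paper's input. It also treats the boundary indices $k<0$ and $k>14n$ uniformly, since these are simple poles of $F_n$. With that lemma replacing your logarithmic-derivative step, the rest of your assembly goes through.
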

\begin{proof}
$B_{n,k}\in\mathbb Z$ and $d_{16n}B'_{n,k}\in\mathbb Z$, and the harmonic indices in $\mathbf p_{s,n}$ lie in $[3n,19n]$, so $d_{19n}^rH^{(r)}_j\in\mathbb Z$. For a detailed calculation, see the supplementary notebook.
\end{proof}

We can now see how much cancellation is needed to pass the irrationality threshold.
\begin{table}[H]
\centering
\renewcommand{\arraystretch}{1.25}
\small
\begin{tabular}{llr}
\toprule
& source & rate\\
\midrule
growth of $\mathbf q_n=\sum_kB_{n,k}$ & Stirling, largest term & $\sigma<65.4516$\\
normalization $D_n=6d_{19n}^4$ & prime number theorem & $76$\\
\midrule
growth of $D_n\mathbf q_n$ & sum of the two & $<141.4516$\\
rate of convergence $g$ &  asymptotics & $>112.8075$\\
\midrule
\textbf{common divisor needed} & difference & $\boldsymbol{>28.6441}$\\
\bottomrule
\end{tabular}
\end{table}
To account for more cancellations, we present these integers via the Dual representation. This gives an alternative expression for the same integer quantities.

Looking at the dual CMF $\mathcal{Q}$, write the initial state as
\[\mathcal G_0=\zeta(4)\,\mathbf a_0+\zeta(3)\,\mathbf b_0+\mathbf d_0,\] with the rational vectors
\[
  \mathbf a_0=(3,0,0),\qquad \mathbf b_0=(-2,-2,0),\qquad \mathbf d_0=(1,1,1),
\]

The kernel of $G^{5,2}_{5,5}$ along the trajectory is the rational function
\[
  R_n(s)=(-1)^n\,\frac{(1+2n-s)_{14n}\,(1+n-s)_{16n}}
  {(s)_{11n+1}\,(s+n)_{11n+1}\,(s+2n)_{8n+2}} .
\]

\begin{lemma}[Dual sum presentation]
\label{lem: dual sum}
Let $J_n=\max(12n,10n+1)$ and define
\[
  (-s)^kR_n(s)
  =\sum_{j=0}^{J_n}\sum_{i=1}^3
    \frac{c^{(k)}_{n,j,i}}{(s+j)^i}.
\]
    For $k=0,1,2$ define
\begin{align*}
  \mathbf a_{n,k}&=3\sum_jc^{(k)}_{n,j,3},\\
  \mathbf b_{n,k}&=2\sum_jc^{(k)}_{n,j,2},\\
  \mathbf d_{n,k}&=-\sum_j\sum_{i=1}^3i\,c^{(k)}_{n,j,i}\,H^{(i+1)}_{2n+j}.
\end{align*}
Then for all $n\ge0$,
\[
  \mathbf a_n=\mathbf a_0\,\Phi_Q(n),\qquad \mathbf b_n=\mathbf b_0\,\Phi_Q(n),\qquad
  \mathbf d_n=\mathbf d_0\,\Phi_Q(n).
\]
\end{lemma}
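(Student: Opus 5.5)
We prove the Dual sum presentation by the recurrence argument of Section~\ref{cmf proof stratergy}, exactly as for Lemma~\ref{prop:z34-p-iv}: show that the three vectors $\mathbf a_n,\mathbf b_n,\mathbf d_n$ (with coordinates indexed by $k=0,1,2$) satisfy $\mathbf x_{n+1}=\mathbf x_n\,Q(n)$, and that at $n=0$ they equal $\mathbf a_0,\mathbf b_0,\mathbf d_0$. Since $Q(n)$ is invertible along the admissible trajectory, these two facts give $\mathbf x_n=\mathbf x_0\,\Phi_Q(n)$ by induction.

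\textbf{The structural source of the identity.} The state $\mathcal G_n=(G_n,\theta G_n,\theta^2G_n)$ is given by a Mellin--Barnes integral of the kernel $R_n(s)$. Applying $\theta^k$ to $z^{s}$ inserts the factor $(-s)^k$ (up to the orientation convention), so $\theta^kG_n$ is the integral of $(-s)^kR_n(s)$. Closing the contour and summing residues at the poles $s=-j$ with $j\ge0$ works because $\deg$ of the numerator is smaller than that of the denominator by enough for convergence at $z=1$. This is the content of the choice of $J_n$: the partial-fraction range covers all poles of $(s)_{11n+1}(s+n)_{11n+1}(s+2n)_{8n+2}$, and the maximal pole order is $3$, since the three Pochhammer blocks overlap on at most triples of integers.

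Each residue of order $i$ produces a derivative of the remaining numerator factor at an integer point. After the standard rewriting, $\sum_j \sum_i c^{(k)}_{n,j,i}/(s+j)^i$ summed against the sinusoidal factor yields, per pole order, the combinations
\[
\sum_j c_{n,j,3}\,\zeta(4)\cdot 3,\qquad \sum_j c_{n,j,2}\,\zeta(3)\cdot 2,
\]
together with the rational tails $\sum_{m\le 2n+j}$ expressed through $H^{(i+1)}_{2n+j}$. This matches the displayed $\mathbf a_{n,k},\mathbf b_{n,k},\mathbf d_{n,k}$. The decomposition $\mathcal G_n=\zeta(4)\mathbf a_n+\zeta(3)\mathbf b_n+\mathbf d_n$ is then unique because $1,\zeta(3),\zeta(4)$ appear with rational coefficients. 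The CMF relation $\mathcal G_{n+1}=\mathcal G_n Q(n)$, with $Q(n)$ rational, must therefore hold separately on each coefficient vector. This separation requires independence, however, which is exactly what we are trying to prove, so we do \emph{not} use it. It only explains why the identity is true.

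\textbf{The actual proof.} Instead we argue purely rationally. Each of $\mathbf a_{n,k},\mathbf b_{n,k},\mathbf d_{n,k}$ is a definite sum of a hypergeometric-type term in $(n,j)$, possibly weighted by a harmonic number. We apply creative telescoping, or equivalently verify certificates, to obtain that each sequence satisfies the scalar recurrence of the companion form of $Q(n)$ (Definition~\ref{def: companion form}). By Lemma~\ref{lem:nonvanishing}-style backward uniqueness, whose trailing coefficient does not vanish for $n\ge0$, it then suffices to check agreement with $\mathbf x_0\Phi_Q(n)$ at $n=0,1,2$. The $n=0$ check is direct: $R_0(s)=1/(s\cdot s\cdot (s)_2)=1/(s^3(s+1))$ gives, after partial fractions, $\mathbf a_{0,0}=3$, $\mathbf b_{0,0}=-2$, etc.

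\textbf{Main obstacle.} The hard step is the harmonic-weighted sums $\mathbf d_{n,k}$. Their telescoping certificates are large, and the $H^{(i+1)}_{2n+j}$ weights require handling the boundary terms at $j=0$ and $j=J_n$. Note also that $J_n$ switches between $12n$ and $10n+1$ at $n=0$, so the small cases need separate treatment. I would first try to avoid this by differentiating the identity for $\mathbf a,\mathbf b$ in a parameter (the $H$ terms arise as derivatives of Pochhammer symbols). If that fails, I would fall back on exact certified verification of the recurrence, as in the supplementary notebook.
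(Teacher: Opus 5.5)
Your proposal matches the paper's proof: for each coordinate $k$ the paper computes the order-three recurrence from the companion form of $Q(n)$, certifies exactly that $\mathbf a_{n,k},\mathbf b_{n,k},\mathbf d_{n,k}$ satisfy it, and matches the initial values. Your $n=0$ check from $R_0(s)=1/(s^3(s+1))$ is correct, and, as you note yourself, the Mellin--Barnes and independence discussion is only motivation. One small correction: to propagate agreement at $n=0,1,2$ forward you need the \emph{leading} coefficient of each recurrence to be nonzero for all $n\ge0$ (or extra checks past its integer zeros), not the trailing coefficient, which is the condition for running the recurrence backward in Lemma~\ref{lem:nonvanishing}.
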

\begin{proof}
    We use the recurrence trick again. We show that both sides satisfy the same order-three recurrence. We first compute the recurrence for the $k$-th coefficients via companion forms coming from the trajectory matrix $Q(n)$. The exact certificate in the supplied notebook.
\end{proof}

\begin{lemma}[A second presentation]\label{prop:z34-dual}
Let 
\[
\tilde u_n=(1,10n+1,(10n+1)^2)
\]
and 
 \[
  \Lambda_n=\frac{(8n)!\,(9n)!^2\,(11n)!^3\,(12n)!^3\,(13n)!^2}
                 {6\,(16n)!^2\,(17n)!^2\,(18n)!^2\,(19n)!}.
\]
Then
\[
  \mathbf q_n=\Lambda_n\det\begin{pmatrix}\mathbf b_n\\\mathbf a_n\\\tilde u_n\end{pmatrix},\quad
  \mathbf p_{3,n}=-\Lambda_n\det\begin{pmatrix}\mathbf d_n\\\mathbf a_n\\\tilde u_n\end{pmatrix},\quad
  \mathbf p_{4,n}=\Lambda_n\det\begin{pmatrix}\mathbf d_n\\\mathbf b_n\\\tilde u_n\end{pmatrix}.
\]
\end{lemma}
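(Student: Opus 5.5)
The natural route is Lemma~\ref{lem:minors} applied to the dual pairing of Lemma~\ref{prop:z34-connection}, with $r'=3$, $A=M$, $B=Q$, $e=e_0=K(n)h_n$. The three determinants in the statement have the form $\det(Y_n;Z_n;h)$ with $Y_n,Z_n$ dual solutions and last row $\tilde u_n$. Since $h_n$ is $\tilde u_n^{\mathsf T}$ divided by the scalar $\prod_{j=8}^{12}(jn+1)$, that factor will be absorbed into the final normalization. By Lemma~\ref{lem: dual sum}, $\mathbf a_n,\mathbf b_n,\mathbf d_n$ are the solutions $Y_0\Phi_Q(n)$ with initial rows $\mathbf a_0=(3,0,0)$, $\mathbf b_0=(-2,-2,0)$, $\mathbf d_0=(1,1,1)$. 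So for each of $\rho_q,\rho_{p,3},\rho_{p,4}$ I would verify the hypothesis $\rho K(0)=\lambda\,(Y_0\times Z_0)$ of Lemma~\ref{lem:minors} for the appropriate pair.

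This first step is pure linear algebra with the recorded $K(0)$. I would compute
\[
\mathbf b_0\times\mathbf a_0=(0,-6,6),\qquad \mathbf d_0\times\mathbf a_0=(0,3,-3),\qquad \mathbf d_0\times\mathbf b_0=(2,-2,0),
\]
and compare them with the products
\[
\rho_qK(0)=(0,0,12),\qquad \rho_{p,3}K(0)=(0,-6,6),\qquad \rho_{p,4}K(0)=(4,-4,0).
\]
The last two are scalar multiples of the corresponding cross products ($\lambda=-2$ and $\lambda=2$). The first is not, so I expect to need a consistency argument rather than a literal match. The difference $\rho_qK(0)-c\,(\mathbf b_0\times\mathbf a_0)$ should pair to zero against $\mathcal G_0$, or more precisely against the relevant intersection data. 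Alternatively, the choice of $K$ may differ by an element annihilated on the span of $\tilde u$-directions. If the naive identity fails, I would instead use the defining relation $e_0=K(n)h_n$: decompose $\rho_qK(0)$ in the basis $\{\mathbf b_0\times\mathbf a_0,\ \mathbf d_0\times\mathbf a_0,\ \mathbf d_0\times\mathbf b_0\}$. By linearity, $q_n$ is then a combination of the three minors, and the extra minors must combine to reproduce the $\mathbf q_n$ formula after the normalization. Ultimately this should be checked in the supplementary notebook.

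Next I would apply Lemma~\ref{lem:minors} to obtain, for each quantity,
\[
\rho\,\Phi_M(n)e_0=\frac{\lambda\kappa_n}{\det\Phi_Q(n)\prod_{j=8}^{12}(jn+1)}\det\begin{pmatrix}Y_n\\Z_n\\\tilde u_n\end{pmatrix}.
\]
Multiplying by $C_n/12$ then gives the bold quantities $\mathbf q_n,\mathbf p_{3,n},\mathbf p_{4,n}$ via Lemmas~\ref{prop:z34-q-iv} and~\ref{prop:z34-p-iv}. It remains to show that the common scalar
\[
\frac{C_n\lambda\kappa_n}{12\,\det\Phi_Q(n)\prod_j(jn+1)}
\]
equals $\pm\Lambda_n$, with signs matching the statement.

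For that I would compute $\kappa_n=\prod_{i<n}c(i)$ and $\det\Phi_Q(n)=\prod_{i<n}\det Q(i)$ in closed form. Both are products of rational functions of $i$ that factor into linear factors $(ai+b)$, so their telescoped products are ratios of Pochhammer symbols, i.e.\ ratios of factorials of multiples of $n$. Together with the binomials in $C_n$, these should collapse to the factorial expression $\Lambda_n$.

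The main obstacle I expect is exactly this bookkeeping. It requires identifying the linear factors of $c(i)/\det Q(i)$ so that the shifted factors produced by the telescoping cancel against $\prod_j(jn+1)$ and the boundary terms. As a safeguard, I would fall back on the recurrence argument used throughout the section. Both sides of each identity solve the order-three recurrence from the companion form of $M(n)$, once the right sides are divided by $\Lambda_n$ and $\Lambda_{n+1}/\Lambda_n$ is recognized as a rational function of $n$ along the lattice. Agreement at $n=0,1,2$, checked by exact computation, would then close the proof.
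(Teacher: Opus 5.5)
Your route is the paper's: Lemma~\ref{lem:minors} applied to the pairing of Lemma~\ref{prop:z34-connection}, then exact linear algebra and a factorial normalization. However, the obstacle you found for $\rho_q$ is an arithmetic slip, and the workaround you propose would not work. With $\mathbf b_0=(-2,-2,0)$ and $\mathbf a_0=(3,0,0)$,
\[
\mathbf b_0\times\mathbf a_0=(0,\,0,\,6),
\]
not $(0,-6,6)$; that vector is $\rho_{p,3}K(0)$. Hence $\rho_qK(0)=(0,0,12)=2\,(\mathbf b_0\times\mathbf a_0)$. The hypothesis of Lemma~\ref{lem:minors} therefore holds literally in all three cases, with $\lambda=2,\,-2,\,2$ for $\mathbf q_n,\mathbf p_{3,n},\mathbf p_{4,n}$. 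These are exactly the signs $+,-,+$ in the statement.

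Your fallback could not have rescued the wrong value. The vectors $(0,-6,6),(0,3,-3),(2,-2,0)$ all lie in the plane orthogonal to $(1,1,1)$, so $(0,0,12)$ is not in their span. In any case, a combination of several minors would not give the single-determinant formula being proved. The case $n=0$ exposes the slip at once: $\tilde u_0=(1,1,1)$, $\Lambda_0=1/6$ and $\mathbf q_0=B_{0,0}=1$ force $(\mathbf b_0\times\mathbf a_0)\cdot\tilde u_0=6$, whereas your vector gives $0$.

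Once $\lambda$ is fixed, the only remaining content is the scalar identity
\[
\Lambda_n=\frac{C_n\,\kappa_n}{6\,\det\Phi_Q(n)\prod_{j=8}^{12}(jn+1)},
\]
where $6=12/\lambda$ accounts for the factor $6$ in $\Lambda_n$. Your plan is the right way to prove it: telescope $c(i)$ and $\det Q(i)$ into ratios of factorials and combine with $C_n$. This is precisely the computation the paper delegates to the supplementary notebook, and with it the recurrence fallback becomes unnecessary.
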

\begin{proof}
The proof follows from the Minors Lemma \ref{lem:minors} and is direct linear algebra calculations carried out in the supplementary notebook.
\end{proof}
We now count primes in the forward and the dual presentations, keeping for each prime the larger count, as described in Section~\ref{par: arithmetic control}.

\begin{lemma}[Common divisor]\label{prop:z34-divisor}
The integers $6d_{19n}^{4}\,(\mathbf q_n,\mathbf p_{3,n},\mathbf p_{4,n})$ have a common divisor $\mathcal D_n$ such that
\[
\liminf_{n\to\infty}\frac1n\log\mathcal D_n\ge30.3467.
\]

\end{lemma}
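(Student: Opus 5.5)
We work prime by prime and then sum via the prime number theorem, as sketched in the arithmetic-control paragraph of Section~\ref{cmf proof stratergy}. For each prime $p>\sqrt{19n}$ (small primes contribute $e^{o(n)}$ and are discarded), we bound $v_p$ of each of the three integers $6d_{19n}^4\mathbf q_n$, $6d_{19n}^4\mathbf p_{3,n}$, $6d_{19n}^4\mathbf p_{4,n}$ from below in two independent ways. We then take the maximum of the two lower bounds and the minimum over the three integers. This produces a step function $\gamma(x)$ of $x=n/p$, periodic-type in the fractional parts $\{jx\}$, with $v_p(\mathcal D_n)\ge\gamma(n/p)$. The PNT then gives
\[
\frac1n\log\mathcal D_n\to\int_0^\infty\gamma(x)\,\frac{dx}{x^2},
\]
and the claim reduces to evaluating this integral numerically (piecewise constant, so exactly computable) and checking it exceeds $30.3467$.

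\emph{Forward presentation.} Here we use Lemmas~\ref{prop:z34-q-iv}, \ref{prop:z34-p-iv} and \ref{prop:z34-int}. For $p>\sqrt{19n}$, $v_p(B_{n,k})$ is a sum of floor expressions in $k/p$ and $n/p$ via Legendre's formula. The harmonic terms $H^{(s)}_{4n+k}$ cost at most $s$ factors of $p$, and only when $p\le 4n+k$; $B'_{n,k}$ loses at most one. The total $v_p(d_{19n}^4\cdot\text{term})$ is therefore at least $4\cdot[p\le19n]$ plus $v_p(B_{n,k})$ minus the harmonic loss. Minimizing over $k$, which only depends on $\{k/p\}$ ranging over $[0,1)$ with $n/p$ fixed, gives $\gamma_F(n/p)$. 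Since $\mathbf p_{s,n}$ is a $\mathbb Z$-combination after normalization, a lower bound on each summand's valuation bounds the sum.

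\emph{Dual presentation.} Here we use Lemmas~\ref{lem: dual sum} and \ref{prop:z34-dual}. The coefficients $c^{(k)}_{n,j,i}$ are partial-fraction residues of $R_n(s)$ at $s=-j$. Each is a ratio of factorials, times the elementary symmetric derivatives of order $3-i$, which cost at most $3-i$ powers of $p$ with $d_{19n}$-type denominators. Their valuations are again floor-function expressions in $j/p$, $n/p$. The determinant is trilinear in rows of $\mathbf a_n,\mathbf b_n,\mathbf d_n$ and the polynomial row $\tilde u_n$. We expand it as a sum over pairs $(j,j')$ and bound each product's valuation by the sum of the factors' bounds. We then add $v_p(\Lambda_n)$, which is a floor expression computed from Legendre, and account for the $H^{(i+1)}_{2n+j}$ losses. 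Minimizing over $j,j'$ gives $\gamma_D(n/p)$, and we set $\gamma=\max(\gamma_F,\gamma_D)$.

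\emph{Main obstacle.} The hard step is making the dual bound sharp enough: the naive term-by-term minimum over pairs $(j,j')$ is lossy. I would first try exploiting antisymmetry of the $2\times2$ minors, pairing $(j,j')$ with $(j',j)$, and using the single-pole structure for $i=1$ terms. If the integral still falls short of $30.3467$, I would add a third presentation from a walk decomposition, per item~(3) of the scheme: factor $\Phi_M(n)$ along a lattice path through $s_0+nt_0'$ with blocks whose entries have explicit factorial content. The final numerical integral, and the exhaustive check of the step functions over the finitely many breakpoints in $[0,1)$ for each $x$-interval, would be done by exact computer evaluation as in the supplementary material.
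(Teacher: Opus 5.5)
Your proposal matches the paper's argument: Legendre counts in the forward sum presentation and in the dual-minor presentation of Lemma~\ref{prop:z34-dual}, the per-prime maximum of the two bounds, and a PNT density integral certified by exact computation. The paper records forward alone $<26.29$, dual alone $<18.25$ and the per-prime maximum $\ge30.3467$, so your fallback walk decomposition is not needed; one small correction is that the single-term Legendre formula needs $p^2$ above about $30n$, not $19n$ (factorials such as $(14n+k)!$ with $k\le15n$ and $(17n+j)!$ with $j\le12n$ occur), although discarding the extra primes still costs only $e^{o(n)}$.
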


\begin{proof}
We use Legendre's formula to count the primes in each presentation, and keep for each prime the larger count. The resulting rates are:
\begin{center}
\renewcommand{\arraystretch}{1.25}
\small
\begin{tabular}{lr}
\toprule
count & rate\\
\midrule
needed  & $>28.6441$\\
\midrule
forward presentation alone & $<26.29$\\
dual presentation alone & $<18.25$\\
\midrule
\textbf{both, best per prime} & $\boldsymbol{\ge30.3467}$\\
margin & $>1.70$\\
\bottomrule
\end{tabular}
\end{center}
Note that neither presentation alone reaches the threshold.
The exact evaluations, including tail bounds for the individual counts and the resulting density certificate, are provided in the supplementary material.
\end{proof}

\paragraph{Nonvanishing.}

\begin{lemma}\label{lem:z34-nonvanishing}
For $(A,B)\in\mathbb Q^2\setminus\{(0,0)\}$ and $\alpha=A\zeta(3)+B\zeta(4)$, the errors
\[
  L_n=\alpha\,q_n-Ap_{3,n}-Bp_{4,n}
\]
do not vanish at three consecutive indices.
\end{lemma}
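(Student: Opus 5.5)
The plan is to apply Lemma~\ref{lem:nonvanishing} to the forward walk $v_n=\Phi_M(n)e_0$, with order $s=3$. All three sequences $q_n$, $p_{3,n}$, $p_{4,n}$ have the form $x\,v_n$ for a fixed row $x$, so $L_n=x_{A,B}\,v_n$ with
\[
  x_{A,B}=\alpha\rho_q-A\rho_{p,3}-B\rho_{p,4}.
\]
We take $\rho_1=\rho_q$, $\rho_2=\rho_{p,3}$, $\rho_3=\rho_{p,4}$. Then $x_{A,B}$ lies in their span, and it is nonzero whenever $(A,B)\neq(0,0)$: $\rho_{p,3}$ and $\rho_{p,4}$ are visibly independent, and $\rho_q$ has a nonzero first coordinate while the other two rows vanish there. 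So the coefficient vector $(\alpha,-A,-B)$ is nonzero, and the rows are linearly independent.

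Two hypotheses of the lemma remain. First, the scalar sequences $x\,v_n$ must satisfy a common recurrence of order $3$ whose trailing coefficient has no zero at integers $n\ge0$. We would obtain this recurrence from the companion form of $M(n)$ (Definition~\ref{def: companion form}); it is the same recurrence already used in Lemma~\ref{prop:z34-p-iv}. $M(n)$ is $4\times4$, but the dual pairing of Lemma~\ref{prop:z34-connection} shows that $e_0=K(n)h_n$ and that $\Phi_M(n)K(n)=\kappa_nK(0)\Phi_Q^\vee(n)$. Hence
\[
  v_n=\kappa_n K(0)\,\Phi_Q^\vee(n)\,h_n,
\]
so every $x\,v_n$ lives in a rank-$3$ system. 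This is why an order-$3$ recurrence is expected; equivalently, the rank-$3$ block $Q$ drives the dynamics. We would compute the recurrence explicitly as a certificate. Its trailing coefficient is a polynomial in $n$ (after clearing denominators), and we would check that it has no nonnegative integer roots, for instance by factoring it into linear factors of the form $(jn+1)$ and similar ones, which have only negative rational roots.

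Second, the $3\times3$ matrix $[\rho_q;\rho_{p,3};\rho_{p,4}]\,(v_0\ v_1\ v_2)$ must be invertible. Since $v_0=e_0$, its first column is $(12,0,0)^{\mathsf T}$. The matrix therefore reduces to a $2\times2$ minor built from the entries of $\rho_{p,3}$ and $\rho_{p,4}$ applied to $v_1=M(0)e_0$ and $v_2=M(0)M(1)e_0$. This is exactly the determinant of the initial values $(p_{3,n},p_{4,n})$ for $n=1,2$, a finite exact rational computation, which we would verify to be nonzero. With both hypotheses in place, Lemma~\ref{lem:nonvanishing} gives the claim directly.

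The main obstacle is the trailing-coefficient condition. The companion form obtained via a cyclic vector may introduce apparent singularities, and the natural recurrence could have a trailing coefficient vanishing at some small $n$. If that happens, we would first try a different cyclic vector or a different normalization of the recurrence. Failing that, we would handle the finitely many bad indices by direct computation: verify that $(L_n,L_{n+1},L_{n+2})$ cannot all vanish there by checking that the corresponding $3\times3$ matrices $[\rho_i]\,(v_n\ v_{n+1}\ v_{n+2})$ are invertible at those $n$. This suffices because $L_n$ is a nonzero combination of the rows $\rho_i$ applied to $v_n$.
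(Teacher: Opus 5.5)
Your proposal is correct and is essentially the paper's proof: apply Lemma~\ref{lem:nonvanishing} to $v_n=\Phi_M(n)e_0$ with the rows $\rho_q,\rho_{p,3},\rho_{p,4}$. The ingredients are the invertibility of the $3\times3$ matrix of initial values (which, as you note, reduces to $12$ times the $2\times2$ minor of $(p_{3,n},p_{4,n})$ at $n=1,2$), a recurrence that can be run backwards for all $n\ge0$, and $\alpha\rho_q-A\rho_{p,3}-B\rho_{p,4}\ne0$. Your use of the dual pairing, $v_n=\kappa_nK(0)\Phi_Q^\vee(n)h_n$, to explain why the common recurrence has order $3$ rather than the a priori order $4$ of a $4\times4$ companion form clarifies a point the paper leaves implicit.
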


\begin{proof}
The sequences $q_n,p_{3,n},p_{4,n}$, and hence $L_n$, satisfy the recurrence given by the companion form of $M(n)$. The $3\times3$ matrix of their values at $n=0,1,2$ is invertible, and the recurrence can be run backwards at every index $n\ge0$. Since
\[
\alpha\rho_q-A\rho_{p,3}-B\rho_{p,4}\ne0,
\] Lemma~\ref{lem:nonvanishing} gives the claim.
\end{proof}
\paragraph{Proof of Theorem \ref{thm:intro-zeta34}.}

We now conclude with a proof of the main theorem.
\begin{proof}[Proof of Theorem \ref{thm:z34}]
Without loss of generality, we may assume $A, B$ to be integers. Let $\alpha=A\zeta(3)+B\zeta(4)$ and consider the approximations 
\[
r_n=(Ap_{3,n}+Bp_{4,n})/q_n,
\] 
with the integer representatives
\[
  \widetilde q_n=\frac{D_n\mathbf q_n}{\mathcal D_n},\qquad
  \widetilde p_n=\frac{D_n(A\mathbf p_{3,n}+B\mathbf p_{4,n})}{\mathcal D_n},
  \qquad D_n=6d_{19n}^4 .
\]
The four claims hold:
\begin{enumerate}
\item the limit is $\alpha$ (Lemma~\ref{lem:z34-limits}).
\item the rate of convergence is $g>112.8075$
  (Lemma~\ref{lem:z34-selected-error}).
\item the representatives are integers, and they grow at rate
  $h<141.4516-30.3467<111.105$ 
  (Lemma~\ref{prop:z34-divisor}).
\item the errors cannot vanish at three consecutive indices
  (Lemma~\ref{lem:z34-nonvanishing}).
\end{enumerate}
The approximation exponent (Definition~\ref{def:delta}) is $\delta=\frac gh-1>0.015$. Hence $\alpha$ is irrational, and 
\[\mu(\alpha)\le\frac{g}{g-h}<
\frac{112.8075}{112.8075-111.105}
=66.2599<66.3.
\]
\end{proof}

\subsection{A Bound for the Irrationality Measures of
\texorpdfstring{$\zeta(2)$ and $\zeta(3)$}{ζ(2) and ζ(3)}}
\label{subsec:z23}
We use the CMF proof method to prove the following theorem.

\begin{theorem}[Irrationality measures]\label{thm:z23-measures}
The irrationality measures (Definition~\ref{def:irrationality-measure}) of
$\zeta(2)$ and $\zeta(3)$ satisfy
\[
  \mu\bigl(\zeta(2)\bigr)<4.62675,
  \qquad
  \mu\bigl(\zeta(3)\bigr)<5.4197 .
\]
\end{theorem}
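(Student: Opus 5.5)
Both bounds follow the CMF proof scheme of Section~\ref{cmf proof stratergy}, exactly as for Theorem~\ref{thm:z34}, applied to two trajectories. For $\zeta(2)$ we use the data of Theorem~\ref{thm:cmf2}: the ${}_4F_3(1)$ CMF with $s=(0,1,1,1;1,1,1)$ and $t=(-11,10,9,11;2,1,3)$. For $\zeta(3)$ we reuse the ${}_5F_4(1)$ trajectory of Theorem~\ref{thm:cmf1}, now with $B=0$, where the target is $\zeta(3)$ alone. In each case we produce sequences $q_n,p_n$ from rows $\rho_q,\rho_p$ applied to $\Phi_M(n)e_0$. We prove an error rate $g$ and a reduced-denominator rate $h$. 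We then conclude from Remark~\ref{rem:mu-from-delta} that $\mu\le g/(g-h)$, which requires $\delta=g/h-1>0.2757$ for $\zeta(2)$ (since $1+1/0.2757<4.62675$) and $\delta>0.226$ for $\zeta(3)$ (since $1+1/0.2266<5.4197$).

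\textbf{Step 1: duality and the limit.} For $\zeta(2)$ we construct a dual pairing (Definition~\ref{def:dual-pairing}) between $M(n)=\mathcal M_t(s+nt)$ and a trajectory of a $G^{4,2}_{4,4}(1)$-type CMF obtained from the adjoint equation (Remark~\ref{rem:finding-dual}). The dual state $\mathcal G_0$ should be a vector of $\mathbb Q$-combinations of $1$ and $\zeta(2)$, computed from the residues of the Mellin--Barnes kernel at double poles. We show that $\mathcal G_n$ is recessive by checking that $Q(\infty)$ has eigenvalues of distinct moduli (Remark~\ref{rem:separated-spectrum}), together with a bound on $\mathcal G_n$ from the integral representation. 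Theorem~\ref{thm:dual-convergence} and Proposition~\ref{prop:cmf-initial-limit} then identify $p_n/q_n\to\zeta(2)$. For $\zeta(3)$, Lemmas~\ref{prop:z34-connection} and~\ref{lem:z34-limits} already supply the limit.

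\textbf{Step 2: the error rate.} We compute the characteristic polynomial of the normalized $Q(\infty)$. The recessive root gives the decay rate $g$ of $L_n=\zeta(k)q_n-p_n$ relative to the normalization $\Lambda_n$. For $\zeta(3)$, the choice $B=0$ lets the $\zeta(4)$-direction drop out, so a strictly larger gap in the spectrum may become usable. Lemma~\ref{lem:z34-selected-error} gives only $g>112.8$ uniformly in $(A,B)$, so we expect a better $g$ here. Nonvanishing at three consecutive indices follows from Lemma~\ref{lem:nonvanishing}, as in Lemma~\ref{lem:z34-nonvanishing}.

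\textbf{Step 3: arithmetic.} This is the main obstacle. We write $q_n$ as a binomial-cleared terminating hypergeometric sum and $p_n$ as the corresponding sum weighted by harmonic numbers $H^{(2)}$ (respectively $H^{(3)}$). These closed forms are proved by the recurrence argument from the companion form, as in Lemmas~\ref{prop:z34-q-iv} and~\ref{prop:z34-p-iv}. The normalization is $d_{cn}^2$, respectively $d_{cn}^3$. Next we derive the dual-minor presentation via Lemma~\ref{lem:minors}, add parameter-permutation presentations, and, if needed, a walk decomposition through an intermediate lattice point. For each prime $p>\sqrt{cn}$ we take the maximal valuation guaranteed across these presentations. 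This gives a step function $\gamma$, and the saving is $\int\gamma(x)x^{-2}\,dx$.

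Neither presentation alone is likely to suffice, so the delicate work is choosing presentations whose floor-function minima complement one another. The integral must then be certified rigorously, including tail bounds, for example with interval arithmetic over the finitely many breakpoints. This certified integral is what determines $h$ and hence the stated constants.
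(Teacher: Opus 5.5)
Your outline follows the paper's scheme, but the two places where you commit to specifics are wrong, and these are exactly where the stated constants are decided.

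First, setting $B=0$ does not enlarge the spectral gap. In both constructions the two non-recessive roots of $Q_\infty$ are a complex-conjugate pair of equal modulus: $\lambda_{1,2}\approx(2.803\pm6.487i)\cdot10^{25}$ for the $\zeta(3)$ trajectory and $-1.287\pm2.207i$ for $\zeta(2)$. So the ``distinct moduli'' check you propose fails. You only need $|\lambda_3|<|\lambda_{1,2}|$, i.e.\ weak Poincar\'e--Perron, together with the integral bound. More importantly, every error row $(\alpha\rho_q-A\rho_{p,3}-B\rho_{p,4})K(0)$ annihilates $\mathcal G_0$ and is therefore governed by that pair. There is no further gap to exploit, and the paper uses the same $g\in(112.80753,112.80754)$ for pure $\zeta(3)$. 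All of the gain is arithmetic. With $g$ fixed, $\mu<5.4197$ forces $h<(1-1/5.4197)g\approx91.9932$. That requires a common divisor of rate about $30.4584$ on top of the normalization $6C'_nd_{19n}^3$, which already saves $19$ over $d_{19n}^4$. The $30.3467$ of Lemma~\ref{prop:z34-divisor} falls short. The paper reaches $30.458547$ only through a new presentation: the partial fractions of $F_n(t)=\prod_{u=0}^{2}(t-(10+u)n)_{10n}\big/\bigl((t+n)_{16n+1}(t+2n)_{14n+1}\bigr)$, matched with $(\mathbf q_n,\mathbf p_{3,n})$ by gamma reflection.

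Second, your arithmetic step only produces termwise bounds (minimum over summands, best over presentations). Both constants need divisibility that no termwise count sees. For primes in certain shells, e.g.\ $p\in(13n/2,8n)$ and five other intervals for $\zeta(2)$, the normalized sum reduces modulo $p$ to $\sum_{x\in\mathbb F_p}P(x)$ with $\deg P<p-1$, and therefore vanishes. For $\zeta(3)$ there are fourteen such shells, plus two windows where a trace identity gives a second factor of $p$. For $\zeta(2)$, the Legendre count from the four-block conservation closed form gives only $12.1284$, and the shells add $233/120$. Without them $h\approx51.04$ and $g/(g-h)\approx5.40$, which is worse than Zudilin's bound. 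Your plan has no mechanism for this saving.

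Finally, your numerical targets are insufficient: $1+1/0.2757\approx4.6271>4.62675$ and $1+1/0.226\approx5.4248>5.4197$. You need $\delta>0.275729$ and $\delta>0.226260$. The paper clears these only in the sixth significant digit: $h<49.098639$ against a threshold of about $49.09866$, and $h<91.99299$ against about $91.9932$. So you must conclude from $g/(g-h)$ directly, with every constant certified to that precision. Also, the paper's dual for $\zeta(2)$ is $G^{4,1}_{4,4}(-1)$, and its state $\mathcal G_0=(\zeta(2)-\zeta(3)-1,\zeta(2)-1,-1)$ contains $\zeta(3)$, not only $1$ and $\zeta(2)$.
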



For $\zeta(2)$ we follow the scheme of Section~\ref{cmf proof stratergy},
with the dual pairing
{\setlength{\fboxsep}{2pt}%
\[
  \boxed{\bigl({}_4F_3(1),\lbl{s_0}\pv{0,1,1,1}{1,1,1},
         \lbl{t_0}\pv{-11,10,9,11}{2,1,3}\bigr)}
  \xleftrightarrow{K(n),c(n)}
  \boxed{\bigl(G^{4,1}_{4,4}(-1),\lbl{s_1}\pv{0,1,1,2}{0,0,0,0},
         \lbl{t_1}\pv{-11,10,9,11}{0,2,1,3}\bigr)}
\]}
Along the two trajectories we have the functions, their states, and their
initial values:
\[
\begin{aligned}
  F_n&={}_4F_3(s_0+n\,t_0;\,1),
  &\qquad
  G_n&=G^{4,1}_{4,4}(-1;\,s_1+n\,t_1),\\[2pt]
  \mathcal F_n&=\bigl(F_n,\ \theta F_n,\ \theta^2F_n\bigr),
  &\qquad
  \mathcal G_n&=\bigl(G_n,\ \theta G_n,\ \theta^2G_n\bigr),\\[2pt]
  \mathcal F_0&=\bigl(1,\ 0,\ 0\bigr),
  &\qquad
  \mathcal G_0&=\bigl(\zeta(2)-\zeta(3)-1,\ \ \zeta(2)-1,\ \ -1\bigr),
\end{aligned}
\]

\begin{lemma}\label{lem:z2-connection}
The trajectory matrices $M(n)$ and $Q(n)$ are related by a dual pairing,
\[
  M(n)\,K(n+1)=c(n)\,K(n)\,Q^\vee(n),
\]
with $K(n)\in\operatorname{GL}_3(\mathbb Q(n))$ and $c(n)\in\mathbb Q(n)$.
\end{lemma}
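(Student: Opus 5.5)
We will establish the dual pairing constructively and then verify it by exact rational-function arithmetic, as was done for Lemma~\ref{prop:z34-connection}. The first step is to write down both trajectory matrices explicitly. $M(n)=\mathcal M_{t_0}(s_0+nt_0)$ comes from the ${}_4F_3(1)$ CMF: compose the elementary contiguity shifts (upper parameters by $+e_i$, lower parameters by $-e_j$) along any admissible lattice path from $s_0+nt_0$ to $s_0+(n+1)t_0$. Conservativity (Definition~\ref{def:CMF}) makes the result path independent. $Q(n)$ is obtained the same way from the $G^{4,1}_{4,4}(-1)$ CMF along $s_1+nt_1$.

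By Remark~\ref{rem:finding-dual}, the Meijer $G$ family solves the adjoint of the ${}_4F_3$ operator. So we expect the forward states $\mathcal F_n$ and the dual states $\mathcal G_n$ to be linked by a bilinear concomitant: for each $n$, a rational $3\times3$ matrix pairs $\theta$-jets of solutions of an operator with those of its adjoint, and the pairing is invariant under the contiguity shifts up to a scalar. This suggests the ansatz that $K(n)$ is, up to scalar normalization, the matrix of this concomitant expressed in the two chosen bases, evaluated at $z=1$ resp.\ $z=-1$. The sign flip in the auxiliary variable reflects the $(-1)^{\text{order}}$ twist of the adjoint.

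Rather than derive $K$ abstractly, we will solve for it. Treat the defining relation
\[
  M(n)K(n+1)=c(n)K(n)Q(n)^{-\mathsf T}
\]
as a linear first-order difference system for the nine entries of $K$, with unknown scalar $c(n)$. To fix $c(n)$, take determinants:
\[
  \det M(n)\,\det K(n+1)=c(n)^3\det K(n)/\det Q(n).
\]
Both $\det M$ and $\det Q$ are explicit products of linear factors in $n$, since every contiguity matrix has a hypergeometric determinant. So we first guess $c(n)$ as the ratio of gamma-type products that makes $\det K$ rational. With $c(n)$ fixed, we search for rational solutions $K(n)$, for instance by the Abramov/van Hoeij algorithm for rational solutions of linear difference systems. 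Alternatively, we can use a denominator ansatz built from the linear factors $(jn+1)$ appearing in $M$ and $Q$, analogous to the $\prod_{j=8}^{12}(jn+1)$ seen in the $\zeta(3),\zeta(4)$ case, and solve a finite linear system for the numerator coefficients.

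The main obstacle is this existence step, namely finding a genuinely rational $K(n)$ together with the correct normalization $c(n)$. Here the difficulty is that the ranks agree ($r=r'=3$), so $K$ must moreover be invertible. If the ansatz fails, I would first try the concomitant construction directly: compute $K$ at generic parameters $x$ for the elementary shifts, verifying $\mathcal M_{e_i}(x)K(x+e_i)=c_i(x)K(x)\mathcal Q_{e_i}(x')^{-\mathsf T}$. Then $K(n)=K(s_0+nt_0)$ automatically intertwines the composed trajectory matrices, with $c(n)$ the product of the $c_i$ along the path. Once a candidate is found, the final verification is mechanical: clear denominators in the identity and check that the resulting polynomial matrix vanishes identically. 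Invertibility of $K(n)$ follows from $\det K(n)$ being a nonzero rational function; we will also record $K(0)$ for use in identifying the limit, as in Lemma~\ref{lem:z34-limits}.
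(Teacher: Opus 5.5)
Your proposal is correct and is essentially the paper's proof. The paper also treats the lemma as a finite identity between explicit rational matrices: it exhibits $K(n)$ and $c(n)$ and checks $M(n)K(n+1)=c(n)K(n)Q^\vee(n)$ by exact rational arithmetic, deferring this to the supplementary material and recording only $K(0)$, whose determinant is $-2\neq0$.

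Your determinant normalization, rational-solution search and concomitant ansatz are only scaffolding for finding $K$ and $c$. The one inaccurate heuristic in that scaffolding is the claim that $z=-1$ comes from a $(-1)^{\text{order}}$ twist of the adjoint. The formal adjoint of $\theta\prod_j(\theta+b_j-1)-z\prod_i(\theta+a_i)$ has no $z\mapsto -z$ twist; the sign is just the factor $(-1)^{p-m-n}=-1$ in the differential equation satisfied by $G^{4,1}_{4,4}$. Nothing in your proof depends on this.
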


\begin{proof}
Exact verification is in the supplementary material. We record the value at the starting point,
$K(0)=\left(\begin{smallmatrix}0&0&-1\\-1&2&-1\\-2&2&0\end{smallmatrix}\right)$,
which is used below.
\end{proof}

\paragraph{Identifying the limit.}
Define the initial conditions
\[
  \rho_q=(2,0,0),\qquad \rho_{p}=(0,2,-1).
\]
\begin{lemma}\label{lem:z2-limits}
Let
\[
  q_n=\rho_q\Phi_M(n)e_0,\qquad
  p_{n}=\rho_{p}\Phi_M(n)e_0
\]
Then the ratios converge:
\[
  \lim_{n\to\infty}\frac{p_{n}}{q_n}=\zeta(2).
\]
\end{lemma}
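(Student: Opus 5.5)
The plan mirrors the proof of Lemma~\ref{lem:z34-limits}. The central step is to show that the function state $\mathcal G_n=(G_n,\theta G_n,\theta^2G_n)$ along $s_1+nt_1$ is the recessive solution of $Q(n)$. Once that is established, Theorem~\ref{thm:dual-convergence} (in the form of Proposition~\ref{prop:cmf-initial-limit}) gives $[\Phi_Q^\vee(n)\mathbf c]\to[\mathcal G_0^{\mathsf T}]$ for non-exceptional columns $\mathbf c$. The dual pairing of Lemma~\ref{lem:z2-connection} then transfers this limit to the forward walk. Telescoping gives $\Phi_M(n)K(n)=\kappa_nK(0)\Phi_Q^\vee(n)$. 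Choosing $h_n$ with $e_0=K(n)h_n$, which is possible since $K(n)\in\operatorname{GL}_3(\mathbb Q(n))$, we obtain
\[
\rho\,\Phi_M(n)e_0=\kappa_n\,\rho K(0)\,\Phi_Q^\vee(n)h_n .
\]
Hence, provided $h_n$ is non-exceptional,
\[
\frac{p_n}{q_n}\longrightarrow\frac{\rho_pK(0)\mathcal G_0^{\mathsf T}}{\rho_qK(0)\mathcal G_0^{\mathsf T}} .
\]

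The limit itself is a direct linear-algebra check. We have $\rho_qK(0)=(0,0,-2)$ and $\rho_pK(0)=(-2-2,\,4-2,\,-2)=(-4,2,-2)$. Pairing these with $\mathcal G_0=(\zeta(2)-\zeta(3)-1,\ \zeta(2)-1,\ -1)$ gives a denominator of $2$. The numerator is
\[
-4(\zeta(2)-\zeta(3)-1)+2(\zeta(2)-1)+2=-2\zeta(2)+4\zeta(3)+4 .
\]
This is not $2\zeta(2)$, so either I have misread the sign conventions of $K(0)$ or $\mathcal G_0$ (row versus column action), or the pairing enters as $K(0)^{\mathsf T}$ or through $h_0$. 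I would therefore recompute the pairing orientation carefully so that $\rho K(0)\mathcal G_0^{\mathsf T}$ matches the convention of Lemma~\ref{lem:minors}, and confirm that the resulting ratio equals $\zeta(2)$, with the $\zeta(3)$ terms cancelling. A numerical evaluation of $p_n/q_n$ for moderate $n$ provides a sanity check.

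The main obstacle is recessiveness, which has two parts. First, compute $Q_\infty=\lim Q(n)$ after normalizing by the scalar growth from the ${}_4F_3$/Meijer-$G$ gamma factors. Then verify that its characteristic polynomial has three roots of pairwise distinct moduli, so that Remark~\ref{rem:separated-spectrum} yields weak Poincar\'e--Perron type. Second, show that $\mathcal G_n$ lies on the smallest eigen-direction. For this I would use the Mellin--Barnes integral for $G^{4,1}_{4,4}(-1)$ along the trajectory, together with a saddle-point estimate of $G_n$. Its exponential rate should match the smallest-modulus root; the other two solutions grow like the larger roots, and any solution with a component along them is eventually larger. An alternative route to the same conclusion is to write $G_n$ as a residue sum, i.e.\ a linear form in $1,\zeta(2),\zeta(3)$ in the style of the rational-function kernel $R_n(s)$ used for $\zeta(3),\zeta(4)$, and to bound the tail of the sum directly.

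Finally, the non-exceptionality of $h_n$ must be checked, meaning that its pairing with the dominant dual direction $\ell_n$ does not degenerate. This follows once $q_n$ grows at the dominant rate, since $q_n\neq0$ and $q_n$ is proportional to $\ell$-type components. In particular it suffices to show that $q_n$ grows like the largest root, which is visible from its positive-term hypergeometric sum.
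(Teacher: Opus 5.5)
You follow the paper's route: recessiveness of $\mathcal G_n$, then Theorem~\ref{thm:dual-convergence}, then transfer through $\Phi_M(n)K(n)=\kappa_nK(0)\Phi_Q^\vee(n)$. However, the step that identifies the constant is broken by an arithmetic slip, and the remedy you propose (changing conventions) is wrong. In fact
\[
\rho_pK(0)=2(-1,2,-1)-(-2,2,0)=(0,2,-2),
\]
not $(-4,2,-2)$. Both $\rho_pK(0)$ and $\rho_qK(0)=(0,0,-2)$ have vanishing first entry. So the coordinate $\zeta(2)-\zeta(3)-1$ of $\mathcal G_0$, the only one involving $\zeta(3)$, never enters, and
\[
\frac{\rho_pK(0)\mathcal G_0^{\mathsf T}}{\rho_qK(0)\mathcal G_0^{\mathsf T}}=\frac{2(\zeta(2)-1)+2}{2}=\zeta(2).
\]
The stated conventions are correct as they are. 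Replacing $K(0)$ by $K(0)^{\mathsf T}$, for instance, would not give $\zeta(2)$.

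Three other steps would not go through as you planned them.

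\emph{Spectral hypothesis.} The normalized limit $Q_\infty$ of Appendix~\ref{app:zeta2-proof} has roots $\lambda_{1,2}\approx-1.287\pm2.207i$ and $\lambda_3\approx-1.6\cdot10^{-27}$. The two dominant roots are complex conjugates of equal modulus, so ``pairwise distinct moduli'' fails and Remark~\ref{rem:separated-spectrum} is unavailable. What is needed, and true, is only the one-versus-two separation $|\lambda_3|<|\lambda_1|=|\lambda_2|$ of Definition~\ref{def:weak-pp}. That requires a dichotomy argument, for example the cone argument of Appendix~\ref{app:hh-rate}, which adapts since it only uses that the extreme eigenvalue is strictly separated.

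\emph{Recessiveness.} You do not need a saddle-point rate for $G_n$. The paper converts the Mellin--Barnes integral, via Euler's beta integral and Mellin inversion, into
$G_n(z)=\beta_n\int_{[0,1]^3}x^{13n}y^{12n}t^{14n}(1-x)^{8n}(1-y)^{8n}(1-t)^{8n+1}(z+xyt)^{-11n-1}\,dx\,dy\,dt$.
It then passes to $z=-1$ (on the branch cut) by dominated convergence, and gets $(n!)^{13}|\theta^kG_n(-1)|=O(n^k\eta^n)$ with $\eta=11^{11}/4^{24}<1$. Since $|\lambda_{1,2}|>1$, decay alone forces the normalized state onto the recessive line.

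\emph{Non-exceptionality.} Your argument relies on $q_n$ being a positive-term sum. But here
$q_n=2\,{}_4F_3(-11n,1+10n,1+9n,1+11n;1+2n,1+n,1+3n;1)$
has only one negative upper parameter. Its terms therefore alternate in sign, unlike the $(-14n)_k(-15n)_k$ case for $\zeta(3),\zeta(4)$. Its dominant growth cannot be read off from the largest term, so this point needs a separate argument.
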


\begin{proof}
The state $\mathcal G_n$ is the recessive solution of $Q(n)$
(Appendix~\ref{app:zeta2-proof}).
The Dual Convergence Theorem~\ref{thm:dual-convergence} applies
therefore
\[
  \lim_{n\to\infty}\frac{p_{n}}{q_n}
  =\frac{\rho_{p}\,K(0)\,\mathcal G_0^{\mathsf T}}{\rho_q\,K(0)\,\mathcal G_0^{\mathsf T}}
  =\zeta(2).
\]
\end{proof}

\paragraph{The rate of convergence.}

\begin{lemma}\label{lem:z2-selected-error}
\[
  \Bigl|\zeta(2)-\frac{p_{n}}{q_n}\Bigr|
  \le e^{-(g-o(1))n},\qquad g>62.636575 .
\]
\end{lemma}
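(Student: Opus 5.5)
The plan is to obtain the error rate from the dual side, because there the recessive solution is explicit. By Lemma~\ref{lem:z2-connection} the dual pairing telescopes to $\Phi_M(n)K(n)=\kappa_n K(0)\Phi_Q^\vee(n)$, and we have $e_0=K(n)h_n$ for a suitable rational column $h_n$. Consequently
\[
L_n:=\zeta(2)q_n-p_n=\kappa_n\,(\zeta(2)\rho_q-\rho_p)K(0)\,\Phi_Q^\vee(n)h_n .
\]
The row $\ell=(\zeta(2)\rho_q-\rho_p)K(0)$ annihilates $\mathcal G_0^{\mathsf T}$; this is precisely the identity used in the proof of Lemma~\ref{lem:z2-limits}. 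We will therefore decompose $\Phi_Q^\vee(n)h_n$ as in the proof of Theorem~\ref{thm:dual-convergence}: a multiple $a_n^{-1}\mathcal G_0^{\mathsf T}$ plus a remainder of relative size $O(e^{-g'n})$. Since $\ell$ kills the leading part, $|L_n|$ is the dominant size of $q_n$ multiplied by $e^{-g'n+o(n)}$. It follows that $|\zeta(2)-p_n/q_n|=e^{-(g'-o(1))n}$, where $g'$ is the gap between the logarithm of the dominant growth and that of the subdominant direction.

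To compute $g'$ we will normalize $Q(n)$ by its degree so that $\tilde Q(n)=n^{-k}Q(n)$ has a finite limit $\tilde Q_\infty$. We then find the characteristic polynomial of $\tilde Q_\infty$, equivalently the leading-coefficient (indicial) polynomial of the companion recurrence, and check that its three roots have pairwise distinct moduli. Remark~\ref{rem:separated-spectrum} then gives weak Poincar\'e--Perron type. The log-gaps can be cross-checked against the saddle point of the Mellin--Barnes integrand $R_n(s)$ of $G^{4,1}_{4,4}(-1)$ along $s_1+nt_1$. There $G_n$ is a single contour integral, so Stirling's formula for the gamma factors gives $\log|G_n|/n\to-\lambda_{\min}$ directly. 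The scalar normalizations (the degree $k$, $\kappa_n$, and $\det\Phi_Q$) are common to $p_n$ and $q_n$ and cancel in the ratio. We expect $g$ to equal $\log|\lambda_1/\lambda_3|$, since the dual column sees the largest growth and the recessive solution the smallest; we will then verify numerically, with interval arithmetic, that this exceeds $62.636575$.

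The remaining issue is to show that the recessive solution is $\mathcal G_n$ itself, and that $h_n$ is non-exceptional so that the Corollary~\ref{cor:dual-rank-one} error bound applies with the full gap rather than only the second gap. For the first point we will use the asymptotics of $G_n$ from the saddle point above, showing that it decays at the minimal eigenvalue rate. For the second, we will show that $h_n$ has a nonvanishing component along the dominant dual direction by computing the limit direction $[h_n]$ and pairing it with the dominant eigenvector of $\tilde Q_\infty^{-\mathsf T}$.

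I expect the main obstacle to be this last step: verifying that $\kappa$ in Theorem~\ref{thm:dual-convergence} is positive and pinning down exactly which gap governs the error. The first attempt will be to check that the limiting pairing is nonzero exactly in rational arithmetic. If it vanishes, the fallback is to pass to the middle eigenvalue and recompute $g$, though we expect this not to be needed given the numerical $\delta$.
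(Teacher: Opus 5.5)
Your architecture matches the paper's. You telescope the dual pairing and observe that $\ell=(\zeta(2)\rho_q-\rho_p)K(0)$ annihilates $\mathcal G_0^{\mathsf T}$, so the dominant dual contribution cancels in $\zeta(2)q_n-p_n$. You then read off $g=\log|\lambda_1/\lambda_3|$ from a normalized limit matrix and certify $g>62.636575$ by interval arithmetic. The step that fails as written is ``check that its three roots have pairwise distinct moduli; Remark~\ref{rem:separated-spectrum} then gives weak Poincar\'e--Perron type.'' Here the limit matrix has $\lambda_{1,2}\approx-1.287\pm2.207i$, a complex-conjugate pair with $|\lambda_1|=|\lambda_2|$; only $\lambda_3\approx-1.602\cdot10^{-27}$ is isolated. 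So the Remark does not apply. You must instead derive Definition~\ref{def:weak-pp}, with constant $\log|\lambda_1/\lambda_3|-\varepsilon$, from one-sided separation alone: the smallest modulus is simple and strictly below the other two. A dichotomy argument splitting the one-dimensional recessive part from the two-dimensional block would do this. The same pair also undercuts two side claims. First, there is no middle modulus to fall back on; an exceptional column would leave the ratio governed by the rotating $\lambda_{1,2}$ block, so it would not converge to $\zeta(2)$ at all. Second, you cannot assert $|\zeta(2)-p_n/q_n|=e^{-(g-o(1))n}$ with equality, because the subdominant contribution oscillates and may nearly cancel at individual $n$. Only the upper bound follows, and that is all the lemma claims.

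Two further corrections. First, a scalar normalization $n^{-k}Q(n)$ cannot produce the right limit. The components of $(G_n,\theta G_n,\theta^2G_n)$ differ by powers of $n$, because $\theta$ inserts $-s$ with $s\asymp n$. Hence $Q_{ij}(n)=O(n^{j-i-13})$, and any scalar rescaling with a finite limit keeps only strictly upper-triangular entries, so its limit is nilpotent. You need the gauge $T(n)=\operatorname{diag}(1,n+1,(n+1)^2)$ and $Q_\infty=\lim(n+1)^{13}T(n)Q(n)T(n+1)^{-1}$, with the common factor $(n!)^{13}$ cancelling in $p_n/q_n$. In the gauged system, the column whose nonexceptionality you must test is $T(n)^{-1}h_n$, not $h_n$.

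Second, your saddle-point route to recessiveness is heavier than necessary. At $z=-1$ you sit on the boundary $|\arg z|=\pi$, where the gamma factors decay only polynomially against $|z^{-s}|$. A rigorous steepest-descent argument would have to show that the fixed contour picks up only the saddle carrying the $\lambda_3$ rate. The paper needs only an upper bound. Euler's beta integral and Mellin inversion give $G_n(z)=\beta_n\int_{[0,1]^3}x^{13n}y^{12n}t^{14n}(1-x)^{8n}(1-y)^{8n}(1-t)^{8n+1}(z+xyt)^{-11n-1}\,dx\,dy\,dt$. Since $1-x,1-y,1-t\le 1-xyt=|{-1}+xyt|$, the integrand at $z=-1$ is bounded by $(1-xyt)^{13n}\le1$. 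This yields $(n!)^{13}|\theta^kG_n(-1)|=O(n^k\eta^n)$ with $\eta=11^{11}/4^{24}<1<|\lambda_{1,2}|$, which already forces $\mathcal G_n$ to be the recessive solution.
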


\begin{proof}
See Appendix \ref{app:z2-error}
\end{proof}

\paragraph{Controlling the arithmetic growth.}
\label{subsubsec:zeta2-blocks}
To bound the growth rate of the reduced denominators, we find a closed form for the approximants, as we did for $\zeta(3)$ and $\zeta(4)$ in Section~\ref{par: gcd z34}, the technical part is then to count the primes dividing its terms. 
This counting is carried out by exact computation, summarized in Appendix~\ref{app:z2-blocks}, with the certificates and the Lean formalization in the supplementary material.

Here, instead of finding a closed form for $p_n$ directly, we use the conservation of the CMF: The walk can be replaced by a product along a different lattice path. We decompose the walk such that for each path the product collapses into an explicit binomial sum. This closed form is used for counting primes. The details are in Appendix~\ref{app:z2-blocks}, and the whole argument is formalized in Lean~4, assuming only the prime number theorem \cite{demoura2021lean4}, see the supplementary material.

By conservation, the walk equals the product along the path of four blocks (Appendix~\ref{app:z2-blocks}).:
\[
\begin{array}{c|c|c}
\text{block}&\text{step}&\text{number of steps}\\\hline
A&(0,1,1,1;\,0,0,0)&3n\\
B&(0,0,0,0;\,2,1,3)&n\\
C&(0,7,6,8;\,0,0,0)&n\\
D&(-1,0,0,0;\,0,0,0)&11n
\end{array}
\]

\begin{lemma}[Integer normalization]\label{prop:zeta2-int}
Let 
\[
C_n=\prod_{u=9,10,11}\binom{un}{8n},\qquad \mathcal D^{\mathrm{car}}_n=\prod_{\sqrt{50n}<p\le13n}p^{\phi(n/p)}
\] 
and let $\mathcal D^{\mathrm{sh}}_n$ be the product of the primes $p>\sqrt{50n}$ in the six shells of Appendix~\ref{app:z2-blocks}. Then for every $n\ge1$,
\[
  \frac{C_n\,d_{12n}\,d_{13n}}{\mathcal D_n}\,(p_n,q_n)\in\mathbb Z^2,
  \qquad \mathcal D_n=\mathcal D^{\mathrm{car}}_n\,\mathcal D^{\mathrm{sh}}_n .
\]
\end{lemma}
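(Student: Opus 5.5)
The plan is to use conservation of the CMF to replace the trajectory walk $\mathcal M_{nt_0}(s_0)$ by the product along the four-block path $A^{3n}B^{n}C^{n}D^{11n}$, starting from $s_0=(0,1,1,1;1,1,1)$. These blocks sum to $nt_0=n(-11,10,9,11;2,1,3)$, so by Definition~\ref{def:CMF} the two products agree wherever every intermediate matrix is admissible. Admissibility is checked block by block: we confirm that no denominator of the ${}_4F_3(1)$ shift matrices vanishes at the lattice points visited.

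The block decomposition is chosen so that each block's product collapses. Block $A$ raises three upper parameters at once. Block $B$ moves only lower parameters. Block $C$ moves only upper parameters in a single combined step. Block $D$ lowers the first upper parameter from $0$ down to $-11n$, which makes the series terminating. Applying the block products to $e_0$ and pairing with $\rho_q,\rho_p$ should give closed forms of the shape
\[
q_n=\sum_k t_{n,k},\qquad p_n=\sum_k\bigl(t_{n,k}\,H^{(2)}_{\ast}+t'_{n,k}\,H^{(1)}_{\ast}\bigr)+(\text{finite rational part}),
\]
with $t_{n,k}$ a ratio of Pochhammer symbols in $k$ built from the parameters $(-11n,10n+1,9n+1,11n+1;2n+1,n+1,3n+1)$. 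Following Lemma~\ref{prop:z34-p-iv}, these identities are proved by the recurrence argument: both sides satisfy the companion recurrence of $M(n)$ and agree at $n=0,1,2$.

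Integrality comes next. Multiplying by $C_n=\prod_{u=9,10,11}\binom{un}{8n}$ clears the Pochhammer denominators, which are factorials of lengths $8n$ shifted by $n,2n,3n$ against upper lengths $9n,10n,11n$. This makes each $C_nt_{n,k}$ an integer. The harmonic numbers that appear have indices at most $12n$ and $13n$, so $d_{12n}d_{13n}$ clears them, since one factor of $d$ is needed for each order of harmonic denominator, together with the derivative terms.

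The removal of $\mathcal D_n$ is done prime by prime. For $p>\sqrt{50n}$, which exceeds $\sqrt{\max(\text{indices})}$, Legendre's formula reduces $v_p$ of each summand to a floor-function expression in $\{n/p\}$ and $\{k/p\}$. We minimise it over $k$, and over $k$ jointly with the harmonic index for the $p_n$ terms, allowing for the loss from $d_{12n}d_{13n}$. This gives the step function $\phi(n/p)$, so $p^{\phi(n/p)}$ divides every term of both sums. Separately, for primes in the six shells, certain residue ranges of $n/p$ force a binomial in $C_n$ to carry a factor $p$ that every summand inherits. We then check that these shell primes are disjoint from the primes already counted in $\mathcal D^{\mathrm{car}}_n$, or else that the valuations add.

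\textbf{Main obstacle.} The hardest step is the valuation bookkeeping for $p_n$. The harmonic terms lower $v_p$ by one exactly when $p$ divides the relevant index range, and the minimum over $k$ must be taken with this correction in place. I would first verify the claimed $\phi$ and the shells by exact computation on the floor-function lattice in $(n/p,k/p)$ as a certificate, and only then prove the finitely many cases symbolically. Small $n$ is also covered by direct computation.
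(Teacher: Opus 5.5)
Your conservation step and your carry divisor agree with the paper. Along the path $A^{3n}B^{n}C^{n}D^{11n}$, the $C_n$-normalized summands are $\pm\binom{11n}{k}\prod_{u=9,10,11}\binom{un+k}{8n}$, times harmonic factors cleared by $d_{12n}d_{13n}$. For $p>\sqrt{50n}$, Legendre's formula turns their valuations into $\rho(n/p,k/p)$, which gives $p^{\phi(n/p)}$.

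The gap is the shell divisor $\mathcal D^{\mathrm{sh}}_n$. You propose that for shell primes ``a binomial in $C_n$ carries a factor $p$ that every summand inherits'', to be added to $\phi(n/p)$. This fails on two counts.
\begin{itemize}
\item \emph{It is false for the first shell.} For $p\in(13n/2,8n)$ one has $n/p\in(1/8,2/13)$. So $\lfloor un/p\rfloor=\lfloor 8n/p\rfloor=1$ and $\lfloor (u-8)n/p\rfloor=0$ for $u=9,10,11$, hence $v_p(C_n)=0$. The $k=0$ summand of $C_nF_n$ is $C_n$ itself, so $\phi(n/p)=0$ there. The entire factor $p$ that $\mathcal D_n$ removes must therefore come from something other than the individual terms.
\item \emph{It cannot add anything on top of $\phi$.} For fixed $x=n/p$, $\rho(x,\cdot)$ is $1$-periodic in $y$ and attains its minimum on the grid $\frac1p\mathbb Z$. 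Every shell lies below $8n<11n$, so every residue of $k$ modulo $p$ occurs in $0\le k\le 11n$. Hence $\phi(n/p)$ is exactly the minimal valuation of the binomial summands. Any power of $p$ ``inherited by every summand'' is already in $\mathcal D^{\mathrm{car}}_n$; your fallback ``or else the valuations add'' is precisely what termwise divisibility cannot give.
\end{itemize}

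What is missing is a cancellation in the \emph{sum}, not in the terms. In the paper, the last block $D$ is an $11n$-th order finite difference that leaves a polynomial part of degree at most $13n-1$; compare the lower endpoints $13n/j$ of the shells. For $p$ in a shell, after removing $p^{\phi(n/p)}$, the normalized sum reduces modulo $p$ to $\sum_{x\in\mathbb F_p}P(x)$ with $\deg P<p-1$. This vanishes because $\sum_{x\in\mathbb F_p}x^j=0$ in $\mathbb F_p$ for $0\le j<p-1$, and it supplies the one extra power of $p$ per shell prime.

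For the $q_n$ component this extra factor is automatic, since $v_p(d_{12n}d_{13n})=2$ for these primes. The real content is the $p_n$ component, where those factors are needed for the harmonic denominators. As written, your plan does not establish the factor $\mathcal D^{\mathrm{sh}}_n$, and so it loses the contribution $233/120$ to the rate in Lemma~\ref{lem:z2-growth}.
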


\begin{proof}
From the closed form, Legendre's formula on its binomial terms gives the carry divisor, and cancellation over $\mathbb F_p$ gives one more factor for each shell prime (Appendix~\ref{app:z2-blocks}).
\end{proof}

\begin{lemma}[Arithmetic growth]\label{lem:z2-growth}
The numbers
\[
  \widetilde q_n=\frac{C_n\,d_{12n}\,d_{13n}\,q_n}{\mathcal D_n},\qquad
  \widetilde p_n=\frac{C_n\,d_{12n}\,d_{13n}\,p_n}{\mathcal D_n}
\]
are integers, and $|\widetilde q_n|\le e^{(h+o(1))n}$ with $h<49.0986393$.
\end{lemma}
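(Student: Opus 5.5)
Integrality is immediate from Lemma~\ref{prop:zeta2-int}: the numbers $\widetilde q_n,\widetilde p_n$ are exactly the normalized pair $\frac{C_nd_{12n}d_{13n}}{\mathcal D_n}(q_n,p_n)$ shown there to lie in $\mathbb Z^2$. The work is therefore the growth bound, and I would split
\[
\frac1n\log|\widetilde q_n| = \frac1n\log|q_n|+\frac1n\log C_n+\frac1n\log(d_{12n}d_{13n})-\frac1n\log\mathcal D_n
\]
and treat the four terms separately.

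\emph{First, the raw size of $q_n$.} Using the block decomposition $A,B,C,D$ of the walk (conservation of the CMF), $q_n$ has the closed binomial-sum form from Appendix~\ref{app:z2-blocks}. Since all summands have controlled sign and there are polynomially many of them, $\frac1n\log|q_n|$ is bounded by the exponential rate of the largest term. Stirling gives this rate as the maximum over $\kappa=k/n$ of a concave entropy function. Alternatively, the rate is read off from the largest modulus of the eigenvalues of the normalized trajectory matrix at infinity, as in the rate-of-convergence paragraph. Either route gives an explicit constant $\sigma_q$.

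\emph{Second, the normalizers.} By Stirling, $\frac1n\log C_n\to\sum_{u=9,10,11}\bigl(u\log u-8\log 8-(u-8)\log(u-8)\bigr)$. By the prime number theorem, $\frac1n\log(d_{12n}d_{13n})\to 25$.

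\emph{Third, the divisor $\mathcal D_n$.} For the carry part, the prime number theorem turns $\frac1n\log\mathcal D^{\mathrm{car}}_n$ into $\int\phi(x)\,x^{-2}\,dx$ over the relevant range, with primes $p\le\sqrt{50n}$ contributing $o(n)$. For the shell part, $\mathcal D^{\mathrm{sh}}_n$ contributes $\sum_{\text{shells}}\int_{\text{shell}}x^{-2}\,dx$, since each shell prime appears once. Adding the four limits gives $h=\sigma_q+\lim\frac1n\log C_n+25-\int\phi/x^2-\text{(shell density)}$, and the claim reduces to checking this number is below $49.0986393$.

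\emph{Main obstacle.} I expect the hard step to be evaluating the step-function integral $\int\phi(x)\,x^{-2}\,dx$ rigorously. $\phi$ is a minimum of floor expressions with infinitely many breakpoints accumulating at $0$. I would first compute it exactly on $x\in[1/M,\infty)$ as a finite sum of logarithms of rationals. I would then bound the tail $x<1/M$ from below by $0$, which is safe since $\phi\ge0$. Finally, I would verify numerically with interval arithmetic, matching the certificate, that the resulting lower bound already secures $h<49.0986393$.
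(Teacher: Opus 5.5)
Your bookkeeping matches the paper's. You take the exponential rate of $C_nq_n$, add $25$ for $d_{12n}d_{13n}$, and subtract $\gamma=\int_{1/13}^\infty\phi(x)x^{-2}\,dx$ and the shell contribution. In the variable $x=n/p$ your shell term $\sum\int x^{-2}\,dx$ is indeed $\tfrac{233}{120}$.

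The gap is your primary estimate of $q_n$: the summands do not have constant sign. Here $q_n=2F_n$ with $F_n={}_4F_3(-11n,1+10n,1+9n,1+11n;1+2n,1+n,1+3n;1)$, so $C_nq_n=2\sum_k(-1)^k\binom{11n}{k}\binom{10n+k}{8n}\binom{9n+k}{8n}\binom{11n+k}{8n}$. This is an alternating sum, an $11n$-th finite difference, which is also what block $D$ produces. The largest-term bound is valid but loses exponentially. The maximal term sits near $k\approx 9.19n$ and has rate about $43.99$, while the actual rate of $C_nq_n$ is about $38.1687$. With $43.99$ you only get $h\approx 54.9$. The paper's own chain $38.168733+25-12.128427-\tfrac{233}{120}$ clears $49.0986393$ with a margin of order $10^{-6}$. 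So you need the exact exponential rate of $C_nq_n$, including the cancellation, and your claim that either route yields a usable $\sigma_q$ is false here.

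The route that works is your alternative, which is what the paper does (it combines the divisor counts with spectral bounds). It has to be done with the absolute normalization, though, not as in the rate-of-convergence paragraph. That paragraph uses only the scale-free ratio $|\lambda_1/\lambda_3|$. On the dual side, $|q_n|=|\kappa_n|(n!)^{13}|\lambda_3|^{-n}e^{o(n)}$ involves the unknown scalar $\kappa_n=\prod_{j<n}c(j)$. You can proceed in either of two ways:
\begin{enumerate}
\item Bound $\Phi_M(n)$ by the spectral radius of the limit of the polynomially gauged forward matrix. This bounds every solution, so no nondegeneracy is needed for an upper bound.
\item Do a saddle-point analysis of the finite difference. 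Setting the limiting term ratio equal to $1$ gives $13\kappa^3-42\kappa^2-2305\kappa-10890=0$. Its real root $\kappa_0\approx 16.78$ lies outside the summation range $[0,11]$ and yields $\frac1n\log|q_n|\to 23.5797\ldots$. Adding $\frac1n\log C_n\to 14.58899\ldots$ recovers the paper's $38.1687$.
\end{enumerate}

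A smaller slip concerns the $\phi$-integral. In $x=n/p$ the profile $\phi$ is $1$-periodic, so the infinite part of $\int_{1/13}^\infty\phi(x)x^{-2}\,dx$ is at $x\to\infty$ (small primes), not near $0$. You sum exactly over finitely many periods (the paper uses $2000$) and drop the tail using $\phi\ge 0$. The cell contributions are rationals, not logarithms.
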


\begin{proof}
The sequence $C_nq_n$ grows at rate less than $38.168733$, and $d_{12n}d_{13n}$ contributes $25$.
By the prime number theorem, $\mathcal D^{\mathrm{car}}_n$ has growth rate $\gamma=\int_{1/13}^\infty\phi(x)\,x^{-2}\,dx>12.128427$ and $\mathcal D^{\mathrm{sh}}_n$ has growth rate $\frac{233}{120}$, the total length of the six shells divided by $n$ (Appendix~\ref{app:z2-blocks}). Hence
\[
  h<38.168733+25-\Bigl(12.128427+\frac{233}{120}\Bigr)< 49.098639333.
\]
\end{proof}
\paragraph{Proof of Theorem~\ref{thm:z23-measures}.}

We now conclude with a proof of the measure bounds.
\begin{proof}[Proof of Theorem~\ref{thm:z23-measures}]
\emph{The case $\zeta(2)$.} Consider the approximations $r_n=p_n/q_n$,
with the integer representatives
\[
  \widetilde q_n=\frac{C_nd_{12n}d_{13n}q_n}{D^{\mathrm{car}}_nD^{\mathrm{sh}}_n},\qquad
  \widetilde p_n=\frac{C_nd_{12n}d_{13n}p_n}{D^{\mathrm{car}}_nD^{\mathrm{sh}}_n}.
\]
The four claims hold:
\begin{enumerate}
\item the limit is $\zeta(2)$ (Lemma~\ref{lem:z2-limits}).
\item the rate of convergence is $g>62.636575$
  (Lemma~\ref{lem:z2-selected-error}).
\item the representatives are integers, and they grow at rate
  $h<38.168733+10.929906=49.098639$
  (Lemmas~\ref{prop:zeta2-int} and~\ref{lem:z2-growth}).
\item The errors can not vanish since $\zeta(2)$ is irrational.
\end{enumerate}
The approximation exponent (Definition~\ref{def:delta}) is
$\delta=\frac gh-1>0.2757$, and therefore
\[
  \mu\bigl(\zeta(2)\bigr)\le1+\frac1\delta=\frac{g}{g-h}
  <\frac{62.636575}{13.537936}<4.62675 .
\]

\emph{The case $\zeta(3)$.} Take $A=1$, $B=0$ in the construction of
Section~\ref{subsec:z34}, and consider the approximations
$r_n=p_{3,n}/q_n=P_{3,n}/Q_n$, with the integer representatives
\[
  \widetilde q_n=\frac{6\,C'_n\,d_{19n}^3\,Q_n}{\mathcal D'_n},\qquad
  \widetilde p_n=\frac{6\,C'_n\,d_{19n}^3\,P_{3,n}}{\mathcal D'_n},
\]
where $\mathcal D'_n$ is the common divisor of
Appendix~\ref{app:zeta3-measure}. The four claims hold:
\begin{enumerate}
\item the limit is $\zeta(3)$ (Lemma~\ref{lem:z34-limits}).
\item the rate of convergence is $g>112.8075$
  (Lemma~\ref{lem:z34-selected-error}).
\item the representatives are integers, and they grow at rate
  $h<53.287584+30\log\frac32+57-30.458547< 91.99299103$
  (Appendix~\ref{app:zeta3-measure}).
\item The errors can not vanish since $\zeta(3)$ is irrational.
\end{enumerate}
The approximation exponent is $\delta=\frac gh-1>0.2262$, and therefore
\[
  \mu\bigl(\zeta(3)\bigr)\le1+\frac1\delta=\frac{g}{g-h}
  <\frac{112.8075}{20.81451}<5.4197 .
\qedhere
\]
\end{proof}
\section{Additional CMF Implications: Harmonic-Weighted Gregory Series}
\label{sec:other_implications}

So far, we have shown how CMFs unify prior irrationality results within a single framework and leveraged them to generate high quality Diophantine approximation sequences for mathematical constants.
In this section, we demonstrate another category of CMF applications to irrationality proofs: establishing the irrationality of infinite families of values 
- specifically the harmonic-weighted Gregory series $B(M)$
(see Remark~\ref{rem:hh-other-families} below about extensions to other families) 
\begin{equation}\label{eq:eq:hh-series}
\begin{aligned}
 B(M)&=\sum_{n=0}^{\infty}\frac{(-1)^nO_n}{(2n+1)(M-1)^n},
\end{aligned}
\end{equation}
with $O_n=\sum_{m=0}^n(2m+1)^{-1}$ being the odd harmonic number.

This series is Gregory's arctangent series \cite{turnbull1939gregory} $\sqrt{M-1}\arctan(1/\sqrt{M-1})=\sum_n(-1)^n/((2n+1)(M-1)^n)$ weighted by the odd harmonic numbers.
Sums with harmonic weights go back to Euler \cite{euler1776meditationes}. Their alternating, odd-index versions were evaluated by 
Flajolet and Salvy \cite{flajolet1998euler} in terms of zeta values, Catalan's constant $G$ and $\pi\log2$.
$B(M)$ values are connected to various constants, for example $B(2)=\frac12G+\frac\pi8\log2$ and $B(4)=\frac98L(2,\chi_{-3})$
(this value was recently proven irrational by Calegari, Dimitrov and Tang \cite{calegari2024linear}).

Gregory's series also has a place in the history of irrationality proofs. At rational points $0<|r|\le1$ it gives the values $\arctan r$, and Lambert's continued fraction for $\tan x$ shows that $\tan x$ is irrational for every rational $x\ne0$ \cite{lambert1768}. Hence $\arctan r$ is irrational for every rational $r\ne0$; in particular $\pi=4\arctan1$ is irrational. This was the first proof of the irrationality of $\pi$.

The connection of $B(M)$ to hypergeometric functions is found by substituting $z=1/M$, as
\begin{equation}\label{eq:hh-forms}
\begin{aligned}
 B(M)&=\sqrt{1-z}\;{}_3F_2\!\left(\begin{matrix}\frac12,\frac12,\frac12\\
 \frac32,\frac32\end{matrix};z\right)
 =\sqrt{M-1}\int_0^{\arctan(1/\sqrt{M-1})}u\cot u\,du.
\end{aligned}
\end{equation}
The integral is the classical inverse-sine integral \cite{lewin1981polylogarithms}.

\begin{theorem}\label{thm:hh-main}

Let $M\ge7165$ be an integer. The CMF data
\[
 \boxed{\ \mathcal M={}_3F_2(1/M),\qquad
 s_0=\bigl(\tfrac12,\tfrac12,\tfrac12;\tfrac32,\tfrac32\bigr),\qquad
 t_0=(2,2,2;3,3)\ }
\]
proves that $B(M)$ has $\delta>0$, and is therefore irrational.
\end{theorem}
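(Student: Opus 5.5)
We follow the scheme of Section~\ref{cmf proof stratergy}, now with $z=1/M$ kept as a parameter. Along $s_0+nt_0$ the function is
\[
F_n={}_3F_2\!\left(\begin{matrix}\tfrac12+2n,\tfrac12+2n,\tfrac12+2n\\ \tfrac32+3n,\tfrac32+3n\end{matrix};z\right),
\]
with state $\mathcal S_n=(F_n,\theta F_n,\theta^2F_n)$. Its initial value $\mathcal S_0$ contains $B(M)/\sqrt{1-z}$ together with elementary pieces, namely $\arctan$-type and algebraic terms coming from $\theta$-derivatives of the inverse-sine integral in \eqref{eq:hh-forms}.

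\emph{Step 1: the limit.} We first show that $\mathcal S_n$ is the recessive solution of the trajectory system. The limit matrix $T_\infty(z)$ has characteristic roots that we compute explicitly from the Stirling asymptotics of the summands of the $(2,2,2;3,3)$-scaled series. We check that for $z<1/7165$ these roots have distinct moduli, so Remark~\ref{rem:separated-spectrum} gives weak Poincar\'e--Perron type. The hypergeometric series at the shifted point decays like $\bigl(2^6/3^6\bigr)^{n}$ up to a $z^{0}$ leading term, and it is the smallest root. Then Proposition~\ref{prop:cmf-initial-limit}, applied to the dual walk with a nonexceptional column, gives rational rows $u,w$ and approximants $p_n/q_n\to B(M)$. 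We pick the rows so that the square root cancels, using rows that isolate the combination of the state equal to $B(M)$ up to the rational factor, or a quadratic twist. Nonvanishing at consecutive indices follows from Lemma~\ref{lem:nonvanishing}, since the trailing coefficient of the companion recurrence has no nonnegative integer zeros.

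\emph{Step 2: rates.} The error rate is $g(M)=\log|\lambda_{\max}/\lambda_{\min}|$ from the roots of $T_\infty$. The denominator growth is also read off: $q_n$ is a polynomial in $z$ of degree about $2n$ with coefficients that are ratios of Pochhammer symbols, so after multiplying by $M^{2n}$ it grows like $\log|\lambda_{\max}|+2\log M$.

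\emph{Step 3: arithmetic.} This is the main obstacle. The half-integer parameters produce denominators of the form $(\tfrac32)_k\sim 2^{-2k}(2k+1)!/k!$. The natural integrality factor is therefore $4^{O(n)}$ times $d_{cn}^{2}$ (two harmonic-type orders, since the constants have weight $2$), and we obtain its explicit constant by Legendre's formula as in Lemma~\ref{prop:z34-int}. We would try to recover a Legendre-type common divisor $\mathcal D_n$ from the ratios $\binom{6n}{4n}$-style factors, but we expect the threshold $7165$ to come from a crude bound with no such divisor.

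\emph{Step 4: conclusion.} The decisive comparison is that $g(M)\approx 2\log M+O(1)$ while $h(M)\approx 2\log M\cdot\theta+C$ with $\theta<1$; the $M$-power terms are what give decay. We would then solve $g(M)>h(M)$ numerically and verify that it holds for all $M\ge7165$ by monotonicity of $g-h$ in $\log M$.
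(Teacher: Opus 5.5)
Your outline has the right four-step shape, but Step~3, which you flag as the main obstacle, is missing the idea that makes it work. Without that idea you have neither integrality nor a denominator growth rate. After the normalization $\hat x_n=\operatorname{adj}(W_n)^{\mathsf T}e_0$, the dual-walk quantities are $2\times2$ minors of the product $W_n=\prod_{i<n}z^3T(i)$. Nothing in your sketch shows they are sums of ``ratios of Pochhammer symbols'', so neither Legendre's formula nor a factor like $4^{O(n)}d_{cn}^2$ can be applied.

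The missing step is that these CMF ratios are type~II Hermite--Pad\'e approximants (Lemma~\ref{lem:dual-pade}). Take $\rho_q=(1,4,4)$, so that $\rho_qS_0^{\mathsf T}=(1+2\theta)^2F_0=(1-z)^{-1/2}$, and $\rho_p=(1,0,0)$. Then the ratios $\rho_pS_0^{\mathsf T}/\rho_qS_0^{\mathsf T}$ and $(1,2,0)S_0^{\mathsf T}/\rho_qS_0^{\mathsf T}$ become $f_2(x)$ and $f_1(x)$ in the variable $x=z/(z-1)$, with $B(M)=f_2(-1/k)$. The polynomial $Q(u)=u^{2n}\tilde a(1/u)$ built from the denominator satisfies $\int_0^1Q(u)u^{j+1/2}\,du=\int_0^1Q(u)u^{j+1/2}\ell(u)\,du=0$ for $0\le j<n$. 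Since $(u^{1/2},u^{1/2}\ell)$ is an AT system, this fixes $Q$ up to scale (Lemma~\ref{lem:hh-unique}). The Rodrigues formula of Proposition~\ref{prop:hh-rodrigues} then writes its coefficients $q_l$ as explicit double sums of binomials with half-integer tops, cleared by $16^n$. The numerator is the truncation $\sum_lq_lx^{2n-l}\sum_{t\le l}\frac{O_t}{2t+1}x^t$, cleared by $L_{4n+1}^2$ with $L_m=\lcm(1,3,\dots,m)$. The same formula gives $|q_l|\le\binom{3n+1/2}{n}^2\binom{2n}{l}$, hence $\sigma(M)=2\log M+\log16+2\log\tfrac{27}{4}+8$.

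Your analytic bookkeeping also fails exactly where the threshold is decided.
\begin{itemize}
\item \textbf{Wrong rate.} The dual walk converges at the gap between the recessive solution and the \emph{next} slowest one. So the rate is $g(M)=\log(\rho_2/\rho_1)$, with $\rho_1<\rho_2<\rho_3$ the roots of $\chi_z$, not $\log|\lambda_{\max}/\lambda_{\min}|$. Since $\chi_0(\rho)=\rho(\rho-\tfrac{27}{4})^2$, $\rho_2$ and $\rho_3$ split from a double root only by about $\mp7.79\sqrt z$. Your choice therefore overstates $g$ by roughly $2.3/\sqrt M\approx0.027$ at $M=7165$, whereas the true margin $g-\sigma$ there is about $9\cdot10^{-5}$. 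Checking your inequality does not prove the needed one.
\item \textbf{Wrong scaling in Step~4.} Your Step~4 also contradicts your Step~2. Because $z^3T(n)$ has entries of degree $\le1$ in $z$ in a rank-$3$ system, the approximants have degree $\le2n$ while the linear forms vanish to order $z^{3n+1}$. Hence $g(M)=3\log M+\log\tfrac{27}{4}+\log\tfrac{729}{16}+O(M^{-1/2})$, against $\sigma(M)=2\log M+O(1)$. This gives $g-\sigma=\log M-8-\log\tfrac{64}{27}+O(M^{-1/2})$, and the value $7165$ comes from the explicit root bounds of Lemma~\ref{lem:hh-roots}.
\item \textbf{Recessiveness.} $F_n$ does not decay: its series has positive terms and constant term $1$. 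Recessiveness needs a growth bound such as $|\theta^iF_n|\le2(2n+2)^2(1-z)^{-2n-3}$, compared with the second eigenvalue $\tfrac{729}{16z^3}\rho_2\ge729$.
\item \textbf{Nonexceptional column.} You must still prove that $e_0$ is nonexceptional. The paper deduces this from the cyclic-vector determinant \eqref{eq:hh-cyclic}.
\end{itemize}
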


\begin{remark}[Comparison with general criteria]\label{rem:hh-comparison}
The $G$-function criterion of Fischler and Rivoal \cite{fischler2018rational} gives $B(M)\notin\mathbb Q$ for $M-1>2^{2412}\approx10^{726}$, and Kawashima's preprint \cite{kawashima2026linear} proposes a polylogarithm criterion - which gives $M>2^{26}e^{16}\approx5.96\cdot10^{14}$ (Theorems~\ref{thm:hh-fr} and~\ref{thm:hh-kaw} derive these bounds for $B(M)$). 
Theorem~\ref{thm:hh-main} lowers the threshold by roughly $722$ and $11$ orders of magnitude, respectively, and gives explicit approximating sequences. Experimentally, $M$ divisible by $2$ or $4$ appear to admit even smaller thresholds (Figure~\ref{fig:hh-exp}).
\end{remark}

\subsection{Proof of Theorem~\ref{thm:hh-main}}\label{subsec:hh-proof}

We follow the CMF proof scheme of Subsection~\ref{cmf proof stratergy}. Here the dual pairing (Definition~\ref{def:dual-pairing}) is trivial: $K(n)=I$ and $c(n)=1$, so the approximants are CMF ratios of the dual walk of $(\mathcal M,s_0,t_0)$ itself,
\[
 r_n=\frac{\rho_p\Phi^\vee(n)e_0}{\rho_q\Phi^\vee(n)e_0},\qquad \rho_p=(1,0,0),\quad \rho_q=(1,4,4).
\]
\begin{enumerate}
 \item \emph{Identifying the constant.} $r_n\to B(M)$ (Lemma~\ref{lem:hh-mobius}, Proposition~\ref{prop:hh-rate}).
 \item \emph{The rate of convergence.} $|B(M)-r_n|\le e^{-g(M)n+o(n)}$, from the eigenvalues of the limiting matrix (Proposition~\ref{prop:hh-rate}).
 \item \emph{Controlling the arithmetic growth.} A closed form for the approximants, as Hermite--Pad\'e approximants with a Rodrigues formula (Lemma~\ref{lem:dual-pade}, Proposition~\ref{prop:hh-rodrigues}), gives $r_n=A_n/D_n$ with integers and $|D_n|\le e^{\sigma(M)n+o(n)}$ (Lemma~\ref{lem:hh-integers}).
 \item \emph{Irrationality exponent.} $g(M)>\sigma(M)$ for $M\ge7165$ (Proposition~\ref{prop:hh-gap}), and the approximations are nontrivial by Lemma~\ref{lem:nonvanishing} (Lemma~\ref{lem:hh-nondeg}); Paragraph~\ref{par: delta} then gives $\delta\ge g/\sigma-1>0$.
\end{enumerate}

Proposition~\ref{prop:hh-gap} shows $g(M)>\sigma(M)$ for $M\ge7165$. Computations specific to this function and trajectory are in Appendix~\ref{app:gregory-series}.

Put $k=M-1$.

Let $T(n)=\mathcal M_{t_0}(s_0+n\,t_0)$ be the trajectory matrix (Definition~\ref{def:trajectory}), given explicitly in \eqref{eq:hh-T}, so that $\Phi^\vee(n)=(T(0)\cdots T(n-1))^{-\mathsf T}$. Along the trajectory we define
\[
  F_n={}_3F_2(s_0+n\,t_0;\,z),\qquad S_n=(F_n,\theta F_n,\theta^2F_n),\qquad S_{n+1}=S_nT(n).
\]
We first identify the approximants. The following lemma applies to any trajectory whose matrix, after scaling by a power of $z$, is polynomial in $z$.

\begin{lemma}[CMF ratios of the dual walk are Hermite--Pad\'e approximants]\label{lem:dual-pade}
Let $r \geq 2$ and $d, m \geq 0$ be integers, and let $T(n)\in\operatorname{GL}_r(\mathbb Q(z))$ satisfy $z^dT(n)\in\operatorname{Mat}_r(\mathbb Q[z])$ with entries of degree $\le m$ in $z$, and let $S_n=(S_{n,0},\dots,S_{n,r-1})$ be rows of formal power series with $S_{n+1}=S_nT(n)$ and $S_{n,j}\in z\mathbb Q[[z]]$ for $j\ge1$. Put $W_n=\prod_{i<n}z^dT(i)$ and
\[
 \hat x_n=\operatorname{adj}(W_n)^{\mathsf T}e_0=z^{-dn}\det(W_n)\,\Phi_T^\vee(n)e_0.
\]
Then $\hat x_n$ has polynomial entries of degree $\le(r-1)mn$, and for all rational rows $p,q$,
\[
 (q\hat x_n)(pS_0^{\mathsf T})-(p\hat x_n)(qS_0^{\mathsf T})=O(z^{dn+1}).
\]
If moreover $(r-1)m\le d$ and $qS_0^{\mathsf T}$ has a nonzero constant term, then $q\hat x_n\not\equiv0$.
\end{lemma}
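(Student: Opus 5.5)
The proof rests on one observation: $S_n$ is obtained from $S_0$ by the forward walk, while $\hat x_n$ is built from the dual walk, so their pairing is conserved. We then read off the three claims: degrees, the order of vanishing, and nonvanishing.

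\emph{Step 1 (polynomiality and degree).} By hypothesis each $z^dT(i)$ is a polynomial matrix with entries of degree $\le m$, so $W_n$ has polynomial entries of degree $\le mn$. The adjugate consists of $(r-1)\times(r-1)$ minors, hence has polynomial entries of degree $\le(r-1)mn$. From $\operatorname{adj}(W)^{\mathsf T}=\det(W)\,W^{-\mathsf T}$ and $W_n=z^{dn}\Phi_T(n)$ we get
\[
\operatorname{adj}(W_n)^{\mathsf T}=z^{rdn}\det\Phi_T(n)\,z^{-dn}\Phi_T(n)^{-\mathsf T}=z^{-dn}\det(W_n)\,\Phi_T^\vee(n).
\]
This gives the stated identity for $\hat x_n$.

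\emph{Step 2 (the conserved pairing).} Since $S_n=S_0\Phi_T(n)$, we have
\[
S_n\,\Phi_T^\vee(n)e_0=S_0\Phi_T(n)\Phi_T(n)^{-1}e_0=S_{0,0}.
\]
This pairing is conserved. Multiplying by the scalar $z^{-dn}\det W_n$ gives
\[
S_0\,\operatorname{adj}(W_n)^{\mathsf T}e_0=z^{dn}\cdots
\]
The cleaner route is to work with
\[
\tilde S_n:=S_0W_n=z^{dn}S_n.
\]
Then
\[
\tilde S_n\,\hat x_n=S_0W_n\operatorname{adj}(W_n)^{\mathsf T}e_0\cdot\det(W_n)/\det(W_n).
\]
More precisely, I use $W_n\operatorname{adj}(W_n)=\det(W_n)I$ transposed appropriately, so that $\operatorname{adj}(W_n)^{\mathsf T}$ is the inverse-transpose up to $\det W_n$. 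This yields
\[
S_0\,\hat x_n=\det(W_n)\,z^{-dn}\,S_n\cdots
\]
and so on. The useful form is the following. For each $j$, the coordinate $S_{n,j}$ equals $z^{-dn}$ times the $j$-th entry of $S_0W_n$. Hence the row vector $S_0\operatorname{adj}(W_n)^{\mathsf T}$, read via Cramer's rule, equals $\det(W_n)\,S_0W_n^{-\mathsf T}$. Its components are expressible through $S_n$ and $\det$.

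Rather than untangle this, I take an exterior-product viewpoint. Write $V:=S_0^{\mathsf T}$ and consider the $2\times2$ minor
\[
(q\hat x_n)(pV)-(p\hat x_n)(qV).
\]
This is the pairing of $p\wedge q$ with $\hat x_n\wedge V$. Since $\hat x_n=z^{-dn}\det W_n\,\Phi^\vee e_0$, we have $\Phi_T(n)^{\mathsf T}\hat x_n\propto e_0$. Therefore $\hat x_n\wedge V$ maps under $\Lambda^2\Phi_T(n)^{\mathsf T}$ to
\[
e_0\wedge S_n^{\mathsf T}\cdot z^{-dn}\det W_n .
\]
This involves only the components $S_{n,j}$ with $j\ge1$.

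\emph{Step 3 (order of vanishing).} This is the main obstacle. One must show that pulling back by $\Lambda^2(W_n^{\mathsf T})^{-1}$ does not destroy the order $z^{dn+1}$. The plan is as follows. Express $\hat x_n\wedge V$ as
\[
\operatorname{adj}(W_n)^{\mathsf T}e_0\wedge V,
\]
and use the identity
\[
\Lambda^2(\operatorname{adj}W^{\mathsf T})=\det(W)\,\Lambda^2(W^{-\mathsf T}),
\]
which equals $\det(W)$ times $\Lambda^{r-2}$ data of $W$ and is polynomial. Writing $V=W_n^{-\mathsf T}\cdot$ would introduce poles. So I instead write $V$ via $S_0=z^{dn}S_nW_n^{-1}$, i.e.
\[
V=z^{dn}W_n^{-\mathsf T}S_n^{\mathsf T}.
\]
Then
\[
\hat x_n\wedge V=z^{dn}\,\Lambda^2(W_n^{-\mathsf T})\bigl(\det W_n\,e_0\wedge S_n^{\mathsf T}\bigr)=z^{dn}\,\Lambda^2(\operatorname{adj}W_n^{\mathsf T})\,(e_0\wedge S_n^{\mathsf T})/\det W_n.
\]
Here I must balance the determinant powers carefully. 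The target is a polynomial compound matrix of $W_n$ times $z^{dn}(e_0\wedge S_n^{\mathsf T})$. The latter has all components in $z^{dn+1}\mathbb Q[[z]]$, because $S_{n,j}\in z\mathbb Q[[z]]$ for $j\ge1$; this property propagates along $n$ by hypothesis. Checking that the determinant factors cancel exactly, using $\Lambda^2$ and Jacobi's complementary-minor identity, is the delicate bookkeeping I expect to be hardest.

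\emph{Step 4 (nonvanishing).} Suppose $q\hat x_n\equiv0$. Since $qV$ has nonzero constant term, the relation from Step 3 gives
\[
(p\hat x_n)(qV)=O(z^{dn+1})
\]
for every $p$. Hence every entry of $\hat x_n$ is $O(z^{dn+1})$. But each entry has degree $\le(r-1)mn\le dn$, so $\hat x_n=0$. This contradicts $\det W_n\ne0$, which holds because $T(i)\in\operatorname{GL}_r$ and $\hat x_n$ is an invertible image of $e_0$.
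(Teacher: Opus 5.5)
Your route is the paper's: the Lagrange identity rewrites the left side as a pairing with $\hat x_n\wedge S_0^{\mathsf T}$. The substitution $S_0^{\mathsf T}=z^{dn}W_n^{-\mathsf T}S_n^{\mathsf T}$ then gives $\hat x_n\wedge S_0^{\mathsf T}=z^{dn}\det(W_n)\,\Lambda^2(W_n^{-\mathsf T})(e_0\wedge S_n^{\mathsf T})$. Your degree argument for $q\hat x_n\not\equiv0$ is the paper's argument verbatim.

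The step you leave as ``delicate bookkeeping'' is one line, and there are no determinant powers left to balance. By Jacobi's complementary-minor theorem, each $2\times2$ minor of $W_n^{-\mathsf T}$ is $\pm$ the complementary $(r-2)\times(r-2)$ minor of $W_n$ divided by $\det W_n$. Hence $\det(W_n)\Lambda^2(W_n^{-\mathsf T})$ has polynomial entries (identically $1$ when $r=2$). Since $e_0\wedge S_n^{\mathsf T}$ has entries $\pm S_{n,j}$ with $j\ge1$, the whole expression is $O(z^{dn+1})$. This is exactly how the paper concludes.

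Two slips should be corrected. First, the claim $S_n\Phi_T^\vee(n)e_0=S_{0,0}$ in Step~2 is false, because $\Phi_T^\vee(n)=\Phi_T(n)^{-\mathsf T}$, not $\Phi_T(n)^{-1}$. You abandon that line, so it does no harm. Second, $\Lambda^2(\operatorname{adj}W^{\mathsf T})=\det(W)^2\,\Lambda^2(W^{-\mathsf T})$, not $\det(W)\,\Lambda^2(W^{-\mathsf T})$. Your final display is nonetheless correct.
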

\begin{proof}
The entries of $\hat x_n$ are $(r-1)\times(r-1)$ minors of $W_n$, whose entries have degree $\le mn$.
Write $\wedge$ for the exterior product and $\Lambda^2$ for the induced action of a matrix on $\Lambda^2\mathbb Q((z))^r$. By the Lagrange identity, the left-hand side equals $(q\wedge p)\cdot(\hat x_n\wedge S_0^{\mathsf T})$.
Since $S_0W_n=z^{dn}S_n$, we have $S_0^{\mathsf T}=z^{dn}W_n^{-\mathsf T}S_n^{\mathsf T}$, hence
\[
 \hat x_n\wedge S_0^{\mathsf T}=z^{dn}\det(W_n)\,\Lambda^2\bigl(W_n^{-\mathsf T}\bigr)\bigl(e_0\wedge S_n^{\mathsf T}\bigr).
\]
By Jacobi's complementary-minor theorem, the entries of $\det(W_n)\Lambda^2(W_n^{-\mathsf T})$ are, up to sign, $(r-2)\times(r-2)$ minors of $W_n$, hence polynomials. The entries of $e_0\wedge S_n^{\mathsf T}$ are $\pm S_{n,j}$ with $j\ge1$, which vanish at $z=0$.
For the last claim, suppose $q\hat x_n\equiv0$. Then each $p\hat x_n$ has degree $<dn+1$ and satisfies $(p\hat x_n)(qS_0^{\mathsf T})=O(z^{dn+1})$, so $p\hat x_n=0$. Hence $\hat x_n=0$, contradicting $\det W_n\ne0$.
\end{proof}

For our trajectory, \eqref{eq:hh-T} gives $r=3$, $d=3$ and $m=1$, \eqref{eq:hh-detT} gives $\det T(n)\ne0$, and $S_{n,1},S_{n,2}$ vanish at $z=0$.
Let $a=\rho_q\hat x_n$ and $f=\rho_p\hat x_n$ (both depend on $n$, which we omit from the notation), so that $r_n=f/a$ at $z=1/M$.

Since $\rho_qS_0^{\mathsf T}$ has constant term $1$, Lemma~\ref{lem:dual-pade} with $q=\rho_q$ shows that $a,f\in\mathbb Q[z]$ have degree $\le2n$, that $a\not\equiv0$, and that
\begin{equation}\label{eq:hh-pade}
 a\,(pS_0^{\mathsf T})-(p\hat x_n)\,(\rho_qS_0^{\mathsf T})=O(z^{3n+1})\qquad\text{for }p=\rho_p\text{ and }p=(1,2,0).
\end{equation}

\begin{lemma}\label{lem:hh-mobius}
Let $x=z/(z-1)$ and $\ell(u)=\log2-\frac12\log(1-u)$. For $-1\le x\le0$,
\[
 \frac{\rho_pS_0^{\mathsf T}}{\rho_qS_0^{\mathsf T}}=f_2(x):=\sum_{n\ge0}\frac{O_n}{2n+1}x^n=\int_0^1\frac{\ell(u)\,du}{2\sqrt u\,(1-xu)},
\]
\[
 \frac{(1,2,0)S_0^{\mathsf T}}{\rho_qS_0^{\mathsf T}}=f_1(x):=\sum_{n\ge0}\frac{x^n}{2n+1}=\int_0^1\frac{du}{2\sqrt u\,(1-xu)}.
\]
In particular, $B(M)=f_2(-1/k)$.
\end{lemma}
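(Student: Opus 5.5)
The plan is to compute the three CMF ratios of $S_0$ in closed form as explicit functions of $z$, to compute $f_1,f_2$ in closed form in the variable $y=\sqrt{-x}$, and then to match the two sides through the substitution $x=z/(z-1)$. Put $w=\sqrt z$ and write $F={}_3F_2(\tfrac12,\tfrac12,\tfrac12;\tfrac32,\tfrac32;z)=\sum_n c_n z^n$. Since $(3/2)_n=(1/2)_n(2n+1)$, we have
\[
c_n=\frac{(1/2)_n}{n!\,(2n+1)^2}.
\]

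\textbf{Step 1: the three functionals.} Since $\theta z^n=nz^n$, we get $\rho_qS_0^{\mathsf T}=(1+2\theta)^2F$ and $(1,2,0)S_0^{\mathsf T}=(1+2\theta)F$. Multiplying $c_n$ by $(2n+1)^2$, respectively $(2n+1)$, gives
\[
\rho_qS_0^{\mathsf T}=\sum_n\frac{(1/2)_n}{n!}z^n=(1-z)^{-1/2},\qquad
(1,2,0)S_0^{\mathsf T}=\sum_n\frac{(1/2)_n}{n!\,(2n+1)}z^n=\frac{\arcsin w}{w}.
\]
The first has constant term $1$, as used after Lemma~\ref{lem:dual-pade}. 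For $\rho_pS_0^{\mathsf T}=F$, set $G(w)=wF(w^2)$. Then $G'(w)=\arcsin(w)/w$, and the substitution $t=\sin u$ gives
\[
F=\frac1w\int_0^{\arcsin w}u\cot u\,du .
\]
This is exactly the integral in \eqref{eq:hh-forms}.

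\textbf{Step 2: the series $f_1,f_2$.} Put $x=-y^2$ with $0\le y\le1$. For $f_1$ we have directly $f_1=\arctan(y)/y$. For $f_2$, write
\[
yf_2=\sum_n(-1)^nO_n\frac{y^{2n+1}}{2n+1}.
\]
Its derivative is $\sum_n(-1)^nO_ny^{2n}$. This is the Cauchy product of $1/(1+y^2)$ with $\arctan(y)/y=\sum_m(-1)^my^{2m}/(2m+1)$, so it equals $\arctan(y)/\bigl(y(1+y^2)\bigr)$. Integrating and substituting $y=\tan u$ gives
\[
yf_2=\int_0^{\arctan y}u\cot u\,du .
\]
Now $y^2=-x=z/(1-z)$, so $y=w/\sqrt{1-z}$ and $\arctan y=\arcsin w$. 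Comparing with Step 1:
\begin{itemize}
\item $f_1=\sqrt{1-z}\,\arcsin(w)/w$, which is the second ratio;
\item $f_2=\sqrt{1-z}\,F$, which is the first ratio.
\end{itemize}

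\textbf{Step 3: the integral representations.} Expand $1/(1-xu)$ geometrically. Then $\tfrac12\int_0^1u^{n-1/2}\,du=1/(2n+1)$ gives the formula for $f_1$. For $f_2$, use $\int_0^1u^{a-1}(-\log(1-u))\,du=(\psi(a+1)+\gamma)/a$ with $a=n+\tfrac12$, together with $\psi(n+\tfrac32)+\gamma=2O_n-2\log2$. This gives
\[
\tfrac12\int_0^1u^{n-1/2}\ell(u)\,du=\frac{\log2}{2n+1}+\frac{O_n-\log 2}{2n+1}=\frac{O_n}{2n+1}.
\]

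\textbf{Step 4: the endpoint.} Finally, $z=1/M$ gives $x=1/(1-M)=-1/k$, and $f_2(-1/k)$ is term by term the series \eqref{eq:eq:hh-series} defining $B(M)$.

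\textbf{Main obstacle.} The only delicate point is justifying the identities on the whole range $-1\le x\le0$, in particular at $x=-1$, that is, $z=\tfrac12$ and $y=1$. There the series for $f_1,f_2$ are only conditionally convergent alternating series. I would handle this with Abel's theorem: $O_n/(2n+1)$ decreases to $0$, so the alternating series converges. The integrals converge by dominated convergence, since $|1-xu|\ge1$ for $x\le0$ and $\ell(u)/\sqrt u$ is integrable. On the $z$ side everything is analytic for $|z|<1$, so there is no issue there.
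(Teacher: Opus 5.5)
Your argument is correct. Its skeleton is the paper's: read off $\rho_qS_0^{\mathsf T}=(1+2\theta)^2F=(1-z)^{-1/2}$ and $(1,2,0)S_0^{\mathsf T}=(1+2\theta)F$ from the coefficients, match these with $f_1,f_2$ under $x=z/(z-1)$, expand $1/(1-xu)$ to get the moments, and use Abel's theorem plus continuity of the integrals at $x=-1$. You carry out the two substantive steps differently, however.

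For the matching, the paper never leaves power series. It characterizes $f_1,f_2$ as the unique power-series solutions of $(2x\frac{d}{dx}+1)f_1=\frac{1}{1-x}$ and $(2x\frac{d}{dx}+1)f_2=\frac{f_1}{1-x}$. It rewrites $2x\frac{d}{dx}=2(1-z)\theta$ and checks that $\sqrt{1-z}\,E$ and $\sqrt{1-z}\,F$ satisfy these equations, using only $E=(1+2\theta)F$, $A=(1+2\theta)E$ and $A=(1-z)^{-1/2}$. You instead evaluate both sides in elementary closed form ($\arcsin w/w$, $\arctan y/y$, $\int u\cot u\,du$) and match them through $\arctan\bigl(w/\sqrt{1-z}\bigr)=\arcsin w$.

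For the moments, the paper uses the integration-by-parts recursion $(2n+1)I_n-(2n-1)I_{n-1}=\frac{1}{2n+1}$ for $I_n=\int_0^1y^{2n}\ell(y^2)\,dy$. You use the digamma evaluation of $\int_0^1u^{a-1}\log(1-u)\,du$ together with $\psi(n+\frac32)+\gamma=2O_n-2\log2$.

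The paper's operator argument is more algebraic and would transfer to nearby parameter choices where no elementary antiderivative exists; its recursion also avoids special-function facts. Your route is more explicit and has a concrete payoff. It proves the representation \eqref{eq:hh-forms}, which the paper states without derivation and then cites to conclude $\sqrt{1-z}\,F=B(M)$. It also makes plain that the final clause $B(M)=f_2(-1/k)$ is nothing more than the defining series \eqref{eq:eq:hh-series}.
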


Put $\tilde a(x)=(1-x)^{2n}a(\frac x{x-1})$, $\tilde f(x)=(1-x)^{2n}f(\frac x{x-1})$ (degree $\le2n$, $\tilde f/\tilde a=f/a$) and
$Q(u)=u^{2n}\tilde a(1/u)$. By \eqref{eq:hh-pade}, the coefficients of $x^{2n+1},\dots,x^{3n}$ in $\tilde af_i$
vanish, and $\tilde f$ is the truncation of $\tilde af_2$ to degree $\le2n$. In terms of $Q$, for $0\le j<n$,
\begin{equation}\label{eq:hh-orth}
 \int_0^1Q(u)\,u^{j+1/2}\,du=0,\qquad\int_0^1Q(u)\,u^{j+1/2}\ell(u)\,du=0 .
\end{equation}
Thus $Q$ satisfies the type II multiple orthogonality conditions, and $\tilde f/\tilde a$ is the $f_2$ component of the simultaneous Hermite--Pad\'e approximation to $(f_1,f_2)$.

The relations \eqref{eq:hh-orth} determine $Q$ up to a constant factor. The next proposition gives $Q$ in closed form by a Rodrigues formula - which makes the integer normalization of Lemma~\ref{lem:hh-integers} possible.
\begin{proposition}
\label{prop:hh-rodrigues}
The normalization of $\hat x_n$ can be chosen so that $Q=Q_n$, where
\[
 Q_n(u)=\frac{u^{-1/2}}{n!^2}\Bigl(\frac d{du}\Bigr)^n\Bigl[(1-u)^n\Bigl(\frac d{du}\Bigr)^n\bigl[u^{2n+\frac12}(1-u)^n\bigr]\Bigr].
\]
With this normalization, write $Q_n=\sum_{l=0}^{2n}q_lu^l$. Then
\[
 \tilde a=x^{2n}Q_n(1/x),\qquad \tilde f=\sum_lq_lx^{2n-l}\sum_{t\le l}\frac{O_t}{2t+1}x^t .
\]
\end{proposition}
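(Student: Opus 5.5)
The plan is to check directly that the Rodrigues expression $Q_n$ is a nonzero polynomial of degree $\le 2n$ satisfying the $2n$ orthogonality relations \eqref{eq:hh-orth}. Since those relations determine $Q$ up to a constant factor, $Q$ is then a scalar multiple of $Q_n$, and rescaling $\hat x_n$ gives $Q=Q_n$. The formulas for $\tilde a$ and $\tilde f$ then follow by unwinding the definitions.

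\emph{Polynomiality.} Write $D=d/du$. The function $D^n[u^{2n+1/2}(1-u)^n]$ equals $u^{n+1/2}P_1(u)$ with $\deg P_1\le n$. Multiplying by $(1-u)^n$ gives $u^{n+1/2}P_2(u)$ with $\deg P_2\le 2n$. Applying $D^n$ again gives $u^{1/2}P_3(u)$ with $\deg P_3\le 2n$. Hence $Q_n=P_3/n!^2$ is a polynomial of degree $\le 2n$. I will show $Q_n\not\equiv0$ by computing its leading coefficient. Each derivative acting on the top monomial $u^{3n+1/2}$ contributes an explicit product of half-integer falling factorials, up to sign, and this product is nonzero.

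\emph{First orthogonality relation.} For $0\le j<n$,
\[
\int_0^1Q_n u^{j+1/2}\,du=\frac1{n!^2}\int_0^1 u^j\,D^n\bigl[(1-u)^nD^n[u^{2n+1/2}(1-u)^n]\bigr]\,du .
\]
Let $E(u)=(1-u)^nD^n[u^{2n+1/2}(1-u)^n]=u^{n+1/2}P_2(u)$. Its derivatives of order $<n$ vanish at $u=0$, because of the factor $u^{n+1/2}$, and at $u=1$, because of the factor $(1-u)^n$. So integrating by parts $n$ times produces no boundary terms, and the integral becomes a multiple of $\int_0^1 D^n(u^j)\,E(u)\,du$. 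This vanishes because $D^n(u^j)=0$ for $j<n$.

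\emph{Second orthogonality relation.} The same integration by parts transfers $D^n$ onto $u^j\ell(u)$. Since $\ell'(u)=\frac1{2(1-u)}$, Leibniz's rule gives
\[
D^n\bigl(u^j\ell(u)\bigr)=\frac{R(u)}{(1-u)^n}
\]
for a polynomial $R$ with $\deg R\le j<n$, and the $\log$ term disappears because $D^n u^j=0$. The factor $(1-u)^n$ in $E$ cancels this denominator. We are left with $\int_0^1 R(u)\,D^n[u^{2n+1/2}(1-u)^n]\,du$. Integrating by parts $n$ more times, again with vanishing boundary terms since $u^{2n+1/2}(1-u)^n$ has a zero of order $\ge n$ at both endpoints, gives zero because $\deg R<n$.

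\emph{Main obstacle.} The delicate point is the vanishing of the boundary terms at $u=1$ in the first round for the $\ell$-integral. There the factor $D^{n-1-k}(u^j\ell)$ is as singular as $(1-u)^{-(n-1-k)}$ for $k<n-1$, and as $\log(1-u)$ for $k=n-1$. Meanwhile $D^kE$ vanishes to order $n-k$ at $u=1$. The product is therefore $O(1-u)$, or $O((1-u)\log(1-u))$ when $k=n-1$, and tends to $0$. I will write this estimate out carefully. I will also justify the improper integrals, since they converge absolutely.

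\emph{Normalization and the formulas for $\tilde a,\tilde f$.} With the orthogonality and uniqueness up to scalars established, choose the free constant in $\hat x_n$, which is allowed because $r_n$ is a ratio, so that $Q=Q_n$. Inverting $Q(u)=u^{2n}\tilde a(1/u)$ gives $\tilde a(x)=x^{2n}Q_n(1/x)=\sum_l q_lx^{2n-l}$. Finally, $\tilde f$ is the truncation of $\tilde a f_2$ to degree $\le 2n$, as noted after \eqref{eq:hh-pade}, and $f_2=\sum_t \frac{O_t}{2t+1}x^t$. In the product $q_lx^{2n-l}\cdot\frac{O_t}{2t+1}x^t$ the degree is $2n-l+t$, so keeping degree $\le 2n$ means exactly $t\le l$. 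This gives the stated double sum for $\tilde f$.
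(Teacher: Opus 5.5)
Your proposal is correct and follows the paper's proof essentially step for step. You integrate by parts $n$ times against $(1-u)^nD^n[u^{2n+1/2}(1-u)^n]$, use $D^n(u^j\ell)=R_j/(1-u)^n$ with $\deg R_j<n$ to reduce the logarithmic relation to $\int R_j\,D^n[u^{2n+1/2}(1-u)^n]=0$, then invoke uniqueness of the type~II orthogonal polynomial (Lemma~\ref{lem:hh-unique}), rescale $\hat x_n$, and read off $\tilde a,\tilde f$ from the truncation property. The differences are cosmetic: the paper gets the degree bound and nonvanishing from the explicit double-sum expansion of $Q_n$ (which it also needs for Lemma~\ref{lem:hh-integers}), whereas you use the leading coefficient $\binom{3n+1/2}{n}^2$, and you write out the boundary estimate at $u=1$ that the paper gives in one line.
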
%
\noindent The proofs of Lemma~\ref{lem:hh-mobius} and Proposition~\ref{prop:hh-rodrigues} are in Appendix~\ref{app:hh-approx}.

Next, we use the asymptotics of the trajectory. After a diagonal coboundary transformation (Definition~\ref{def:coboundary}), the trajectory matrix has a limit $\tilde T_\infty$, given in \eqref{eq:hh-Tinf}. Its eigenvalues are real, positive and distinct; let $g(M)$ be the logarithm of the ratio of the two smallest.

\begin{proposition}\label{prop:hh-rate}
$S_n$ is recessive, $e_0$ is nonexceptional in the sense of Theorem~\ref{thm:dual-convergence} 
and
$r_n\to B(M)$ and $|B(M)-r_n|\le e^{-g(M)n+o(n)}$.
\end{proposition}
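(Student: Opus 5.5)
I will apply Theorem~\ref{thm:dual-convergence} and Corollary~\ref{cor:dual-rank-one} to the trajectory system generated by $T(n)$. The plan has three steps.

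\textbf{Step 1: weak Poincar\'e--Perron type.} After the diagonal coboundary $U(n)$ giving $\tilde T(n)\to\tilde T_\infty$, the eigenvalues $\lambda_1<\lambda_2<\lambda_3$ of $\tilde T_\infty$ are real, positive and distinct. Remark~\ref{rem:separated-spectrum} then shows the normalized cocycle is of weak Poincar\'e--Perron type. Since a diagonal rational coboundary has only polynomial growth, this property and the exponential gaps transfer back to $T(n)$. The recessive direction is asymptotically $\lambda_1^n$ up to subexponential factors, and the others grow at least like $\lambda_2^n$. This gives the separation constant $g(M)=\log(\lambda_2/\lambda_1)$ in Definition~\ref{def:weak-pp}.

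\textbf{Step 2: $S_n$ is the recessive solution.} I would avoid matching $S_n$ to the $\lambda_1$-eigendirection abstractly and instead estimate $F_n={}_3F_2(s_0+nt_0;z)$ directly. Using an Euler-type integral for ${}_3F_2$, one can write $S_n$ as a combination of integrals $\int_0^1\!\int_0^1 (\cdots)^n\,\omega$. A Laplace/saddle-point estimate should then give $\limsup\frac1n\log\|S_n\|\le\log\lambda_1$.

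This step is the main obstacle: one must check that the relevant saddle value equals the smallest eigenvalue, and not merely some eigenvalue. I would try first the cheaper route of using the Hermite--Pad\'e structure already established. By \eqref{eq:hh-orth}, the remainder $\int_0^1 Q_n(u)u^{1/2}\ell(u)/(1-xu)\,du$ can be written, after $n$-fold integration by parts with the Rodrigues formula of Proposition~\ref{prop:hh-rodrigues}, as a double integral with integrand of the form $\bigl(u^2(1-u)\cdots/(1-xu)\bigr)^n$. Its maximum on $[0,1]^2$ at $x=-1/k$ should be computed and compared with $\lambda_1$, which is the root of the characteristic polynomial of \eqref{eq:hh-Tinf}. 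Since the remainder is, via Lemma~\ref{lem:dual-pade}, a $2\times2$ minor pairing of $\hat x_n$ with $S_0^{\mathsf T}$, its decay is tied to the recessive solution. Conversely, positivity of the integrand shows $S_n\neq0$, so $S_n$ is not identically in a dominant subspace.

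\textbf{Step 3: nonexceptionality, the limit, and the error rate.} For $e_0$ to be nonexceptional, I need $e_0^{\mathsf T}\ell_n$ bounded away from degeneracy, where $\ell_n$ is the dual of the recessive direction. Equivalently, $e_0$ must have a nonzero component along the dominant solution of the dual system, whose growth is governed by $\lambda_1^{-1}$. I would show this through the polynomial structure. By Lemma~\ref{lem:dual-pade}, $\hat x_n=z^{-dn}\det(W_n)\Phi^\vee(n)e_0$ has $\rho_q\hat x_n=a\not\equiv0$. Moreover, $\tilde a=x^{2n}Q_n(1/x)$, whose evaluation at $x=-1/k$ grows like the dominant dual rate; this can be checked by a Rodrigues/saddle estimate on $Q_n$, which is a sum of positive terms with alternating signs controlled.

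Once Steps 1 and 2 hold, Proposition~\ref{prop:cmf-initial-limit} gives
\[
r_n\to\frac{\rho_pS_0^{\mathsf T}}{\rho_qS_0^{\mathsf T}}=f_2(-1/k)=B(M)
\]
by Lemma~\ref{lem:hh-mobius}, noting $\rho_qS_0^{\mathsf T}\ne0$. Corollary~\ref{cor:dual-rank-one} then yields the error bound $e^{-g(M)n+o(n)}$.
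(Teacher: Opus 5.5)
Your Step~1 and the closing deduction via Lemma~\ref{lem:hh-mobius} and Corollary~\ref{cor:dual-rank-one} are fine, with two small caveats. Remark~\ref{rem:separated-spectrum} only gives the estimate of Definition~\ref{def:weak-pp} with exponent $g(M)-\varepsilon$, not $g(M)$ itself. You also take for granted that the $\rho_i$ are real, positive and distinct; the proof has to check this, e.g.\ from the discriminant $\frac{19683}{256}z(1-z)(32z-81)^2>0$ and the alternating signs of the coefficients of $\chi_z$.

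\textbf{Step~2 is not proved, and the missing idea is a simple one.} You aim at $\limsup\frac1n\log\|S_n\|\le\log\lambda_1$ and leave it to a saddle-point computation. You do not carry it out, and you would still have to identify its value with a root of $\chi_z$. That target is far sharper than needed. The recessive subspace is one-dimensional, and every solution outside it grows at rate at least $\log\lambda_2$. So it suffices that the nonzero solution $S_nU(n)$ grows strictly slower than $\lambda_2=\frac{729\rho_2}{16z^3}$. This is easy because $\lambda_2$ is huge:
\begin{itemize}
\item $\chi_z(2)\ge0$ and $\rho_3\ge\frac13(3z+\frac{27}2)>2$ force $\rho_2\ge2$, hence $\lambda_2\ge729$ for $z\le\frac12$;
\item the termwise comparison $(\frac12+2n)_j^3/(\frac32+3n)_j^2\le(\frac12+2n)_j$ bounds each $|\theta^iF_n|$ by a polynomial in $n$ times $(1-z)^{-2n-3}$, i.e.\ rate at most $-2\log(1-z)\le\log4$.
\end{itemize}
Your fallback through the Hermite--Pad\'e remainder runs in the wrong direction. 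In the proof of Lemma~\ref{lem:dual-pade} the remainder is expressed through $S_n$, so estimating it does not bound $S_n$. Also, $S_n\ne0$ is automatic from $S_0\ne0$ and $\det T(n)\ne0$, and it says nothing about which subspace $S_n$ lies in.

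\textbf{Step~3 has the same kind of gap.} Knowing $a=\rho_q\hat x_n\not\equiv0$ as a polynomial says nothing about its size at $z=1/M$. Moreover, the identity $\tilde a=x^{2n}Q_n(1/x)$ holds only after the $n$-dependent rescaling of $\hat x_n$ in Proposition~\ref{prop:hh-rodrigues}, and you never determine that constant. So even a sharp asymptotic for $Q_n(-k)$ would not give the growth of $\rho_q\Phi^\vee(n)e_0$. (Note that $Q_n(-k)$ is a sum of positive terms, since $(-1)^lq_l>0$.) It is $\rho_q\Phi^\vee(n)e_0$ that must be bounded \emph{below} by $\lambda_1^{-n}e^{-o(n)}$ to control the component of $e_0$ along the dominant adjoint solution $\ell_n$. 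Positivity alone does not match that rate.

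The paper needs no analysis here:
\begin{itemize}
\item It checks that $e_0$ is a cyclic vector of $\tilde T_\infty^{-\mathsf T}$: $\det[e_0,\tilde T_\infty^{-\mathsf T}e_0,(\tilde T_\infty^{-\mathsf T})^2e_0]=-\frac{8192z^5(32z-81)}{14348907}\ne0$.
\item A ratio argument in an eigenbasis of $\tilde T_\infty$ shows $[\ell_n]\to[v^{\mathsf T}]$, with $v$ a right eigenvector for the smallest eigenvalue.
\item If $e_0^{\mathsf T}v=0$, then $v$ would be orthogonal to those three spanning columns, which is impossible.
\end{itemize}
This yields nonexceptionality in the sense of Theorem~\ref{thm:dual-convergence} directly, and Corollary~\ref{cor:dual-rank-one} then gives the limit and the rate.
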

The proof is in Appendix~\ref{app:hh-rate}.

Next, we normalize the approximants to integers.
\begin{lemma}\label{lem:hh-integers}
Let $L_m=\lcm(1,3,\dots,m)$ for odd $m$. Then
$D_n=16^nL_{4n+1}^2\sum_lq_l(-1)^lk^l$ and $A_n=16^nL_{4n+1}^2k^{2n}\tilde f(-1/k)$ are integers,
$D_n>0$, $r_n=A_n/D_n$, and
\[
 \limsup_{n\to\infty}\frac{\log|D_n|}n\le\sigma(M):=2\log M+\log16+2\log\tfrac{27}4+8.
\]
\end{lemma}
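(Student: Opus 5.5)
The plan is to expand the Rodrigues formula of Proposition~\ref{prop:hh-rodrigues} explicitly, read off integrality and size of the coefficients $q_l$, and then treat the harmonic part of $\tilde f$ separately. Write the inner step as
\[
\frac1{n!}\Bigl(\frac d{du}\Bigr)^n\bigl[u^{2n+\frac12}(1-u)^n\bigr]
=\sum_{j=0}^n(-1)^j\binom nj\frac{(n+j+\tfrac32)_n}{n!}\,u^{n+j+\frac12}.
\]
Then multiply by $(1-u)^n$, apply $\frac1{n!}(d/du)^n$ once more, and multiply by $u^{-1/2}$. The result is $q_l$ as a double sum of products of the form
\[
\pm\binom nj\binom ni\,\frac{(a+\tfrac12)_n}{n!}\,\frac{(b+\tfrac12)_n}{n!}
\]
with $a,b$ nonnegative integers of size $O(n)$.

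\textbf{Integrality.} We have $2^n(a+\frac12)_n=\prod_{m=0}^{n-1}(2a+2m+1)$, a product of $n$ consecutive odd integers. For every odd prime $p$, any $n$ consecutive odd integers contain at least $\lfloor n/p^e\rfloor$ multiples of $p^e$, so by Legendre's formula this product is divisible by the odd part of $n!$. The remaining power of $2$ in $n!$ is at most $2^{n}$. Hence $4^n(a+\frac12)_n/n!\in\mathbb Z$, and with two such factors $16^nq_l\in\mathbb Z$. Since $k=M-1$ is an integer and
\[
k^{2n}\tilde a(-1/k)=\sum_lq_l(-k)^l,
\]
it follows that $D_n\in\mathbb Z$. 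For $A_n$, the formula for $\tilde f$ in Proposition~\ref{prop:hh-rodrigues} involves only $O_t/(2t+1)$ with $t\le 2n$. The denominator of $O_t$ divides $L_{2t+1}$, and $2t+1\le4n+1$, so $L_{4n+1}^2$ clears these denominators. Multiplying by $k^{2n}$ clears the powers of $x=-1/k$, which have degree at most $2n$. The identity $r_n=A_n/D_n$ then follows from $r_n=f/a=\tilde f/\tilde a$ at $x=z/(z-1)=-1/k$, because the common factor $16^nL_{4n+1}^2k^{2n}$ cancels.

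\textbf{Positivity.} We have $D_n=16^nL_{4n+1}^2\,Q_n(-k)$. By \eqref{eq:hh-orth}, $Q_n$ satisfies $2n$ orthogonality conditions against $u^{j+1/2}$ and $u^{j+1/2}\ell(u)$ on $[0,1]$. The pair of weights $(u^{1/2},u^{1/2}\ell(u))$ forms an AT (Nikishin-type) system, since $\ell$ is a Markov/Stieltjes-type function of $u$. The standard sign-change argument then shows that $Q_n$ has all $2n$ zeros simple and inside $(0,1)$. A degree $2n$ polynomial whose zeros all lie in $(0,1)$ has the sign of its leading coefficient at the point $-k<0$. From the Rodrigues formula the leading coefficient is a product of two factors of sign $(-1)^n$, hence positive, so $D_n>0$. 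If the AT property is awkward to verify directly, I would instead show that $(-1)^l q_l\ge0$ via the explicit double sum. That would give
\[
Q_n(-k)=\sum_l|q_l|k^l>0,
\]
which also yields the growth bound without further loss.

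\textbf{Growth.} The bound is
\[
|D_n|\le 16^nL_{4n+1}^2\,k^{2n}\sum_l|q_l|.
\]
The factors contribute as follows:
\begin{itemize}
\item $k^{2n}\le M^{2n}$ gives $2\log M$.
\item $16^n$ gives $\log 16$.
\item $L_{4n+1}\le d_{4n+1}=e^{4n+o(n)}$ by the prime number theorem, so $L_{4n+1}^2$ gives $8$.
\end{itemize}
It remains to show $\sum_l|q_l|\le(27/4)^{2n+o(n)}$. Each summand is bounded by products of binomials such as $\binom nj\binom ni\binom{3n}{n}$-type ratios coming from $(a+\frac12)_n/n!$ with $a\le 2n$. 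Each of the two Rodrigues stages contributes at most $\max_j\binom{3n}{n}\approx(27/4)^n$, up to factors $2^{O(1)\cdot n}$ that are already absorbed by the $4^n$ normalizations and counted in $\log16$. The number of terms is polynomial in $n$.

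This coefficient estimate is the step I expect to be the main obstacle. The danger is double counting powers of $2$ between the $16^n$ normalization and the $(27/4)^{2n}$ estimate. I would first try to prove the $(27/4)^n$ bound for each stage separately by Stirling applied to the largest term. If that fails to match $\sigma(M)$ exactly, I would fall back on a saddle-point estimate of $Q_n(-k)$ from the integral representation.
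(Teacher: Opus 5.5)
Your integrality argument, the identity $r_n=A_n/D_n$, and the positivity claim are correct. For positivity, your fallback is in fact immediate from your own expansion, since
$(-1)^lq_l=\sum_{i+j=l}\binom ni\binom nj\binom{2n+j+\frac12}n\binom{n+i+j+\frac12}n>0$.
So the AT/zero-location argument is not needed, although it does work given the AT property proved in Lemma~\ref{lem:hh-unique}.

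The gap is in the growth step. You bound $|D_n|\le16^nL_{4n+1}^2k^{2n}\sum_l|q_l|$ and then need $\sum_l|q_l|\le(27/4)^{2n+o(n)}$, but this is false. All terms of the double sum are positive once the sign $(-1)^{i+j}$ is removed, so $\sum_l|q_l|=Q_n(-1)$ is at least any single term. Take $i=j=n/2$ with $n$ even: the term $\binom n{n/2}^2\binom{5n/2+\frac12}n\binom{2n+\frac12}n$ has $n$-th root tending to $4\cdot4\cdot(5/2)^{5/2}(3/2)^{-3/2}\approx86$, whereas $(27/4)^2\approx45.6$.

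The factors $\binom ni\binom nj$ coming from the two expansions of $(1-u)^n$ are not absorbed by the $16^n$. That factor is a genuine multiplier of $D_n$, introduced only to make the two half-integer binomials integral, and it is already counted separately as $\log16$ in $\sigma(M)$. Your chain therefore yields a rate exceeding $\sigma(M)$ by a positive constant for large $M$. The lemma as stated does not follow, and the threshold $M\ge7165$, which depends on the exact constant, would be lost.

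The missing idea is to keep $k^l$ paired with $q_l$ instead of bounding it by $k^{2n}$. Since $\binom yn$ is increasing in $y\ge n$, both half-integer binomials are at most $\binom{3n+\frac12}n$. Vandermonde's identity $\sum_{i+j=l}\binom ni\binom nj=\binom{2n}l$ then gives $|q_l|\le\binom{2n}l\binom{3n+\frac12}n^2$, and the binomial theorem yields
\[
 0<Q_n(-k)=\sum_l|q_l|k^l\le\binom{3n+\tfrac12}{n}^2(1+k)^{2n}=\binom{3n+\tfrac12}{n}^2M^{2n}.
\]
The factors $\binom ni\binom nj$ are exactly what converts $k^{2n}$ into $M^{2n}$, which is why $\sigma(M)$ contains $2\log M$ rather than $2\log k$. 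Stirling's formula, $\binom{3n+\frac12}n^{1/n}\to\frac{27}4$, together with $L_{4n+1}\le d_{4n+1}=e^{(4+o(1))n}$, finishes the bound. No saddle-point analysis is needed.
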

The proof is in Appendix~\ref{app:hh-approx}.

\begin{lemma}\label{lem:hh-nondeg}
$D_nB(M)\ne A_n$ for infinitely many $n$.
\end{lemma}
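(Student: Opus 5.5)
The plan is to use Lemma~\ref{lem:nonvanishing} together with the fact that $D_n$ and $A_n$ are, up to the common normalization $16^nL_{4n+1}^2k^{2n}$, the entries $\rho_q\hat x_n$ and $\rho_p\hat x_n$ of the dual walk evaluated at $z=1/M$.

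First I express the error as a single linear form in the dual walk. By Lemma~\ref{lem:hh-integers} and the definitions of $a,f$, there is a nonzero scalar $\nu_n$ with $D_nB(M)-A_n=\nu_n\,(B(M)\rho_q-\rho_p)\,\hat x_n$. Since $\hat x_n=z^{-dn}\det(W_n)\Phi_T^\vee(n)e_0$, the error is therefore a nonzero multiple of $L_n=x\,v_n$. Here $x=B(M)\rho_q-\rho_p$, and $v_n=\Phi^\vee(n)e_0=\Phi_{T^{-\mathsf T}}(n)e_0$ is the walk of the dual system $A(i)=T(i)^{-\mathsf T}$. The prefactors are nonzero because $\det T(i)\ne0$ by \eqref{eq:hh-detT} and $M\ne0$.

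Next I check the hypotheses of Lemma~\ref{lem:nonvanishing} for $A=T^{-\mathsf T}$ with $s=3$.
\begin{enumerate}
\item \emph{Common recurrence.} For every row $y$, the sequence $y\,v_n$ satisfies a common order-$3$ scalar recurrence: the companion form of the dual system, obtained by the cyclic-vector theorem.
\item \emph{Trailing coefficient.} Its trailing coefficient is, up to rational factors, $\det T(n)$ and the leading data of $T$ at the rational points $s_0+nt_0$. I must check that these do not vanish at any integer $n\ge0$. Since every parameter lies in $\tfrac12+\mathbb Z$, the linear factors in $n$ from \eqref{eq:hh-T} and \eqref{eq:hh-detT} should have no nonnegative integer roots. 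If the cyclic vector introduces an apparent singularity, I instead use $e_0$ and its images directly, that is, the Krylov basis.
\item \emph{Invertibility at the start.} Choose $\rho_1=\rho_p$, $\rho_2=\rho_q$, and $\rho_3$ a third rational row, say $e_2$. The $3\times3$ matrix $[\rho_i v_j]_{i,j<3}$ must be invertible; this is a finite exact computation with $T(0),T(1)$ at $z=1/M$, and it holds for generic $M$ (I would check that its determinant is a nonzero polynomial in $z$ with no root at $1/M$ for the relevant integers).
\item \emph{The row $x$.} The row $x$ is nonzero because $\rho_p,\rho_q$ are linearly independent, and it lies in $\operatorname{span}(\rho_1,\rho_2,\rho_3)$.
\end{enumerate}

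A subtlety is that the coefficient $B(M)$ is real, not rational. Lemma~\ref{lem:nonvanishing}'s proof works verbatim over $\mathbb R$, since it only uses linearity and backward solvability. So $L_n$ cannot vanish at three consecutive indices, and hence $D_nB(M)\ne A_n$ for infinitely many $n$; indeed at least one in every three consecutive indices works.

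The main obstacle is the nonvanishing of the trailing coefficient at all $n\ge0$. The recurrence reduction may introduce factors depending on $M$, which could vanish for special $M$.

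The fallback avoids the recurrence. Suppose $L_n=L_{n+1}=L_{n+2}=0$. Then $x$ is orthogonal to $v_n,v_{n+1},v_{n+2}$. These three vectors span $\mathbb R^3$ whenever the Krylov matrix $(e_0, A(n)e_0, A(n)A(n+1)e_0)$, transported by the invertible $\Phi^\vee(n)$, is invertible. That reduces the problem to showing a single rational function of $(n,z)$ has no nonnegative integer zeros at $z=1/M$, which I would verify by exact symbolic factorization.
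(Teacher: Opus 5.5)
Your reduction follows the paper's. $D_nB(M)-A_n$ is a nonzero multiple of $x\,v_n$ with $x=B(M)\rho_q-\rho_p\neq0$, the recurrence should be built from the Krylov columns $e_0,\ T(n)^{-\mathsf T}e_0,\ T(n)^{-\mathsf T}T(n+1)^{-\mathsf T}e_0$ (not from an arbitrary cyclic vector), and Lemma~\ref{lem:nonvanishing} works verbatim over $\mathbb R$. Your fallback is actually a cleaner route. Since
\[
[v_n,v_{n+1},v_{n+2}]=\Phi^\vee(n)\,\bigl[e_0,\ T(n)^{-\mathsf T}e_0,\ T(n)^{-\mathsf T}T(n+1)^{-\mathsf T}e_0\bigr],
\]
everything reduces to the nonvanishing of the single Krylov determinant $\Delta(n)$, and no backward recurrence is needed.

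The gap is that you never prove this nonvanishing, and the justification you sketch does not address the right object. The trailing coefficient of the Krylov recurrence is, up to sign, $\Delta(n+1)/(\det T(n)\,\Delta(n))$, not something built from $\det T(n)$ and ``leading data'' of $T$. Moreover, $\Delta(n)$ contains a factor $p(n,z)$ depending jointly on $n$ and $z$. That factor does not come from the linear factors $(2n+1),(4n+1),(6n+5),\dots$ of \eqref{eq:hh-T} and \eqref{eq:hh-detT}, so half-integrality of the parameters says nothing about it.

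Likewise, ``invertible for generic $M$'' and ``verify by exact symbolic factorization at $z=1/M$'' do not cover the infinitely many integers $M\ge7165$. For the backward-recurrence route they also do not cover all $n\ge0$ at once.

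The missing ingredient is a uniform argument. The paper writes $\Delta(n)=c(n)\,p(n,z)$ with $c(n)\neq0$ for $n\ge0$. It then observes that every coefficient of $p$, viewed as a polynomial in $n$, has the form $az-b$ with $0<a<b$. Hence $p(n,z)<0$ for all $n\ge0$ and $0<z\le1$, which gives both $\Delta(0)\neq0$ and the nonvanishing trailing coefficient.

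For the statement as posed, your fallback would only need $\Delta(\cdot,1/M)\not\equiv0$, for instance because the leading $n$-coefficient $az-b$ is nonzero. But you still have to supply some such explicit argument; without it the proof is incomplete.
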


This follows from Lemma~\ref{lem:nonvanishing}; see Appendix~\ref{app:hh-arith}.

\noindent The final step compares the error rate with the denominator growth.

\begin{proposition}\label{prop:hh-gap}
As $M\to\infty$,
\[
 g(M)-\sigma(M)=\log M-8-\log\tfrac{64}{27}+O(M^{-1/2}),
\]
and $g(M)>\sigma(M)$ for every integer $M\ge7165$.
\end{proposition}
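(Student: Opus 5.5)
We need $g(M)$ explicitly: the logarithm of the ratio of the two smallest eigenvalues of $\tilde T_\infty$ from \eqref{eq:hh-Tinf}. The plan is to compute the characteristic polynomial of $\tilde T_\infty$ as a polynomial in $\lambda$ with coefficients in $z=1/M$, and to expand its roots as $M\to\infty$.

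Heuristically, the asymptotic growth of the Hermite--Pad\'e forms along the direction $t_0=(2,2,2;3,3)$ is governed by a saddle-point equation. Its roots behave like powers of $z$: two roots separated by a factor of order $M$, and a third root of order $1$. The plan is:
\begin{enumerate}
\item Write the cubic $\chi_M(\lambda)=\det(\lambda I-\tilde T_\infty)$ explicitly.
\item Rescale $\lambda=\mu\cdot(\text{appropriate power of }M)$ so that the two smallest roots $\lambda_1<\lambda_2$ separate, since $\sigma(M)=2\log M+\text{const}$ suggests $g(M)\approx3\log M$.
\item Solve the resulting Puiseux expansions, which by the stated error term are in powers of $M^{-1/2}$, to obtain
\[
 g(M)=\log(\lambda_2/\lambda_1)=3\log M+c_0+O(M^{-1/2}).
\]
\end{enumerate}
Subtracting $\sigma(M)=2\log M+\log16+2\log\tfrac{27}{4}+8$ then identifies the asymptotic claim. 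One should check that
\[
 c_0=\log16+2\log\tfrac{27}{4}-\log\tfrac{64}{27},
\]
which is presumably where the $27/4$ (the classical maximum of $u^{2}(1-u)$-type factors) and the $64/27$ come from. The $\sqrt{M}$-type corrections arise because $\tilde T_\infty$ involves $\sqrt{1-z}$-like discriminants, consistent with the $\sqrt{1-z}$ prefactor in \eqref{eq:hh-forms}.

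For the explicit threshold we cannot rely on the asymptotic expansion alone. The plan is to make the $O(M^{-1/2})$ term effective:
\begin{enumerate}
\item Bound $\lambda_1$ and $\lambda_2$ in closed form. The cubic has real, positive, distinct roots, so one can use sign changes of $\chi_M$ at explicit rational points $\lambda=\ell_i(M)$ and $\lambda=u_i(M)$ that bracket each root.
\item For $M\ge M_0$, with $M_0$ moderately large, certify by elementary polynomial inequalities in $M$ that $\log(u_1/\ell_2)^{-1}$ exceeds $\sigma(M)$. Equivalently, show that
\[
 \log\ell_2-\log u_1-\sigma(M)>0,
\]
using monotonicity in $M$ of the resulting explicit expression.
\item For the remaining range $7165\le M<M_0$, together with the verification that the inequality actually fails just below $7165$, compute $g(M)$ numerically with interval arithmetic. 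Alternatively, show that $g(M)-\sigma(M)$ is increasing in $M$, so that it suffices to check $M=7165$ alone.
\end{enumerate}

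The main limitation is that the leading term $\log M-8-\log\frac{64}{27}$ only becomes positive near $M\approx e^{8.86}\approx7070$. The margin at $7165$ is therefore tiny, and the $M^{-1/2}$ correction is not negligible there.

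Hence the main obstacle is the effective part: proving that $g(M)-\sigma(M)$ is increasing in $M$, or otherwise obtaining a rigorous lower bound accurate enough near $M=7165$. The first approach to try is implicit differentiation of the characteristic polynomial in $M$, to show $\frac{d}{dM}\bigl(g-\sigma\bigr)>0$ for $M\ge 7000$. That reduces the problem to a single certified numerical evaluation at $M=7165$, with exact rational bracketing of the two roots.
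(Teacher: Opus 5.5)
Your plan is essentially the paper's proof. The paper brackets the roots of $\chi_z$ by sign changes, $\rho_1<\frac{16}{729}z^3(1+\frac89z+z^2)$ and $\frac{27}{4}>\rho_2>\frac{27}{4}-7.794229\sqrt z+\frac32z$, uniformly for $M\ge7165$; it gets the asymptotics from these brackets plus Vieta, and notes that the resulting lower bound for $g-\sigma$ is a sum of functions increasing in $M$, so a single evaluation at $M=7165$ (value $\approx8.9\cdot10^{-5}$) suffices, with no implicit differentiation, interval checks over a range, or check below $7165$ needed. One correction to your heuristics: at $z=0$ the cubic is $\rho(\rho-\frac{27}{4})^2$, so it is the two \emph{largest} roots that coalesce and split like $\frac{27}{4}\mp\frac{9\sqrt3}{2}\sqrt z$ (this, via the factor $z$ in the discriminant, is the source of the $O(M^{-1/2})$ term, not any $\sqrt{1-z}$), while the two smallest roots are separated by a factor of order $M^3$, not $M$.
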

The term $\log M$ reflects that the error is of order $M^{-3n}$ while $D_n$ grows like $M^{2n}$. The constant $8$ comes from the factor $L_{4n+1}^2$ by the prime number theorem, and $\log\frac{64}{27}$ collects the remaining constants. The threshold is therefore close to $\frac{64}{27}e^8\approx7066$; the value $7165$ comes from explicit bounds for the two eigenvalues (Lemma~\ref{lem:hh-roots}). The proof is in Appendix~\ref{app:hh-arith}.

\begin{proof}[Proof of Theorem~\ref{thm:hh-main}]
Let $M\ge7165$. By Proposition~\ref{prop:hh-gap}, $g>\sigma$. Lemma~\ref{lem:hh-integers} and Proposition~\ref{prop:hh-rate} give
$|D_nB-A_n|\le e^{(\sigma-g)n+o(n)}\to0$, and Lemma~\ref{lem:hh-nondeg} shows that this quantity is nonzero for infinitely
many $n$. If $B(M)=p/q$, then $q(D_nB-A_n)$ would be a nonzero integer for infinitely many $n$, a contradiction.
The bound $\delta_{(r_n)}\ge g/\sigma-1>0$ is derived in Appendix~\ref{app:hh-arith}.
\end{proof}

\begin{remark}[Further application]\label{rem:hh-other-families}
The proof of Theorem~\ref{thm:hh-main} applies to other families of constants.
Let the trajectory matrix of a CMF trajectory depend on a parameter $z$, and let the CMF ratios of its dual walk converge to $\alpha(z)$.
At $z=1/M$, suppose that
(i) the CMF ratios can be written as $A_n/D_n$ with integers $|D_n|\le e^{\sigma(M)n+o(n)}$, and
(ii) the trajectory matrix has a limit with eigenvalues of distinct moduli, the compatible state is recessive, and the terminal column is a cyclic vector of the limit.
Then Theorem~\ref{thm:dual-convergence}%
, with the argument of Appendix~\ref{app:hh-rate},
gives $|\alpha(1/M)-r_n|\le e^{-g(M)n+o(n)}$ and Lemma~\ref{lem:nonvanishing} gives $D_n\alpha(1/M)\ne A_n$ infinitely often, so $\alpha(1/M)\notin\mathbb Q$ whenever $g(M)>\sigma(M)$.
Condition~(ii) is a finite computation with the limiting matrix.
Lemma~\ref{lem:dual-pade} indicates when $g(M)>\sigma(M)$ for all large $M$: if $z^dT(n)$ has entries of degree at most $m$ in $z$ and $d>(r-1)m$, the error decays like $M^{-dn}$ while the denominators grow like $M^{(r-1)mn}$.
\end{remark}

\section{Discussion and Outlook}
\label{sec:outlook}


\subsection{Improving the Method}

The research strategy at the core of this work is inspired by experimental mathematics: we first connect a constant of interest to a D-finite function, construct a CMF using its contiguous relations, and then optimize the irrationality measure $\delta$ along trajectories in its lattice.

A natural direction could be to construct CMFs for more D-finite functions, such as Kampé de Fériet and A-hypergeometric systems, enabling the exploration of a much broader trajectory spaces.

\subsection{Higher-Order Irrationality Results}

The results of this paper, together with recent breakthroughs based on arithmetic holonomy, determinant methods, and AI-assisted mathematical discovery \cite{calegari2024linear, lai2026zeta34, openai2026catalan}, point toward a broader challenge: establishing irrationality results for higher-order constants.

The \(G_{d,d}^{d,2}(1)\) CMF encodes \(\zeta(3),\zeta(4),\dots\zeta(d-1)\) simultaneously. Could the next irrationality result be discovered as a trajectory in it?

\subsection{Relation of Experimental Math to "AI for Math"}

Before LLMs started writing major proofs several months ago \cite{openai2026planar} computers had very different uses in mathematical research. 
A primary use case, "experimental math", was also termed AI in many venues (in retrospect, confusingly).
This area of math and type of mathematical research are now completely disjoined from modern "AI for math".

There is in fact a key difference between experimental math and modern AI for math: Computer assistance in experimental math has always been used to enhance overall human understanding, by supporting it with examples in the technically demanding parts. 
In contrast, recent LLM achievements are (so far, mostly) replacing human understanding.

While it is too early to tell how LLM-use for math will evolve in the near future, we hope that the work here will help encourage computer usage and reporting that enhance human understanding, promoting responsible use of AI in mathematics.

In particular, recent AI discoveries included important achievements in Diophantine approximations \cite{openai2026catalan,fauzan2026}, including a direct competition to our work by an LLM-discovery honestly reported by Lai \cite{lai2026zeta34}.
Our work differs exactly in the important aspect of human understanding: The approach we developed over the past several years used experimental math to help unify decades of human effort into a compact framework that supports understanding through a simple representation.

Looking forward, our CMF approach does not capture certain key results \cite{fauzan2026, lai2026zeta34, openai2026catalan, calegari2024linear}.
We hope that near-future advances will help connect these efforts to a unified, more understandable framework. 
More broadly, attempts at such unifications and at creating simpler representations may help contribute a little to the broader effort to align AI with the goal of enhancing human understanding.

\subsection*{Use of AI Tools}
The mathematical concepts, CMF framework, and proof methods presented in this paper were developed by the authors over several years. 
In particular, the simultaneous Ap\'ery-type approximation construction and the proof of Theorem~\ref{thm:intro-zeta34} were discovered and written by the authors, independently of LLM-generated mathematical arguments.

AI tools assisted in developing code for the symbolic-computation software used for numerical experiments. 
After the authors had established the proof strategy and demonstrated it in Theorem~\ref{thm:intro-zeta34}, AI assistance contributed to the most technical parts 
of subsequent proofs:
the arithmetic-growth estimate in Lemma~\ref{lem:z2-growth}, supporting Theorem~\ref{thm:intro-zeta23}, and technical arguments in Appendix~\ref{app:gregory-series}, supporting the irrationality of 
harmonic-weighted Gregory series in Section~\ref{sec:other_implications}. 
These arguments were subsequently reconstructed, verified, and written in full by the authors.

AI tools also assisted in the formal verification of the two main theorems in Lean. 
The authors take full responsibility for all mathematical arguments, computations, and citations.

\section*{Code and data availability}\phantomsection\label{sec:code}
All computations used in the proofs are available at \coderepo{}
(release v1.0.0). Each notebook runs independently and states which
lemma it verifies.
\begin{itemize}
\item Theorem~\ref{thm:z34} (Section~\ref{subsec:z34}, Appendix~\ref{app:zeta34-proof}): \texttt{python/joint/}.
\item The $\zeta(2)$ bound (Section~\ref{subsec:z23}, Appendices~\ref{app:zeta2-proof}--\ref{app:z2-blocks}): \texttt{python/zeta2/}.
\item The $\zeta(3)$ bound (Appendix~\ref{app:zeta3-measure}): \texttt{python/pure\_zeta3/}.
\item The integral CMF for $\pi$ and the harmonic-weighted Gregory series: \texttt{additional/}.
\item Lean~4 sources, which assume the prime number theorem: \texttt{lean/}.
\end{itemize}

\section*{Acknowledgments}

We thank the members of the Ramanujan Machine team, past and present.
This work grew out of several years of joint investigation of
conservative matrix fields.  We are grateful for the discussions,
computations, experiments, and questions that shaped the point of view
developed here.

The Ramanujan Machine team gratefully acknowledges support from
Schmidt Sciences, LLC.  The first author began this work as a member
of the team and is supported by the FWO and the F.R.S.--FNRS under
the Excellence of Science (EOS) programme, project ID 40007542.

\printbibliography

@misc{NIST:DLMF,
  sortkey = {DLMF},
  title = {{NIST Digital Library of Mathematical Functions}},
  year = {2026},
  url = {https://dlmf.nist.gov/},
  note = {Release 1.2.8 of 2026-09-15. F.~W.~J. Olver, A.~B. Olde Daalhuis, D.~W. Lozier, B.~I. Schneider, R.~F. Boisvert, C.~W. Clark, B.~R. Miller, B.~V. Saunders, H.~S. Cohl, and M.~A. McClain, eds.}
}

@incollection{apery1979irrationalite,
  author = {Ap{\'e}ry, Roger},
  title = {Irrationalit{\'e} de {$\zeta(2)$} et {$\zeta(3)$}},
  booktitle = {Journ{\'e}es Arithm{\'e}tiques de Luminy},
  series = {Ast{\'e}risque},
  volume = {61},
  pages = {11--13},
  publisher = {Soci{\'e}t{\'e} Math{\'e}matique de France},
  year = {1979},
  url = {https://www.numdam.org/item/AST_1979__61__11_0/}
}

@misc{bai2026pi,
  author = {Bai, Yufei},
  title = {The irrationality measure of {$\pi$} is at most {$7.101862832357$}},
  year = {2026},
  eprint = {2609.11276},
  archivePrefix = {arXiv},
  primaryClass = {math.NT},
  url = {https://arxiv.org/abs/2609.11276},
  note = {Preprint, version 2, revised 11 September 2026}
}

@article{beukers1979note,
  author = {Beukers, Frits},
  title = {A note on the irrationality of {$\zeta(2)$} and {$\zeta(3)$}},
  journal = {Bulletin of the London Mathematical Society},
  volume = {11},
  number = {3},
  pages = {268--272},
  year = {1979},
  doi = {10.1112/blms/11.3.268}
}

@misc{brownZudilin2022zeta5,
  author = {Brown, Francis and Zudilin, Wadim},
  title = {On cellular rational approximations to {$\zeta(5)$}},
  year = {2022},
  eprint = {2210.03391},
  archivePrefix = {arXiv},
  primaryClass = {math.NT},
  url = {https://arxiv.org/abs/2210.03391},
  note = {Preprint, version 3, revised 29 January 2026}
}

@misc{calegari2024linear,
  author = {Calegari, Frank and Dimitrov, Vesselin and Tang, Yunqing},
  title = {The linear independence of {$1$, $\zeta(2)$, and $L(2,\chi_{-3})$}},
  year = {2024},
  eprint = {2408.15403},
  archivePrefix = {arXiv},
  primaryClass = {math.NT},
  url = {https://arxiv.org/abs/2408.15403},
  note = {Preprint, version 2, revised 16 September 2024}
}

@misc{calegariDimitrovTang2025Holonomy,
  author = {Calegari, Frank and Dimitrov, Vesselin and Tang, Yunqing},
  title = {Arithmetic holonomy bounds and effective {Diophantine} approximation},
  year = {2025},
  eprint = {2510.04156},
  archivePrefix = {arXiv},
  primaryClass = {math.NT},
  note = {Preprint},
  url = {https://arxiv.org/abs/2510.04156}
}

@misc{cohenZudilin2025variations,
  author = {Cohen, Henri and Zudilin, Wadim},
  title = {Variations on a theme of {Ap{\'e}ry}},
  year = {2025},
  eprint = {2501.10090},
  archivePrefix = {arXiv},
  primaryClass = {math.NT},
  url = {https://arxiv.org/abs/2501.10090},
  note = {Version 3, revised 3 November 2025}
}

@article{dauguetZudilin2014,
  author = {Dauguet, Simon and Zudilin, Wadim},
  title = {On simultaneous diophantine approximations to {$\zeta(2)$} and {$\zeta(3)$}},
  journal = {Journal of Number Theory},
  volume = {145},
  pages = {362--387},
  year = {2014},
  doi = {10.1016/j.jnt.2014.06.007},
  eprint = {1401.5322},
  archivePrefix = {arXiv},
  primaryClass = {math.NT}
}

@article{elimelech2024algorithm,
  author = {Elimelech, Rotem and David, Ofir and De la Cruz Mengual, Carlos and Kalisch, Rotem and Berndt, Wolfgang and Shalyt, Michael and Silberstein, Mark and Hadad, Yaron and Kaminer, Ido},
  title = {Algorithm-assisted discovery of an intrinsic order among mathematical constants},
  journal = {Proc. Natl. Acad. Sci. USA},
  volume = {121},
  number = {25},
  year = {2024},
  pages = {e2321440121},
  doi = {10.1073/pnas.2321440121}
}

@misc{eskandari2026catalan,
  author = {Eskandari, Payman},
  title = {Improved rational approximations to {Catalan}'s constant},
  year = {2026},
  eprint = {2609.26354},
  archivePrefix = {arXiv},
  primaryClass = {math.NT},
  url = {https://arxiv.org/abs/2609.26354},
  note = {Preprint, version 1, 22 September 2026}
}

@incollection{fischler2004bourbaki,
  author = {Fischler, St{\'e}phane},
  title = {Irrationalit{\'e} de valeurs de z{\^e}ta (d'apr{\`e}s {Ap{\'e}ry}, {Rivoal}, \ldots)},
  booktitle = {S{\'e}minaire Bourbaki, Vol. 2002/2003},
  series = {Ast{\'e}risque},
  volume = {294},
  pages = {27--62},
  note = {Exp. no. 910},
  publisher = {Soci{\'e}t{\'e} Math{\'e}matique de France},
  year = {2004},
  eprint = {math/0303066},
  archivePrefix = {arXiv},
  url = {https://www.numdam.org/item/SB_2002-2003__45__27_0/}
}

@article{gautschi1967computational,
  author = {Gautschi, Walter},
  title = {Computational aspects of three-term recurrence relations},
  journal = {SIAM Review},
  volume = {9},
  number = {1},
  pages = {24--82},
  year = {1967},
  doi = {10.1137/1009002}
}

@misc{giraudin2026zeta2,
  author = {Niedbala Giraudin, David},
  title = {A new upper bound for the irrationality exponent of {$\zeta(2)$}},
  year = {2026},
  eprint = {2610.02912},
  archivePrefix = {arXiv},
  primaryClass = {math.NT},
  url = {https://arxiv.org/abs/2610.02912},
  note = {Preprint, version 1, 2 October 2026}
}

@misc{giraudin2026zeta3,
  author = {Niedbala Giraudin, David},
  title = {A new upper bound for the irrationality exponent of {$\zeta(3)$}},
  year = {2026},
  eprint = {2609.26980},
  archivePrefix = {arXiv},
  primaryClass = {math.NT},
  url = {https://arxiv.org/abs/2609.26980},
  note = {Preprint, version 1, 22 September 2026}
}

@article{hata1993pi,
  author = {Hata, Masayoshi},
  title = {Rational approximations to {$\pi$} and some other numbers},
  journal = {Acta Arithmetica},
  volume = {63},
  number = {4},
  pages = {335--349},
  year = {1993},
  doi = {10.4064/aa-63-4-335-349}
}

@article{hata2000zeta3,
  author = {Hata, Masayoshi},
  title = {A new irrationality measure for {$\zeta(3)$}},
  journal = {Acta Arithmetica},
  volume = {92},
  number = {1},
  pages = {47--57},
  year = {2000},
  doi = {10.4064/aa-92-1-47-57}
}

@article{hendriksSinger1999,
  author = {Hendriks, Peter A. and Singer, Michael F.},
  title = {Solving difference equations in finite terms},
  journal = {Journal of Symbolic Computation},
  volume = {27},
  number = {3},
  pages = {239--259},
  year = {1999},
  doi = {10.1006/jsco.1998.0251},
  url = {https://singer.math.ncsu.edu/papers/finite.pdf}
}

@misc{lai2026zeta34,
  author = {Lai, Li},
  title = {The linear independence of {$1$, $\zeta(3)$, and $\zeta(4)$}},
  year = {2026},
  eprint = {2610.06838},
  archivePrefix = {arXiv},
  primaryClass = {math.NT},
  url = {https://arxiv.org/abs/2610.06838},
  note = {Preprint, version 1, 5 October 2026}
}

@article{marcovecchio2009log2,
  author = {Marcovecchio, Raffaele},
  title = {The {Rhin--Viola} method for {$\log 2$}},
  journal = {Acta Arithmetica},
  volume = {139},
  number = {2},
  pages = {147--184},
  year = {2009},
  doi = {10.4064/aa139-2-5}
}

@article{nesterenko2016catalan,
  author = {Nesterenko, Yuri V.},
  title = {On {Catalan}'s constant},
  journal = {Proceedings of the Steklov Institute of Mathematics},
  volume = {292},
  number = {1},
  pages = {153--170},
  year = {2016},
  doi = {10.1134/S0081543816010107}
}

@misc{ramanujantools,
  author = {{Ramanujan Machine Group}},
  title = {{ramanujantools}: Symbolic and numeric research tools},
  year = {2025},
  howpublished = {Software},
  url = {https://github.com/RamanujanMachine/ramanujantools}
}

@inproceedings{raz2025euler,
  author = {Raz, Tomer and Shalyt, Michael and Leibtag, Elyasheev and Kalisch, Rotem and Weinbaum, Shachar and Hadad, Yaron and Kaminer, Ido},
  title = {From {Euler} to {AI}: Unifying formulas for mathematical constants},
  booktitle = {Advances in Neural Information Processing Systems},
  volume = {38},
  year = {2025},
  pages = {137390--137471},
  doi = {10.52202/085713-4132},
  url = {https://proceedings.neurips.cc/paper_files/paper/2025/hash/b394adff21327520268aaf565b63414c-Abstract-Conference.html}
}

@article{rhin1996permutation,
  author = {Rhin, Georges and Viola, Carlo},
  title = {On a permutation group related to {$\zeta(2)$}},
  journal = {Acta Arithmetica},
  volume = {77},
  number = {1},
  pages = {23--56},
  year = {1996},
  doi = {10.4064/aa-77-1-23-56}
}

@incollection{dvornicichViola1990,
  author = {Dvornicich, Roberto and Viola, Carlo},
  title = {Some remarks on {Beukers}' integrals},
  booktitle = {Number Theory, Vol. II (Budapest, 1987)},
  series = {Colloquia Mathematica Societatis J\'anos Bolyai},
  volume = {51},
  pages = {637--657},
  publisher = {North-Holland},
  address = {Amsterdam},
  year = {1990}
}

@article{alladiRobinson1980,
  author = {Alladi, Krishnaswami and Robinson, M. L.},
  title = {Legendre polynomials and irrationality},
  journal = {Journal f\"ur die reine und angewandte Mathematik},
  volume = {318},
  pages = {137--155},
  year = {1980}
}

@article{rukhadze1987log2,
  author = {Rukhadze, E. A.},
  title = {A lower bound for the approximation of {$\ln 2$} by rational numbers},
  journal = {Vestnik Moskovskogo Universiteta. Seriya 1. Matematika. Mekhanika},
  number = {6},
  pages = {25--29},
  year = {1987},
  note = {In Russian}
}

@incollection{mignotte1974pi,
  author = {Mignotte, Maurice},
  title = {Approximations rationnelles de {$\pi$} et quelques autres nombres},
  booktitle = {Journ\'ees arithm\'etiques (Grenoble, 1973)},
  series = {Bulletin de la Soci\'et\'e math\'ematique de France. M\'emoire},
  number = {37},
  pages = {121--132},
  year = {1974},
  doi = {10.24033/msmf.139}
}

@article{rhinViola2001,
  author = {Rhin, Georges and Viola, Carlo},
  title = {The group structure for {$\zeta(3)$}},
  journal = {Acta Arithmetica},
  volume = {97},
  number = {3},
  pages = {269--293},
  year = {2001},
  doi = {10.4064/aa97-3-6}
}

@article{salikhov2010pi,
  author = {Salikhov, Vladislav Kh.},
  title = {On the measure of irrationality of the number {$\pi$}},
  journal = {Mathematical Notes},
  volume = {88},
  pages = {563--573},
  year = {2010},
  doi = {10.1134/S0001434610090294}
}

@article{van1979proof,
  author = {van der Poorten, Alfred},
  title = {A proof that {Euler} missed \ldots\ {Ap{\'e}ry}'s proof of the irrationality of {$\zeta(3)$}},
  journal = {The Mathematical Intelligencer},
  volume = {1},
  number = {4},
  pages = {195--203},
  year = {1979},
  note = {An informal report},
  doi = {10.1007/BF03028234}
}

@misc{weinbaum2025conservative,
  author = {Weinbaum, Shachar and Leibtag, Elyasheev and Kalisch, Rotem and Shalyt, Michael and Kaminer, Ido},
  title = {On conservative matrix fields: Continuous asymptotics and arithmetic},
  year = {2025},
  eprint = {2507.08138},
  archivePrefix = {arXiv},
  primaryClass = {math.NT},
  url = {https://arxiv.org/abs/2507.08138},
  note = {Preprint, version 2, revised 15 July 2026}
}

@incollection{zagier2009apery,
  author = {Zagier, Don},
  title = {Integral solutions of {Ap{\'e}ry}-like recurrence equations},
  booktitle = {Groups and Symmetries: From the Neolithic Scots to John McKay},
  series = {CRM Proceedings and Lecture Notes},
  volume = {47},
  pages = {349--366},
  publisher = {American Mathematical Society},
  address = {Providence, RI},
  year = {2009},
  url = {https://people.mpim-bonn.mpg.de/zagier/files/tex/AperylikeRecEqs/fulltext.pdf}
}

@article{zeilberger1990holonomic,
  author = {Zeilberger, Doron},
  title = {A holonomic systems approach to special functions identities},
  journal = {J. Comput. Appl. Math.},
  volume = {32},
  number = {3},
  year = {1990},
  pages = {321--368},
  doi = {10.1016/0377-0427(90)90042-X}
}

@article{zeilberger1991method,
  author = {Zeilberger, Doron},
  title = {The method of creative telescoping},
  journal = {J. Symbolic Comput.},
  volume = {11},
  number = {3},
  year = {1991},
  pages = {195--204},
  doi = {10.1016/S0747-7171(08)80044-2}
}

@article{zeilbergerZudilin2020,
  author = {Zeilberger, Doron and Zudilin, Wadim},
  title = {The irrationality measure of {$\pi$} is at most {$7.103205334137\ldots$}},
  journal = {Moscow Journal of Combinatorics and Number Theory},
  volume = {9},
  number = {4},
  pages = {407--419},
  year = {2020},
  doi = {10.2140/moscow.2020.9.407},
  eprint = {1912.06345},
  archivePrefix = {arXiv},
  primaryClass = {math.NT}
}

@article{zudilin2011arithmeticHypergeometric,
  author = {Zudilin, Wadim},
  title = {Arithmetic hypergeometric series},
  journal = {Russian Mathematical Surveys},
  volume = {66},
  number = {2},
  year = {2011},
  pages = {369--420},
  url = {https://www.math.ru.nl/~zudilin/PS/ahs-eng.pdf}
}

@article{zudilin2002arithmetic,
  author = {Zudilin, Wadim},
  title = {Arithmetic of linear forms involving odd zeta values},
  journal = {Journal de th{\'e}orie des nombres de Bordeaux},
  volume = {16},
  number = {1},
  pages = {251--291},
  year = {2004},
  doi = {10.5802/jtnb.447},
  eprint = {math/0206176},
  archivePrefix = {arXiv},
  primaryClass = {math.NT}
}

@article{zudilin2014zeta2,
  author = {Zudilin, Wadim},
  title = {Two hypergeometric tales and a new irrationality measure of {$\zeta(2)$}},
  journal = {Annales math{\'e}matiques du Qu{\'e}bec},
  volume = {38},
  pages = {101--117},
  year = {2014},
  doi = {10.1007/s40316-014-0016-0},
  eprint = {1310.1526},
  archivePrefix = {arXiv},
  primaryClass = {math.NT}
}

@misc{openai2026catalan,
  author = {{OpenAI}},
  title = {{Catalan's constant is irrational}},
  year = {2026},
  month = sep,
  note = {OpenAI Math Release preprint, September 24, 2026},
  url = {https://github.com/openai/math/blob/main/preprints/Catalans-constant-is-irrational-September-24-2026/paper.pdf}
}

@article{viola2022catalan,
  author = {Viola, Carlo},
  title = {Rational approximations to {Catalan}'s constant},
  journal = {Oberwolfach Reports},
  volume = {19},
  number = {2},
  year = {2022},
  pages = {1100--1101},
  doi = {10.4171/OWR/2022/21},
  note = {Report on joint work with Raffaele Marcovecchio},
  url = {https://ems.press/content/serial-article-files/46958}
}

@article{hata1995beukers,
  author = {Hata, Masayoshi},
  title = {A note on {Beukers}' integral},
  journal = {J. Austral. Math. Soc. Ser. A},
  volume = {58},
  number = {2},
  pages = {143--153},
  year = {1995},
  doi = {10.1017/S1446788700038192}
}

@article{hata1992hypergeometric,
  author  = {Hata, Masayoshi},
  title   = {Irrationality measures of the values of hypergeometric functions},
  journal = {Acta Arithmetica},
  volume  = {60},
  number  = {4},
  pages   = {335--347},
  year    = {1992},
  doi     = {10.4064/aa-60-4-335-347}
}

@article{stanley1980differentiably,
  author  = {Stanley, Richard P.},
  title   = {Differentiably Finite Power Series},
  journal = {European Journal of Combinatorics},
  volume  = {1},
  number  = {2},
  pages   = {175--188},
  year    = {1980},
  doi     = {10.1016/S0195-6698(80)80051-5}
}

@article{lipshitz1989d,
  author  = {Lipshitz, Leonard},
  title   = {D-Finite Power Series},
  journal = {Journal of Algebra},
  volume  = {122},
  number  = {2},
  pages   = {353--373},
  year    = {1989},
  doi     = {10.1016/0021-8693(89)90222-6}
}

@article{birkhoff1930formal,
  author  = {Birkhoff, George D.},
  title   = {Formal Theory of Irregular Linear Difference Equations},
  journal = {Acta Mathematica},
  volume  = {54},
  number  = {1},
  pages   = {205--246},
  year    = {1930}
}

@article{birkhoff1933analytic,
  author  = {Birkhoff, George D. and Trjitzinsky, W. J.},
  title   = {Analytic Theory of Singular Difference Equations},
  journal = {Acta Mathematica},
  volume  = {60},
  number  = {1},
  pages   = {1--89},
  year    = {1933}
}

@article{wimp1985resurrecting,
  author  = {Wimp, Jet and Zeilberger, Doron},
  title   = {Resurrecting the Asymptotics of Linear Recurrences},
  journal = {Journal of Mathematical Analysis and Applications},
  volume  = {111},
  number  = {1},
  pages   = {162--176},
  year    = {1985},
  doi     = {10.1016/0022-247X(85)90209-4}
}

@article{zahar1977mathematical,
  author  = {Zahar, R. V. M.},
  title   = {A Mathematical Analysis of Miller's Algorithm},
  journal = {Numerische Mathematik},
  volume  = {27},
  pages   = {427--447},
  year    = {1977},
  doi     = {10.1007/BF01399606}
}

@article{gelfandKapranovZelevinsky1990,
  author  = {Gel'fand, I. M. and Kapranov, M. M. and Zelevinsky, A. V.},
  title   = {Generalized {E}uler integrals and {$A$}-hypergeometric functions},
  journal = {Advances in Mathematics},
  volume  = {84},
  number  = {2},
  pages   = {255--271},
  year    = {1990}
}

@book{saitoSturmfelsTakayama2000,
  author    = {Saito, Mutsumi and Sturmfels, Bernd and Takayama, Nobuki},
  title     = {Gr{\"o}bner Deformations of Hypergeometric Differential Equations},
  series    = {Algorithms and Computation in Mathematics},
  volume    = {6},
  publisher = {Springer-Verlag},
  address   = {Berlin},
  year      = {2000}
}

@article{jainTzermias2005,
  author  = {Jain, Lalit and Tzermias, Pavlos},
  title   = {Beukers' Integrals and Ap{\'e}ry's Recurrences},
  journal = {Journal of Integer Sequences},
  volume  = {8},
  year    = {2005},
  number  = {1},
  pages   = {Article 05.1.1}
}

@incollection{rhin1987pade,
  author = {Rhin, Georges},
  title = {Approximants de {Pad\'e} et mesures effectives d'irrationalit\'e},
  booktitle = {S\'eminaire de {Th\'eorie} des {Nombres}, {Paris} 1985--86},
  series = {Progr. Math.},
  volume = {71},
  pages = {155--164},
  publisher = {Birkh\"auser},
  address = {Boston},
  year = {1987},
  doi = {10.1007/978-1-4757-4267-1_11}
}

@article{mahler1953pi,
  author = {Mahler, Kurt},
  title = {On the approximation of {$\pi$}},
  journal = {Indag. Math.},
  volume = {15},
  year = {1953},
  pages = {30--42}
}

@book{lewin1981polylogarithms,
  author = {Lewin, Leonard},
  title = {Polylogarithms and Associated Functions},
  publisher = {North-Holland},
  address = {New York},
  year = {1981}
}

@article{euler1776meditationes,
  author = {Euler, Leonhard},
  title = {Meditationes circa singulare serierum genus},
  journal = {Novi Comment. Acad. Sci. Petropol.},
  volume = {20},
  year = {1776},
  pages = {140--186}
}

@article{flajolet1998euler,
  author = {Flajolet, Philippe and Salvy, Bruno},
  title = {Euler sums and contour integral representations},
  journal = {Experiment. Math.},
  volume = {7},
  number = {1},
  year = {1998},
  pages = {15--35}
}

@article{fischler2018rational,
  author = {Fischler, St{\'e}phane and Rivoal, Tanguy},
  title = {Rational approximation to values of {$G$}-functions, and their expansions in integer bases},
  journal = {Manuscripta Math.},
  volume = {155},
  year = {2018},
  pages = {579--595},
  doi = {10.1007/s00229-017-0933-8}
}

@article{fischler2018erratum,
  author = {Fischler, St{\'e}phane and Rivoal, Tanguy},
  title = {Erratum to: {Rational} approximation to values of {$G$}-functions, and their expansions in integer bases},
  journal = {Manuscripta Math.},
  volume = {155},
  year = {2018},
  pages = {597--598},
  doi = {10.1007/s00229-017-0988-6}
}

@misc{kawashima2026linear,
  author = {Kawashima, Makoto},
  title = {Linear independence of periods related to polylogarithms},
  year = {2026},
  eprint = {2605.17756},
  archivePrefix = {arXiv},
  primaryClass = {math.NT},
  note = {Preprint},
  url = {https://arxiv.org/abs/2605.17756}
}

@book{nikishin1991rational,
  author = {Nikishin, E. M. and Sorokin, V. N.},
  title = {Rational Approximations and Orthogonality},
  series = {Translations of Mathematical Monographs},
  volume = {92},
  publisher = {American Mathematical Society},
  address = {Providence, RI},
  year = {1991}
}

@book{elaydi2005introduction,
  author = {Elaydi, Saber},
  title = {An Introduction to Difference Equations},
  edition = {3},
  publisher = {Springer},
  address = {New York},
  year = {2005}
}

@article{rosser1962approximate,
  author = {Rosser, J. Barkley and Schoenfeld, Lowell},
  title = {Approximate formulas for some functions of prime numbers},
  journal = {Illinois J. Math.},
  volume = {6},
  year = {1962},
  pages = {64--94},
  doi = {10.1215/ijm/1255631807}
}

@article{zudilin2002thirdOrder,
  author = {Zudilin, Wadim},
  title = {A third-order {Ap\'ery}-like recursion for {$\zeta(5)$}},
  journal = {Mathematical Notes},
  volume = {72},
  number = {5},
  pages = {733--737},
  year = {2002},
  doi = {10.1023/A:1021473409544},
  eprint = {math/0206178},
  archivePrefix = {arXiv}
}

@article{huttner2003abelian,
  author = {Huttner, Marc},
  title = {On linear independence measures of some {Abelian} integrals},
  journal = {Kyushu Journal of Mathematics},
  volume = {57},
  number = {1},
  pages = {129--157},
  year = {2003},
  doi = {10.2206/kyushujm.57.129}
}

@article{salikhov2007log3,
  author = {Salikhov, V. Kh.},
  title = {On the irrationality measure of {$\ln 3$}},
  journal = {Doklady Mathematics},
  volume = {76},
  number = {3},
  pages = {955--957},
  year = {2007},
  doi = {10.1134/S1064562407060361}
}

@article{heimonenMatalaAhoVaananen1994,
  author = {Heimonen, A. and Matala-aho, T. and V{\"a}{\"a}n{\"a}nen, K.},
  title = {An application of {Jacobi} type polynomials to irrationality measures},
  journal = {Bulletin of the Australian Mathematical Society},
  volume = {50},
  number = {2},
  pages = {225--243},
  year = {1994},
  doi = {10.1017/S0004972700013691}
}

@online{zeilbergerZudilin2019supplement,
  author = {Zeilberger, Doron and Zudilin, Wadim},
  title = {Automatic Discovery of Irrationality Proofs and Irrationality Measures: supplementary material},
  date = {2019-12-20},
  url = {https://www.math.ru.nl/~wzudilin/gat/},
  urldate = {2026-10-11},
  note = {The ALLADI package outputs oALLADI1.txt (linear denominators) and oALLADI4.txt (quadratic denominators)}
}

@article{borweinChoiPigulla2005,
  author = {Borwein, Jonathan Michael and Choi, Kwok-Kwong Stephen and Pigulla, Wilfried},
  title = {Continued fractions of tails of hypergeometric series},
  journal = {The American Mathematical Monthly},
  volume = {112},
  number = {6},
  pages = {493--501},
  year = {2005},
  url = {https://www.sfu.ca/~schoia/paper/contfrac.pdf}
}

@article{zudilin2003catalan,
  author = {Zudilin, Wadim},
  title = {An {Ap\'ery}-like difference equation for {Catalan}'s constant},
  journal = {The Electronic Journal of Combinatorics},
  volume = {10},
  number = {1},
  eid = {R14},
  year = {2003},
  eprint = {math/0201024},
  archivePrefix = {arXiv},
  note = {Preprint first posted in January 2002}
}

@misc{zudilin2002catalanRemarks,
  author = {Zudilin, Wadim},
  title = {A few remarks on linear forms involving {Catalan}'s constant},
  year = {2002},
  eprint = {math/0210423},
  archivePrefix = {arXiv}
}

@online{ramanujanMachine2020catalan,
  author = {{Ramanujan Machine}},
  title = {{Catalan}'s constant {$G$} results},
  year = {2020},
  url = {https://www.ramanujanmachine.com/wp-content/uploads/2020/06/catalan.pdf},
  urldate = {2026-10-11},
  note = {Catalogue of polynomial continued fractions; 22 entries marked ``new and unproven''}
}

@article{hermite1873exponentielle,
  author = {Hermite, Charles},
  title = {Sur la fonction exponentielle},
  journal = {Comptes rendus hebdomadaires des s{\'e}ances de l'Acad{\'e}mie des sciences},
  volume = {77},
  pages = {18--24, 74--79, 226--233, 285--293},
  year = {1873},
  url = {https://www.e-rara.ch/zut/content/titleinfo/27940137},
  note = {Digitized original offprint, ETH-Bibliothek Z{\"u}rich, Rar 22000}
}

@article{rivoal2009gamma,
  author = {Rivoal, Tanguy},
  title = {Rational approximations for values of derivatives of the {Gamma} function},
  journal = {Transactions of the American Mathematical Society},
  volume = {361},
  number = {11},
  pages = {6115--6149},
  year = {2009},
  doi = {10.1090/S0002-9947-09-04905-8},
  url = {https://rivoal.perso.math.cnrs.fr/articles/eulerconstant.pdf}
}

@article{hessamiPilehrood2013continued,
  author = {Hessami Pilehrood, Khodabakhsh and Hessami Pilehrood, Tatiana},
  title = {On a continued fraction expansion for {Euler}'s constant},
  journal = {Journal of Number Theory},
  volume = {133},
  pages = {769--786},
  year = {2013},
  eprint = {1010.1420},
  archivePrefix = {arXiv},
  url = {https://arxiv.org/abs/1010.1420}
}

@article{borweinBradley1997,
  author = {Borwein, Jonathan M. and Bradley, David M.},
  title = {Empirically determined {Ap\'ery}-like formulae for {$\zeta(4n+3)$}},
  journal = {Experimental Mathematics},
  volume = {6},
  number = {3},
  pages = {181--194},
  year = {1997},
  eprint = {math/0505124},
  archivePrefix = {arXiv},
  url = {https://arxiv.org/abs/math/0505124}
}

@article{reicheltSchulzeSevenheckWalther2021,
  author = {Reichelt, Thomas and Schulze, Mathias and Sevenheck, Christian and Walther, Uli},
  title = {Algebraic aspects of hypergeometric differential equations},
  journal = {Beitr{\"a}ge zur Algebra und Geometrie},
  year = {2021},
  doi = {10.1007/s13366-020-00560-1}
}

@incollection{Gosper1990,
  author = {R. W. Gosper},
  title = {Strip Mining in the Abandoned Orefields of Nineteenth Century Mathematics},
  booktitle = {Computers in Mathematics},
  publisher = {Dekker},
  year = {1990},
}

@misc{David2023,
  title = {The conservative matrix field}, 
  author = {O. David},
  year = {2023},
  eprint = {2303.09318},
  archivePrefix = {arXiv},
}

@misc{OharaTakayama2015,
  author        = {Katsuyoshi Ohara and Nobuki Takayama},
  title         = {Pfaffian Systems of {A}-Hypergeometric Systems {II}---Holonomic Gradient Method},
  year          = {2015},
  eprint        = {1505.02947},
  archivePrefix = {arXiv},
  primaryClass  = {cs.SC},
  url           = {https://arxiv.org/abs/1505.02947},
}

@misc{David2023CMF,
  author        = {David, Ofir},
  title         = {The conservative matrix field},
  year          = {2023},
  eprint        = {2303.09318},
  archivePrefix = {arXiv},
  primaryClass  = {math.GM},
  note          = {Version 3, December 2023}
}

@article{Chestnov2022Macaulay,
  author = {Chestnov, Vsevolod and Gasparotto, Federico and Mandal, Manoj K. and Mastrolia, Pierpaolo and Matsubara-Heo, Saiei J. and Munch, Henrik J. and Takayama, Nobuki},
  title = {Macaulay Matrix for Feynman Integrals: Linear Relations and Intersection Numbers},
  journal = {Journal of High Energy Physics},
  year = {2022},
  volume = {09},
  pages = {187},
  doi = {10.1007/JHEP09(2022)187},
  eprint = {2204.12983},
  archivePrefix = {arXiv}
}

@article{lambert1768,
  author  = {Lambert, Johann Heinrich},
  title   = {M{\'e}moire sur quelques propri{\'e}t{\'e}s remarquables des quantit{\'e}s transcendentes circulaires et logarithmiques},
  journal = {M{\'e}moires de l'Acad{\'e}mie Royale des Sciences et Belles-Lettres de Berlin},
  volume  = {17},
  year    = {1768},
  pages   = {265--322},
  note    = {Volume for the year 1761}
}

@article{Nesterenko85,
  author  = {Nesterenko, Yu. V.},
  title   = {On the linear independence of numbers},
  journal = {Vestnik Moskov. Univ. Ser. I Mat. Mekh.},
  year    = {1985}, number = {1}, pages = {46--49}}

@article{Hata90,
  author  = {Hata, Masayoshi},
  title   = {Legendre type polynomials and irrationality measures},
  journal = {J. Reine Angew. Math.},
  volume  = {407}, year = {1990}, pages = {99--125}}

@incollection{zagier2006integral,
  author    = {Zagier, Don},
  title     = {Integral Solutions of Ap{\'e}ry-like Recurrence Equations},
  booktitle = {Groups and Symmetries: From Neolithic Flats to {K}3 Surfaces},
  series    = {AMS/IP Studies in Advanced Mathematics},
  volume    = {38},
  pages     = {349--366},
  year      = {2006},
  publisher = {American Mathematical Society / International Press}
}

@article{brown2026mellin,
  title={Mellin transforms, transfinite diameter and rational approximations of integrals},
  author={Brown, Francis},
  journal={Experimental Mathematics},
  pages={1--36},
  year={2026},
  publisher={Taylor \& Francis}
}

@misc{fauzan2026,
  author       = {Fauzan, Aabir},
  title        = {\(\zeta(5)\) is irrational},
  month        = sep,
  year         = 2026,
  publisher    = {Zenodo},
  version      = {v1},
  doi          = {10.5281/zenodo.22826419},
  url          = {https://doi.org/10.5281/zenodo.22826419},
}

@article{brown2014irrationality,
  title={Irrationality proofs for zeta values, moduli spaces and dinner parties},
  author={Brown, Francis},
  journal={arXiv preprint arXiv:1412.6508},
  year={2014}
}

@article{zudilin2017determinantal,
  title={A determinantal approach to irrationality},
  author={Zudilin, Wadim},
  journal={Constructive Approximation},
  volume={45},
  number={2},
  pages={301--310},
  year={2017},
  publisher={Springer}
}

@article{ball2001irrationalite,
  title={Irrationalit{\'e} d’une infinit{\'e} de valeurs de la fonction z{\^e}ta aux entiers impairs},
  author={Ball, Keith and Rivoal, Tanguy},
  journal={Inventiones mathematicae},
  volume={146},
  number={1},
  pages={193--207},
  year={2001},
  publisher={Springer Berlin Heidelberg Berlin/Heidelberg}
}

@techreport{openai2026planar,
  author      = {{OpenAI}},
  title       = {Planar Point Sets with Many Unit Distances},
  institution = {OpenAI},
  year        = {2026},
  month       = 5,
  url         = {https://cdn.openai.com/pdf/74c24085-19b0-4534-9c90-465b8e29ad73/unit-distance-proof.pdf}
}

@book{turnbull1939gregory,
  editor    = {Turnbull, H. W.},
  title     = {James Gregory Tercentenary Memorial Volume},
  publisher = {G. Bell and Sons for the Royal Society of Edinburgh},
  address   = {London},
  year      = {1939},
  note      = {Contains Gregory's letter to J.~Collins of 15 February 1671 with the series for $\arctan x$}
}

@inproceedings{demoura2021lean4,
  author    = {de Moura, Leonardo and Ullrich, Sebastian},
  title     = {The {L}ean 4 Theorem Prover and Programming Language},
  booktitle = {Automated Deduction -- CADE 28},
  series    = {Lecture Notes in Computer Science},
  volume    = {12699},
  publisher = {Springer},
  year      = {2021},
  pages     = {625--635},
  doi       = {10.1007/978-3-030-79876-5_37}
}

\newpage
\appendix

\section{Additional CMF Constructions, Embeddings, and Numerical Data}
\label{sec:appendix-history}

\Needspace{8\baselineskip}
\subsection{Additional CMF recoveries}
\label{app:full-results}
This appendix supplements the reconstruction data in the main text.
We first collect the additional recoveries,
then specify the special CMFs, the numerator and denominator initial values, and finally the integral CMF used for $\pi$.

We use the initial-point, direction, walk and initial value conventions of the
main text.

Only fixed trajectory directions are included in the supplementary table;
index-dependent paths are omitted. Here $\varphi=(1+\sqrt5)/2$.

The supplementary table contains the entries omitted from the main
reconstruction table. It is obtained from the full table in Section~3 by
removing those constructions and all index-dependent paths, retaining
the remaining rows in their original order. For a published
irrationality-measure bound $\mu_0$, the comparison quantity
\[
  \delta_{\mathrm{eq}}=\frac{1}{\mu_0-1}
\]
need not be the approximation exponent of the underlying sequence.
We distinguish this quantity from sequence bounds, numerical estimates
of $\delta$, and the finite-depth values $\delta_N$ reported in the table.

\begingroup
\scriptsize
\setlength{\tabcolsep}{2pt}
\renewcommand{\arraystretch}{1.20}
\setlength{\LTcapwidth}{\linewidth}
\begin{longtable}{@{}>{\raggedright\arraybackslash}p{.13\linewidth}>{\raggedright\arraybackslash}p{.19\linewidth}c>{\raggedright\arraybackslash}p{.06\linewidth}>{\raggedright\arraybackslash}p{.105\linewidth}>{\raggedright\arraybackslash}p{.16\linewidth}>{\raggedright\arraybackslash}p{.21\linewidth}@{}}
\caption{The dictionary of CMF embeddings capturing the classical Ap\'ery-type Diophantine approximations from papers spanning several decades. 
This table expands on the selected results in Section~4. 
Subscripts $\mathrm F$ and $\mathrm D$ denote forward and dual walks. Notation such as $0^6$ denotes six repeated entries. Ramanujan Machine Catalan limits remain conjectural.}
\label{tab:appendix-reconstructions}\\
\toprule
Constant & Reference & Year & $\delta_N$ & CMF & Initial point & Direction \\
\midrule\endfirsthead
\multicolumn{7}{@{}l}{\emph{Additional CMF recoveries (continued)}}\\
\toprule
Constant & Reference & Year & $\delta_N$ & CMF & Initial point & Direction \\
\midrule\endhead
\midrule\multicolumn{7}{r@{}}{\emph{Continued on next page}}\\\endfoot
\bottomrule\endlastfoot
$\zeta(2)$ & Ap\'ery \cite{apery1979irrationalite} & 1979 & $0.094$
& ${}_3F_2(1)$ & $(-2,-2,3;1,1)$ & $(-1,-1,1;0,0)_{\mathrm F}$ \\
 & Beukers \cite{beukers1979note} & 1979 & $0.094$
& ${}_3F_2(1)$ & $(3,3,3;6,6)$ & $(1,1,1;2,2)_{\mathrm D}$ \\
 & Hata \cite{hata1995beukers} & 1995 & $0.217$
& ${}_3F_2(1)$ & $(3,3,3;6,6)$ & $(12,15,14;30,28)_{\mathrm D}$ \\
 & Rhin--Viola \cite{rhin1996permutation} & 1996 & $0.225$
& ${}_3F_2(1)$ & $(3,3,3;6,6)$ & $(12,13,14;24,28)_{\mathrm D}$ \\
 & Zudilin \cite{zudilin2002thirdOrder} (simultaneous) & 2002 & $-0.043$
& ${}_4F_3(1)$ & $(3,3,3,3;6,6,6)$ & $(1,1,1,1;2,2,2)_{\mathrm F}$ \\
 & Zudilin \cite{zudilin2014zeta2} & 2014 & $0.244$
& ${}_4F_3(1)$ & $(1,1,1,0;1,1,1)$ & $(8,7,6,-6;1,2,3)_{\mathrm F}$ \\
 & Dauguet--Zudilin \cite{dauguetZudilin2014} (simultaneous) & 2014 & $0.062$
& ${}_4F_3(1)$ & $(1,1,1,0;1,1,1)$ & $(10,9,8,-5;1,2,3)_{\mathrm F}$ \\
 & Niedbala Giraudin \cite{giraudin2026zeta2} & 2026 & $0.252$
& ${}_4F_3(1)$ & $(1,1,1,0;1,1,1)$ & $(169,146,123,-123;\allowbreak 23,46,69)_{\mathrm F}$ \\
\addlinespace
$\zeta(3)$ & Ap\'ery \cite{apery1979irrationalite} & 1979 & $0.082$
& ${}_4F_3(1)$ & $(-2,-2,3,3;\allowbreak 1,1,1)$ & $(-1,-1,1,1;0,0,0)_{\mathrm F}$ \\
 & Beukers \cite{beukers1979note} & 1979 & $0.081$
& ${}_4F_3(1)$ & $(-2,-2,3,3;\allowbreak 1,1,1)$ & $(-1,-1,1,1;0,0,0)_{\mathrm F}$ \\
 & Hata \cite{hata2000zeta3} & 2000 & $0.157$
& $G^{4,2}_{4,4}(1)$ & $(1,1,2,2;\allowbreak 1,1,1,1)$ & $(0,1,15,16;\allowbreak 8,7,8,9)_{\mathrm F}$ \\
 & Rhin--Viola \cite{rhinViola2001} & 2001 & $0.221$
& $G^{4,2}_{4,4}(1)$ & $(1,1,2,2;\allowbreak 1,1,1,1)$ & $(0,7,31,32;\allowbreak 18,17,16,19)_{\mathrm F}$ \\
 & Zudilin \cite{zudilin2002thirdOrder} (simultaneous) & 2002 & $-0.042$
& ${}_4F_3(1)$ & $(3,3,3,3;6,6,6)$ & $(1,1,1,1;2,2,2)_{\mathrm F}$ \\
 & Dauguet--Zudilin \cite{dauguetZudilin2014} (simultaneous) & 2014 & $0.038$
& ${}_4F_3(1)$ & $(1,1,1,0;1,1,1)$ & $(10,9,8,-5;1,2,3)_{\mathrm F}$ \\
 & Niedbala Giraudin \cite{giraudin2026zeta3} & 2026 & $0.222$
& $G^{4,2}_{4,4}(1)$ & $(1,1,2,2;\allowbreak 1,1,1,1)$ & $(0,363979,1612007,\allowbreak 1664004;\allowbreak 935996,883999,\allowbreak 832002,987993)_{\mathrm F}$ \\
\addlinespace
$\zeta(5)$ & Zudilin \cite{zudilin2002thirdOrder} & 2002 & $-0.221$
& ${}_8F_7(1)$ & $(8,3,3,3,3,3,3,3;\allowbreak 6,6,6,6,6,6,6)$ & $(3,1,1,1,1,1,1,1;\allowbreak 2,2,2,2,2,2,2)_{\mathrm F}$ \\
 & Brown--Zudilin \cite{brownZudilin2022zeta5} & 2022 & $-0.134$
& ${}_8F_7(1)$ & $(5,2,2,2,2,2,2,2;\allowbreak 4,4,4,4,4,4,4)$ & $(41,17,16,15,\allowbreak 14,13,12,11;\allowbreak 24,25,26,27,28,29,30)_{\mathrm D}$ \\
\addlinespace
$\gamma$ & Rivoal \cite{rivoal2009gamma} (third order) & 2009 & $-0.940$
& $G^{3, 2}_{2,3}(1)$ & $(0, 0; 0, 0, 0)$ & $(0, 0; 1, 1, 1)_{\mathrm D}$ \\
 & Rivoal \cite{rivoal2009gamma} (simultaneous) & 2009 & $-0.928$
& $G^{4, 3}_{3, 4}(1)$ & $(0, 0, 0; 0, 0, 0, 0)$ & $(0, 0, 0; 1, 1, 1, 1)_{\mathrm D}$ \\
\addlinespace
Catalan's $G$ & Zudilin \cite{zudilin2003catalan} (original) & 2003 & $-0.476$
& ${}_3F_2(1)$ & $(2,2,\tfrac32;\tfrac72,\tfrac72)$ & $(1,1,1;2,2)_{\mathrm F}$ \\
 & Zudilin \cite{zudilin2002catalanRemarks} (modified) & 2002 & $-0.476$
& ${}_3F_2(1)$ & $(1,1,\tfrac12;\tfrac52,\tfrac52)$ & $(1,1,1;2,2)_{\mathrm F}$ \\
 & Nesterenko \cite{nesterenko2016catalan} & 2016 & $-0.476$
& ${}_3F_2(1)$ & $(1,1,\tfrac32;\tfrac52,\tfrac52)$ & $(1,1,1;2,2)_{\mathrm F}$ \\
$\zeta(7)$ & Borwein--Bradley \cite{borweinBradley1997} & 1997 & $-0.824$ & $\mathcal B_{15}$ & $(0,\allowbreak 0)$ & $(1,\allowbreak 0)_{\mathrm F}$ \\
\addlinespace
$\log 2$ & Alladi--Robinson \cite{alladiRobinson1980} & 1980 & $0.282$ & ${}_2F_1(-1)$ & $(1,\allowbreak 1;\allowbreak 2)$ & $(1,\allowbreak 1;\allowbreak 2)_{\mathrm F}$ \\
 & Rhin (Chudnovsky benchmark) \cite{rhin1987pade} & 1987 & $0.324$ & $T(e_1,\allowbreak e_2,\allowbreak b)$ & $(0,\allowbreak 0,\allowbreak 1)$ & $(9,\allowbreak 1,\allowbreak 10)_{\mathrm F}$ \\
 & Rukhadze \cite{rukhadze1987log2} & 1987 & $0.346$ & ${}_2F_1(-1)$ & $(1,\allowbreak 1;\allowbreak 2)$ & $(7,\allowbreak 6;\allowbreak 14)_{\mathrm D}$ \\
 & Huttner \cite{huttner2003abelian} (simultaneous) & 2003 & $0.225$ & $\mathcal H$ & $(2,\allowbreak 2,\allowbreak 2,\allowbreak 3)$ & $(2,\allowbreak 2,\allowbreak 2,\allowbreak 3)_{\mathrm F}$ \\
 & Marcovecchio \cite{marcovecchio2009log2} & 2009 & $0.388$ & ${}_3F_2(\tfrac12)$ & $(1,\allowbreak 1,\allowbreak 1;\allowbreak 2,\allowbreak 1)$ & $(7,\allowbreak 5,\allowbreak 6;\allowbreak 2,\allowbreak 1)_{\mathrm F}$ \\
\addlinespace
$\log 3$ & Huttner \cite{huttner2003abelian} (simultaneous) & 2003 & $0.100$ & $\mathcal H$ & $(2,\allowbreak 2,\allowbreak 2,\allowbreak 3)$ & $(2,\allowbreak 2,\allowbreak 2,\allowbreak 3)_{\mathrm F}$ \\
 & Salikhov \cite{salikhov2007log3} & 2007 & $0.245$ & $\mathcal S$ & $(-\tfrac12,\allowbreak 0,\allowbreak 0,\allowbreak 1)$ & $(1,\allowbreak 1,\allowbreak 1,\allowbreak 2)_{\mathrm F}$ \\
\addlinespace
$\log(3/2)$ & Alladi--Robinson \cite{alladiRobinson1980} & 1980 & $0.408$ & ${}_2F_1(-\tfrac12)$ & $(2,\allowbreak 2;\allowbreak 4)$ & $(1,\allowbreak 1;\allowbreak 2)_{\mathrm F}$ \\
 & Huttner \cite{huttner2003abelian} (simultaneous) & 2003 & $0.099$ & $\mathcal H$ & $(2,\allowbreak 2,\allowbreak 2,\allowbreak 3)$ & $(2,\allowbreak 2,\allowbreak 2,\allowbreak 3)_{\mathrm F}$ \\
\addlinespace
$\log(4/3)$ & Alladi--Robinson \cite{alladiRobinson1980} & 1980 & $0.461$ & ${}_2F_1(-\tfrac13)$ & $(2,\allowbreak 2;\allowbreak 4)$ & $(1,\allowbreak 1;\allowbreak 2)_{\mathrm F}$ \\
 & Huttner \cite{huttner2003abelian} (simultaneous) & 2003 & $0.209$ & $\mathcal H$ & $(2,\allowbreak 2,\allowbreak 2,\allowbreak 3)$ & $(2,\allowbreak 2,\allowbreak 2,\allowbreak 3)_{\mathrm F}$ \\
\addlinespace
$\log 5$ & Alladi--Robinson \cite{alladiRobinson1980} & 1980 & $-0.420$ & ${}_2F_1(-4)$ & $(1,\allowbreak 1;\allowbreak 2)$ & $(1,\allowbreak 1;\allowbreak 2)_{\mathrm F}$ \\
\addlinespace
$\log 7$ & Alladi--Robinson \cite{alladiRobinson1980} & 1980 & $-0.553$ & ${}_2F_1(-6)$ & $(1,\allowbreak 1;\allowbreak 2)$ & $(1,\allowbreak 1;\allowbreak 2)_{\mathrm F}$ \\
\addlinespace
$\log(5/4)$ & Alladi--Robinson \cite{alladiRobinson1980} & 1980 & $0.497$ & ${}_2F_1(-\tfrac14)$ & $(1,\allowbreak 1;\allowbreak 2)$ & $(1,\allowbreak 1;\allowbreak 2)_{\mathrm F}$ \\
\addlinespace
$\log(6/5)$ & Alladi--Robinson \cite{alladiRobinson1980} & 1980 & $0.518$ & ${}_2F_1(-\tfrac15)$ & $(1,\allowbreak 1;\allowbreak 2)$ & $(1,\allowbreak 1;\allowbreak 2)_{\mathrm F}$ \\
\addlinespace
$\log(5/3)$ & Alladi--Robinson \cite{alladiRobinson1980} & 1980 & $0.106$ & ${}_2F_1(-\tfrac23)$ & $(1,\allowbreak 1;\allowbreak 2)$ & $(1,\allowbreak 1;\allowbreak 2)_{\mathrm F}$ \\
\addlinespace
$\log(7/5)$ & Alladi--Robinson \cite{alladiRobinson1980} & 1980 & $0.194$ & ${}_2F_1(-\tfrac25)$ & $(1,\allowbreak 1;\allowbreak 2)$ & $(1,\allowbreak 1;\allowbreak 2)_{\mathrm F}$ \\
\addlinespace
$\pi/\sqrt3$ & Zeilberger--Zudilin \cite{zeilbergerZudilin2019supplement} & 2019 & $0.145$ & ${}_2F_1(-\tfrac1{3})$ & $(\tfrac12,\allowbreak 1;\allowbreak \tfrac32)$ & $(1,\allowbreak 1;\allowbreak 2)_{\mathrm F}$ \\
 & Hata \cite{hata1993pi} & 1993 & $0.277$ & ${}_3F_2(-\tfrac13)$ & $(\tfrac12,\allowbreak 1,\allowbreak \tfrac12;\allowbreak \tfrac12,\allowbreak \tfrac32)$ & $(3,\allowbreak 3,\allowbreak 2;\allowbreak 0,\allowbreak 6)_{\mathrm F}$ \\
 & Heimonen--Matala-aho--V\"a\"an\"anen \cite{heimonenMatalaAhoVaananen1994} & 1994 & $0.255$ & ${}_2F_1(-\tfrac13)$ & $(3,\allowbreak \tfrac52;\allowbreak \tfrac{13}2)$ & $(2,\allowbreak 2;\allowbreak 5)_{\mathrm F}$ \\
\addlinespace
\shortstack[l]{$\arctan(1/\sqrt{7})$\\$/\sqrt{7}$} & Heimonen--Matala-aho--V\"a\"an\"anen \cite{heimonenMatalaAhoVaananen1994} & 1994 & $0.344$ & ${}_2F_1(-\tfrac17)$ & $(2,\allowbreak \tfrac32;\allowbreak \tfrac72)$ & $(3,\allowbreak 3;\allowbreak 7)_{\mathrm F}$ \\
 & Zeilberger--Zudilin \cite{zeilbergerZudilin2019supplement} & 2019 & $0.268$ & ${}_2F_1(-\tfrac1{7})$ & $(\tfrac12,\allowbreak 1;\allowbreak \tfrac32)$ & $(1,\allowbreak 1;\allowbreak 2)_{\mathrm F}$ \\
\addlinespace
\shortstack[l]{$\arctan(1/\sqrt{11})$\\$/\sqrt{11}$} & Hata \cite{hata1992hypergeometric} & 1992 & $0.371$ & ${}_2F_1(-\tfrac1{11})$ & $(1,\allowbreak \tfrac12;\allowbreak \tfrac32)$ & $(4,\allowbreak 4;\allowbreak 9)_{\mathrm F}$ \\
 & Zeilberger--Zudilin \cite{zeilbergerZudilin2019supplement} & 2019 & $0.324$ & ${}_2F_1(-\tfrac1{11})$ & $(\tfrac12,\allowbreak 1;\allowbreak \tfrac32)$ & $(1,\allowbreak 1;\allowbreak 2)_{\mathrm F}$ \\
\addlinespace
\shortstack[l]{$\arctan(1/\sqrt{15})$\\$/\sqrt{15}$} & Hata \cite{hata1992hypergeometric} & 1992 & $0.399$ & ${}_2F_1(-\tfrac1{15})$ & $(1,\allowbreak \tfrac12;\allowbreak \tfrac32)$ & $(4,\allowbreak 4;\allowbreak 9)_{\mathrm F}$ \\
 & Zeilberger--Zudilin \cite{zeilbergerZudilin2019supplement} & 2019 & $0.356$ & ${}_2F_1(-\tfrac1{15})$ & $(\tfrac12,\allowbreak 1;\allowbreak \tfrac32)$ & $(1,\allowbreak 1;\allowbreak 2)_{\mathrm F}$ \\
\addlinespace
\shortstack[l]{$\pi/\sqrt3$\\$-\log3$} & Hata \cite{hata1992hypergeometric} & 1992 & $0.286$ & ${}_2F_1(-\tfrac18)$ & $(2,\allowbreak \tfrac53;\allowbreak \tfrac{11}3)$ & $(3,\allowbreak 3;\allowbreak 7)_{\mathrm F}$ \\
\addlinespace
\shortstack[l]{$\pi/\sqrt3$\\$+\log3$} & Hata \cite{hata1992hypergeometric} & 1992 & $0.286$ & ${}_2F_1(-\tfrac18)$ & $(2,\allowbreak \tfrac43;\allowbreak \tfrac{10}3)$ & $(3,\allowbreak 3;\allowbreak 7)_{\mathrm F}$ \\
\addlinespace
\shortstack[l]{$\pi/\sqrt3-$\\$\sqrt3\log(2+\sqrt3)$} & Hata \cite{hata1992hypergeometric} & 1992 & $0.200$ & ${}_2F_1(\tfrac19)$ & $(2,\allowbreak \tfrac74;\allowbreak \tfrac{15}4)$ & $(3,\allowbreak 3;\allowbreak 7)_{\mathrm F}$ \\
\addlinespace
\shortstack[l]{$\pi/\sqrt3+$\\$\sqrt3\log(2+\sqrt3)$} & Hata \cite{hata1992hypergeometric} & 1992 & $0.200$ & ${}_2F_1(\tfrac19)$ & $(2,\allowbreak \tfrac54;\allowbreak \tfrac{13}4)$ & $(3,\allowbreak 3;\allowbreak 7)_{\mathrm F}$ \\
\addlinespace
$\sqrt5\log\varphi$ & Hata \cite{hata1992hypergeometric} & 1992 & $0.297$ & ${}_2F_1(\tfrac15)$ & $(1,\allowbreak \tfrac12;\allowbreak \tfrac32)$ & $(2,\allowbreak 2;\allowbreak 5)_{\mathrm F}$ \\
\addlinespace
$\pi$ & Gauss arctangent CF (DLMF) \cite[Eq.~4.25.4]{NIST:DLMF} & 2026 & $-0.203$ & ${}_2F_1(-1)$ & $(\tfrac12,\allowbreak 1;\allowbreak \tfrac32)$ & $(1,\allowbreak 1;\allowbreak 2)_{\mathrm F}$ \\
 & Mahler \cite{mahler1953pi} & 1953 & $-0.538$ & $G^{11,\allowbreak 0}_{11,\allowbreak 11}(-i)$ & $(1^{11};\allowbreak 0^{11})$ & $(1^{11};\allowbreak 0^{11})_{\mathrm D}$ \\
 & Mignotte \cite{mignotte1974pi} & 1974 & $-0.413$ & $G^{6,\allowbreak 0}_{6,\allowbreak 6}(-i)$ & $(1^{6};\allowbreak 0^{6})$ & $(1^{6};\allowbreak 0^{6})_{\mathrm D}$ \\
 & Hata \cite{hata1993pi} & 1993 & $0.144$ & $J(u,\allowbreak v,\allowbreak w,\allowbreak m)$ & $(2,\allowbreak 2,\allowbreak 2,\allowbreak 4)$ & $(2,\allowbreak 2,\allowbreak 2,\allowbreak 3)_{\mathrm F}$ \\
 & Borwein--Choi--Pigulla \cite{borweinChoiPigulla2005} (Taylor index $2$) & 2005 & $-0.198$ & ${}_2F_1(-1)$ & $(\tfrac52,\allowbreak 1;\allowbreak \tfrac72)$ & $(1,\allowbreak 1;\allowbreak 2)_{\mathrm F}$ \\
 & Salikhov \cite{salikhov2010pi} & 2010 & $0.157$ & $K(a,\allowbreak b,\allowbreak g)$ & $(6,\allowbreak 11,\allowbreak 3)$ & $(6,\allowbreak 10,\allowbreak 3)_{\mathrm F}$ \\
 & Zeilberger--Zudilin \cite{zeilbergerZudilin2020} & 2020 & $0.163$ & $K(a,\allowbreak b,\allowbreak g)$ & $(2,\allowbreak 4,\allowbreak 1)$ & $(4,\allowbreak 6,\allowbreak 2)_{\mathrm F}$ \\
 & Bai (preprint) \cite{bai2026pi} & 2026 & $0.165$ & $K(a,\allowbreak b,\allowbreak g)$ & $(0,\allowbreak 1,\allowbreak 0)$ & $(3714,\allowbreak 5570,\allowbreak 1857)_{\mathrm F}$ \\
\addlinespace
$e$ & Hermite A \cite{hermite1873exponentielle} & 1873 & $1.001$ & ${}_1F_1(1)$ & $(1;\allowbreak 2)$ & $(1;\allowbreak 2)_{\mathrm F}$ \\
 & Hermite B \cite{hermite1873exponentielle} & 1873 & $1.000$ & ${}_1F_1(1)$ & $(2;\allowbreak 3)$ & $(1;\allowbreak 2)_{\mathrm F}$ \\
 & Hermite C \cite{hermite1873exponentielle} & 1873 & $1.000$ & ${}_1F_1(1)$ & $(1;\allowbreak 3)$ & $(1;\allowbreak 2)_{\mathrm F}$ \\
\addlinespace
$\sqrt e$ & Hermite A \cite{hermite1873exponentielle} & 1873 & $1.001$ & ${}_1F_1(\tfrac12)$ & $(1;\allowbreak 2)$ & $(1;\allowbreak 2)_{\mathrm F}$ \\
 & Hermite B \cite{hermite1873exponentielle} & 1873 & $1.000$ & ${}_1F_1(\tfrac12)$ & $(2;\allowbreak 3)$ & $(1;\allowbreak 2)_{\mathrm F}$ \\
 & Hermite C \cite{hermite1873exponentielle} & 1873 & $1.000$ & ${}_1F_1(\tfrac12)$ & $(1;\allowbreak 3)$ & $(1;\allowbreak 2)_{\mathrm F}$ \\
\addlinespace
$\sqrt[3]{e}$ & Hermite A \cite{hermite1873exponentielle} & 1873 & $1.001$ & ${}_1F_1(\tfrac13)$ & $(1;\allowbreak 2)$ & $(1;\allowbreak 2)_{\mathrm F}$ \\
 & Hermite B \cite{hermite1873exponentielle} & 1873 & $1.000$ & ${}_1F_1(\tfrac13)$ & $(2;\allowbreak 3)$ & $(1;\allowbreak 2)_{\mathrm F}$ \\
 & Hermite C \cite{hermite1873exponentielle} & 1873 & $1.000$ & ${}_1F_1(\tfrac13)$ & $(1;\allowbreak 3)$ & $(1;\allowbreak 2)_{\mathrm F}$ \\
\addlinespace
$e^2$ & Hermite A \cite{hermite1873exponentielle} & 1873 & $1.001$ & ${}_1F_1(2)$ & $(1;\allowbreak 2)$ & $(1;\allowbreak 2)_{\mathrm F}$ \\
 & Hermite B \cite{hermite1873exponentielle} & 1873 & $1.000$ & ${}_1F_1(2)$ & $(2;\allowbreak 3)$ & $(1;\allowbreak 2)_{\mathrm F}$ \\
 & Hermite C \cite{hermite1873exponentielle} & 1873 & $1.000$ & ${}_1F_1(2)$ & $(1;\allowbreak 3)$ & $(1;\allowbreak 2)_{\mathrm F}$ \\
\addlinespace
$\gamma+\log(z)$ & Rivoal \cite{rivoal2009gamma} (third order) & 2009 & $\approx -1$ & $G^{3,\allowbreak  2}_{2,\allowbreak 3}(z)$ & $(0,\allowbreak  0;\allowbreak  0,\allowbreak  0,\allowbreak  0)$ & $(0,\allowbreak  0;\allowbreak  1,\allowbreak  1,\allowbreak  1)_{\mathrm D}$ \\
\addlinespace
$\zeta(2)-\gamma^2$ & Rivoal \cite{rivoal2009gamma} (simultaneous) & 2009 & $-0.933$ & $G^{4,\allowbreak  3}_{3,\allowbreak  4}(1)$ & $(0,\allowbreak  0,\allowbreak  0;\allowbreak  0,\allowbreak  0,\allowbreak  0,\allowbreak  0)$ & $(0,\allowbreak  0,\allowbreak  0;\allowbreak  1,\allowbreak  1,\allowbreak  1,\allowbreak  1)_{\mathrm D}$ \\
\addlinespace
Euler--Gompertz $\mathcal G$ & Hessami Pilehrood--Hessami Pilehrood \cite{hessamiPilehrood2013continued} (Stieltjes CF) & 2013 & $-0.979$ & $G^{0,\allowbreak  2}_{2,\allowbreak  1}(1)$ & $(1,\allowbreak 1;\allowbreak  0)$ & $(1,\allowbreak 1;\allowbreak  0)_{\mathrm D}$ \\
\addlinespace
Catalan's $G$ & RM catalogue \#1 \cite{ramanujanMachine2020catalan} & 2020 & $-0.825$ & $\mathcal{R}^g_{0,\allowbreak 0}$ & $(0,\allowbreak 0)$ & $(1,\allowbreak 0)_{\mathrm F}$ \\
 & RM catalogue \#2 \cite{ramanujanMachine2020catalan} & 2020 & $-0.837$ & $\mathcal{R}^g_{0,\allowbreak 0}$ & $(0,\allowbreak 1)$ & $(1,\allowbreak 0)_{\mathrm F}$ \\
 & RM catalogue \#3 \cite{ramanujanMachine2020catalan} & 2020 & $-0.848$ & $\mathcal{R}^g_{0,\allowbreak 0}$ & $(0,\allowbreak 2)$ & $(1,\allowbreak 0)_{\mathrm F}$ \\
 & RM catalogue \#4 \cite{ramanujanMachine2020catalan} & 2020 & $-0.857$ & $\mathcal{R}^g_{0,\allowbreak 0}$ & $(0,\allowbreak 3)$ & $(1,\allowbreak 0)_{\mathrm F}$ \\
 & RM catalogue \#5 \cite{ramanujanMachine2020catalan} & 2020 & $-0.815$ & $\mathcal{R}^g_{0,\allowbreak 2}$ & $(0,\allowbreak 0)$ & $(1,\allowbreak 0)_{\mathrm F}$ \\
 & RM catalogue \#6 \cite{ramanujanMachine2020catalan} & 2020 & $-0.828$ & $\mathcal{R}^g_{0,\allowbreak 2}$ & $(0,\allowbreak 1)$ & $(1,\allowbreak 0)_{\mathrm F}$ \\
 & RM catalogue \#7 \cite{ramanujanMachine2020catalan} & 2020 & $-0.840$ & $\mathcal{R}^g_{0,\allowbreak 2}$ & $(0,\allowbreak 2)$ & $(1,\allowbreak 0)_{\mathrm F}$ \\
 & RM catalogue \#8 \cite{ramanujanMachine2020catalan} & 2020 & $-0.850$ & $\mathcal{R}^g_{0,\allowbreak 2}$ & $(0,\allowbreak 3)$ & $(1,\allowbreak 0)_{\mathrm F}$ \\
 & RM catalogue \#9 \cite{ramanujanMachine2020catalan} & 2020 & $-0.812$ & $\mathcal{R}^f_{1,\allowbreak 1}$ & $(0,\allowbreak 1)$ & $(1,\allowbreak 0)_{\mathrm F}$ \\
 & RM catalogue \#10 \cite{ramanujanMachine2020catalan} & 2020 & $-0.808$ & $\mathcal{R}^g_{0,\allowbreak 4}$ & $(0,\allowbreak 0)$ & $(1,\allowbreak 0)_{\mathrm F}$ \\
 & RM catalogue \#11 \cite{ramanujanMachine2020catalan} & 2020 & $-0.820$ & $\mathcal{R}^g_{0,\allowbreak 4}$ & $(0,\allowbreak 1)$ & $(1,\allowbreak 0)_{\mathrm F}$ \\
 & RM catalogue \#12 \cite{ramanujanMachine2020catalan} & 2020 & $-0.830$ & $\mathcal{R}^g_{0,\allowbreak 4}$ & $(0,\allowbreak 2)$ & $(1,\allowbreak 0)_{\mathrm F}$ \\
 & RM catalogue \#13 \cite{ramanujanMachine2020catalan} & 2020 & $-0.808$ & $\mathcal{R}^g_{2,\allowbreak 2}$ & $(0,\allowbreak 0)$ & $(1,\allowbreak 0)_{\mathrm F}$ \\
 & RM catalogue \#14 \cite{ramanujanMachine2020catalan} & 2020 & $-0.817$ & $\mathcal{R}^g_{2,\allowbreak 2}$ & $(0,\allowbreak 1)$ & $(1,\allowbreak 0)_{\mathrm F}$ \\
 & RM catalogue \#15 \cite{ramanujanMachine2020catalan} & 2020 & $-0.833$ & $\mathcal{R}^g_{2,\allowbreak 2}$ & $(0,\allowbreak 2)$ & $(1,\allowbreak 0)_{\mathrm F}$ \\
 & RM catalogue \#16 \cite{ramanujanMachine2020catalan} & 2020 & $-0.844$ & $\mathcal{R}^g_{2,\allowbreak 2}$ & $(0,\allowbreak 3)$ & $(1,\allowbreak 0)_{\mathrm F}$ \\
 & RM catalogue \#17 \cite{ramanujanMachine2020catalan} & 2020 & $-0.805$ & $\mathcal{R}^f_{1,\allowbreak 1}$ & $(0,\allowbreak 3)$ & $(1,\allowbreak 0)_{\mathrm F}$ \\
 & RM catalogue \#18 \cite{ramanujanMachine2020catalan} & 2020 & $-0.800$ & $\mathcal{R}^g_{2,\allowbreak 4}$ & $(0,\allowbreak 0)$ & $(1,\allowbreak 0)_{\mathrm F}$ \\
 & RM catalogue \#19 \cite{ramanujanMachine2020catalan} & 2020 & $-0.811$ & $\mathcal{R}^g_{2,\allowbreak 4}$ & $(0,\allowbreak 1)$ & $(1,\allowbreak 0)_{\mathrm F}$ \\
 & RM catalogue \#20 \cite{ramanujanMachine2020catalan} & 2020 & $-0.823$ & $\mathcal{R}^g_{2,\allowbreak 4}$ & $(0,\allowbreak 2)$ & $(1,\allowbreak 0)_{\mathrm F}$ \\
 & RM catalogue \#21 \cite{ramanujanMachine2020catalan} & 2020 & $-0.801$ & $\mathcal{R}^f_{2,\allowbreak 2}$ & $(0,\allowbreak 2)$ & $(1,\allowbreak 0)_{\mathrm F}$ \\
 & RM catalogue \#22 \cite{ramanujanMachine2020catalan} & 2020 & $-0.798$ & $\mathcal{R}^f_{1,\allowbreak 3}$ & $(0,\allowbreak 3)$ & $(1,\allowbreak 0)_{\mathrm F}$ \\
 & Eskandari (preprint) \cite{eskandari2026catalan} & 2026 & $-0.373$ & $E(k,\allowbreak m)$ & $(0,\allowbreak 0)$ & $(3,\allowbreak 10)_{\mathrm F}$ \\
\addlinespace
$\zeta(2)+\zeta(3)$ & Zudilin \cite{zudilin2002thirdOrder} & 2002 & $-0.158$ & ${}_4F_3(1)$ & $(3,\allowbreak 3,\allowbreak 3,\allowbreak 3;\allowbreak 6,\allowbreak 6,\allowbreak 6)$ & $(1,\allowbreak 1,\allowbreak 1,\allowbreak 1;\allowbreak 2,\allowbreak 2,\allowbreak 2)_{\mathrm F}$ \\
 & Dauguet--Zudilin \cite{dauguetZudilin2014} & 2014 & $-0.101$ & ${}_4F_3(1)$ & $(1,\allowbreak 1,\allowbreak 1,\allowbreak 0;\allowbreak 1,\allowbreak 1,\allowbreak 1)$ & $(10,\allowbreak 9,\allowbreak 8,\allowbreak -5;\allowbreak 3,\allowbreak 2,\allowbreak 1)_{\mathrm F}$ \\
\addlinespace
$\zeta(3)+\zeta(5)$ & Zudilin \cite{zudilin2002thirdOrder} & 2002 & $-0.281$ & ${}_8F_7(1)$ & $(8,\allowbreak 3^{7};\allowbreak 6^{7})$ & $(3,\allowbreak 1^{7};\allowbreak 2^{7})_{\mathrm F}$ \\
\addlinespace
$L(2,\chi_{-3})$ & Zagier \cite{zagier2009apery} & 2009 & $-0.475$ & ${}_3F_2(4)$ & $(0,\allowbreak 0,\allowbreak \tfrac12;\allowbreak 1,\allowbreak 1)$ & $(-1,\allowbreak -1,\allowbreak 0;\allowbreak 0,\allowbreak 0)_{\mathrm F}$ \\
\end{longtable}
\endgroup

\Needspace{8\baselineskip}
\subsection{Constructing the special CMFs}
\label{app:special-cmf-constructions}
\label{app:special-cmfs}

This subsection specifies the special CMFs beyond the generalized
hypergeometric and Meijer--$G$ families. Constructions represented by
${}_pF_q$ or Meijer--$G$ CMFs in the reconstruction tables are treated
using those families and are not repeated here.

We record the family-specific weights, recurrence data and shift matrices.
The integral CMF for $\pi$ is developed separately in
Section~\ref{app:pi-build-recovery}.
Write $C(q_0,\ldots,q_r)$ for the row-convention companion matrix,
with subdiagonal ones and last column
$-(q_0,\ldots,q_{r-1})^{\mathsf T}/q_r$.

\Needspace{7\baselineskip}
\subsubsection{Integral CMFs: construction of the shift matrices}
For each integral family below, the weight and contour determine the
moments. The following reduction procedure specifies the matrices;
listing the weight alone is not the construction.

\smallskip
\paragraph{\textbf{Input and moment basis.}}
Write the weight in exponent coordinates as
\[
 w(t;\lambda)=\prod_{\nu=1}^d f_\nu(t)^{\lambda_\nu},
 \qquad D(t)=\prod_{\nu=1}^d f_\nu(t),\qquad r=\deg D-1.
\]
Use the moment basis
\[
 I_j(\lambda)=\int_\gamma t^jw(t;\lambda)\,dt,
 \qquad 0\leq j<r.
\]
The construction is taken on a parameter domain where these moment
classes form a basis. The integer $r$ specifies the size of the matrices.

\smallskip
\paragraph{\textbf{Reduction coefficients.}}
Integration by parts, initially with vanishing boundary terms, gives
\[
 \int_\gamma R_k(t;\lambda)w(t;\lambda)\,dt=0,
 \qquad
 R_k=kt^{k-1}D+t^k\left(D'+D\frac{w'}w\right).
\]
Since $w'/w=\sum_\nu\lambda_\nu f_\nu'/f_\nu$, each $R_k$ is a
polynomial with coefficients rational in the parameters. Write
\[
 R_k=\sum_{j=0}^{r+k}\rho_{kj}(\lambda)t^j.
\]
Let $u_0,\ldots,u_{r-1}$ be the standard columns of $\mathbb Q^r$.
For $\ell\geq r$, define further columns successively by
\[
 u_\ell=-\frac{1}{\rho_{\ell-r,\ell}}
       \sum_{j=0}^{\ell-1}\rho_{\ell-r,j}u_j.
\]
Where the displayed denominator is nonzero, this is exactly the reduction
\[
 I_\ell=(I_0,\ldots,I_{r-1})u_\ell.
\]
Thus every required higher moment is reduced by a finite sequence of
rational operations. No numerical fitting is involved.

\smallskip
\paragraph{\textbf{Positive exponent shifts.}}
Expand $f_\nu(t)=\sum_{a=0}^{d_\nu}f_{\nu,a}t^a$. Increasing
$\lambda_\nu$ by one multiplies the integrand by $f_\nu$, so
\[
 I_j(\lambda+e_\nu)
 =\sum_{a=0}^{d_\nu}f_{\nu,a}I_{j+a}(\lambda).
\]
Consequently column $j$ of the positive-shift matrix is
\[
\operatorname{col}_j\!\bigl(M_{\lambda_\nu}(\lambda)\bigr)
=\sum_{a=0}^{d_\nu} f_{\nu,a}\,u_{j+a}(\lambda),
\qquad 0\leq j<r.
\]
Only the columns through $u_{r-1+d_\nu}$ are needed. This formula,
combined with the reduction recursion, specifies every matrix entry.

\smallskip
\paragraph{\textbf{Denominator parameters and compatibility.}}
A negative exponent shift is represented by
$M_{\lambda_\nu}(\lambda-e_\nu)^{-1}$. Thus increasing a denominator
parameter, such as $b$ in $(1+t)^{-b}$, uses a negative exponent shift;
the inverse is evaluated at the shifted parameters.
Commuting parameter shifts induce compatible matrices when reduced in
this common basis. The matrices therefore give a CMF on the domain
where the basis, reduction denominators and shift inverses are valid.
These conditions must also hold along the chosen trajectory.

\Needspace{7\baselineskip}
\subsubsection{Integral families}
For each family, substitute the factors of the displayed weight into the
construction above. The exponents may be affine functions of the named
parameters; for example, $g-\tfrac12$ increases by one when $g$ does.
The stated contours determine the moments and their boundary conditions.

\smallskip
\paragraph{\textbf{The rank-four CMF $T(e_1,e_2,b)$.}}
The weight and contour are
\[
 w_T(t)=[t(1-t)]^{e_1}(6t^2-5t+1)^{e_2}(1+t)^{-b},
 \qquad \gamma=[0,1].
\]

\smallskip
\paragraph{\textbf{The rank-fourteen CMF $\widetilde T(\mathbf e,b)$.}}
Here $\mathbf e=(e_1,\ldots,e_6)$, and
\[
 w_{\widetilde T}(t)=\prod_{\nu=1}^6p_\nu(t)^{e_\nu}(1+t)^{-b},
 \qquad \gamma=[0,1],
\]
with polynomial factors
\[
\begin{aligned}
 p_1&=t(1-t),&p_2&=t^2+2t-1,\\
 p_3&=6t^2-5t+1,&p_4&=7t^2-6t+1,\\
 p_5&=13t^2-11t+2,&p_6&=2t^4+3t^3+3t^2-5t+1.
\end{aligned}
\]

\smallskip
\paragraph{\textbf{The rank-three CMF $J(u,v,w,m)$.}}
The weight and contour are
\[
 w_J(t)=(t-1)^u(t-2)^v(t-1-i)^w t^{-m},
 \qquad \gamma:1\longrightarrow1+i.
\]
The rank is three over $\mathbb Q(i)$, or six after taking real and
imaginary parts.

\smallskip
\paragraph{\textbf{The rank-three CMF $K(a,b,g)$.}}
In the moment coordinate $t$, the weight is
\[
 w_K(t)=t^{g-1/2}(t^2+6t+25)^a(t-25)^{-b}.
\]
The contour is the image of the segment $-1-2i\to-1+2i$ under $t=x^2$,
with the square root continued along that path.
Section~\ref{sec:pi-integral-data} gives the original integral coordinates
and the normalization used for the $\pi$ embedding.

\smallskip
\paragraph{\textbf{Logarithm CMFs.}}
The same procedure gives the logarithm CMFs $\mathcal H$, $\mathcal S$ and $\mathcal R$ from the weights
\[
\begin{aligned}
\mathcal H:&\quad (t-1)^u(t-\tfrac23)^v(t-\tfrac12)^w t^{-m},\\
\mathcal S:&\quad t^u(t-25)^v(t-49)^w(1225-t)^{-m},\\
\mathcal R:&\quad t^{-b}\prod_{j=1}^6 f_j(t)^{e_j}.
\end{aligned}
\]
Their ranks are $3$, $3$ and $8$, respectively; the coordinates are
$(u,v,w,m)$, $(u,v,w,m)$ and $(e_1,\ldots,e_6,b)$. Here
\[
\begin{aligned}
(f_1,\ldots,f_6)={}&(t-1,t-\tfrac23,t-\tfrac43,5t-4,\\
&17t^2-34t+16,19t^2-36t+16),\\
\boldsymbol\alpha={}&(0.704324,0.552418,0.447582,\\
&0.109072,0.038934,0.054368).
\end{aligned}
\]

\Needspace{7\baselineskip}
\subsubsection{Recurrence-based moment extensions: the common construction}
\label{app:auxiliary-moment-fields}
\smallskip
\paragraph{\textbf{Source sequence and auxiliary parameter.}}
The input is a scalar sequence $J_n$ with a known recurrence.
Define the linear functional $L_J$ on polynomials by $L_J(t^n)=J_n$.
Introducing a second parameter gives
\[
 S_J(u,v)=L_J(t^u(1-t)^v)
 =\sum_{k=0}^v(-1)^k\binom vk J_{u+k}.
\]
\smallskip
\paragraph{\textbf{State and shifts.}}
For nonnegative integer $v$, the displayed sum is finite. At $v=0$,
$S_J(u,0)=J_u$, so the original sequence is recovered on the horizontal
axis. Use the row state
\[
 \mathcal S(u,v)=\bigl(S_J(u,v),S_J(u+1,v),\ldots,S_J(u+r-1,v)\bigr).
\]
The $v$-axis is our binomial-transform extension, not an additional
parameter asserted in the source paper.

\smallskip
\paragraph{\textbf{Reduction from the source recurrence.}}
If
$\sum_{i=0}^h a_i(n)J_{n+i}=0$ and $d=\max_i\deg a_i$, put
\[
 \Omega=\sum_{i=0}^h t^{i-1}a_i(\theta-1),\qquad
 \theta=t\frac{d}{dt},\qquad
 \sum_{j=0}^r q_jt^j
 =\frac{\Omega[t^{u+1}(1-t)^{v+d}]}{t^u(1-t)^v}.
\]
The resulting shift matrices are
\[
 M_u=C(q_0,\ldots,q_r),\qquad M_v=I_r-M_u.
\]

\smallskip
\paragraph{\textbf{Embedding of the source sequence.}}
We use nonnegative forward walks from
$(0,0)$ in direction $(1,0)$, so the walk depth is the source index.
The notebook helpers evaluate the higher-rank matrices at integer
points without expanding all symbolic entries.

\Needspace{7\baselineskip}
\subsubsection{Recurrence-based families: source data and initial values}
\smallskip
\paragraph{\textbf{The $\zeta(7)$ partial sums: the rank-fifteen CMF $\mathcal B_{15}$.}}
Let
\[
 J_N=\frac52\sum_{k=1}^N\frac{(-1)^{k+1}}{k^7\binom{2k}{k}}
 +\frac{25}{2}\sum_{k=1}^N
 \frac{(-1)^{k+1}H_{k-1}^{(4)}}{k^3\binom{2k}{k}},\qquad
 H_m^{(4)}=\sum_{j=1}^m j^{-4}.
\]
This is the partial-sum sequence of \cite[Eq.~(1.4)]{borweinBradley1997}.
Its scalar recurrence is obtained by eliminating the two summand
coordinates from
\[
 \begin{pmatrix}J_{n+1}\\a_{n+1}\\b_{n+1}\end{pmatrix}
 =\begin{pmatrix}
 1&5/[2(n+1)^4]&25/2\\
 0&\rho_n&0\\
 0&\rho_n/(n+1)^4&\rho_n
 \end{pmatrix}
 \begin{pmatrix}J_n\\a_n\\b_n\end{pmatrix},\quad
 \rho_n=-\frac{(n+1)^3}{2(n+2)^2(2n+3)},
\]
where $a_n=(-1)^{n+2}/[(n+1)^3\binom{2n+2}{n+1}]$ and
$b_n=a_nH_n^{(4)}$. After clearing denominators, the recurrence
has order three and coefficient degree $12$, giving rank $15$.
Both $J_n$ and the constant sequence $1$ satisfy it. Thus the seed
rows $(J_0,\ldots,J_{14})$ and $(1,\ldots,1)$ give the approximation $J_N$.
Here $M_v$ is singular at $v=0$ because it annihilates the constant
solution; no inverse $v$-step is used at this boundary.
The notebooks record exact source-sequence and off-axis checks,
and overlapping two-precision Arb enclosures at source index $1000$.
For the rank-$15$ CMF, conservation is supported by
the moment construction and exact sampled squares, not a fully
expanded symbolic matrix check.

\smallskip
\paragraph{\textbf{Eskandari: the rank-eight CMF $E(k,m)$.}}
Let $u_k$ satisfy Eskandari's second-order recurrence
\cite{eskandari2026catalan}, and set
$U(k,m)=\sum_{\ell=0}^m\binom m\ell(-16)^\ell u_{k+\ell}$.
The $k$-shift advances the source index. The $m$-shift satisfies
$U(k,m+1)=U(k,m)-16U(k+1,m)$ by the binomial identity.
The rank-eight state is $(U(k+j,m))_{j=0}^7$.
If $\sum_{j=0}^8c_j(k,m)U(k+j,m)=0$ is the induced recurrence,
then $M_k=C(c)$ and $M_m=I_8-16C(c)$.

\Needspace{7\baselineskip}
\subsubsection{Dvornicich--Viola moment CMFs}
\smallskip
\paragraph{\textbf{Parameters and source.}}
These two CMFs use the Beukers moment functional associated with
$\zeta(s)$, for $s=2,3$. Their three parameters change the powers of
$t$, $1-A_st$ and $1-B_st$. Set $(A_2,B_2)=(11,12)$, $(A_3,B_3)=(34,35)$,
$h=t^{e_0}(1-A_st)^{e_1}(1-B_st)^{e_2}$ and $\theta=t\,d/dt$. Put
\[
\begin{aligned}
\Omega_2&=t(\theta+1)^2+11\theta^2+11\theta+3-t^{-1}\theta^2,\\
\Omega_3&=t(\theta+1)^3-34\theta^3-51\theta^2-27\theta-5+t^{-1}\theta^3,\\
\sum_{j=0}^{2s+2}q_jt^j&=h^{-1}\Omega_s[th(1-A_st)^s(1-B_st)^s].
\end{aligned}
\]
\smallskip
\paragraph{\textbf{State and shift matrices.}}
Then $D_s$ is the rank-$(2s+2)$ CMF with matrices
$(M_{e_0},M_{e_1},M_{e_2})=(C(q),I-A_sC(q),I-B_sC(q))$.
The state consists of consecutive $e_0$-shifts of the Beukers moment
functional applied to $h$ \cite{dvornicichViola1990}.

\Needspace{7\baselineskip}
\subsubsection{Ramanujan Machine: explicit rank-two shift matrices}
The catalogue families $\mathcal R^g_{i,j}$ and $\mathcal R^f_{i,j}$
are specified directly by their shift matrices. The indices $i,j$ are
fixed family labels; $(n,k)$ are the two CMF coordinates. The horizontal
matrix advances $n$ by one. The vertical matrix advances $k$ by one
for the $g$ family and by two for the $f$ family.

\smallskip
\paragraph{\textbf{Horizontal shifts.}}
Define the horizontal coefficients: \[
\begin{aligned}
 b&=-2n(n+i)(n+j)(n+k),\\
 a_g&=3n^2+(2i+2j+3)n+(i+1)(j+1),\\
 a_f&=3n^2+[2(i+j+k)+3]n+(i+1)(j+1)(k+1)-ijk.
\end{aligned}
\]
The horizontal matrices are
\[
 X_g=\begin{pmatrix}0&b\\1&a_g\end{pmatrix},\qquad
 X_f=\begin{pmatrix}0&b\\1&a_f\end{pmatrix}.
\]
\smallskip
\paragraph{\textbf{Vertical shifts.}}
The vertical matrices $Y_g$ and $Y_f$, for steps $k\mapsto k+1$ and $k\mapsto k+2$, respectively, are
\[
\begin{aligned}
Y_g&=\begin{pmatrix}u&b\\1&v\end{pmatrix},\qquad
Y_f=\tfrac12\begin{pmatrix}k^2+k-n^2+n&b\\1&w\end{pmatrix},\\
u&=4k^2+2kn+2(1-i-j)(k+n)-2n^2,\\
v&=4k^2+2kn+(4-2i-2j)k+n^2+n+(i-1)(j-1),\\
w&=k^2+2kn+(i+j+2)k+2n^2+2(i+j+1)n+(i+1)(j+1).
\end{aligned}
\]

\Needspace{8\baselineskip}
\subsection{Embedding initial values and numerical evaluation}
\label{app:cmf-recoveries}
Table~\ref{tab:appendix-reconstructions} records the supplementary
recoveries. This subsection distinguishes their numerical scores from
published bounds and specifies the initial values used for the
experimental constructions.

\paragraph{Historical qualifications.}
The supplementary table reports numerical sequence exponents. These
should be distinguished from historical irrationality-measure bounds.
For a published irrationality-measure bound $\mu_0$, the comparison quantity
$\delta_{\mathrm{eq}}=1/(\mu_0-1)$ is not necessarily the exponent of a
particular approximation sequence. Brown--Zudilin's $\zeta(5)$ construction
\cite{brownZudilin2022zeta5}
gives a sequence bound $\delta\geq-0.14$, not an irrationality-measure bound.
The Zagier sequence for $L(2,\chi_{-3})$ has $\delta=-0.47651$;
Calegari--Dimitrov--Tang's quantitative result reports
$\mu(L(2,\chi_{-3}))\leq24781$ \cite{calegariDimitrovTang2025Holonomy}.
The Bai and OpenAI $\pi$ entries are preprints; the latter does not specify
a CMF sequence with exponent $\delta=1$. Mignotte's $\pi$ construction
uses degree-five Hermite--Pad\'e forms, not a scalar rational approximant.
The data marked as numerical in the embedding table are not claimed as proved
arithmetic bounds.

For the recovered Dauguet--Zudilin construction, the initial point and
direction in Table~\ref{tab:appendix-reconstructions} encode
\cite[Eq.~(27)]{dauguetZudilin2014}. The approximation is
\[
 r_N=\frac{\Phi_{x_0,v}(N)_{10}}{\Phi_{x_0,v}(N)_{00}}.
\]

\Needspace{7\baselineskip}
\subsubsection{Experimental approximations}
\label{sec:appendix-extractions}

We give the data for the two experimental rows in
the main results table. For each forward or dual walk $W_N$, wherever $(W_N)_{00}\ne0$, set
\[
 c_j=\frac{(W_N)_{j0}}{(W_N)_{00}}.
\]

\paragraph{\texorpdfstring{The sum $\zeta(2)+\zeta(3)$}{The sum zeta(2)+zeta(3)}}
Use the ${}_4F_3(1)$ CMF with
\[
 x_0=(7,8,6,9;2,3,16),\qquad v=(11,13,15,16;2,4,30),
 \qquad W_N=\Phi_{x_0,v}(N).
\]
The rational approximation is
\[
 E_{2,3}=\frac{N_{2,3}}{D_{2,3}},
\]
where
\begin{align*}
N_{2,3}&=320242182383092497c_1-5523372274576928103c_2\\
          &\quad-18266336406752995,\\
D_{2,3}&=112484439099648000c_1-1940073689296512000c_2\\
          &\quad-6416014872960000.
\end{align*}
This is the experimental trajectory with exponent near $-0.00018$;
it differs from the recovered Dauguet--Zudilin trajectory in
Table~\ref{tab:appendix-reconstructions}.

\paragraph{\texorpdfstring{The value $L(2,\chi_{-3})$}{The value L(2,chi(-3))}}
Use the ${}_4F_3(\tfrac14)$ CMF with
\[
 x_0=(\tfrac12,\tfrac12,\tfrac12,\tfrac12;
       \tfrac32,\tfrac32,\tfrac32),\qquad
 v=(5,7,6,8;17,18,8),\qquad W_N=\Phi_{x_0,v}(N)^{-\mathsf T}.
\]
The numerator and denominator initial values used in the experiment then define the approximation
\[
 E_L=\frac{-16c_1-8}{-54c_1-108c_2-72c_3-9}.
\]
The corresponding special-value expression reduces to the stated
Dirichlet value. Writing $\psi_1$ for the trigamma function,
its duplication and series formulas \cite[5.15.7 and 5.15.1]{NIST:DLMF} give
\[
 \psi_1(\tfrac13)+\psi_1(\tfrac56)=4\psi_1(\tfrac23),
\]
and hence
\[
 \frac{3\psi_1(1/3)-\psi_1(5/6)}{36}
 =\frac{\psi_1(1/3)-\psi_1(2/3)}9
 =\sum_{k\ge0}\left(\frac1{(3k+1)^2}-\frac1{(3k+2)^2}\right)
 =L(2,\chi_{-3}).
\]
The numerical approximation exponents in the main results table
refer to the reduced rational outputs of these approximations.

\FloatBarrier
\Needspace{8\baselineskip}
\subsection{The integral CMF for \texorpdfstring{$\pi$}{pi} and its recoveries}
\label{app:pi-build-recovery}
\Needspace{7\baselineskip}
\subsubsection{Integral coordinates for \texorpdfstring{$\pi$}{pi}}
\label{sec:pi-integral-data}
Put
\[
 H(x)=\bigl((x-4)^2+4\bigr)\bigl((x-6)^2+4\bigr),
 \qquad
 K(a,b,g)=-i\int_{4-2i}^{4+2i}
       \frac{H(x)^a(x-5)^{2g}}{x^b(x-10)^b}\,dx,
\]
where the contour is the straight segment.  For the nonnegative integral
parameters used below, with $b>0$, the integrand is rational and has no pole
on the contour.  For the CMF calculation, use the coordinates
\begin{equation}\label{eq:pi-I-K-coordinates}
 I(A,B,G)=K(2A,2B+10,G),\qquad
 \mathcal S_I(A,B,G)=
 \bigl(I(A,B,G),I(A,B,G+1),I(A,B+1,G)\bigr).
\end{equation}
In particular, half-integral values of $B$ allow odd values of $b$.
The integral decomposition belongs to $\mathbb Q+\mathbb Q\pi$:
partial fractions at $0$ and $10$ have opposite residues, and the logarithmic
contribution uses $\arctan(1/2)+\arctan(1/3)=\pi/4$.

\Needspace{7\baselineskip}
\subsubsection{Shift matrices and their normalization}
The three shift matrices are specified by polynomial identities. Write
\begin{gather*}
 y=(x-5)^2,\qquad h(y)=y^2+6y+25,\qquad V=y-25,\\
 \boldsymbol\beta=(1,y,V^{-2}),\qquad
 (F_A,F_B,F_G)=(h^2,V^{-2},y).
\end{gather*}
For a polynomial $P$ of degree at most six, define
\begin{align*}
 \mathcal T(P)={}&(1+2G)hPV
 +2y\bigl((h'P+hP')V-3hP\bigr)\\
 &+4Ayh'PV-(4B+20)yhP.
\end{align*}
The $j$th column of $M_i$, together with a polynomial $P_{ij}$ of degree at
most six, is specified by the polynomial identity
\begin{equation}\label{eq:pi-axis-coefficient-specification}
 V^4\left(F_i\beta_j-\sum_{k=0}^2(M_i)_{kj}\beta_k\right)
       =\mathcal T(P_{ij}),\qquad 0\leq j\leq2.
\end{equation}
Equating coefficients of $1,y,\ldots,y^9$ gives ten linear equations for
these ten unknowns over $\mathbb Q(A,B,G)$.  The coefficient determinant at
$(A,B,G)=(1,2,1)$ is $-216423567360000000000$, so the solution is generically
unique.  The supplied exact matrices satisfy all nine polynomial identities.
Indeed, if $f(x)=H(x)^{2A}(x-5)^{2G}/[x(x-10)]^{2B+10}$, then the
right-hand side divided by $V^4$ is
\[
 \frac1{f(x)}\frac{d}{dx}
 \left(f(x)\frac{(x-5)h(y)P_{ij}(y)}{V^3}\right).
\]
The primitive vanishes at both endpoints when $2A\geq0$.  Integration
therefore gives the exact row-state convention
$\mathcal S_I(z+e_i)=\mathcal S_I(z)M_i(z)$ wherever the entries are regular.

\Needspace{7\baselineskip}
\subsubsection{Initial state and approximation}
At the initial point $x_0=(1,2,1)$ one has
\begin{equation}\label{eq:pi-I-initial-rows}
 \mathcal S_I(x_0)=\frac{\pi\mathbf q-\mathbf p}
 {600600000000000000000000000000},
\end{equation}
where
\[
\begin{aligned}
 \mathbf p&=(-5236454140000,\ 249191746500000,\ 3609168431),\\
 \mathbf q&=(-4799995200000,\ 83002920000000,\ -3414290880).
\end{aligned}
\]
Thus the rational approximation for direction $v$ is
$r_N=(\mathbf p\Phi_{x_0,v}(N)e_0)/(\mathbf q\Phi_{x_0,v}(N)e_0)$.
Equation~\eqref{eq:pi-I-initial-rows} follows by rational partial fractions;
it does not rely on numerical recognition of $\pi$.

\Needspace{7\baselineskip}
\subsubsection{Embedding of published integral families}
The relation to the published integrals requires keeping the initial point
as well as the direction.  The Salikhov integral displayed in
\cite[Part~I]{zeilbergerZudilin2020} is $K(6n,10n+1,3n)$.
Consequently the initial point $(6,11,3)$ and direction $(6,10,3)$ in $K$
coordinates select its index shift $n\mapsto n+1$.
Bai's integral \cite[\S1, Eqs.~(1.1), (1.6)]{bai2026pi}, at indices $5570n$,
is exactly $K(3714n,5570n+1,1857n)$; this explains the initial point
$(0,1,0)$ and direction $(3714,5570,1857)$.
These are identities between integral families.  They do not transfer a
published arithmetic normalization to a different initial point.

For comparison, the improved Zeilberger--Zudilin integral
\cite[Part~II, Eq.~(1)]{zeilbergerZudilin2020} is
$-5K(2m,3m+1,m)$.
Its odd subsequence $m=2n+1$ is therefore
$-5I(1+2n,-3+3n,1+2n)$.
The initial point used here $(1,2,1)$ has the same direction $(2,3,2)$ but
adds ten to the denominator parameter $b$.
It is a fixed contiguous variant, with the initial values in \eqref{eq:pi-I-initial-rows}; its finite-depth scores
are not the published Zeilberger--Zudilin irrationality-measure bound.

The shift matrices and exact checks of the polynomial identities
and initial-state formula are provided in the supplementary
Git repository notebooks.

\subsubsection{Comparison at equal accuracy}
\label{app:equal-accuracy-plots}
The following figures compare the finite-depth approximation exponents
$\delta_N$ against the achieved accuracy $D_N$ (in decimal digits).
The legends identify the individual constructions. These plots show
finite numerical data, not proved asymptotic bounds. The data and plotting
notebook are provided in the supplementary Git repository.

\begin{figure}[!htbp]
\centering
\includegraphics[width=\linewidth,height=.37\textheight,keepaspectratio]{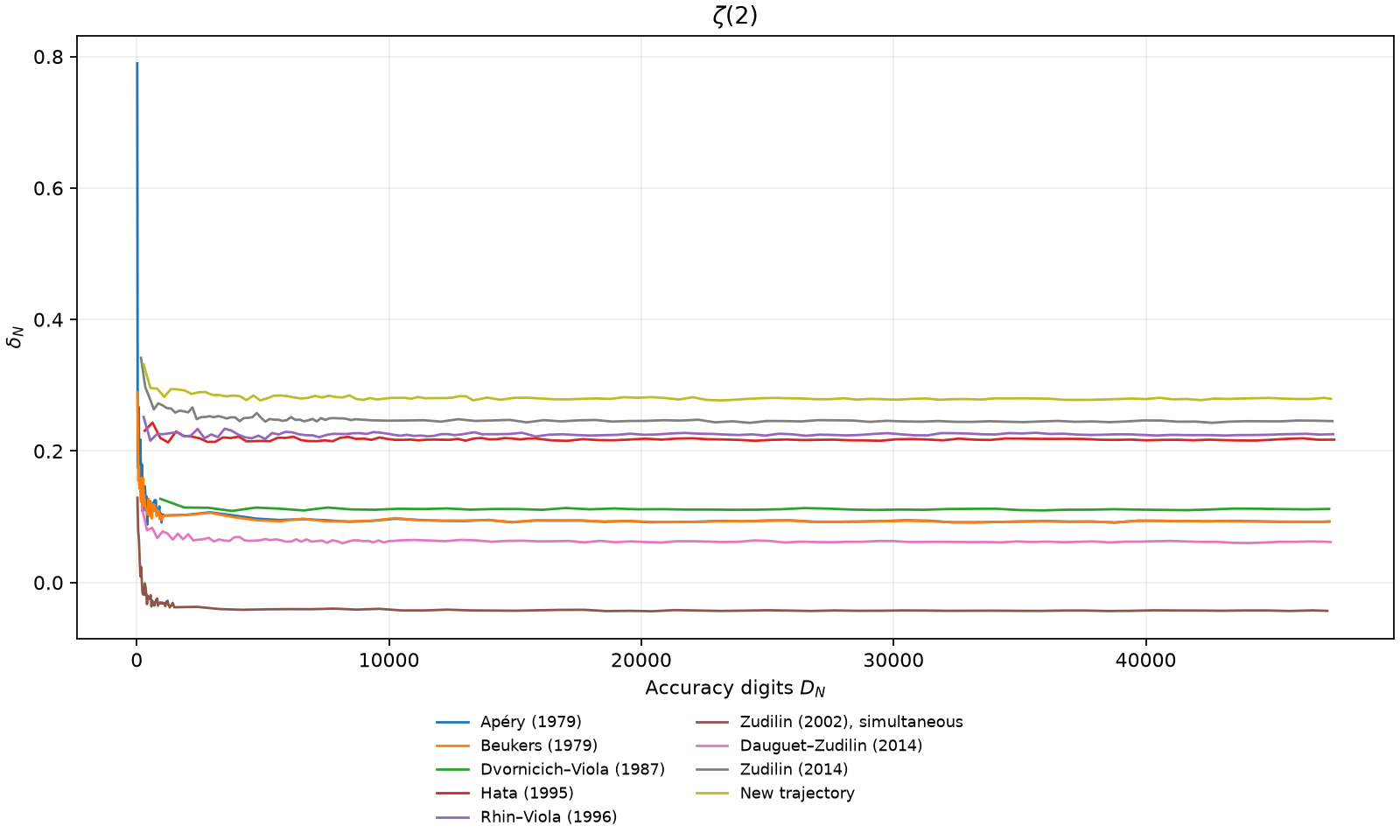}
\caption{Comparison for $\zeta(2)$: finite-depth exponent $\delta_N$ versus accuracy $D_N$ in decimal digits.}
\label{fig:equal-accuracy-zeta2}
\end{figure}

\begin{figure}[!htbp]
\centering
\includegraphics[width=\linewidth,height=.37\textheight,keepaspectratio]{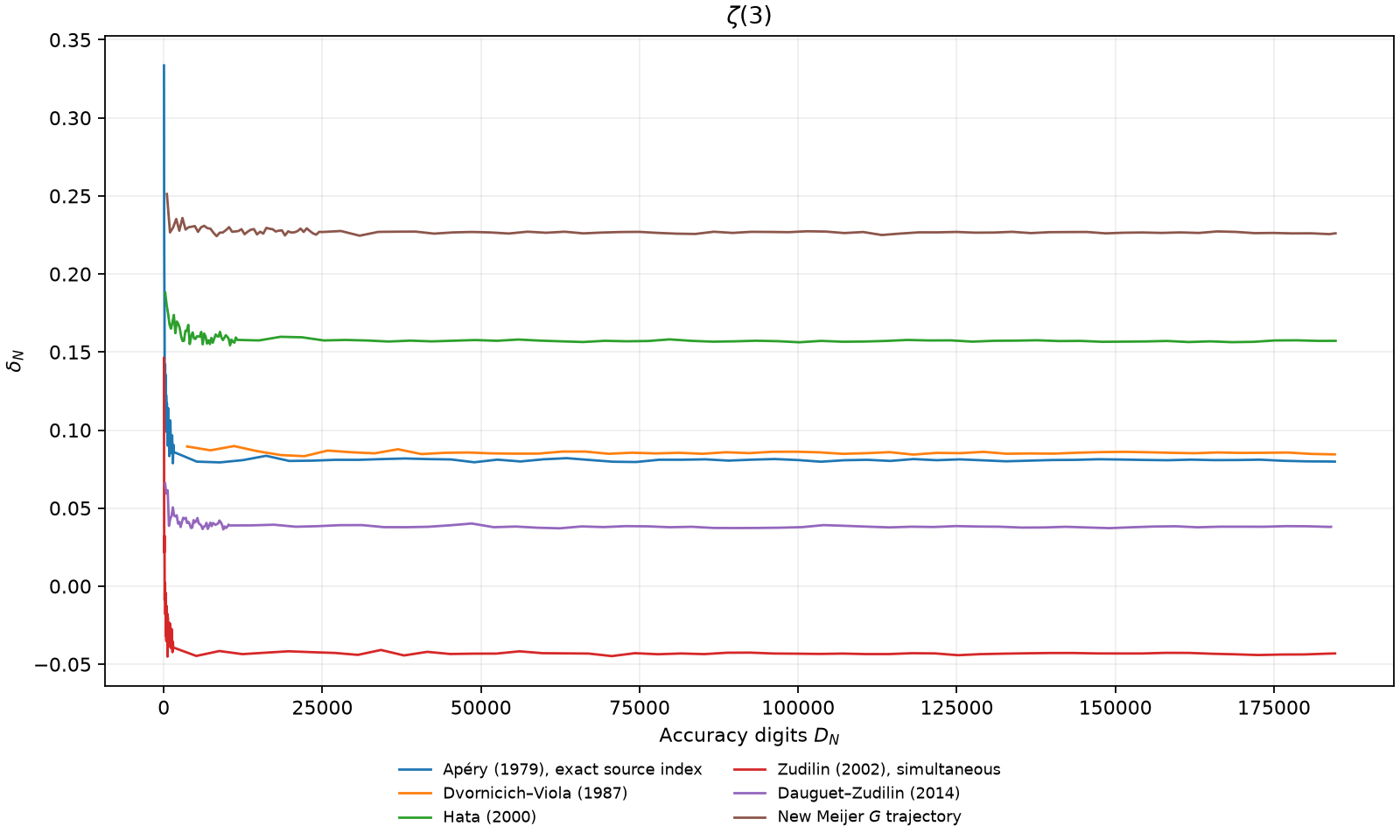}
\caption{Comparison for $\zeta(3)$: finite-depth exponent $\delta_N$ versus accuracy $D_N$ in decimal digits.}
\label{fig:equal-accuracy-zeta3}
\end{figure}

\clearpage

\begin{figure}[!htbp]
\centering
\includegraphics[width=\linewidth,height=.37\textheight,keepaspectratio]{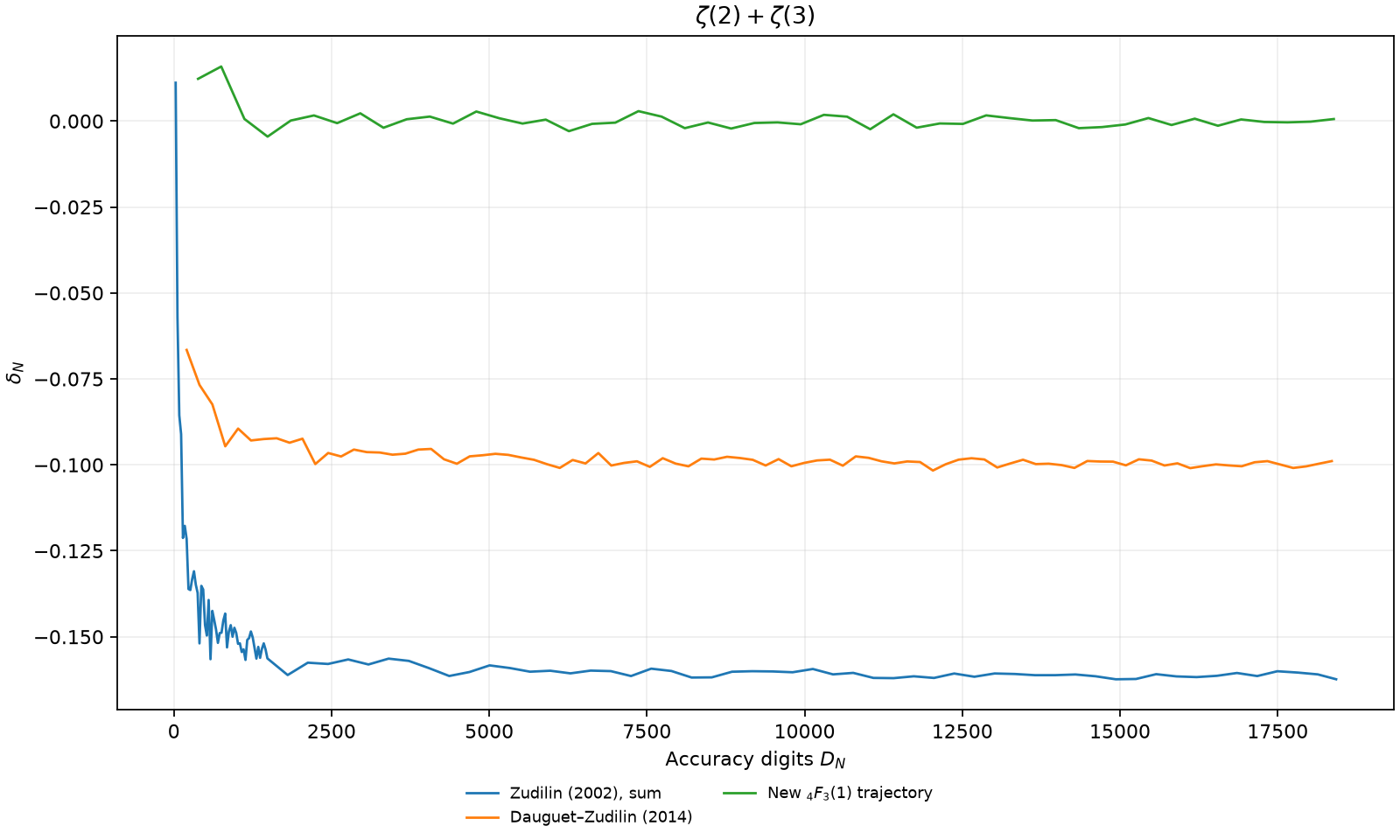}
\caption{Comparison for $\zeta(2)+\zeta(3)$: finite-depth exponent $\delta_N$ versus accuracy $D_N$ in decimal digits.}
\label{fig:equal-accuracy-zeta2-zeta3}
\end{figure}

\begin{figure}[!htbp]
\centering
\includegraphics[width=\linewidth,height=.37\textheight,keepaspectratio]{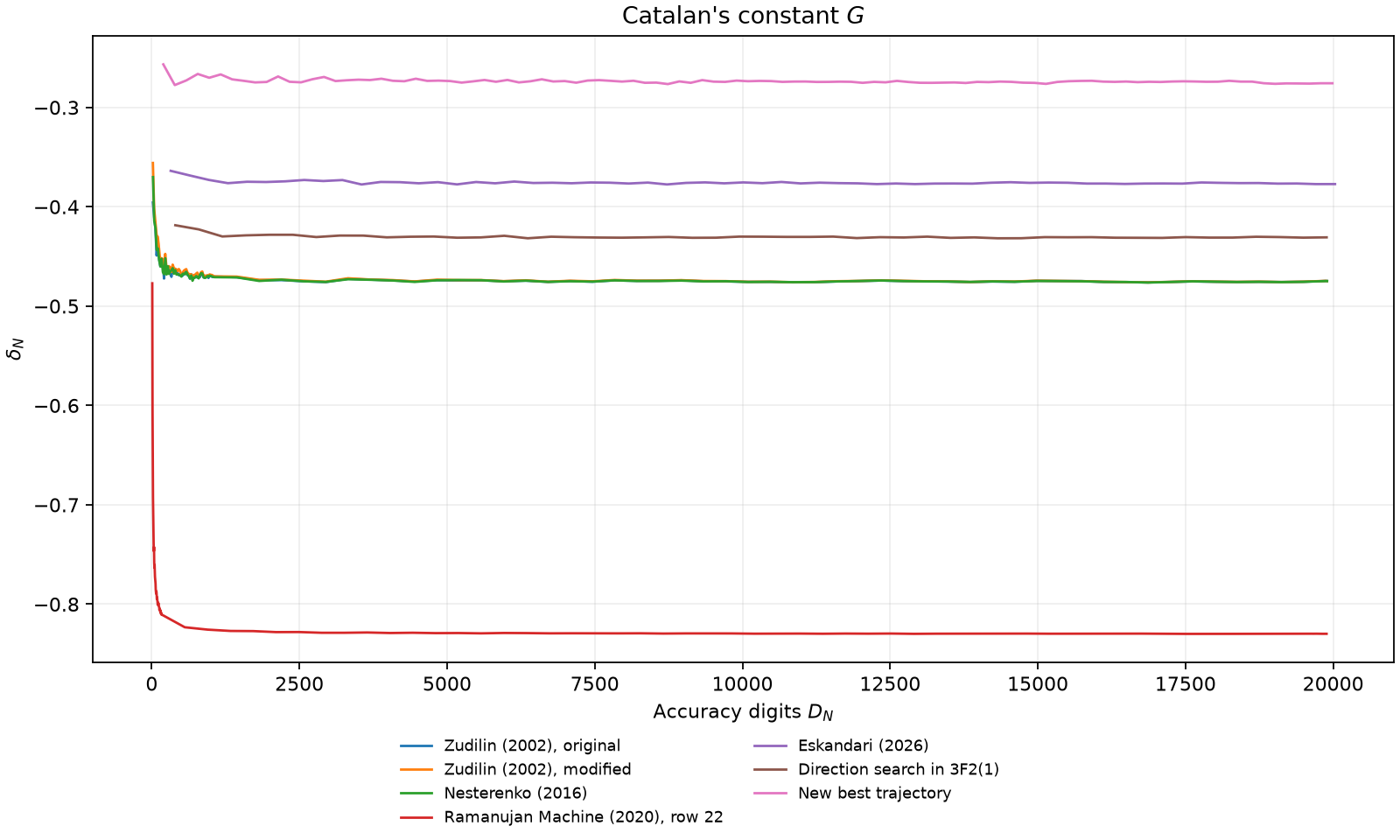}
\caption{Comparison for Catalan's constant $G$: finite-depth exponent $\delta_N$ versus accuracy $D_N$ in decimal digits.}
\label{fig:equal-accuracy-catalan}
\end{figure}

\clearpage

\begin{figure}[!htbp]
\centering
\includegraphics[width=\linewidth,height=.37\textheight,keepaspectratio]{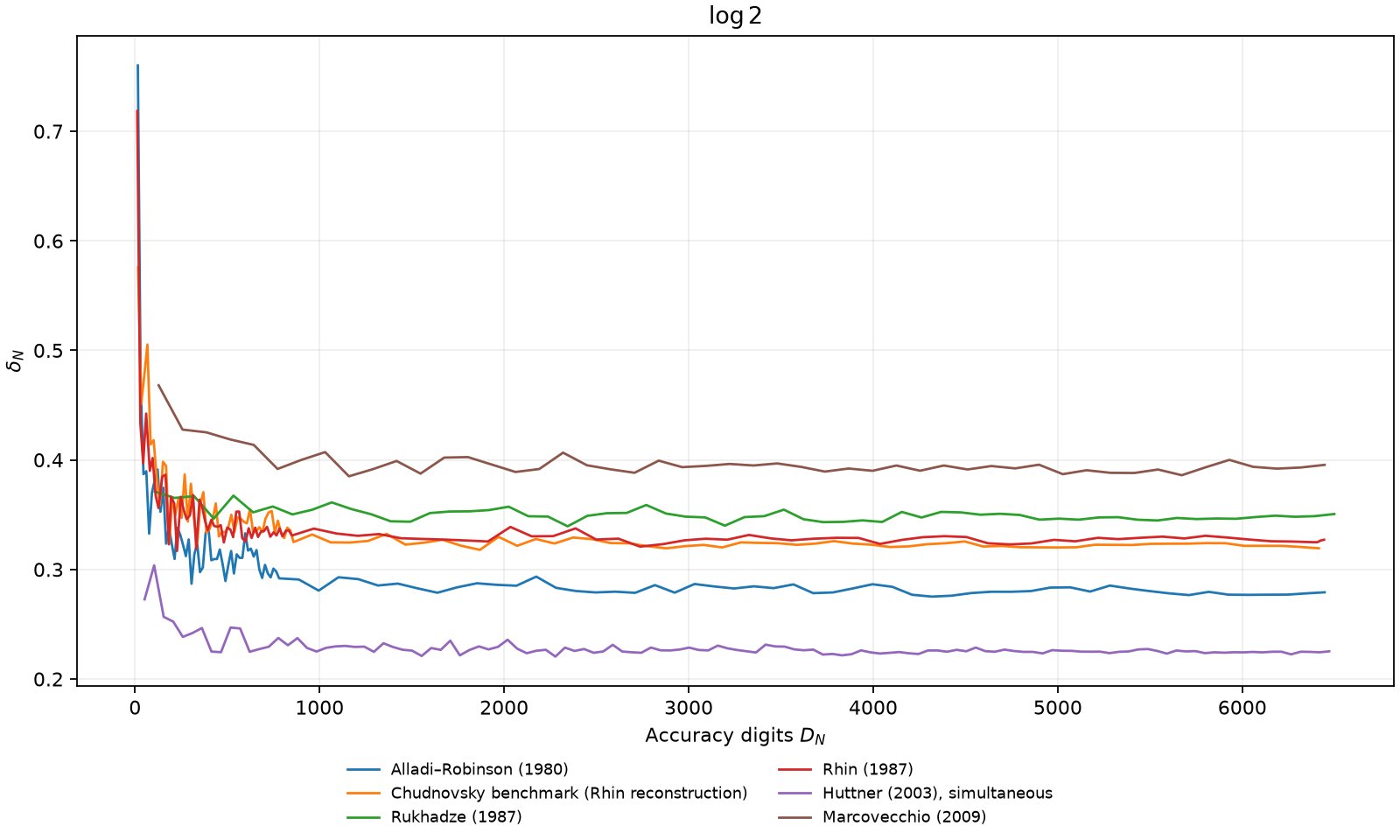}
\caption{Comparison for $\log 2$: finite-depth exponent $\delta_N$ versus accuracy $D_N$ in decimal digits.}
\label{fig:equal-accuracy-log2}
\end{figure}

\begin{figure}[!htbp]
\centering
\includegraphics[width=\linewidth,height=.37\textheight,keepaspectratio]{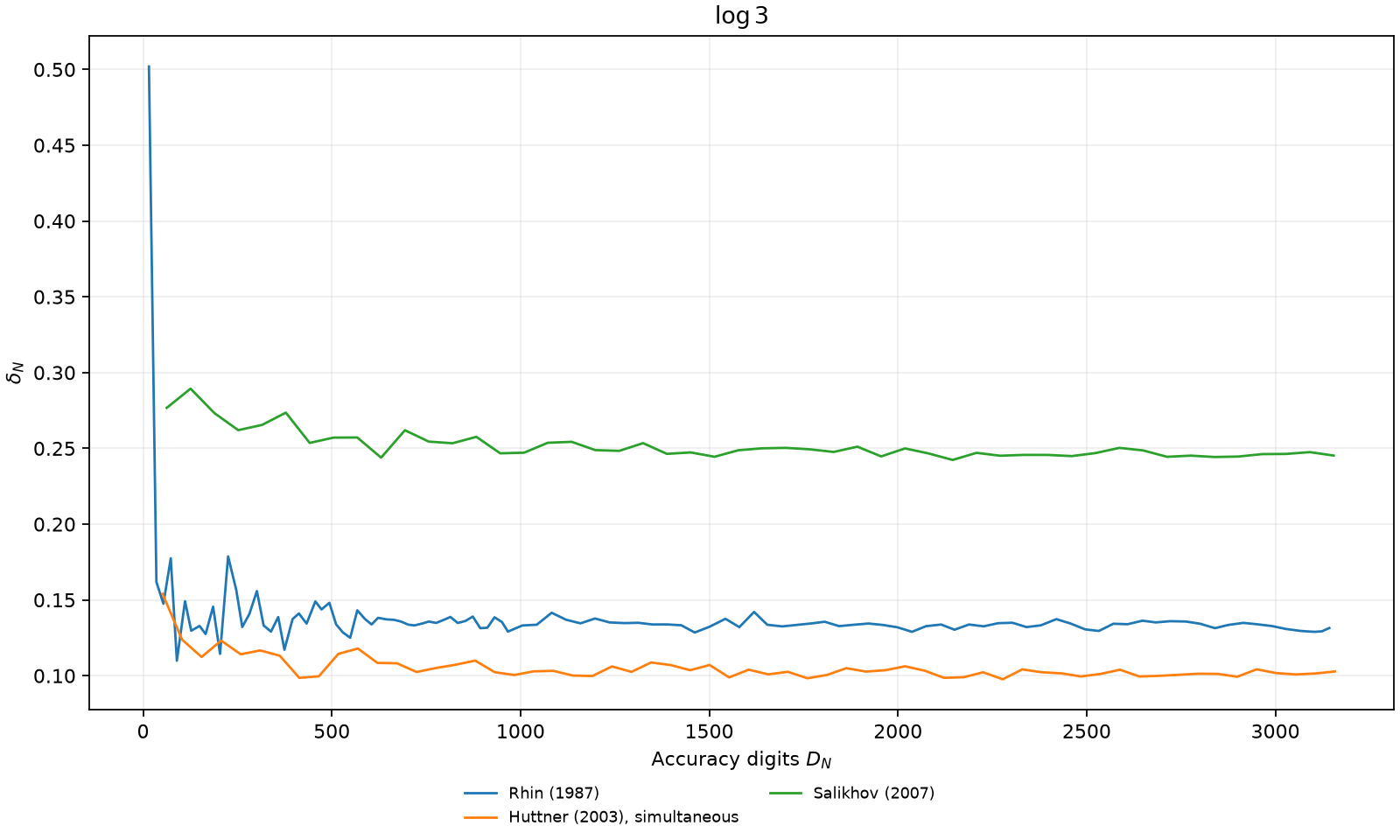}
\caption{Comparison for $\log 3$: finite-depth exponent $\delta_N$ versus accuracy $D_N$ in decimal digits.}
\label{fig:equal-accuracy-log3}
\end{figure}

\clearpage

\begin{figure}[!htbp]
\centering
\includegraphics[width=\linewidth,height=.37\textheight,keepaspectratio]{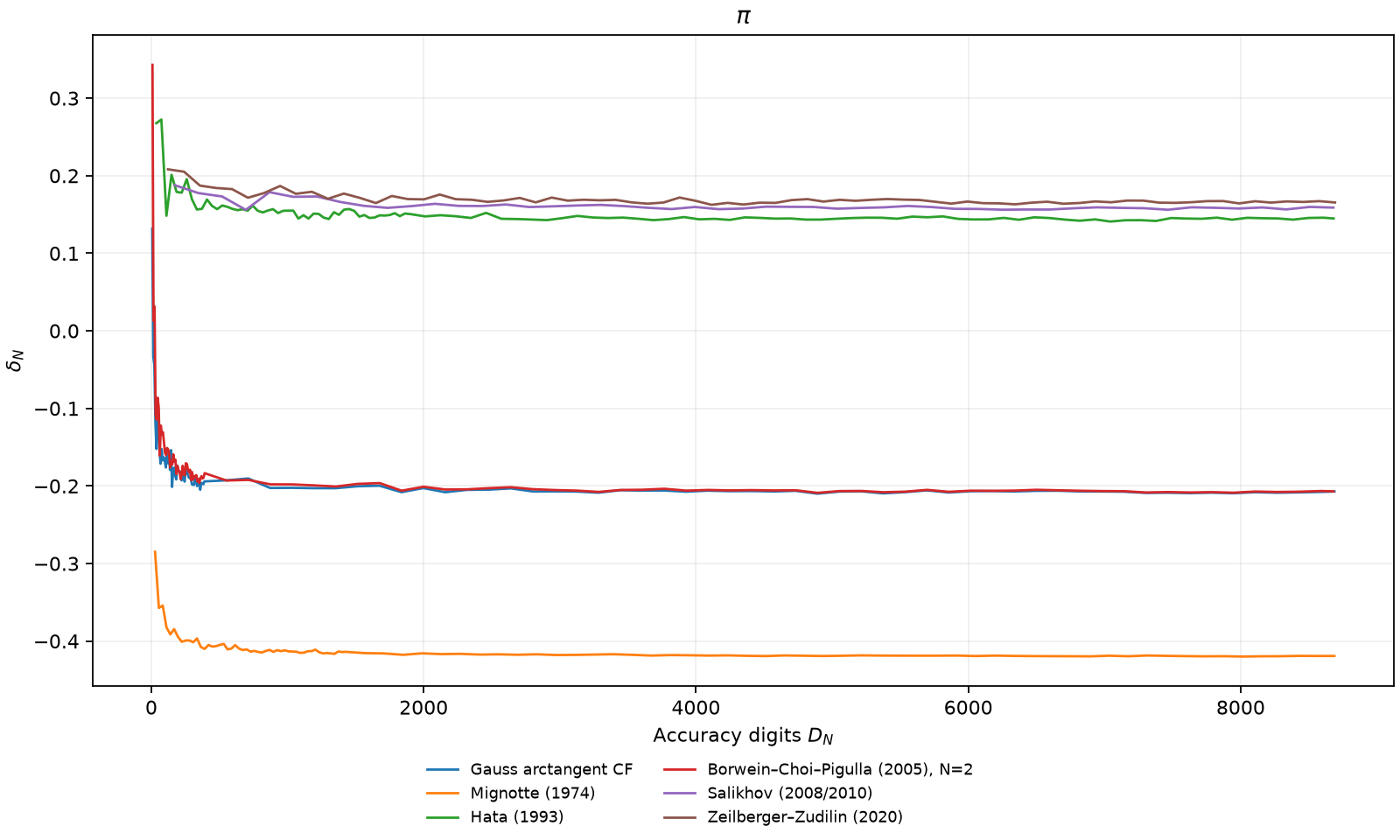}
\caption{Comparison for $\pi$: finite-depth exponent $\delta_N$ versus accuracy $D_N$ in decimal digits.}
\label{fig:equal-accuracy-pi}
\end{figure}

\begin{figure}[!htbp]
\centering
\includegraphics[width=\linewidth,height=.37\textheight,keepaspectratio]{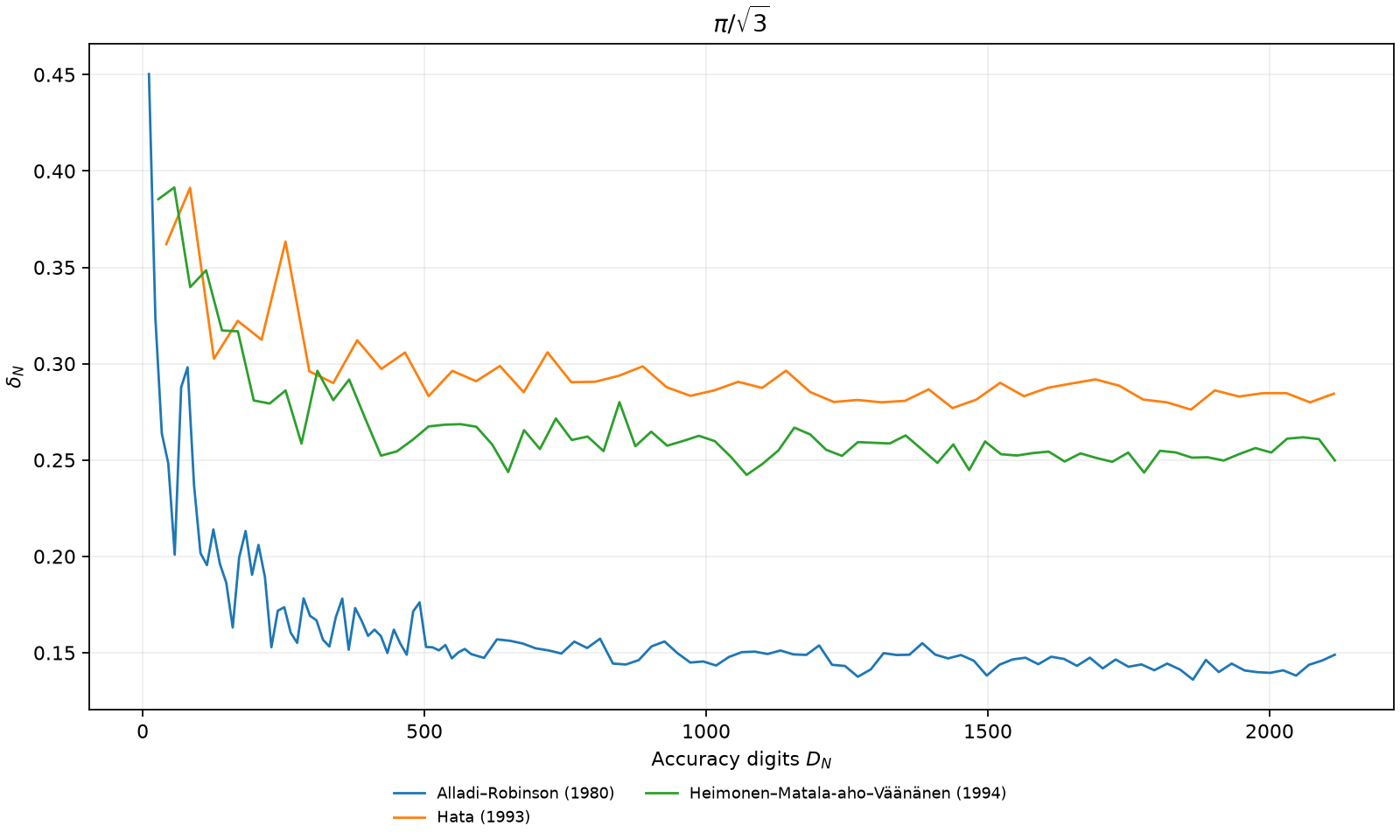}
\caption{Comparison for $\pi/\sqrt3$: finite-depth exponent $\delta_N$ versus accuracy $D_N$ in decimal digits.}
\label{fig:equal-accuracy-pi-sqrt3}
\end{figure}

\clearpage

\begin{figure}[!htbp]
\centering
\includegraphics[width=\linewidth,height=.37\textheight,keepaspectratio]{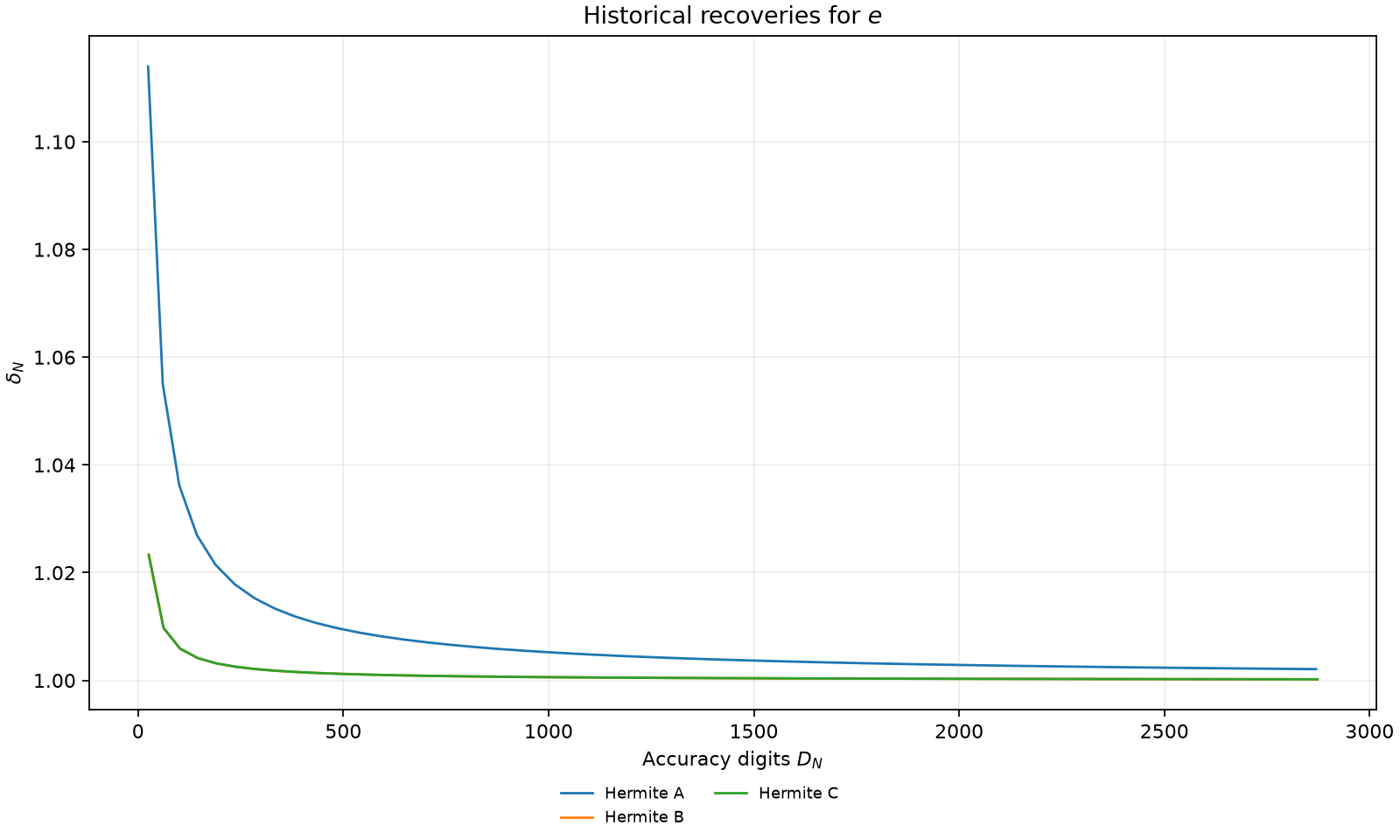}
\caption{Comparison for $e$: finite-depth exponent $\delta_N$ versus accuracy $D_N$ in decimal digits.}
\label{fig:equal-accuracy-e}
\end{figure}

\begin{figure}[!htbp]
\centering
\includegraphics[width=\linewidth,height=.37\textheight,keepaspectratio]{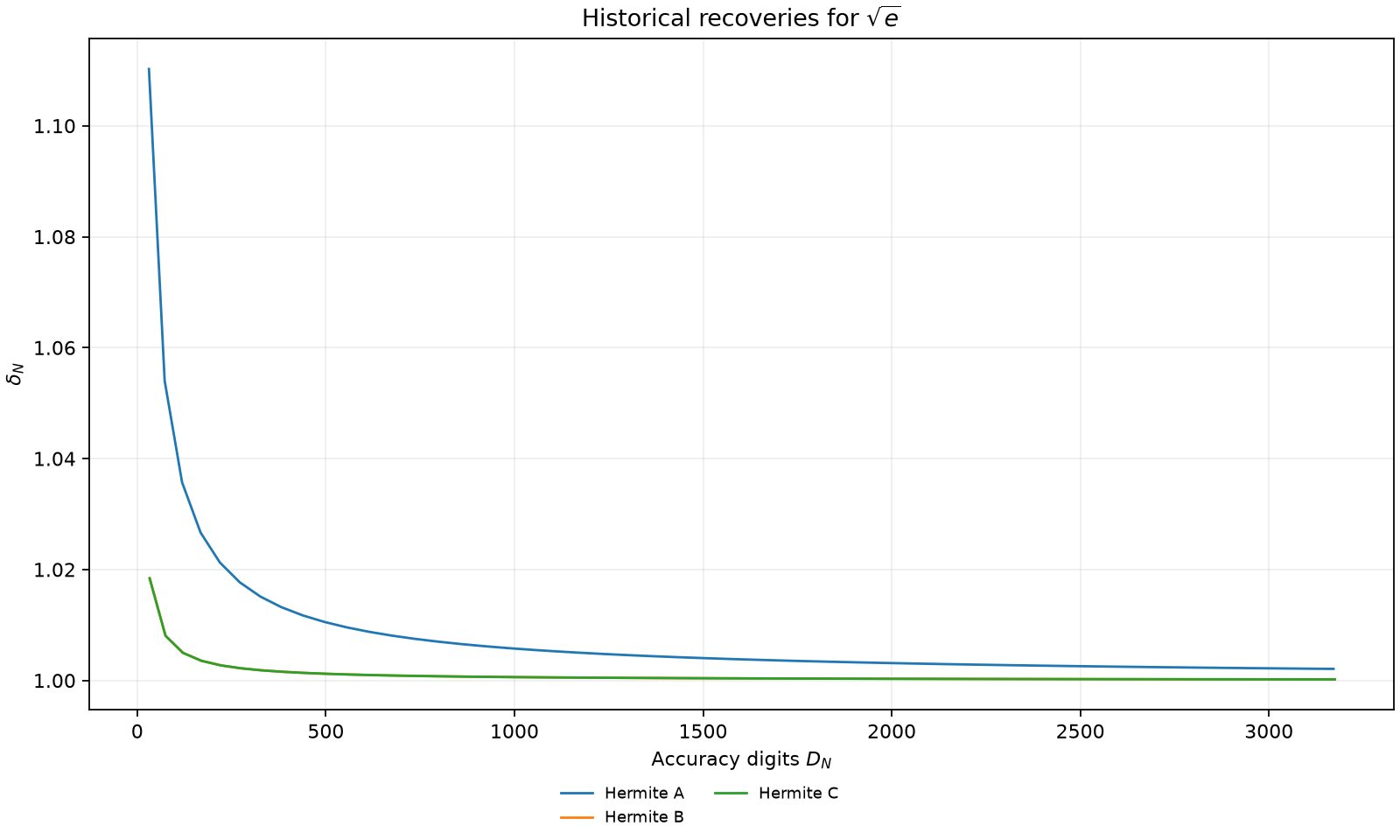}
\caption{Comparison for $\sqrt e$: finite-depth exponent $\delta_N$ versus accuracy $D_N$ in decimal digits.}
\label{fig:equal-accuracy-sqrt-e}
\end{figure}

\clearpage

\begin{figure}[!htbp]
\centering
\includegraphics[width=\linewidth,height=.37\textheight,keepaspectratio]{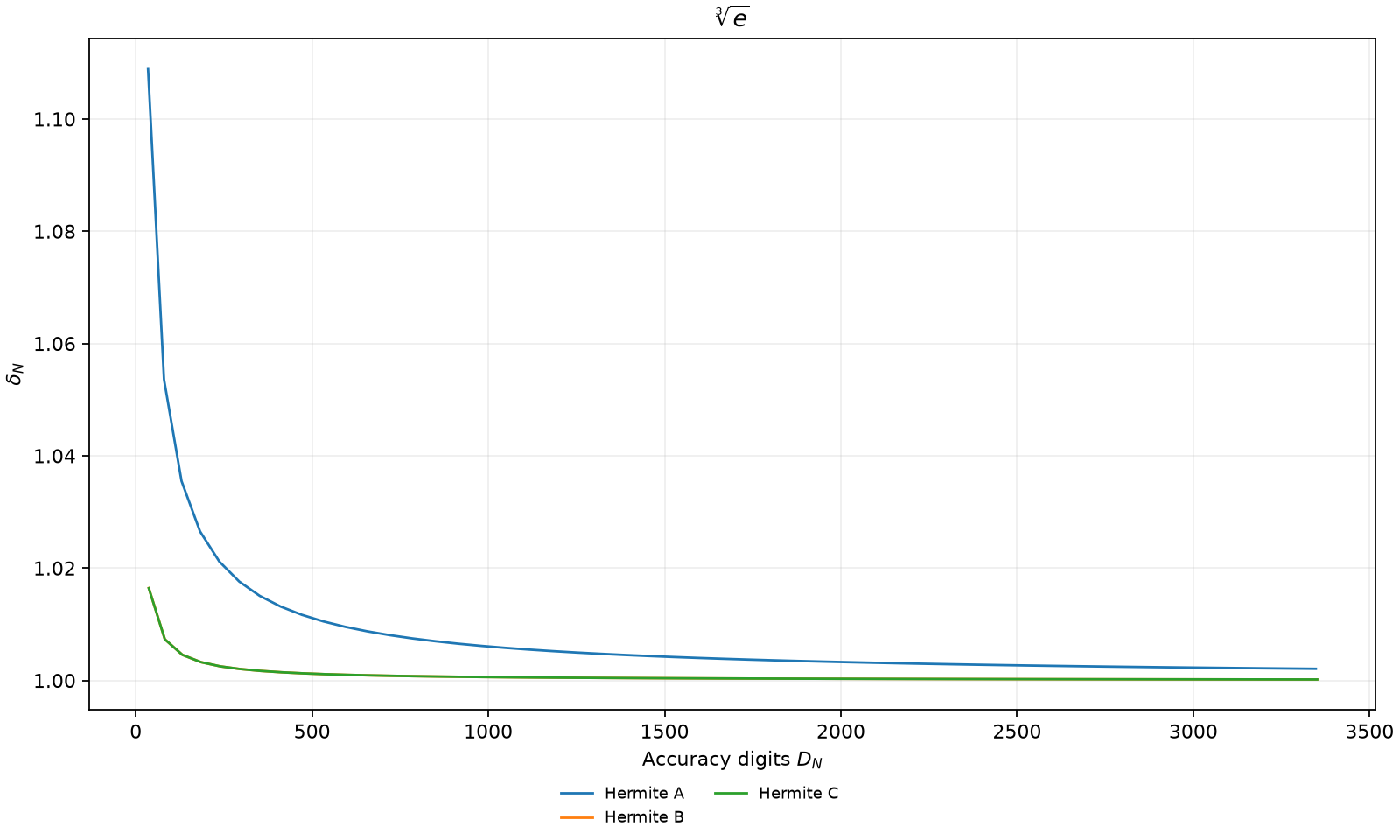}
\caption{Comparison for $\sqrt[3]{e}$: finite-depth exponent $\delta_N$ versus accuracy $D_N$ in decimal digits.}
\label{fig:equal-accuracy-cbrt-e}
\end{figure}

\begin{figure}[!htbp]
\centering
\includegraphics[width=\linewidth,height=.37\textheight,keepaspectratio]{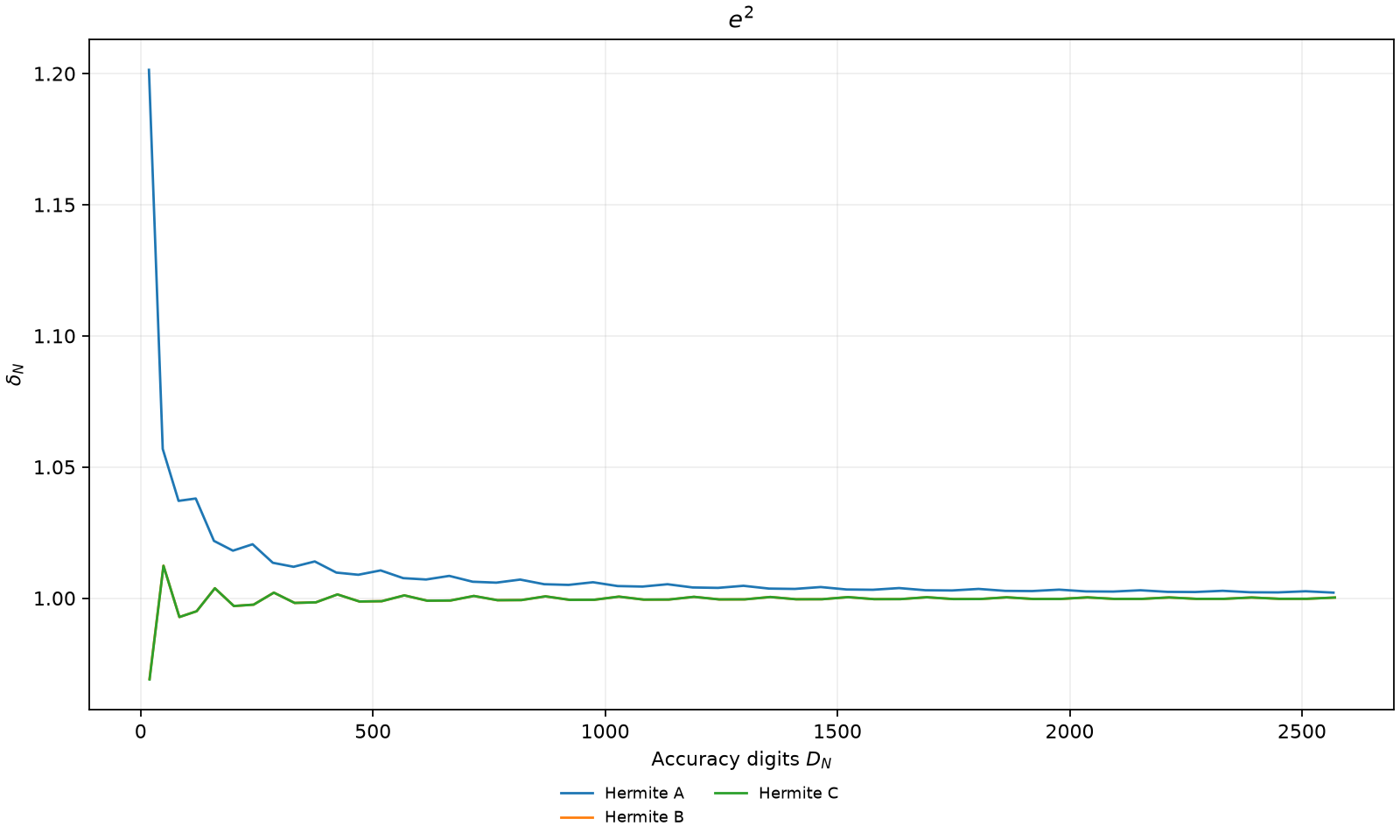}
\caption{Comparison for $e^2$: finite-depth exponent $\delta_N$ versus accuracy $D_N$ in decimal digits.}
\label{fig:equal-accuracy-e-squared}
\end{figure}

\clearpage

\begin{figure}[!htbp]
\centering
\includegraphics[width=\linewidth,height=.37\textheight,keepaspectratio]{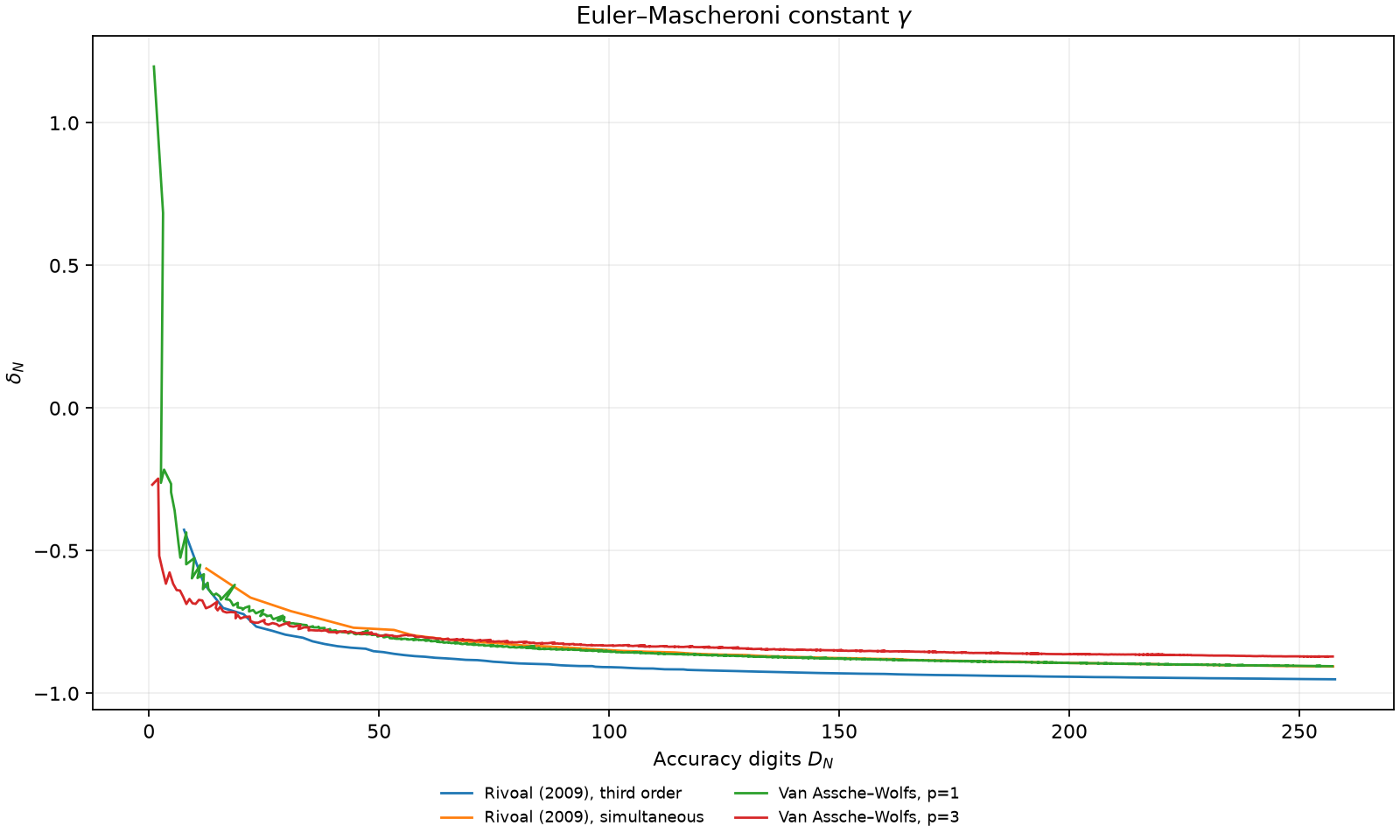}
\caption{Comparison for Euler\textquotesingle s constant $\gamma$: finite-depth exponent $\delta_N$ versus accuracy $D_N$ in decimal digits.}
\label{fig:equal-accuracy-euler-gamma}
\end{figure}

\clearpage

\section{Exact Calculations from Section~\ref{sec:proof_strategy}}
\label{Appendex: proof calculations}

\subsection{Subsection~\ref{subsec:z34} Proofs}
\label{app:zeta34-proof}

\begin{lemma}\label{lem:z34-recessive}\label{app:z34-recessive}
The state $\mathcal G_n$ is the recessive solution of $Q(n)$.
\end{lemma}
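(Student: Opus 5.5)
We need to show that $\mathcal G_n=(G_n,\theta G_n,\theta^2G_n)$, with $G_n=G^{5,2}_{5,5}(1;s_1+nt_1)$, satisfies the weak Poincar\'e--Perron inequality of Definition~\ref{def:weak-pp} for the block $Q(n)$. The plan has two halves. First we determine the exponential growth rates of all solutions of $\mathcal G_{n+1}=\mathcal G_nQ(n)$ from the matrix at infinity. Then we show that $\mathcal G_n$ decays at the smallest of these rates.

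\emph{Step 1: spectrum at infinity.} Along $s_1+nt_1$ the entries of $Q(n)$ are rational in $n$ with a common polynomial degree. We apply a diagonal coboundary $U(n)=\operatorname{diag}(n^{a_0},n^{a_1},n^{a_2})\,\lambda(n)$, with $\lambda(n)$ a scalar gamma-ratio normalization, chosen so that the rescaled matrix $\widetilde Q(n)$ has a finite invertible limit $\widetilde Q_\infty$. By Definition~\ref{def:coboundary} this does not change exponential separation. We then compute the characteristic polynomial of $\widetilde Q_\infty$ exactly and certify, with interval arithmetic, that its three roots $\lambda_1,\lambda_2,\lambda_3$ have pairwise distinct moduli $|\lambda_1|<|\lambda_2|<|\lambda_3|$. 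By Remark~\ref{rem:separated-spectrum} the cocycle is then of weak Poincar\'e--Perron type. Its one-dimensional recessive subspace is spanned by a solution with growth $|\lambda_1|^{n+o(n)}$, and every solution outside this line grows at rate at least $|\lambda_2|$.

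\emph{Step 2: $\mathcal G_n$ lies on the recessive line.} The analytic input is an asymptotic estimate for the Mellin--Barnes integral
\[
G_n=\frac1{2\pi i}\int_{\mathcal L}\Gamma\text{-factors}(s;s_1+nt_1)\,ds,
\]
and for the same integrals carrying the factors $s,s^2$ that correspond to $\theta,\theta^2$. Since $z=1$, the integrand is essentially the kernel $R_n(s)$ of Lemma~\ref{lem: dual sum}, up to the trigonometric factor of the $G^{5,2}_{5,5}$ integrand. After the substitution $s=n\sigma$, Stirling's formula gives $|R_n(n\sigma)|=\exp(n f(\sigma)+O(\log n))$ with an explicit $f$. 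A saddle-point (or simple maximum-modulus) argument along a shifted vertical contour then yields
\[
\limsup_{n\to\infty}\tfrac1n\log\|\mathcal G_n\|\le \max_{\sigma\in\mathcal L'}\operatorname{Re} f(\sigma).
\]
The real work is to choose the contour $\mathcal L'$ through the relevant saddle point $\sigma^\*$ of $f$ and to verify that $\operatorname{Re} f(\sigma^\*)$ equals the normalized $\log|\lambda_1|$, or at least is strictly below $\log|\lambda_2|$. This suffices: any solution with a nonzero component outside the recessive line grows at rate $\ge\log|\lambda_2|$, so an upper bound strictly below that forces $\mathcal G_n$ into the recessive line. Note that $\mathcal G_n\neq0$ since $\mathcal G_0=(1-2\zeta(3)+3\zeta(4),1-2\zeta(3),1)$.

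A cheaper alternative we would try first is to use Lemma~\ref{lem: dual sum}. It writes $\mathcal G_n=\zeta(4)\mathbf a_n+\zeta(3)\mathbf b_n+\mathbf d_n$, and Remark~\ref{rem:z34-type-one} already asserts decay at rate about $26$ against coefficient growth about $87$. Since $\mathbf a_n,\mathbf b_n,\mathbf d_n$ are explicit partial-fraction sums, we would realize $\mathcal G_n$ as the residue sum $\sum_j\operatorname{Res}$ of $R_n(s)\cdot(\pi/\sin\pi s)^{\cdot}$. By the residue theorem this tail is the contour integral $\frac1{2\pi i}\int R_n(s)\,(\pi/\sin \pi s)^3\,ds$ over a line $\operatorname{Re}s=c$, which is then bounded by the Stirling estimate above.

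\emph{Main obstacle.} The obstacle is Step 2: producing a rigorous, explicit exponential upper bound for the Barnes integral and matching it to $\log|\lambda_1|$, or placing it below $\log|\lambda_2|$. This means controlling the trigonometric factor, choosing a contour that avoids poles while passing near the saddle, and bounding the tails uniformly. We would certify the inequality $\max\operatorname{Re} f<\log|\lambda_2|$ numerically with rigorous interval bounds, in the same supplementary-notebook style as the other lemmas.
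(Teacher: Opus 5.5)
Your skeleton matches the paper's proof: isolate the recessive line from the spectrum of the normalized matrix at infinity, then show that $\mathcal G_n$ grows strictly more slowly than every other solution. However, Step~1 contains a certification that would fail, and Step~2 is much heavier than it needs to be.

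\textbf{Step~1.} Use the coboundary $\operatorname{diag}(1,n+1,(n+1)^2)$; no scalar gamma normalization is needed for this trajectory. Then $Q_\infty$ has roots $(2.803\pm6.487i)\cdot10^{25}$ and $7.203\cdot10^{-24}$. The two dominant roots are complex conjugates and have equal modulus. So an interval certificate of pairwise distinct moduli cannot succeed, and Remark~\ref{rem:separated-spectrum} does not apply as stated. What your Step~2 actually uses is weaker: the smallest modulus is strictly separated from the other two. That does hold, and it still gives a one-dimensional recessive line, with every other solution growing at rate about $59.5$. State and certify that dichotomy instead.

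\textbf{Step~2.} The paper never uses saddle points. Writing $s=2n+w$, it applies Euler's beta integral to three paired ratios:
\begin{itemize}
\item $\Gamma(w+2n)/\Gamma(w+13n+1)$,
\item $\Gamma(w+3n)/\Gamma(w+14n+1)$,
\item $\Gamma(w+4n)/\Gamma(w+12n+2)$,
\end{itemize}
and also to $\Gamma(w+n)\Gamma(15n+1-w)$. Mellin inversion then turns $G_n(1)$ into an integral over $[0,1]^4$ with positive integrand. The inequality $1-r+xytr\ge xyt$ bounds this by a factorial ratio $\beta_n$ with $\beta_{n+1}/\beta_n\to\eta\approx1.76\cdot10^{-13}$. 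The $\theta$-derivatives are handled by writing $(w+2n)^k$ in rising factorials $(w)_j$ with nonnegative coefficients, which costs only a factor $(16n+1)^k$.

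The key point you missed is that the dominant moduli are about $7\cdot10^{25}>1$. Hence any bound showing that $\mathcal G_n$ does not grow exponentially already forces it onto the recessive line. You do not need to locate a saddle, match the logarithm of the smallest root, or certify the integral numerically, so your ``main obstacle'' largely disappears.

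Your saddle-point route would in principle give the sharp decay rate. That information is never used, since $g$ comes from the eigenvalues, and obtaining it requires the contour, tail and trigonometric-factor control you flag. The paper's route instead gives an exact, elementary bound, and positivity $G_n(1)>0$ comes for free.

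Even a crude version of your idea suffices. On the line $\operatorname{Re}s=2n+\tfrac12$, bound each of the three paired ratios using $|\Gamma(w)/\Gamma(w+m)|\le\Gamma(\operatorname{Re}w)/\Gamma(\operatorname{Re}w+m)$. Bound three of the remaining four gamma factors by $|\Gamma(a+it)|\le\Gamma(a)$, and use integrability of $|\Gamma(\tfrac12+it)|$ for the last. This already yields a geometrically decaying bound for $\mathcal G_n$.
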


\begin{proof}
Leaving the argument free, write
\[
G_n(z)=G_{5,5}^{5,2}\!\left(
z\;\middle|\;
\begin{matrix}
-16n,-17n;\ 1+11n,1+12n,2+10n\\
-2n,-n,0,n,2n
\end{matrix}\right),
\qquad \theta=z\frac{d}{dz}.
\]
The Mellin--Barnes representation \cite[Eq.~16.17.1]{NIST:DLMF} is
\[
G_n(z)=\frac{1}{2\pi i}
\int_{2n+\frac12-i\infty}^{2n+\frac12+i\infty}
\frac{\Gamma(16n+1-s)\Gamma(17n+1-s)
      \prod_{j=-2}^{2}\Gamma(s+jn)}
     {\Gamma(s+11n+1)\Gamma(s+12n+1)\Gamma(s+10n+2)}
z^{-s}\,ds.
\]
The contour is traversed upwards. It separates the left poles,
$s\leq2n$, from the right poles, $s\geq16n+1$.
The substitution $s=2n+w$ puts the contour on the fixed line
$\mathcal C=\tfrac12+i\mathbb R$ without crossing any poles.
For $z>0$ the integral and its first two $\theta$ derivatives
converge absolutely.

Apply Euler's beta integral \cite[Eq.~5.12.1]{NIST:DLMF} to
\[
\frac{\Gamma(w+2n)}{\Gamma(w+13n+1)},\qquad
\frac{\Gamma(w+3n)}{\Gamma(w+14n+1)},\qquad
\frac{\Gamma(w+4n)}{\Gamma(w+12n+2)}.
\]
This gives
\[
\begin{aligned}
G_n(z)
&=\frac{z^{-2n}}{2\pi i(11n)!^2(8n+1)!}
\int_{\mathcal C}
\Gamma(w)\Gamma(w+n)\Gamma(14n+1-w)\Gamma(15n+1-w)z^{-w}\\
&\quad\times\left[\int_{[0,1]^3}
x^{w+2n-1}y^{w+3n-1}t^{w+4n-1}
(1-x)^{11n}(1-y)^{11n}(1-t)^{8n+1}
\,dx\,dy\,dt\right]dw.
\end{aligned}
\]
Apply the beta integral once more in the form
\[
\Gamma(w+n)\Gamma(15n+1-w)
=(16n)!\int_0^1r^{w+n-1}(1-r)^{15n-w}\,dr.
\]
All beta parameters have positive real parts on $\mathcal C$.
Absolute convergence permits interchanging the integrals.
With $u=xyt$, Mellin inversion \cite[Eq.~5.13.1]{NIST:DLMF} gives
\[
\frac{1}{2\pi i}\int_{\mathcal C}
\Gamma(w)\Gamma(14n+1-w)
\left(\frac{z(1-r)}{ur}\right)^{-w}\,dw
=(14n)!\left(\frac{ur}{z(1-r)+ur}\right)^{14n+1}.
\]
The argument is positive, and the contour condition
$0<\tfrac12<14n+1$ holds for every $n$. Hence
\[
\begin{aligned}
G_n(z)&=\frac{z^{-2n}(14n)!(16n)!}{(11n)!^2(8n+1)!}\\
&\quad\times\int_{[0,1]^4}
\frac{x^{16n}y^{17n}t^{18n}r^{15n}
 (1-x)^{11n}(1-y)^{11n}(1-t)^{8n+1}(1-r)^{15n}}
 {[z(1-r)+xytr]^{14n+1}}
\,dx\,dy\,dt\,dr.
\end{aligned}
\]
Since $1-r+xytr\geq xyt$, for $n\geq1$ another application of
the beta integral yields
\[
\begin{aligned}
0<G_n(1)
&\leq\frac{(14n)!(16n)!}{(11n)!^2(8n+1)!}
\int_{[0,1]^4}
x^{2n-1}y^{3n-1}t^{4n-1}r^{15n}\\
&\qquad\times
(1-x)^{11n}(1-y)^{11n}(1-t)^{8n+1}(1-r)^{15n}
\,dx\,dy\,dt\,dr\\
&=\frac{(16n)!(15n)!^2(2n-1)!(3n-1)!(4n-1)!}
        {(13n)!(12n+1)!(30n+1)!}
=:\beta_n.
\end{aligned}
\]
The factorial ratio satisfies
\[
\lim_{n\to\infty}\frac{\beta_{n+1}}{\beta_n}
=\frac{16^{16}15^{30}2^2 3^3 4^4}
       {13^{13}12^{12}30^{30}}
=\frac{1048576}{5961490723055315799}<1.
\]
Thus $\beta_n=O(\eta^n)$ for some $\eta<1$.

Applying $\theta^k$ inserts $(-s)^k=(-1)^k(w+2n)^k$ in
the Mellin--Barnes integrand. For $n\geq1$ and $k\leq2$,
the polynomial $(w+2n)^k$ has nonnegative coefficients in the
rising-factorial basis $(w)_j$:
\[
w+2n=(w)_1+2n,\qquad
(w+2n)^2=(w)_2+(4n-1)(w)_1+4n^2.
\]
Each factor $(w)_j$ replaces $\Gamma(w)$ by $\Gamma(w+j)$.
The same Mellin inversion therefore multiplies the positive
integrand at $z=1$ by
\[
(14n+1)_j\left(\frac{1-r}{1-r+ur}\right)^j
\leq(14n+1)_j.
\]
Evaluating the polynomial expansion at $w=14n+1$ gives
\[
|(\theta^kG_n)(1)|
\leq(16n+1)^k\beta_n
=O(n^k\eta^n),\qquad k=0,1,2.
\]

Put
\[
T(n)=\operatorname{diag}(1,n+1,(n+1)^2),\qquad
Q_\infty=\lim_{n\to\infty}T(n)Q(n)T(n+1)^{-1}.
\]
The exact matrix computations in
the supplementary material,
show that $Q_\infty$ has three simple, nonzero characteristic roots:
\[
\lambda_{1,2}\approx(2.803\pm6.487i)\cdot10^{25},
\qquad \lambda_3\approx7.203\cdot10^{-24}.
\]
The rational change of coordinates contributes only polynomial
factors. The asymptotic theory used in the CMF proof strategy,
together with the exponential decay just proved, identifies the
nonzero state $\mathcal G_n$ as the recessive solution corresponding
to $\lambda_3$.
\end{proof}

\begin{proof}[Proof of Lemma~\ref{lem:z34-selected-error}]
\phantomsection\label{app:z34-error}
The characteristic roots of the reduced dual system are
\[
\lambda_3^{-1},\qquad \lambda_1^{-1},\qquad \lambda_2^{-1},
\qquad |\lambda_3|^{-1}>|\lambda_1|^{-1}=|\lambda_2|^{-1},
\]
where $\lambda_1,\lambda_2,\lambda_3$ are the characteristic roots
of $Q_\infty$, with $\lambda_3$ recessive. The additional root $-1$
of the full Meijer G system does not occur in $Q(n)$.

Fix $(A,B)\in\mathbb Q^2\setminus\{(0,0)\}$ and put
$\alpha=A\zeta(3)+B\zeta(4)$.
The limit identification in the main text fixes the dominant dual
contribution. The source pairings show that its coefficient in the
denominator is nonzero, whereas
\[
\bigl(\alpha\rho_q-A\rho_{p,3}-B\rho_{p,4}\bigr)
K(0)\mathcal G_0^{\mathsf T}=0.
\]
Thus this contribution cancels in the error numerator.
With $\kappa_n=\prod_{j=0}^{n-1}c(j)$, the asymptotic error-rate
argument from the CMF proof method gives
\[
\begin{aligned}
|\alpha q_n-Ap_{3,n}-Bp_{4,n}|
&\leq|\kappa_n|\,|\lambda_1|^{-n}e^{o(n)},\\
|q_n|&=|\kappa_n|\,|\lambda_3|^{-n}e^{o(n)}.
\end{aligned}
\]
Dividing cancels the common scalar and yields
\[
\left|\alpha-\frac{Ap_{3,n}+Bp_{4,n}}{q_n}\right|
\leq\left(\frac{|\lambda_3|}{|\lambda_1|}\right)^n e^{o(n)}.
\]
Equivalently, if $\tau=|\lambda_3|/|\lambda_1|$, then for every
$\varepsilon>0$ the error is
$O_{A,B,\varepsilon}((\tau+\varepsilon)^n)$.

Put
\[
g=\log\left|\frac{\lambda_1}{\lambda_3}\right|.
\]
Exact root isolation and interval arithmetic are calculated in the supplementary material, it is shown that 
\[
112.80753<g<112.80754.
\]
Therefore $g>112.8075$, and the preceding estimate is precisely
\[
\left|A\zeta(3)+B\zeta(4)
       -\frac{Ap_{3,n}+Bp_{4,n}}{q_n}\right|
\leq e^{-(g-o(1))n}.
\]
\end{proof}

\subsection{Subsection~\ref{subsec:z23} Proofs}
\label{app:zeta2-proof}
\begin{lemma}\label{lem:z2-recessive}\label{app:z2-recessive}
The state $\mathcal G_n$ is the recessive solution of $Q(n)$.
\end{lemma}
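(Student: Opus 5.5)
We follow the template of Lemma~\ref{lem:z34-recessive}, now for the dual $\zeta(2)$ trajectory
\[
G_n(z)=G^{4,1}_{4,4}\!\left(z\;\middle|\;\begin{matrix}-11n,\ 1+10n,1+9n,2+11n\\ 0,2n,n,3n\end{matrix}\right),
\]
evaluated at $z=-1$. The plan has three steps: rewrite $G_n$ as a positive multiple integral, show that it and its first two $\theta$-derivatives decay exponentially, and then match this decay rate to the smallest characteristic root of the normalized limit matrix of $Q(n)$.

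\textbf{Step 1 (integral representation).} Start from the Mellin--Barnes representation \cite[Eq.~16.17.1]{NIST:DLMF}. There is a single upper gamma factor $\Gamma(11n+1-s)$ in the numerator and four lower factors $\Gamma(s+b_j)$. Choose a vertical contour that separates the left poles from the right poles, and shift it by an affine substitution onto a fixed line $\tfrac12+i\mathbb R$.

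Next, pair each surplus lower gamma with an upper gamma in the denominator, such as $\Gamma(s+2n)/\Gamma(s+10n+1)$. Each such quotient becomes a beta integral over $[0,1]$ by \cite[Eq.~5.12.1]{NIST:DLMF}. The remaining pair $\Gamma(w)\Gamma(11n+1-w)$ is then inverted by \cite[Eq.~5.13.1]{NIST:DLMF}. Since $z=-1$, the factor $z^{-s}$ contributes $e^{\pm i\pi s}$. After inversion this gives a kernel of the form $(1+xyt)^{-(11n+1)}$ or $(1-xyt)^{-(\cdot)}$, the Beukers-type denominator that produces $\zeta(2)$.

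The outcome should be an explicit factorial prefactor times a real integral over $[0,1]^3$ with a positive integrand. This matches the Beukers/Zudilin integrals for $\zeta(2)$ recorded in Table~\ref{tab:all-reconstructions}. Absolute convergence on the shifted line justifies interchanging the integrals.

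\textbf{Step 2 (exponential decay).} Bound the denominator from below, for example $1-xyt\ge 1-x$, or by $xyt$ as in the $\zeta(3)$ case. The integral then collapses to a product of beta values $\beta_n$, and the ratio limit $\lim\beta_{n+1}/\beta_n$ follows from Stirling. The same monomial comparison must hold against the factorial prefactor, giving $|G_n(-1)|=O(\eta^n)$ with $\eta<1$.

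For $\theta G_n$ and $\theta^2G_n$, insert $(-s)^k$ and expand it in the rising-factorial basis $(w)_j$ with nonnegative coefficients. This shifts $\Gamma(w)\mapsto\Gamma(w+j)$ and multiplies the integrand by a bounded factor of size at most $(11n+1)_j$. The result is $|\theta^kG_n(-1)|=O(n^k\eta^n)$ for $k=0,1,2$.

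\textbf{Step 3 (identification with the recessive root).} Conjugate by $\operatorname{diag}(1,n+1,(n+1)^2)$ to obtain a limit $Q_\infty$. Compute its characteristic polynomial exactly and isolate its three roots, checking that they have distinct moduli and exactly one of them is small. Remark~\ref{rem:separated-spectrum} then gives the weak Poincar\'e--Perron property. A nonzero solution whose growth is bounded by the exponential rate $\eta^n$ of Step~2 must be the one attached to the smallest root, provided $\eta$ is strictly smaller than the next root modulus (after the normalization factors). Finally, check $\mathcal G_0\neq0$, for instance via $\theta^2G_0=-1$.

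The main obstacle is Step~1 at the negative argument $z=-1$. One must confirm that the sign or phase produced by $z^{-s}$ gives an integrand of constant sign, and not an oscillatory integral. It must also be checked that the crude bound of Step~2 beats the second-smallest root; if it does not, I would refine the bound with a sharper saddle-point estimate of the integral.
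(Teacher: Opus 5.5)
Your three-step plan follows the paper's route: Mellin--Barnes, then beta integrals, then Mellin inversion to a Beukers-type cube integral, then a crude bound, rising factorials for $\theta^k$, and finally comparison with the roots of $Q_\infty$. However, two steps do not go through as written.

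\textbf{First, the passage to $z=-1$.} You claim that absolute convergence on the line justifies interchanging the integrals. This is false exactly at $z=-1$. On $s=\tfrac12+iv$ one has $|(z/u)^{-s}|=u^{1/2}e^{\pi v}$. As $v\to+\infty$ this cancels the $e^{-\pi|v|}$ decay of $\Gamma(s)\Gamma(L-s)$, with $L=11n+1$, and leaves growth $|v|^{L-1}$. So once the three gamma quotients are written as beta integrals, the integrand is not absolutely integrable on the line times $[0,1]^3$. This holds even though the Mellin--Barnes integral itself converges absolutely, only polynomially, like $(1+|v|)^{-13n-4}$.

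The paper instead interchanges for $|\arg z|<\pi$, obtaining
\[
G_n(z)=\beta_n\int_{[0,1]^3}\frac{x^{13n}y^{12n}t^{14n}(1-x)^{8n}(1-y)^{8n}(1-t)^{8n+1}}{(z+xyt)^{11n+1}}\,dx\,dy\,dt,\qquad \beta_n=\frac{(11n)!}{(8n)!^2(8n+1)!}.
\]
It then lets $z\to-1$ through $\operatorname{Re}z\le-1$, $\operatorname{Im}z>0$, using dominated convergence on both sides. On the contour, the Stirling bound is uniform for $|\arg z|\le\pi$. On the cube, $|z+xyt|\ge d:=1-xyt\ge\max(1-x,1-y,1-t)$, so the integrand is at most $d^{24n+1}/d^{11n+1}=d^{13n}\le1$. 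This also settles your sign question: the kernel is $(xyt-1)^{-(11n+1)}$, of constant sign, so $0<|G_n(-1)|\le\beta_n$.

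Neither of your proposed lower bounds works literally. Applying $1-xyt\ge1-x$ to the whole power leaves $(1-x)^{-3n-1}$, and $1-xyt\ge xyt$ is false; $d$ must be spread over all three factors. Likewise, for $\theta^jG_n$ the extra factor at $z=-1$ is $(L)_j|z/(z+u)|^j=(L)_jd^{-j}$. Unlike the $\zeta(3)$ case this is unbounded, not $\le(L)_j$, and it is absorbed only through the margin $d^{13n-j}\le1$. Also, $s^2=(s)_2-(s)_1$ has a negative coefficient, so you need the triangle inequality rather than positivity.

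\textbf{Second, the normalization.} Here $T(n)Q(n)T(n+1)^{-1}\to0$, because every solution of $Q(n)$ decays like $(n!)^{-13}$ times an exponential. So your Step~3 limit is the zero matrix, and Remark~\ref{rem:separated-spectrum} gives nothing. Likewise, the bound $|G_n(-1)|=O(\eta^n)$ holds for every $\eta>0$ and cannot single out the recessive solution.

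The missing step is to pass to the normalized state $(n!)^{13}\mathcal G_nT(n)^{-1}$. Its one-step matrix $(n+1)^{13}T(n)Q(n)T(n+1)^{-1}$ tends to $Q_\infty$, whose roots are $\lambda_{1,2}\approx-1.287\pm2.207i$ and $\lambda_3\approx-1.6\cdot10^{-27}$. You then bound $a_n=(n!)^{13}\beta_n$. In $a_{n+1}/a_n$ the numerator satisfies $(n+1)^{13}\prod_{j=1}^{11}(11n+j)\le11^{11}(n+1)^{24}$, and each of the $24$ denominator factors is $\ge8n+1\ge4(n+1)$. This gives $a_{n+1}/a_n\le11^{11}/4^{24}<1<|\lambda_{1,2}|$. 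Hence $(n!)^{13}|\theta^kG_n(-1)|=O(n^k\eta^n)$, which identifies the normalized state with $\lambda_3$. With this normalization the crude bound has a large margin, and no saddle-point refinement is needed.
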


\begin{proof}
We use the common factorial normalization $(n!)^{13}$.
Leaving the argument free, write
\[
G_n(z)=G_{4,4}^{4,1}\!\left(
z\;\middle|\;
\begin{matrix}
-11n;\ 1+10n,1+9n,2+11n\\
0,2n,n,3n
\end{matrix}\right),
\qquad \theta=z\frac{d}{dz}.
\]
Put $L=11n+1$ and $\beta_n=(11n)!/[(8n)!^2(8n+1)!]$.
The Mellin--Barnes representation \cite[Eq.~16.17.1]{NIST:DLMF} is
\[
G_n(z)=\frac{1}{2\pi i}\int_{\mathcal C}
\underbrace{
\frac{\Gamma(s)\Gamma(s+n)\Gamma(s+2n)\Gamma(s+3n)\Gamma(L-s)}
     {\Gamma(s+10n+1)\Gamma(s+9n+1)\Gamma(s+11n+2)}
}_{\displaystyle\Phi_n(s)}z^{-s}\,ds.
\]
We use the principal branch, with $\log(-1)=i\pi$.
The fixed contour $\mathcal C=\tfrac12+i\mathbb R$ is traversed
upwards; it separates the left poles, $s\leq0$, from the right
poles, $s\geq L$. The gamma estimate
\cite[Eq.~5.11.9]{NIST:DLMF} gives, for $k\leq2$,
\[
\left|\left(-\tfrac12-iv\right)^k
\Phi_n(\tfrac12+iv)z^{-\frac12-iv}\right|
=O_n\!\left((1+|v|)^{-13n-4+k}\right),
\]
uniformly for $\tfrac12\leq|z|\leq2$ and $|\arg z|\leq\pi$.
Thus the integral and its first two $\theta$ derivatives converge
absolutely at $z=-1$, and limits in $z$ may be taken under the
integral with the contour fixed.

For $|\arg z|<\pi$, apply Euler's beta integral
\cite[Eq.~5.12.1]{NIST:DLMF} to
\[
\frac{\Gamma(s+2n)}{\Gamma(s+10n+1)},\qquad
\frac{\Gamma(s+n)}{\Gamma(s+9n+1)},\qquad
\frac{\Gamma(s+3n)}{\Gamma(s+11n+2)}.
\]
This gives
\[
\begin{aligned}
G_n(z)
&=\frac{1}{2\pi i(8n)!^2(8n+1)!}
\int_{\mathcal C}\Gamma(s)\Gamma(L-s)z^{-s}\\
&\quad\times\left[\int_{[0,1]^3}
x^{s+2n-1}y^{s+n-1}t^{s+3n-1}
(1-x)^{8n}(1-y)^{8n}(1-t)^{8n+1}
\,dx\,dy\,dt\right]ds.
\end{aligned}
\]
All beta parameters have positive real parts on $\mathcal C$.
Absolute convergence permits interchanging the integrals.
With $u=xyt$, Mellin inversion \cite[Eq.~5.13.1]{NIST:DLMF} gives
\[
\frac{1}{2\pi i}\int_{\mathcal C}
\Gamma(s)\Gamma(L-s)(z/u)^{-s}\,ds
=\Gamma(L)\left(\frac{u}{z+u}\right)^L.
\]
The argument condition is unchanged because $u>0$, and the contour
condition $0<\tfrac12<L$ holds for every $n$. Consequently,
\[
G_n(z)=\beta_n\int_{[0,1]^3}
\frac{x^{13n}y^{12n}t^{14n}
      (1-x)^{8n}(1-y)^{8n}(1-t)^{8n+1}}
     {(z+xyt)^{11n+1}}\,dx\,dy\,dt.
\]
To pass to $z=-1$, take $\operatorname{Re}z\leq-1$,
$\operatorname{Im}z>0$, and put $d=1-xyt$. Since
\[
0\leq1-x,1-y,1-t\leq d\leq1,\qquad |z+xyt|\geq d,
\]
the absolute integrand is bounded by
$d^{24n+1}/d^{11n+1}=d^{13n}\leq1$.
Dominated convergence therefore gives, since the integrand has
constant sign at $z=-1$,
\[
0<|G_n(-1)|\leq\beta_n.
\]

Set $a_n=(n!)^{13}\beta_n$ and $\eta=11^{11}/4^{24}<1$.
For $n\geq1$, bounding each numerator factor by $11(n+1)$
and each denominator factor by $4(n+1)$ gives
\[
\frac{a_{n+1}}{a_n}
=\frac{(n+1)^{13}\prod_{j=1}^{11}(11n+j)}
       {\left(\prod_{j=1}^{8}(8n+j)\right)^2
        \prod_{j=2}^{9}(8n+j)}
\leq\eta.
\]
Hence $a_n=O(\eta^n)$.

Applying $\theta^k$ inserts $(-s)^k$ in the Mellin--Barnes
integrand. The identities $s=(s)_1$ and $s^2=(s)_2-(s)_1$
reduce these derivatives to the same Mellin inversion.
Each factor $(s)_j$ multiplies the cube integrand by
$(L)_j[z/(z+u)]^j$. At $z=-1$ its absolute value is
$(L)_jd^{-j}$. For $n\geq1$ and $j\leq2$, the resulting bound
$d^{13n-j}\leq1$ also justifies the boundary limit. Thus
\[
|(\theta G_n)(-1)|\leq L\beta_n,\qquad
|(\theta^2G_n)(-1)|\leq L(L+2)\beta_n,
\]
and consequently
\[
(n!)^{13}|(\theta^kG_n)(-1)|
=O(n^k\eta^n),\qquad k=0,1,2.
\]

Put
\[
T(n)=\operatorname{diag}(1,n+1,(n+1)^2),\qquad
Q_\infty=\lim_{n\to\infty}(n+1)^{13}T(n)Q(n)T(n+1)^{-1}.
\]
The exact matrix computations in the supplementary material show that $Q_\infty$ has three simple, nonzero characteristic roots:
\[
\lambda_{1,2}\approx-1.287\pm2.207i,\qquad
\lambda_3\approx-1.602\cdot10^{-27}.
\]
The normalized row state $(n!)^{13}\mathcal G_nT(n)^{-1}$
has one-step matrix $(n+1)^{13}T(n)Q(n)T(n+1)^{-1}$.
Since $|\lambda_{1,2}|>1$, the exponential decay just proved
and the asymptotic theory used in the CMF proof strategy identify
the nonzero normalized state as the recessive solution corresponding
to $\lambda_3$.
\end{proof}

\begin{proof}[Proof of Lemma~\ref{lem:z2-selected-error}]
\phantomsection\label{app:z2-error}
The normalized dual system has characteristic roots
\[
\lambda_3^{-1},\qquad\lambda_1^{-1},\qquad\lambda_2^{-1},
\qquad |\lambda_3|^{-1}>|\lambda_1|^{-1}=|\lambda_2|^{-1},
\]
where $\lambda_1,\lambda_2,\lambda_3$ are the characteristic roots
of $Q_\infty$, with $\lambda_3$ recessive. The unnormalized dual
solutions have the common factorial factor $(n!)^{13}$.

The limit identification in the main text fixes the dominant dual
contribution. The source pairings show that its coefficient in the
denominator is nonzero, whereas
\[
\bigl(\zeta(2)\rho_q-\rho_p\bigr)K(0)\mathcal G_0^{\mathsf T}=0.
\]
Thus this contribution cancels in the error numerator.
With $\kappa_n=\prod_{j=0}^{n-1}c(j)$, the asymptotic error-rate
argument from the CMF proof method gives
\[
\begin{aligned}
|\zeta(2)q_n-p_n|
&\leq|\kappa_n|(n!)^{13}|\lambda_1|^{-n}e^{o(n)},\\
|q_n|&=|\kappa_n|(n!)^{13}|\lambda_3|^{-n}e^{o(n)}.
\end{aligned}
\]
Dividing cancels both the common scalar and the common factorial
factor, giving
\[
\left|\zeta(2)-\frac{p_n}{q_n}\right|
\leq\left(\frac{|\lambda_3|}{|\lambda_1|}\right)^n e^{o(n)}.
\]
Equivalently, if $\tau=|\lambda_3|/|\lambda_1|$, then for every
$\varepsilon>0$ the error is $O_\varepsilon((\tau+\varepsilon)^n)$.

Put
\[
g=\log\left|\frac{\lambda_1}{\lambda_3}\right|.
\]
Exact root isolation and interval arithmetic in
the supplementary material give
\[
62.636575<g<62.636576.
\]
Therefore $g>62.63657$, and the preceding estimate is precise
\[
\left|\zeta(2)-\frac{p_n}{q_n}\right|
\leq e^{-(g-o(1))n}.
\]
Since $p_n/q_n$ is rational and $\zeta(2)$ is irrational, the error
is strictly positive whenever the approximant is defined.
\end{proof}

\subsection{The block decomposition for \texorpdfstring{$\zeta(2)$}{zeta(2)}}
\label{app:z2-blocks}

Conservation permits the four-block path of
Section~\ref{subsec:z23}. Its last block takes a finite
difference of order $11n$, leaving a polynomial part of degree at most
$13n-1$. The resulting binomial sums and their harmonic terms are given
in the supplementary notebook.

For $C_n=\prod_{u=9,10,11}\binom{un}{8n}$, put
\[
\begin{aligned}
 \rho(x,y)={}&\lfloor11x\rfloor-\lfloor y\rfloor-\lfloor11x-y\rfloor
             -3\lfloor8x\rfloor\\
 &+\sum_{u=9,10,11}
   \bigl(\lfloor ux+y\rfloor-\lfloor(u-8)x+y\rfloor\bigr),
 \qquad \phi(x)=\min_{0\le y<1}\rho(x,y).
\end{aligned}
\]
Legendre's formula and the divided-difference bounds give
\[
 \frac{C_nd_{12n}d_{13n}}{\mathcal D_n^{\mathrm{car}}}
 (p_n,q_n)\in\mathbb Z^2,\qquad
 \mathcal D_n^{\mathrm{car}}
 =\prod_{\sqrt{50n}<p\le13n}p^{\phi(n/p)}.
\]
Let $\mathcal D_n^{\mathrm{sh}}$ be the product of the primes
$p>\sqrt{50n}$ in the six open intervals
\[
 \left(\frac{13n}{2},8n\right),\quad
 \left(\frac{13n}{3},\frac{9n}{2}\right),\quad
 \left(\frac{13n}{4},\frac{10n}{3}\right),\quad
 \left(\frac{13n}{5},\frac{8n}{3}\right),\quad
 \left(\frac{13n}{6},\frac{9n}{4}\right),\quad
 \left(\frac{13n}{12},\frac{9n}{8}\right).
\]
For each such prime, the normalized sum reduces to
$\sum_{x\in\mathbb F_p}P(x)$ with $\deg P<p-1$, hence vanishes.
Thus, with $\mathcal D_n=\mathcal D_n^{\mathrm{car}}
\mathcal D_n^{\mathrm{sh}}$, the same integrality statement holds
after division by $\mathcal D_n$.

The prime number theorem gives
\[
 \lim_{n\to\infty}\frac1n
 \log\frac{d_{12n}d_{13n}}{\mathcal D_n}
 =25-\gamma-\frac{233}{120}<10.929906,\qquad
 \gamma=\int_{1/13}^{\infty}\frac{\phi(x)}{x^2}\,dx.
\]
The one-periodic profile $\phi$ has $94$ cells on $[0,1)$;
exact summation over $2000$ periods with a bounded tail gives
$\gamma>12.1284279747910$.
The shell calculations and integral bounds are reproduced in
the supplementary material
combines them with the spectral bounds to obtain Lemma~\ref{lem:z2-growth}.
The supplied formalization and full written proof are in the supplementary material.

\subsection{The arithmetic of the pure
\texorpdfstring{$\zeta(3)$}{zeta(3)} approximations}
\label{app:zeta3-measure}

Write
\[
 F_n(t)=\frac{\prod_{u=0}^{2}(t-(10+u)n)_{10n}}
 {(t+n)_{16n+1}(t+2n)_{14n+1}}
 =\sum_{j=n}^{17n}\left(\frac{a_j}{(t+j)^2}+\frac{b_j}{t+j}\right),
\]
and set
\[
 Q_n=\sum_ja_j,\qquad
 P_{3,n}=\sum_j\left(a_jH^{(3)}_{j+2n}
                       +\frac12b_jH^{(2)}_{j+2n}\right),\qquad
 C'_n=\frac{(15n)!^2}{(10n)!^3}.
\]
Gamma reflection and comparison of Laurent coefficients give
$C'_na_{2n+k}=(-1)^nB_{n,k}$ and
$C'_nb_{2n+k}=-(-1)^nB'_{n,k}$, including the simple-pole tails.
Hence $C'_n(Q_n,P_{3,n})=(-1)^n(\mathbf q_n,\mathbf p_{3,n})$,
so $P_{3,n}/Q_n=p_{3,n}/q_n$ for every $n\ge0$.
The error estimate of Lemma~\ref{lem:z34-selected-error} therefore applies.

The coefficient bounds $C'_na_j\in\mathbb Z$ and
$d_{16n}C'_nb_j\in\mathbb Z$ give
$6C'_nd_{19n}^3(Q_n,P_{3,n})\in\mathbb Z^2$.
For $p>29$, $p^2>30n$ and $p\nmid n$, a stronger common divisor
$\mathcal D'_n$ combines the factorial carry count with fourteen
first-order shells and the two second-order windows
\[
 \{n/p\}\in
 \left(\frac1{10},\frac2{19}\right)
 \cup\left(\frac{11}{12},\frac{15}{16}\right).
\]
The carry and shell counts are evaluated on the $270$ cells of the
Farey partition of order $29$, with periodic lifts.
The first-order terms cancel over $\mathbb F_p$; a trace identity
gives the additional factor in the two windows.
The resulting divisor satisfies, for $n\ge1$,
\[
 \frac{6C'_nd_{19n}^3}{\mathcal D'_n}(Q_n,P_{3,n})\in\mathbb Z^2,
 \qquad
 \liminf_{n\to\infty}\frac{\log\mathcal D'_n}{n}
 >30.458547243805.
\]
Together with
$\limsup n^{-1}\log|C'_nQ_n|<65.451536565866$, this gives
\[
 h<65.451536565866+57-30.458547243805<91.992990,
\]
as used in Theorem~\ref{thm:z23-measures}.

The sum identity and spectral calculations are in
the supplementary material.

\section{Exact Calculations for Harmonic-Weighted Gregory Series Irrationality Proof}
\label{app:gregory-series}
\subsection{The trajectory matrix}\label{app:hh-matrix}
For the trajectory of Theorem~\ref{thm:hh-main}, written with $z=1/M$, the trajectory
matrix (Definition~\ref{def:trajectory}) is
\begin{equation}\label{eq:hh-T}
 T(n)=\frac{9(2n+1)^2(6n+5)^2(6n+7)^2}{16z^3(n+1)^4(4n+1)^3(4n+3)^3}\,\bigl(P_{ij}(n,z)\bigr)_{0\le i,j\le2},
\end{equation}
where each $P_{ij}$ is a polynomial of degree $\le1$ in $z$:
\begin{align*}
 P_{00}&=z(4n+1)^3(16n+13),\qquad P_{01}=-\tfrac32z(4n+1)^3(28n^2+56n+27),\\
 P_{02}&=\tfrac14z(4n+1)^3(6n+7)(72n^2+146n+71),\\
 P_{10}&=12z(96n^3+132n^2+50n+5)-2(576n^3+876n^2+404n+59),\\
 P_{11}&=-2z(1504n^4+3856n^3+3246n^2+1018n+97)\\
       &\quad+3024n^4+8208n^3+7632n^2+2876n+383,\\
 P_{12}&=z(7680n^5+28864n^4+40096n^3+25020n^2+6742n+607)\\
       &\quad-\tfrac12(6n+7)(2592n^4+7128n^3+6708n^2+2570n+353),\\
 P_{20}&=4(z-1)(84n^2+104n+29),\\
 P_{21}&=-2(z-1)(432n^3+1044n^2+776n+173),\\
 P_{22}&=z(2176n^4+7840n^3+10128n^2+5472n+1017)\\
       &\quad-(6n+7)(360n^3+876n^2+650n+143).
\end{align*}
In particular $z^3T(n)$ is a polynomial of degree $\le1$ in $z$, and
\begin{equation}\label{eq:hh-detT}
 \det T(n)=\frac{729(2n+1)^6(6n+5)^6(6n+7)^6}{64z^6(n+1)^6(4n+1)^6(4n+3)^6}\ne0\qquad(n\ge0).
\end{equation}
With the diagonal coboundary (Definition~\ref{def:coboundary}) $U(n)=\operatorname{diag}(1,(n+1)^{-1},(n+1)^{-2})$, the matrix $\tilde T(n)=U(n)^{-1}T(n)U(n+1)$ has rational entries with finite limits as $n\to\infty$. Its limit is
\begin{equation}\label{eq:hh-Tinf}
 \tilde T_\infty=\begin{pmatrix}
 \frac{729}{z^2}&-\frac{15309}{8z^2}&\frac{19683}{4z^2}\\[2pt]
 \frac{6561(z-1)}{8z^3}&-\frac{729(188z-189)}{64z^3}&\frac{2187(80z-81)}{32z^3}\\[2pt]
 \frac{15309(z-1)}{64z^3}&-\frac{19683(z-1)}{32z^3}&\frac{729(136z-135)}{64z^3}
 \end{pmatrix}.
\end{equation}
Expanding the determinant gives $\det(\tilde T_\infty-\frac{729}{16z^3}\rho I)=-\frac{387420489}{4096z^9}\chi_z(\rho)$, where $\chi_z(\rho)=\rho^3-(3z+\frac{27}2)\rho^2+(3z^2-\frac{81}2z+\frac{729}{16})\rho-z^3$. Hence the eigenvalues of $\tilde T_\infty$ are $\frac{729}{16z^3}\rho_i$, where $\rho_1<\rho_2<\rho_3$ are the roots of $\chi_z$, and $g(M)=\log(\rho_2/\rho_1)$.

\subsection{Proofs for the Explicit Approximants}\label{app:hh-approx}

\begin{proof}[Proof of Lemma~\ref{lem:hh-mobius}]
Write $F=\rho_pS_0^{\mathsf T}=F_0$, $E=(1,2,0)S_0^{\mathsf T}$ and $A=\rho_qS_0^{\mathsf T}$. Since $S_0=(F,\theta F,\theta^2F)$, we have $E=(1+2\theta)F$ and $A=(1+2\theta)E$.
The coefficient of $z^n$ in $F$ is $(\frac12)_n/(n!(2n+1)^2)$, and $1+2\theta$ multiplies it by $2n+1$. Hence
$E={}_2F_1(\frac12,\frac12;\frac32;z)$ and $A=\sum_n(\frac12)_nz^n/n!=(1-z)^{-1/2}$, so the two ratios in the lemma are $\sqrt{1-z}\,F$ and $\sqrt{1-z}\,E$.

The series $f_1,f_2$ are the unique power-series solutions of $(2x\frac d{dx}+1)f_1=\frac1{1-x}$ and
$(2x\frac d{dx}+1)f_2=\frac{f_1}{1-x}$. Since $dz/dx=-(1-z)^2$, we have $2x\frac d{dx}=2z(1-z)\frac d{dz}=2(1-z)\theta$ and $\frac1{1-x}=1-z$. Then
\begin{align*}
 \bigl(2(1-z)\theta+1\bigr)\bigl(\sqrt{1-z}\,E\bigr)&=\sqrt{1-z}\bigl(2(1-z)\theta E-zE+E\bigr)\\
 &=(1-z)^{3/2}(E+2\theta E)=(1-z)^{3/2}A=1-z,
\end{align*}
and in the same way $\bigl(2(1-z)\theta+1\bigr)\bigl(\sqrt{1-z}\,F\bigr)=(1-z)^{3/2}(F+2\theta F)=(1-z)\sqrt{1-z}\,E$.
Hence $\sqrt{1-z}\,E=f_1$ and $\sqrt{1-z}\,F=f_2$, initially near $x=0$. At $z=1/M$, $x=-1/k$ and $\sqrt{1-z}\,F=B(M)$ by \eqref{eq:hh-forms}.

For the moments, put $I_n=\int_0^1y^{2n}\ell(y^2)\,dy$. Direct integration gives $I_0=1$.
For $n\ge1$, integration by parts using $y^{2n+1}-y^{2n-1}$ (whose product with $\ell(y^2)$ vanishes at both endpoints) gives
\[
 (2n+1)I_n-(2n-1)I_{n-1}=\int_0^1y^{2n}\,dy=\frac1{2n+1}.
\]
Thus $I_n=O_n/(2n+1)$. Substituting $u=y^2$ gives $\int_0^1u^n\ell(u)\,du/(2\sqrt u)=O_n/(2n+1)$, and directly $\int_0^1u^n\,du/(2\sqrt u)=1/(2n+1)$.
Summing the geometric series proves the integral representations for $|x|<1$, and analytic continuation along $-1<x\le0$ gives the stated identities there.
At $x=-1$ both series converge by the alternating-series test (their coefficient magnitudes decrease to zero); Abel's theorem and continuity of the integrals give the endpoint identities.
\end{proof}

\begin{lemma}\label{lem:hh-unique}
\eqref{eq:hh-orth} has, up to a scalar, a unique nonzero solution $Q$ with $\deg Q\le2n$.
\end{lemma}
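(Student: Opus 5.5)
The plan is to count dimensions for existence, and then reduce uniqueness to a Chebyshev-system (sign-change) argument.

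\emph{Existence.} A polynomial $Q$ with $\deg Q\le 2n$ has $2n+1$ coefficients. The relations \eqref{eq:hh-orth} are $2n$ homogeneous linear conditions on these coefficients. Hence a nonzero solution exists.

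\emph{Reduction of uniqueness.} Suppose $Q_1,Q_2$ are two solutions that are not proportional. Some nontrivial combination $P=\alpha Q_1+\beta Q_2$ kills the coefficient of $u^{2n}$, so $P\ne0$, $\deg P\le 2n-1$, and $P$ still satisfies all $2n$ conditions. Thus it suffices to show: if $\deg P\le 2n-1$ and
\[
 \int_0^1 P(u)\,u^{1/2}\bigl(A(u)+B(u)\ell(u)\bigr)\,du=0
\]
for all polynomials $A,B$ of degree $<n$, then $P=0$. Write $\mathcal V_n=\{A+B\ell:\deg A,\deg B<n\}$. This is a $2n$-dimensional space of functions on $(0,1)$; its dimension is $2n$ because $\ell$ is not rational.

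\emph{Key step: $\mathcal V_n$ is a Haar space on $(0,1)$.} I claim every nonzero $f=A+B\ell\in\mathcal V_n$ has at most $2n-1$ zeros in $(0,1)$, counted with multiplicity. Differentiate $n$ times. Since $\deg A,\deg B<n$, and $\ell^{(k)}(u)=c_k(1-u)^{-k}$ for $k\ge1$ with $c_k=\tfrac12(k-1)!\neq0$, Leibniz gives
\[
 f^{(n)}=\sum_{k=1}^{n}\binom nk B^{(n-k)}\ell^{(k)}=\frac{R(u)}{(1-u)^n},\qquad \deg R\le n-1 .
\]
There are two cases.
\begin{itemize}
\item If $R\not\equiv0$, then $f^{(n)}$ has at most $n-1$ zeros in $(0,1)$. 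By Rolle's theorem, $f$ has at most $2n-1$.
\item If $R\equiv0$, then $f$ is a polynomial of degree $<n$. Since $\ell$ is transcendental over $\mathbb Q(u)$, this forces $B=0$, and then $f=A$ has at most $n-1$ zeros.
\end{itemize}

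\emph{Sign-change argument.} Suppose $P\ne0$. Since $u^{1/2}>0$ on $(0,1)$, the function $P(u)u^{1/2}$ changes sign at most $2n-1$ times there, say at $x_1<\dots<x_m$ with $m\le 2n-1$. Because $\mathcal V_n$ is a Haar space of dimension $2n>m$, the standard construction applies: take a limit of normalized interpolants, or use the determinant $\det\bigl(\phi_i(x_j)\bigr)$ with one free point. This produces $f\in\mathcal V_n$ that vanishes exactly at $x_1,\dots,x_m$ and changes sign at each of them. Then $P\,u^{1/2}f$ has constant sign on $(0,1)$ and is not identically zero, so its integral is nonzero. This contradicts the orthogonality, so $P=0$ and the solution is unique up to a scalar.

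\emph{Main obstacle.} I expect the hardest step to be the last one: building a Haar-space element with \emph{exactly} the prescribed sign changes and no others. I would cite the classical Krein--Nudel'man/Karlin lemma for Chebyshev systems. If a self-contained proof is preferred, I would instead take $f$ to be the determinant of the $2n\times 2n$ matrix whose rows are the basis functions evaluated at $x_1,\dots,x_m$, padded with $2n-1-m$ auxiliary points clustered near $1$, together with the free variable $u$. The zero count above then shows that this $f$ has simple zeros only at the chosen points, so it changes sign at each $x_i$. Near $1$, the auxiliary points contribute sign changes in pairs that can be merged away.
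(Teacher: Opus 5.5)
Your argument is correct and is essentially the paper's. The paper proves the same bound of $2n-1$ zeros for $p_1+p_2\ell$ via the $n$-th derivative and Rolle; it shows $R\neq0$ by expanding $p_2$ in powers of $1-u$, where you instead use that $\ell$ is not rational. It then cites Nikishin--Sorokin for uniqueness of the type II polynomial of an AT system, and that cited result is exactly the sign-change argument you unpack.

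There is one slip in your self-contained fallback. If $2n-1-m$ is odd, merging the auxiliary zeros in pairs leaves one extra sign change of $f$ that $P$ does not share, so $P\,u^{1/2}f$ need not have constant sign. Either of two fixes works:
\begin{itemize}
\item Let all auxiliary points tend to $1$ and take a normalized limit in the finite-dimensional space $\mathcal V_n$; the limit is nonzero and its sign changes in $(0,1)$ are exactly $x_1,\dots,x_m$.
\item Interpolate directly in the $(m+1)$-dimensional subspace with $\deg A<\lceil (m+1)/2\rceil$ and $\deg B<\lfloor (m+1)/2\rfloor$. The same Rolle argument gives at most $m$ zeros there, so the interpolant vanishing at $x_1,\dots,x_m$ changes sign exactly at those points.
\end{itemize}
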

\begin{proof}
The case $n=0$ is immediate. For $n\ge1$, let $h=p_1+p_2\ell$ with $\deg p_i<n$, not both zero. If $p_2\ne0$, then $\frac{d^n}{du^n}h=R/(1-u)^n$ with $0\ne R$ and $\deg R\le n-1$.
To see that $R\ne0$, expand $p_2$ in $(1-u)^j$: for $0\le j<n$,
$(1-u)^n\frac{d^n}{du^n}[(1-u)^j\ell(u)]=\frac{(-1)^j}{2}j!(n-j-1)!(1-u)^j$.
By Rolle's theorem, $h$ has at most $2n-1$ zeros in $(0,1)$ counted with multiplicity; this also holds trivially if
$p_2=0$. Thus $(u^{1/2},u^{1/2}\ell)$ is an AT system, which gives uniqueness up to scale of the type II orthogonal polynomial \cite{nikishin1991rational}.
\end{proof}

\begin{proof}[Proof of Proposition~\ref{prop:hh-rodrigues}]
Leibniz's rule with $\frac{d^n}{du^n}u^\gamma/n!=\binom\gamma nu^{\gamma-n}$ gives
\begin{equation}\label{eq:hh-QN-expansion}
 Q_n(u)=\sum_{i,j=0}^n(-1)^{i+j}\binom ni\binom nj\binom{2n+j+\frac12}n\binom{n+i+j+\frac12}nu^{i+j}.
\end{equation} Put $\Psi_1=u^{2n+1/2}(1-u)^n$ and $\Psi_2=(1-u)^n\frac{d^n}{du^n}\Psi_1$;
both vanish to order $\ge n$ at $0$ and $1$, and $u^{1/2}Q_n=\frac{d^n}{du^n}\Psi_2/n!^2$. Integrating by parts $n$ times gives
$\int u^{j+1/2}Q_n=\pm n!^{-2}\int\Psi_2\frac{d^n}{du^n}u^j=0$ and $\int u^{j+1/2}Q_n\ell=\pm n!^{-2}\int\Psi_2\frac{d^n}{du^n}(u^j\ell)$. The boundary terms
vanish because $\frac{d^i}{du^i}\Psi_2=O((1-u)^{n-i})$. Since $\frac{d^n}{du^n}(u^j\ell)=R_j/(1-u)^n$ with $\deg R_j<n$, the last integral is a constant multiple of
$\int R_j\frac{d^n}{du^n}\Psi_1=\pm\int\Psi_1\frac{d^n}{du^n}R_j=0$. Lemma~\ref{lem:hh-unique} gives $Q$ as a nonzero rational multiple of $Q_n$. Rescaling $\hat x_n$ gives $Q=Q_n$ without changing the CMF ratios, and $\tilde f$ is the stated truncation.
\end{proof}

\begin{proof}[Proof of Lemma~\ref{lem:hh-integers}]
At $z=1/M$ we have $x=-1/k$, so $r_n=\tilde f(-1/k)/\tilde a(-1/k)=A_n/D_n$ by Proposition~\ref{prop:hh-rodrigues}.
The expansion \eqref{eq:hh-QN-expansion} gives $(-1)^lq_l>0$ for $0\le l\le2n$, so $D_n>0$.
For an integer $j\ge n$, $4^n\binom{j+1/2}n=2^n\prod_{t<n}(2j+1-2t)/n!$ is an integer. Indeed, a product of $n$
consecutive odd integers is divisible by the odd part of $n!$, and the $2$-adic valuation satisfies $v_2(n!)<n$ for $n\ge1$ (the case $n=0$ is immediate). So $16^nq_l\in\mathbb Z$. Moreover
$k^{2n}\tilde f(-1/k)=\sum_lq_l\sum_{t\le l}(-1)^{l-t}k^{l-t}\frac{O_t}{2t+1}$, and $L_{4n+1}^2\frac{O_t}{2t+1}\in\mathbb Z$ for $t\le2n$. Since
$\binom yn$ increases for $y\ge n$, $|q_l|\le\binom{3n+1/2}n^2\binom{2n}l$, hence $|D_n|\le16^nL_{4n+1}^2\binom{3n+1/2}n^2M^{2n}$. Finally
$\binom{3n+1/2}n^{1/n}\to\frac{27}4$ by Stirling's formula, and $L_{4n+1}\le\lcm(1,\dots,4n+1)=e^{(4+o(1))n}$ by the prime number theorem.
\end{proof}

\subsection{Proof of Proposition~\ref{prop:hh-rate}}\label{app:hh-rate}
By \eqref{eq:hh-Tinf}, $\tilde T(n)\to\tilde T_\infty$, whose eigenvalues are $\frac{729}{16z^3}\rho_i$. The discriminant of
$\chi_z$ is $\frac{19683}{256}z(1-z)(32z-81)^2>0$ for $0<z<1$, so its roots are real and distinct. Moreover $\chi_z<0$ on $(-\infty,0]$,
because its coefficients alternate in sign; hence $0<\rho_1<\rho_2<\rho_3$. The moduli are therefore distinct and $\tilde T_\infty$
is invertible. By Remark~\ref{rem:separated-spectrum} and Perron's theorem \cite{elaydi2005introduction}, the recessive
solutions grow at rate $\log\frac{729\rho_1}{16z^3}$ and all others at rate $\ge\log\frac{729\rho_2}{16z^3}$.

Here $0<z\le1/2$. Since
$\chi_z(2)=(\frac12-z)(z^2-\frac{11}2z+\frac{361}4)\ge0$ and $\rho_3>\frac13(3z+\frac{27}2)>2$,
we have $\rho_2\ge2$. The nonzero row $\tilde Y_n=S_nU(n)$ solves $\tilde Y_{n+1}=\tilde Y_n\tilde T(n)$.
Termwise $(\frac12+2n)_j^3/(\frac32+3n)_j^2\le(\frac12+2n)_j$, so
$|\theta^iF_n|\le2(2n+2)^2(1-z)^{-2n-3}$ for $i=0,1,2$.
Its exponential growth rate is at most $-2\log(1-z)$, whereas
$729\rho_2/(16z^3)\ge729>4\ge(1-z)^{-2}$. Hence $\tilde Y$ is recessive.

We now check the terminal column required by Theorem~\ref{thm:dual-convergence}. From \eqref{eq:hh-Tinf},
\begin{equation}\label{eq:hh-cyclic}
 \det\bigl[e_0,\;\tilde T_\infty^{-\mathsf T}e_0,\;(\tilde T_\infty^{-\mathsf T})^2e_0\bigr]
 =-\frac{8192\,z^5(32z-81)}{14348907}\ne0\qquad(0<z<1),
\end{equation}
so $e_0$ is a cyclic vector of $\tilde T_\infty^{-\mathsf T}$, equivalently of $\tilde T_\infty^{\mathsf T}$.

We show that this makes $e_0$ nonexceptional. Let $H$ be the complementary space of Definition~\ref{def:weak-pp} for $\tilde T$, and let $h\ne0$ be a column with $Hh=0$; then $\tilde Y_0h\ne0$. The rows $\ell_n$ in the proof of Theorem~\ref{thm:dual-convergence}, formed for $\tilde T$, are
\[
 \ell_n^{\mathsf T}=\frac{\Phi_{\tilde T}(n)^{-1}h}{\tilde Y_0h},
\]
so $\zeta_n=\ell_n^{\mathsf T}$ solves $\zeta_{n+1}=\tilde T(n)^{-1}\zeta_n$, and $\|\zeta_n\|\ge1/\|\tilde Y_n\|$ because $\ell_n\tilde Y_n^{\mathsf T}=1$.
The eigenvalues of $\tilde T_\infty^{-1}$ are $\mu_i=16z^3/(729\rho_i)$, with $\mu_1>\mu_2>\mu_3>0$, and since $\tilde Y_n$ grows at rate $\log(729\rho_1/(16z^3))$, we have $\|\zeta_n\|\ge\mu_1^ne^{o(n)}$.
In an eigenbasis of $\tilde T_\infty$, the system reads $\zeta_{n+1}=(\operatorname{diag}(\mu_1,\mu_2,\mu_3)+E_n)\zeta_n$ with $\|E_n\|_\infty\to0$. Let $a_n$ be the modulus of the first coordinate of $\zeta_n$, $b_n$ the larger modulus of the other two, and $q_n=b_n/a_n$.
If $q_n>1$ for all large $n$, then $\|\zeta_{n+1}\|_\infty=b_{n+1}\le(\mu_2+\|E_n\|_\infty)\|\zeta_n\|_\infty$, which contradicts $\|\zeta_n\|\ge\mu_1^ne^{o(n)}$.
If $q_n\le1$, then $a_{n+1}\ge(\mu_1-\|E_n\|_\infty)a_n$ and $b_{n+1}\le\mu_2b_n+\|E_n\|_\infty a_n$, so $q_{n+1}\le(\mu_2q_n+\|E_n\|_\infty)/(\mu_1-\|E_n\|_\infty)$; for large $n$ this keeps $q_n\le1$ and forces $q_n\to0$.
Hence $[\ell_n]\to[v^{\mathsf T}]$, where $v$ is a right eigenvector of $\tilde T_\infty$ for its smallest eigenvalue $729\rho_1/(16z^3)$.
If $e_0^{\mathsf T}v=0$, then $\bigl((\tilde T_\infty^{-\mathsf T})^je_0\bigr)^{\mathsf T}v=e_0^{\mathsf T}\tilde T_\infty^{-j}v=(16z^3/(729\rho_1))^j\,e_0^{\mathsf T}v=0$ for $j=0,1,2$, so $v$ is orthogonal to the three columns in \eqref{eq:hh-cyclic}. These columns span $\mathbb R^3$, so $v=0$, a contradiction.
Therefore $e_0^{\mathsf T}v\ne0$, and $\|\ell_n\|\,\|e_0\|/|\ell_ne_0|\to\|v\|/|e_0^{\mathsf T}v|<\infty$: the column $e_0$ is nonexceptional.

By Remark~\ref{rem:separated-spectrum}, the estimate of Definition~\ref{def:weak-pp} holds with exponent $g(M)-\varepsilon$ for every $\varepsilon>0$.
Since $U(0)=I$ and $U(n)^{-\mathsf T}e_0=e_0$, coboundary equivalence gives
$\Phi^\vee(n)e_0=\Phi_{\tilde T}^\vee(n)e_0$.
As $\rho_q\tilde Y_0^{\mathsf T}=A\ne0$, Corollary~\ref{cor:dual-rank-one} gives the limit $F/A=B(M)$ and
$\limsup_{n\to\infty}n^{-1}\log|B(M)-r_n|\le-g(M)+\varepsilon$.
Letting $\varepsilon\downarrow0$ proves the claimed error bound.\hfill$\square$

\subsection{Proofs of Lemma~\ref{lem:hh-nondeg} and Proposition~\ref{prop:hh-gap}}\label{app:hh-arith}

\begin{lemma}\label{lem:hh-roots}
If $M\ge7165$, then $\rho_1<\frac{16}{729}z^3(1+\frac89z+z^2)$ and $\frac{27}4>\rho_2>\frac{27}4-7.794229\sqrt z+\frac32z$.
\end{lemma}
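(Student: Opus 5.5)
The bounds are root localizations for the cubic
\[
\chi_z(\rho)=\rho^3-(3z+\tfrac{27}2)\rho^2+(3z^2-\tfrac{81}2z+\tfrac{729}{16})\rho-z^3,
\qquad z=1/M\le 1/7165 .
\]
Since its three roots are real and satisfy $0<\rho_1<\rho_2<\rho_3$ (Appendix~\ref{app:hh-rate}), each claimed bound follows from a sign change of $\chi_z$ at the corresponding point. The key observation is the degeneration at $z=0$: there $\chi_0(\rho)=\rho(\rho-\tfrac{27}4)^2$, with a simple root at $0$ and a double root at $\tfrac{27}4$. So $\rho_1$ is a small perturbation of $0$, while $\rho_2,\rho_3$ split from $\tfrac{27}4$ at order $\sqrt z$. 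This explains the shapes of the bounds: a cubic in $z$ for $\rho_1$, and a $\sqrt z$ term for $\rho_2$.

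\textbf{The root $\rho_1$.} Near $\rho=0$, $\chi_z(\rho)\approx \tfrac{729}{16}\rho-z^3$, so $\rho_1\approx\tfrac{16}{729}z^3$. Set $\beta=\tfrac{16}{729}z^3(1+\tfrac89z+z^2)$ and aim to show $\chi_z(\beta)>0$. Since $\chi_z(0)=-z^3<0$ and $\rho_1$ is the only root in $(0,\rho_2)$, this forces $\rho_1<\beta$, provided $\beta<\rho_2$, which is immediate from $\rho_2>2$. To check the sign, expand $\chi_z(\beta)/z^3$ in powers of $z$. The $z^0$ term cancels. The correction $1+\tfrac89z$ is chosen to absorb the linear term $-\tfrac{81}2z\beta$, and $z^2$ adds a positive margin. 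The remaining terms are $O(z^6)$ with bounded coefficients, so it suffices to prove positivity of an explicit polynomial in $z$ on $(0,1/7165]$. I would do this by crude bounds on the coefficients, e.g.\ each term bounded by $z^2$ times a constant.

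\textbf{The upper bound $\rho_2<\tfrac{27}4$.} Evaluate at the double root of $\chi_0$:
\[
\chi_z(\tfrac{27}4)=z\bigl(-3\cdot(\tfrac{27}4)^2-\tfrac{81}2\cdot\tfrac{27}4\bigr)+O(z^2),
\]
which is negative for small $z$. Since $\chi_z$ goes from negative to positive at $\rho_1$, it is positive on $(\rho_1,\rho_2)$ and negative on $(\rho_2,\rho_3)$. Hence $\chi_z(\tfrac{27}4)<0$, together with $\tfrac{27}4>\rho_1$, gives $\rho_2<\tfrac{27}4<\rho_3$. This is exact polynomial arithmetic.

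\textbf{The lower bound on $\rho_2$.} Write $\rho=\tfrac{27}4-c\sqrt z+\tfrac32z$. The leading balance is
\[
\tfrac{27}4\,c^2 z \approx z\bigl(3\cdot\tfrac{729}{16}+\tfrac{81}2\cdot\tfrac{27}4\bigr),
\qquad\text{i.e.}\qquad c^2=\tfrac{81}{4}+\tfrac{81}2=\tfrac{243}4,
\]
so $c=\tfrac{9\sqrt3}2\approx7.7942286$. The stated constant $7.794229$ exceeds this slightly, and that gap supplies the margin. Substitute $\rho=\tfrac{27}4-c\sqrt z+\tfrac32z$ with $c=7.794229$ and $s=\sqrt z$, expand $\chi_z$ as a polynomial in $s$ of degree at most $6$, and show it is positive for $0<s\le 7165^{-1/2}$. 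Positivity, together with $\rho>\rho_1$, places the point in $(\rho_1,\rho_2)$, hence $\rho_2>\rho$. In the expansion the $s^2$ terms cancel to the tiny positive quantity $\tfrac{27}4(c^2-\tfrac{243}4)s^2$, and the $\tfrac32z$ term should kill the $s^3$ coefficient or make it favorable.

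\textbf{Main obstacle.} The hard part is this last positivity check. The margin $c^2-\tfrac{243}4\approx 10^{-6}$ is extremely small, so it must dominate the $O(s^3)$ and higher terms, which requires $s$ small. This is exactly where the threshold $M\ge7165$ enters. I would first compute the $s^3$ coefficient exactly, hoping the $\tfrac32z$ shift makes it vanish or nonnegative. If it is negative, I would use interval arithmetic on the polynomial over $s\in(0,7165^{-1/2}]$, or compare the leading term against the sum of the absolute values of the higher-order terms.
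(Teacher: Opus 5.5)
Your plan is correct and is essentially the paper's proof: the paper checks the sign pattern $-,+,+,-$ of $\chi_z$ at $0<t_1<m_2<\frac{27}4$, and in $\chi_z(m_2)=s^2\bigl(c_0+c_1s+c_2s^2+c_3s^3+\frac18s^4\bigr)$ one indeed has $c_0=\frac{27}4(c^2-\frac{243}4)$. As you anticipated, the $\frac32z$ shift is exactly the next Puiseux term, which leaves only $c_1=-c(c^2-\frac{243}4)$; hence $c_0+c_1s=(c^2-\frac{243}4)(\frac{27}4-cs)>0$, and $c_2\approx\frac{81}{16}>|c_3|s$.

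Two small corrections. First, the threshold $7165$ comes from Proposition~\ref{prop:hh-gap}, not from this margin. Second, your fallback of comparing $c_0$ with the absolute values of all higher terms would fail: at $s=7165^{-1/2}$ one has $c_2s^2\approx7\cdot10^{-4}\gg c_0\approx3.9\cdot10^{-5}$, so you must use the positive sign of $c_2$, as the paper does.
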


\begin{proof}
Write $s=\sqrt z$, and $t_1$ and $m_2$ for the two bounds in the lemma, so $t_1=\frac{16}{729}z^3(1+\frac89z+z^2)$ and $m_2=\frac{27}4-7.794229s+\frac32s^2$. Then $0<s<\frac{119}{10^4}$ and clearly $0<t_1<m_2<\frac{27}4$. We have $\chi_z(0)=-z^3<0$ and $\chi_z(\frac{27}4)=-z(\frac{6561}{16}-\frac{81}4z+z^2)<0$.
Expanding, $\chi_z(t_1)=z^5\bigl(\frac{67}{243}+zR(z)\bigr)$, where $R$ is a polynomial of degree $9$ whose coefficients have
modulus $<1$, so $\frac{67}{243}+zR(z)\ge\frac{67}{243}-\frac{z}{1-z}>0$ in the stated range.
Next, $\chi_z(m_2)=s^2\bigl(c_0+c_1s+s^2(c_2+c_3s)+\frac18s^4\bigr)$, where
$c_0=\frac{154019907}{4\cdot10^{12}}$, $c_1=-\frac{44461719470989}{10^{18}}$, $c_2=\frac{10125017113323}{2\cdot10^{12}}$ and
$c_3=-\frac{23382687}{4\cdot10^6}$. Since $c_1,c_3<0$, and $c_0+c_1s$ and $c_2+c_3s$ are positive at $s=\frac{119}{10^4}$, we get $\chi_z(m_2)>0$. The sign pattern
$-,+,+,-$ of the monic cubic at $0<t_1<m_2<\frac{27}4$ places $\rho_1\in(0,t_1)$ and $\rho_2\in(m_2,\frac{27}4)$.
\end{proof}

\begin{proof}[Proof of Proposition~\ref{prop:hh-gap}]
By Lemma~\ref{lem:hh-roots}, $\rho_2=\frac{27}4+O(M^{-1/2})$. The roots of $\chi_z$ satisfy $\rho_1\rho_2\rho_3=z^3$ and
$\rho_2\rho_3=3z^2-\frac{81}2z+\frac{729}{16}-\rho_1(\rho_2+\rho_3)=\frac{729}{16}+O(z)$, so $\rho_1=\frac{16}{729}z^3(1+O(z))$.
Hence $g=\log(\rho_2/\rho_1)=3\log M+\log\frac{27}4+\log\frac{729}{16}+O(M^{-1/2})$, and subtracting
$\sigma=2\log M+\log16+2\log\frac{27}4+8$ gives the asymptotic formula, since $\frac{729}{16\cdot16\cdot\frac{27}4}=\frac{27}{64}$.

For the explicit threshold, let $M\ge7165$. Lemma~\ref{lem:hh-roots} gives
\[
 g-\sigma>\log M-\log16-8+\log\Bigl(\frac{27}4-\frac{7.794229}{\sqrt M}+\frac3{2M}\Bigr)-\log\Bigl(1+\frac8{9M}+\frac1{M^2}\Bigr).
\]
Each term on the right is increasing in $M$, and at $M=7165$ the right-hand side equals $8.91\ldots\cdot10^{-5}>0$.
\end{proof}


\begin{proof}[Proof of Lemma~\ref{lem:hh-nondeg}]
The rows $Y_n=x\,\Phi^\vee(n)$ solve $Y_{n+1}=Y_nT(n)^{-\mathsf T}$, so the numbers $x\,\Phi^\vee(n)e_0$ satisfy a common scalar recurrence of order $3$, obtained from the columns $e_0,\,T(n)^{-\mathsf T}e_0,\,T(n)^{-\mathsf T}T(n+1)^{-\mathsf T}e_0$. Their determinant is $\Delta(n)=c(n)\,p(n,z)$, where $c(n)\ne0$ for $n\ge0$ and every coefficient of the polynomial $p(n,z)$ in $n$ has the form $a z-b$ with $0<a<b$; hence $p(n,z)<0$ for $n\ge0$ and $0<z\le1$. The trailing coefficient of the recurrence is a nonzero multiple of $p(n+1,z)/p(n,z)$, so it does not vanish for $n\ge0$, and $\det[v_0,v_1,v_2]=\Delta(0)\ne0$. Lemma~\ref{lem:nonvanishing} with $x=u-B(M)w$ shows that $D_nB(M)-A_n$, a nonzero multiple of $x\,\Phi^\vee(n)e_0$, does not vanish at three consecutive indices.
\end{proof}

\paragraph{The approximation exponent.}
Once $B(M)$ is proved irrational, convergence implies that the reduced denominators $\widetilde D_n$ tend to infinity.
For every $\varepsilon>0$, the proved bounds give, eventually,
\[
 -1-\frac{\log|B(M)-r_n|}{\log\widetilde D_n}
 \ge -1+\frac{g(M)-\varepsilon}{\sigma(M)+\varepsilon},
\]
since $\widetilde D_n\le D_n$. Keeping only the first occurrence of each rational value leaves an infinite subsequence,
and its liminf is at least $g(M)/\sigma(M)-1$ by Definition~\ref{def:delta}. 


\subsection{Other Irrationality Criteria}\label{app:hh-other}

\subsubsection{The Fischler--Rivoal bound}

\begin{theorem}\label{thm:hh-fr}
$B(M)\notin\mathbb Q$ for every integer $M$ with $M-1>2^{2412}$.
\end{theorem}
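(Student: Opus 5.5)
The plan is to apply the Fischler--Rivoal criterion for values of $G$-functions at rational points \cite{fischler2018rational} (with the erratum \cite{fischler2018erratum}), and to make its constants explicit for our function. The criterion states the following. Let $f(x)=\sum_n a_nx^n\in\mathbb Q[[x]]$ be a $G$-function that is not a polynomial. Suppose the denominators of $a_0,\dots,a_n$ divide $C^{n+1}d_n^{s}$, and the coefficients satisfy $|a_n|\le e^{o(n)}$. Then $f(1/b)\notin\mathbb Q$ for every integer $b$ with $|b|$ larger than an explicit bound, exponential in the arithmetic data of $f$ and the order of its minimal differential operator. By Lemma~\ref{lem:hh-mobius}, $B(M)=f_2(-1/k)$ with $k=M-1$ and
\[
f_2(x)=\sum_{n\ge0}\frac{O_n}{2n+1}x^n ,
\]
so the natural choice is $f=f_2$ and $b=-k$.

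The key steps, in order, are as follows.

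\begin{enumerate}
\item Show that $f_2$ is a $G$-function and not rational. It satisfies $(2x\frac d{dx}+1)f_2=f_1/(1-x)$ and $(2x\frac d{dx}+1)f_1=1/(1-x)$. Hence $f_2$ is annihilated by an explicit operator $L$ of order $3$ with singularities only at $0$, $1$ and $\infty$. It is transcendental, since the logarithmic factor $\ell(u)$ produces nontrivial monodromy at $x=1$.
\item Record the arithmetic data. The coefficients satisfy $|a_n|\le1$. The denominators satisfy $\operatorname{den}(a_0,\dots,a_n)\mid L_{2n+1}^2$, and $L_{2n+1}^2\mid d_{2n+1}^2=e^{(4+o(1))n}$. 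So the arithmetic growth constant is $e^4$, with $s=2$ in the appropriate normalization.
\item Substitute these data, together with the order $\mu=3$ and degree of $L$, into the explicit form of the Fischler--Rivoal threshold, $\log|b|>c(\mu,\deg L)\cdot(\text{arithmetic exponent})$. Then check that the resulting numerical bound is at most $2^{2412}$. This yields $B(M)\notin\mathbb Q$ whenever $M-1>2^{2412}$.
\end{enumerate}

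The main obstacle is step 3. The published constant involves the size of the Siegel--Shidlovskii type auxiliary construction, which depends on the full $G$-operator data: the order, the degree in $x$, and the Galochkin/Chudnovsky constant of $L$. I would first compute that constant by clearing denominators in the companion system of $L$. Concretely, I would bound $\operatorname{den}$ of the iterated matrices $\frac1{m!}A_m$ by $d_m^2$ times a geometric factor, using the explicit recurrence for $a_n$. I would then track how each contribution enters the exponent until it produces $2412\log2$.

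If the constant turns out slightly worse than $2412$, I would instead apply the criterion to the pair $(f_1,f_2)$. The reason is that $f_1$ has simpler arithmetic, and the linear form in the proof of the criterion only needs the smaller system. I would also use the fact that $1,f_1(x)$ are already handled by Lambert's arctangent result, and then recompute.
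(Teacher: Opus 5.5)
You have the right tool and the right function, but the proposal never derives the number $2^{2412}$, which is the whole content of the statement. Planning to ``track each contribution until it produces $2412\log2$'' reverse-engineers the target rather than proving it. What is missing is the explicit form of the criterion. In the proof of Fischler--Rivoal's Theorem~2 (\S3.2), the threshold is $b>(c_1a)^{c_2}$ for positive integers $a,b$, with
\[
c_1=4H(D)(C\mathcal D)^{8rd+1}C,\qquad c_2=3(r+2).
\]
Here $Y=(1,F_1,\dots,F_r)^{\mathsf T}$ satisfies $Y'=AY$ with $1,F_1,\dots,F_r$ linearly independent over $\mathbb Q(x)$. The polynomial $D\in\mathbb Z[x]$ satisfies $DA\in\mathrm{Mat}_{r+1}(\mathbb Z[x])$, $\deg D\le d$ and $\deg(DA)_{ij}\le d-1$. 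Finally, $|f_{j,n}|\le C^{n+1}$, and $d_nf_{j,m}\in\mathbb Z$ for $m\le n$ with $d_n\le\mathcal D^{n+1}$. No Galochkin constant and no denominators of $A_m/m!$ enter, only the Taylor coefficients of the $F_j$.

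Moreover, the relevant system is not the companion system of the order-$3$ operator of $f_2$. Since
\[
\Bigl(2x\tfrac{d}{dx}+1\Bigr)\Bigl[(1-x)\Bigl(2x\tfrac{d}{dx}+1\Bigr)f_2\Bigr]=\frac{1}{1-x},
\]
the functions $1,f_2,f_2',f_2''$ are dependent over $\mathbb Q(x)$, so that vector with $1$ adjoined violates the independence hypothesis. The paper instead uses $Y=(1,\mathcal F,\mathcal G)^{\mathsf T}$ with $\mathcal F(x)=f_1(-x)$ and $\mathcal G(x)=f_2(-x)$. The sign flip puts the evaluation point at the positive rational $1/k$, as the criterion requires. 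For this system $D=2x(1+x)$, $r=d=2$ and $H(D)=2$.

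Two hypotheses also need a different verification from the one you sketch. First, transcendence of $f_2$ is not enough: you need $1,\mathcal F,\mathcal G$ independent over $\mathbb Q(x)$. The paper obtains this from the expansions $\mathcal F(-1+t)=-\frac12\log t+O(1)$ and $\mathcal G(-1+t)=\frac18\log^2t+O(|\log t|)$ as $t\to0^+$. Your Lambert fallback does not help here, since it concerns values, not functional independence.

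Second, the criterion needs $d_n\le\mathcal D^{n+1}$ for \emph{every} $n$, so the asymptotic rate $e^{(4+o(1))n}$ does not give $\mathcal D=e^4$. The paper takes $d_n=L_{2n+3}^2$ and proves $d_n\le64^{n+1}$. For $n\ge600$ it uses $\psi(v)<1.03883v$ (Rosser--Schoenfeld), and for smaller $n$ a direct computation. With $C=1$ and $\mathcal D=64$ this gives $c_1=4\cdot2\cdot64^{33}=2^{201}$ and $c_2=12$. Taking $a=1$ and $b=k$ then yields exactly the condition $M-1>2^{2412}$.
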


\textbf{The criterion.} 
Theorem~2 in \cite{fischler2018rational} (and its erratum \cite{fischler2018erratum}) proves irrationality under the bound conditions. 
The constants are those of its proof, \S3.2, computed from the system data of \S2.1. In this subsection $Y,A,D,F_j,r,d,C,\mathcal D,d_n,a,b,c_1,c_2$
follow the notation of \cite{fischler2018rational}. Let $Y=(1,F_1,\ldots,F_r)^{\mathsf T}$ consist of
$G$-functions with rational Taylor coefficients, linearly independent over $\mathbb Q(x)$, with $Y'=AY$. Let
$D\in\mathbb Z[x]\setminus\{0\}$ satisfy $DA\in\operatorname{Mat}_{r+1}(\mathbb Z[x])$, $\deg D\le d$ and $\deg(DA)_{ij}\le d-1$. Let $C,\mathcal D\ge1$, and let
integers $1\le d_n\le\mathcal D^{n+1}$ satisfy $|f_{j,n}|\le C^{n+1}$ and $d_nf_{j,m}\in\mathbb Z$ for $m\le n$, where
$F_j=\sum f_{j,n}x^n$. Then $F_j(a/b)\notin\mathbb Q$ for positive integers $a,b$ with $b>(c_1a)^{c_2}$, where
\begin{equation}\label{eq:hh-frconstants}
 c_1=4H(D)(C\mathcal D)^{8rd+1}C,\qquad c_2=3(r+2),
\end{equation}
and $H(D)$ is the largest absolute coefficient of $D$. Section~3.1 of \cite{fischler2018rational} is stated for a general
independent system, and the parameter choice in \S3.2 uses only the data $(r,d,H(D),C,\mathcal D)$.

\textbf{The system.} Put $\mathcal F(x)=f_1(-x)$ and $\mathcal G(x)=f_2(-x)$, with $f_1,f_2$ as in Lemma~\ref{lem:hh-mobius}.
By that lemma, $B(M)=\mathcal G(1/k)$, and coefficient comparison gives, with $'=d/dx$,
\begin{equation}\label{eq:hh-firstorder}
 2x\mathcal F'+\mathcal F=\frac1{1+x},\qquad 2x\mathcal G'+\mathcal G=\frac{\mathcal F}{1+x}.
\end{equation}
For $Y=(1,\mathcal F,\mathcal G)^{\mathsf T}$ these give $D=2x(1+x)$ and
$DA=\bigl(\begin{smallmatrix}0&0&0\\1&-(1+x)&0\\0&1&-(1+x)\end{smallmatrix}\bigr)$, so $r=2$, $d=2$ and $H(D)=2$.

\textbf{Independence.} Set $t=1+x\to0^+$. Then
$\mathcal F(-1+t)=\frac{\operatorname{arctanh}\sqrt{1-t}}{\sqrt{1-t}}=-\frac12\log t+\log2+O(t|\log t|)$. By
\eqref{eq:hh-firstorder},
$\frac d{dt}\bigl(\sqrt{1-t}\,\mathcal G(-1+t)\bigr)=-\frac{\mathcal F(-1+t)}{2t\sqrt{1-t}}=\frac{\log t}{4t}-\frac{\log2}{2t}+O(|\log t|)$, hence
$\mathcal G(-1+t)=\frac18\log^2t+O(|\log t|)$. Suppose a nontrivial $\mathbb Q(x)$-relation among $1,\mathcal F,\mathcal G$ existed.
Expand its coefficients at $t=0$ and divide by the smallest power of $t$. The terms with nonzero constant
coefficient have orders $\log^2t$, $\log t$ and $1$, so the largest cannot cancel, while the remaining terms
tend to $0$. Hence $1,\mathcal F,\mathcal G$ are independent over $\mathbb Q(x)$.

\textbf{Growth constants.} The coefficients have modulus $\frac1{2n+1}$ and $\frac{O_n}{2n+1}\le1$, so $C=1$. Let
$d_n=L_{2n+3}^2$; it clears all coefficients of index $\le n$. The inequality $d_n\le64^{n+1}$ was checked for $n<600$ by an
exact integer computation. For $n\ge600$, with $\psi(v)=\sum_{p^a\le v}\log p$ and $\psi(v)<1.03883v$
\cite[Thm.~12, (3.35)]{rosser1962approximate}, we get $\log d_n\le2\psi(2n+3)<2.07766(2n+3)<(n+1)\log64$. Indeed, at $n=600$ the
last inequality reads $2499.42\ldots<2499.48\ldots$, and both sides are linear in $n$ with slopes $4.15532<\log64=4.15888\ldots$.
So $\mathcal D=64$. The system $Y'=AY$ and these bounds show that $\mathcal F$ and $\mathcal G$ are $G$-functions.

\textbf{Conclusion.} \eqref{eq:hh-frconstants} gives $c_1=4\cdot2\cdot64^{33}=2^3\cdot2^{198}=2^{201}$ and $c_2=12$, so $c_1^{c_2}=2^{2412}$.
Applying the criterion with $a=1$ and $b=k$ proves Theorem~\ref{thm:hh-fr}.

\subsubsection{The polylogarithm criterion}

\begin{theorem}\label{thm:hh-kaw}
$B(M)\notin\mathbb Q$ for every integer $M>2^{26}e^{16}$, by Corollary~2.3 of \cite{kawashima2026linear}.
\end{theorem}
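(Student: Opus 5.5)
The plan is to apply Corollary~2.3 of \cite{kawashima2026linear} to the system already set up for the Fischler--Rivoal bound. That is, we take $Y=(1,\mathcal F,\mathcal G)^{\mathsf T}$ with $\mathcal F(x)=f_1(-x)$ and $\mathcal G(x)=f_2(-x)$, so that $B(M)=\mathcal G(1/k)$ with $k=M-1$. The first step is to rewrite $\mathcal F,\mathcal G$ as polylogarithm-type series of the shape treated there. Substituting $x=y^2$ gives
\[
\mathcal F(y^2)=\frac{\arctan y}{y},
\]
and, by the integral representation of Lemma~\ref{lem:hh-mobius}, $\mathcal G(y^2)$ is a weight-two iterated integral against the same kernel $du/(2\sqrt u(1\pm y^2u))$. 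Hence both functions are combinations of $\operatorname{Li}_1,\operatorname{Li}_2$ (and of $\operatorname{Li}_1^2$-type products) at $\pm iy$, or equivalently Lerch/odd-index polylogarithms. I would therefore identify $B(M)$ with the corresponding entry of Kawashima's family, evaluated at $\alpha=1/k$ or $\pm i/\sqrt k$, and record which weight and which depth parameters apply.

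The second step is to check the hypotheses of the corollary. Linear independence over $\mathbb Q(x)$ of $1,\mathcal F,\mathcal G$ was already proved, via the $\log^2 t$ asymptotics at $x=-1$, in the Fischler--Rivoal subsection, and it transfers to whatever independence condition Kawashima requires. The arithmetic inputs are the denominators $L_{2n+3}^2$, giving the prime-number-theorem contribution $e^{4n}$ per unit, hence the $e^{16}$ after the weight and scaling factors, and the archimedean coefficient bound $C=1$.

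The third step is to substitute the data into the explicit threshold of Corollary~2.3. For our parameters it should read $|\alpha|^{-1}>2^{a}e^{b}$ with $a=26$ and $b=16$. Solving for $M$ then gives $M>2^{26}e^{16}$; the $\pm1$ between $M$ and $k$ is harmless since the inequality is strict and we may absorb it.

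The main obstacle is the first step: matching our functions exactly to Kawashima's normalization, in particular whether the relevant argument is $1/k$ or $1/\sqrt k$ (which squares the threshold), and which constants in his bound, such as the power of $2$ coming from the odd-index denominators $2t+1$ and the $16^n$-type factors, specialize to $2^{26}$. I would try first to express $B(M)$ through $\operatorname{Li}_2$ and $\operatorname{Li}_1^2$ at $\pm i/\sqrt k$ and then verify the constant by recomputing his denominator and height estimates for $r=2$.
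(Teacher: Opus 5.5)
Your overall strategy (apply Kawashima's Corollary~2.3 over $\mathbb Q(\sqrt{-k})$) matches the paper's, but there is a genuine gap. You never turn $B(M)\in\mathbb Q$ into a statement that the corollary can refute, and the threshold is asserted (``it should read \dots\ with $a=26$, $b=16$'') rather than derived.

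As the corollary is applied in the paper, its inputs are not a differential system with denominators $L_{2n+3}^2$, a coefficient bound $C=1$, and $\mathbb Q(x)$-independence of $1,\mathcal F,\mathcal G$. Those are the Fischler--Rivoal data. Independence of \emph{values} is the conclusion of Kawashima's result, not a hypothesis you transfer into it. Instead one supplies a number field $K$, a place $v_0$ and $\alpha,\beta\in K$. If an explicit height quantity $V_{v_0}(\alpha,\beta)$ is positive, then (for $m=1$, $r=2$) the numbers $1,\operatorname{Li}_1(\omega),\operatorname{Li}_2(\omega),\operatorname{Li}_1(\omega)^2$ at the \emph{single} point $\omega=\alpha/\beta$ are linearly independent over $K$. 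So you need a one-point dilogarithmic formula for $B(M)$ and a nontrivial $K$-linear relation among exactly these four numbers.

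Your candidate points $\pm i/\sqrt k$ do not provide this. They are two points, and you give no identity placing $B(M)$ in the relevant span. The natural route, via $\operatorname{Ti}_2(y)=\frac1{2i}\bigl(\operatorname{Li}_2(iy)-\operatorname{Li}_2(-iy)\bigr)$ and $\operatorname{Ti}_2(\tan\theta)=\theta\log\tan\theta+\frac12\operatorname{Cl}_2(2\theta)+\frac12\operatorname{Cl}_2(\pi-2\theta)$, produces the unwanted term $\operatorname{Cl}_2(\pi-2\theta)$. That is a dilogarithm at yet another point.

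The missing idea is the paper's point $\omega=2/(1-\sqrt{-k})=1-e^{-2i\theta}$, with $\theta=\arctan(1/\sqrt k)$.
\begin{itemize}
\item Integration by parts gives $\int_0^\theta u\cot u\,du=\theta\log(2\sin\theta)+\frac12\operatorname{Cl}_2(2\theta)$.
\item Euler's reflection formula at $u=e^{-2i\theta}$ converts this into $B(M)=\frac{\sqrt k}2\operatorname{Im}\operatorname{Li}_2(\omega)$.
\item Since $\omega/(\omega-1)=\bar\omega$, Landen's identity gives $2i\operatorname{Im}\operatorname{Li}_2(\omega)=2\operatorname{Li}_2(\omega)+\frac12\operatorname{Li}_1(\omega)^2$.
\item So $B(M)=p/q$ would yield the nontrivial relation $2\operatorname{Li}_2(\omega)+\frac12\operatorname{Li}_1(\omega)^2-\frac{4p}{qk}\sqrt{-k}=0$ over $K=\mathbb Q(\sqrt{-k})$.
\item Take $\alpha=2$ and $\beta=1-\sqrt{-k}$, an algebraic integer with $|\beta|=\sqrt M$. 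Then $h(\alpha)=\log2$, $h(\beta)=\frac12\log M$, and $V_{v_0}=4h(\beta)-h(\alpha)-3\bigl(h(\beta)+2h(\alpha)\bigr)-(6\log2+8)=\frac12\log M-13\log2-8$.
\end{itemize}

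That is the real source of $2^{26}e^{16}$. Of the $13\log 2$, $7\log2$ comes from $h(\alpha)$; the remaining $6\log2+8$ are constants of the corollary. Everything is doubled because $h(\beta)=\frac12\log M$. The odd denominators $2t+1$ and $L_{2n+3}^2$ play no role.

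Finally, $M$ rather than $k$ enters because $|1-\sqrt{-k}|^2=1+k$, so nothing needs absorbing. Your proposed absorption would not have been valid anyway: a condition $k>c$ misses the integer $M\in(c,c+1]$.
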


For $M\ge5$, put $\theta=\arctan(1/\sqrt k)$ and $\omega=2/(1-i\sqrt k)=1-e^{-2i\theta}$, so $|\omega|=2/\sqrt M<1$.

\begin{lemma}\label{lem:hh-dilog}
$B(M)=\frac{\sqrt k}2\operatorname{Im}\operatorname{Li}_2(\omega)$, where $\operatorname{Li}_s(\omega)=\sum_{n\ge1}\omega^n/n^s$.
\end{lemma}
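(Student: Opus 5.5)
The plan is to compare both sides as functions of the angle, using the integral representation \eqref{eq:hh-forms}, which states $B(M)=\sqrt{k}\int_0^{\theta}u\cot u\,du$ with $\theta=\arctan(1/\sqrt k)$. It then suffices to show
\[
 \operatorname{Im}\operatorname{Li}_2\bigl(1-e^{-2i\theta}\bigr)=2\int_0^{\theta}u\cot u\,du
\]
for $0\le\theta\le\arctan(1/2)$. I will prove this by differentiating both sides in $\theta$ and checking that they agree at $\theta=0$. If one prefers not to rely on the inverse-sine integral form, \eqref{eq:hh-forms} can instead be recovered from Lemma~\ref{lem:hh-mobius} by substituting $u=\tan^2 v/\tan^2\theta$-type variables in the $f_2$ integral. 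I will take \eqref{eq:hh-forms} as given.

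\emph{Step 1 (domain).} For $\theta'\in[0,\theta]$ put $\omega(\theta')=1-e^{-2i\theta'}$. Then $|\omega(\theta')|=2\sin\theta'\le 2\sin\theta=2/\sqrt M<1$. Hence $\omega(\theta')$ stays in the open unit disk, where $\operatorname{Li}_2$ is holomorphic with $\operatorname{Li}_2'(w)=-\log(1-w)/w$ (principal branch). Moreover $1-\omega(\theta')=e^{-2i\theta'}$ with $2\theta'\le 2\arctan(1/2)<\pi/2$, so the principal logarithm gives $\log(1-\omega)=-2i\theta'$ with no branch ambiguity.

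\emph{Step 2 (derivative).} By the chain rule, using $d\omega/d\theta'=2ie^{-2i\theta'}$,
\[
 \frac{d}{d\theta'}\operatorname{Li}_2(\omega)=\frac{2i\theta'}{\omega}\cdot 2ie^{-2i\theta'}=\frac{-4\theta'}{e^{2i\theta'}-1}=\frac{2i\theta' e^{-i\theta'}}{\sin\theta'}=2\theta'+2i\theta'\cot\theta'.
\]
Its imaginary part is $2\theta'\cot\theta'$. At $\theta'=0$ we have $\omega=0$, so $\operatorname{Li}_2(\omega)=0$. Integrating from $0$ to $\theta$, and noting that $u\cot u$ extends continuously to $u=0$, gives the displayed identity. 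Multiplying by $\sqrt k/2$ and invoking \eqref{eq:hh-forms} yields $B(M)=\frac{\sqrt k}{2}\operatorname{Im}\operatorname{Li}_2(\omega)$. Finally, $\omega=1-e^{-2i\theta}=2/(1-i\sqrt k)$ follows from $e^{-2i\theta}=(\sqrt k-i)/(\sqrt k+i)$, since $\cos\theta=\sqrt{k/M}$ and $\sin\theta=1/\sqrt M$.

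The only delicate point is the branch bookkeeping in Step~1: that $\log(1-\omega)$ is exactly $-2i\theta'$ and that the path never leaves the disk of convergence. The hypothesis $M\ge5$ gives both, with room to spare. The rest is a one-line computation.
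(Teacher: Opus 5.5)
Your proof is correct, but it takes a different route from the paper's. The paper integrates by parts to write $\int_0^\theta u\cot u\,du=\theta\log(2\sin\theta)+\frac12\operatorname{Cl}_2(2\theta)$, using the classical evaluation $\int_0^\theta\log(2\sin u)\,du=-\frac12\operatorname{Cl}_2(2\theta)$. It then applies Euler's reflection formula $\operatorname{Li}_2(1-u)=\frac{\pi^2}{6}-\operatorname{Li}_2(u)-\log u\log(1-u)$ at $u=e^{-2i\theta}$, and reads off the imaginary part using the branch $\log\omega=\log(2\sin\theta)+i(\frac\pi2-\theta)$.

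You instead differentiate $\theta'\mapsto\operatorname{Li}_2(1-e^{-2i\theta'})$ directly via $\operatorname{Li}_2'(w)=-\log(1-w)/w$. This avoids both the Clausen function and the reflection formula. The only inputs are that the path stays in the unit disk, since $|\omega|=2\sin\theta'\le 2/\sqrt M<1$, and that $\log(1-\omega)=-2i\theta'$ on the principal branch; you verify both. Your computation of the derivative as $2\theta'+2i\theta'\cot\theta'$ checks out, and the endpoint value at $\omega=0$ and the check $1-e^{-2i\theta}=2/(1-i\sqrt k)$ are also correct.

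Your route is more elementary and self-contained, and it gives $\operatorname{Re}\operatorname{Li}_2(\omega)=\theta^2$ as a by-product. The paper's route ties the identity to standard Clausen and dilogarithm identities from Lewin's book, at the cost of citing them and tracking the branch of $\log\omega$. Both arguments take the integral representation \eqref{eq:hh-forms} as given, so your aside about rederiving it from Lemma~\ref{lem:hh-mobius} is not needed.
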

\begin{proof}
Integrating by parts and using $\int_0^\theta\log(2\sin u)\,du=-\frac12\operatorname{Cl}_2(2\theta)$ gives
$\int_0^\theta u\cot u\,du=\theta\log(2\sin\theta)+\frac12\operatorname{Cl}_2(2\theta)$, where
$\operatorname{Cl}_2(\vartheta)=\operatorname{Im}\operatorname{Li}_2(e^{i\vartheta})$ \cite{lewin1981polylogarithms}. Euler's reflection formula
$\operatorname{Li}_2(1-u)=\frac{\pi^2}6-\operatorname{Li}_2(u)-\log u\log(1-u)$ \cite{lewin1981polylogarithms}, with principal branches,
applies at $u=e^{-2i\theta}$: $1-u=\omega$, $\log u=-2i\theta$ and $\log\omega=\log(2\sin\theta)+i(\frac\pi2-\theta)$. This gives
$\operatorname{Im}\operatorname{Li}_2(\omega)=\operatorname{Cl}_2(2\theta)+2\theta\log(2\sin\theta)$. Now compare with \eqref{eq:hh-forms}.
\end{proof}

\begin{proof}[Proof of Theorem~\ref{thm:hh-kaw}]
Let $K=\mathbb Q(\sqrt{-k})$, with $\sqrt{-k}=i\sqrt k$. Since $\omega/(\omega-1)=\bar \omega$ and $\operatorname{Li}_2$ has real coefficients, Landen's
identity $\operatorname{Li}_2(\frac \omega{\omega-1})=-\operatorname{Li}_2(\omega)-\frac12\log^2(1-\omega)$ \cite{lewin1981polylogarithms} and
$\operatorname{Li}_1(\omega)=-\log(1-\omega)$ give $2i\operatorname{Im}\operatorname{Li}_2(\omega)=2\operatorname{Li}_2(\omega)+\frac12\operatorname{Li}_1(\omega)^2$.
So, by Lemma~\ref{lem:hh-dilog}, $B(M)=p/q$ would give the nontrivial $K$-linear relation
$2\operatorname{Li}_2(\omega)+\frac12\operatorname{Li}_1(\omega)^2-\frac{4p}{qk}\sqrt{-k}=0$.

Apply \cite[Corollary~2.3]{kawashima2026linear} at the complex place $v_0$ of $K$, where the normalized absolute value is the usual
one. As there, $h$ is the absolute logarithmic Weil height and $h_{v_0}$ its local part at $v_0$. Take $m=1$ and $r=2$, so the number written $M$ there is $(m+1)^r-1=3$. Take $\alpha=2$ and $\beta=1-\sqrt{-k}$, so that
$\alpha/\beta=\omega$. Since $\beta$ is an algebraic integer with $|\beta|=\sqrt M>2$, we have $h(\beta)=h_{v_0}(\beta)=\frac12\log M$ and
$h(\alpha)=h_{v_0}(\alpha)=\log2$. The quantity (1) of \cite{kawashima2026linear} is then
\[
 V_{v_0}(\alpha,\beta)=4h(\beta)-h(\alpha)-3\bigl(h(\beta)+2h(\alpha)\bigr)-(3\log2+3\log2+2+6)=\tfrac12\log M-13\log2-8 ,
\]
which is positive for $M>2^{26}e^{16}$. The corollary makes $1,\operatorname{Li}_1(\omega),\operatorname{Li}_2(\omega),\operatorname{Li}_1(\omega)^2$
linearly independent over $K$, a contradiction.
\end{proof}

\subsubsection{Summary of the bounds}

The general criterion of Fischler and Rivoal \cite{fischler2018rational} gives $B(M)\notin\mathbb Q$ for
$M-1>2^{2412}\approx10^{726}$ (Theorem~\ref{thm:hh-fr}). Kawashima's polylogarithm criterion \cite{kawashima2026linear}
gives $M>2^{26}e^{16}\approx5.96\cdot10^{14}$ (Theorem~\ref{thm:hh-kaw}). Theorem~\ref{thm:hh-main} therefore lowers
the threshold by about eleven orders of magnitude, and it does so with explicit approximating sequences.
\subsection{Experimental Exponents}\label{app:hh-exp}

Beyond the proven theorem for a general $M$, we discovered experimentally an apparent pattern in the irrationality exponent for different values of $M$.
We sampled 700 integers $432\le M\le65536$: 100 odd, 100 with $M\equiv2\pmod4$, and 500 with $4\mid M$. 
For each, the exponent $\delta_M(n)=-1-\log|B(M)-r_n|/\log\widetilde D_n$, with $\widetilde D_n$ the reduced denominator of $r_n=A_n/D_n$, was evaluated using Arb enclosures for the error at
$n=1000,1200,\dots,2000$ and extrapolated as $\delta(M)+a\log n/n$. Points show $\delta(M)$, curves are quadratic fits in
$\log_2M$, and crosses mark their zeros near $458$ ($4\mid M$), $1860$ ($M\equiv2\bmod4$) and $7197$ (odd $M$). 
For odd $M$ the observed crossing lies close to the proved threshold $7165$. 
For even $M$ the reduced denominators are observed to be smaller by factors of about $4^n$ ($M\equiv2\bmod4$) and $16^n$ ($4\mid M$).
This suggests that an argument exploiting this $2$-adic cancellation could reach thresholds near $1.9\cdot10^3$ and $4.6\cdot10^2$, respectively.

\begin{figure}[H]
\centering
\includegraphics[width=0.8\linewidth]{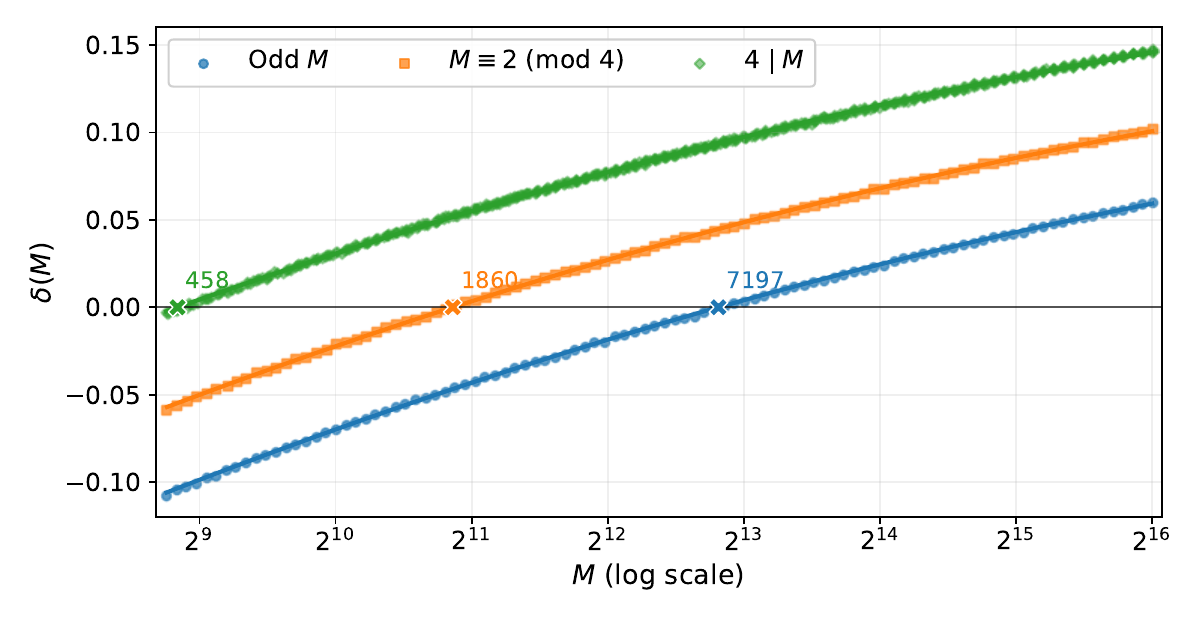}
\caption{\emph{Experimentally discovered irrationality exponents of $B(M)$ exhibit $2$-adic cancellation.} }

\label{fig:hh-exp}
\end{figure}

\end{document}